\numberwithin{equation}{section}       
\theoremstyle{plain}
\newtheorem{theo}{Theorem}
\newtheorem{prop}{Proposition}[section]
\newtheorem{lemm}[prop]{Lemma}
\newtheorem*{mainthm}{Main Theorem}
\newtheorem*{thm2}{Theorem 2'}
\theoremstyle{definition}
\newtheorem{defi}[prop]{Definition}
\theoremstyle{remark}
\newtheoremstyle{citing}
  {3pt}
  {3pt}
  {\itshape}
  {}
  {\bfseries}
  {.}
  {.5em}
  {\thmnote{#3}}
\theoremstyle{citing}
\DeclareMathAlphabet{\mathpzc}{OT1}{pzc}{m}{it} 
\newcommand{\N}{\mathbb{N}}
\newcommand{\Q}{\mathbb{Q}}
\newcommand{\R}{\mathbb{R}}
\newcommand{\Z}{\mathbb{Z}}
\newcommand{\bfF}{\textbf{F}}
\newcommand{\cC}{\mathcal{C}}
\newcommand{\cH}{\mathcal{H}}
\newcommand{\cI}{\mathcal{I}}
\newcommand{\cK}{\mathcal{K}}
\newcommand{\cL}{\mathcal{L}}
\newcommand{\cN}{\mathcal{N}}
\newcommand{\cO}{\mathcal{O}}
\newcommand{\cP}{\mathcal{P}}
\newcommand{\cR}{\mathcal{R}}
\newcommand{\cT}{\mathcal{T}}
\newcommand{\cU}{\mathcal{U}}
\newcommand{\cX}{\mathcal{X}}
\newcommand{\sA}{\mathscr{A}}
\newcommand{\sD}{\mathscr{D}}
\newcommand{\sE}{\mathscr{E}}
\newcommand{\sF}{\mathscr{F}}
\newcommand{\sG}{\mathscr{G}}
\newcommand{\sL}{\mathscr{L}}
\newcommand{\sM}{\mathscr{M}}
\newcommand{\sP}{\mathscr{P}}
\newcommand{\hE}{\widehat{E}}
\newcommand{\hJ}{\widehat{J}}
\newcommand{\hS}{\widehat{S}}
\newcommand{\hh}{\widehat{h}}
\newcommand{\hdelta}{\widehat{\delta}}
\newcommand{\hLambda}{\widehat{\Lambda}}
\newcommand{\tA}{\widetilde{A}}
\newcommand{\tB}{\widetilde{B}}
\newcommand{\tJ}{\widetilde{J}}
\newcommand{\tK}{\widetilde{K}}
\newcommand{\teta}{\widetilde{\teta}}
\newcommand{\eps}{\varepsilon}
\DeclareMathOperator{\dom}{dom}
\newcommand{\Bad}{\textrm{Bad}}
\newcommand{\Leb}{\textrm{Leb}}
\newcommand{\dist}{d}
\DeclareMathOperator{\Dist}{Dist}
\DeclareMathOperator{\CV}{\operatorname{CV}}
\newcommand{\bff}{\textbf{f}}
\newcommand{\bfg}{\textbf{g}}
\newcommand{\bfh}{\textbf{h}}
\newcommand{\Crit}{\mathcal{C}}
\newcommand{\SC}{\mathcal{S}}
\newcommand{\LD}{\mathcal{LD}}
\begin{document}

\title[Stochastic stability]{On stochastic stability of non-uniformly expanding interval maps}
\author[W. Shen]{Weixiao Shen}

\thanks{{\em 2010 Mathematics Subject Classification.} Primary: 37E05, Secondary: 37D25, 37C40, 37C75, 37H99.}
\address{Department of Mathematics, National University of Singapore, Block S17,
10, Lower Kent Ridge Road,
Singapore 119076 (Email: matsw@nus.edu.sg)
}
\thanks{The work was partially supported by AcRF Tier 1 Grant No. R-146-000-128-133.}
\date{\today}



%
%
\begin{abstract}
We study the expanding properties of random perturbations of regular interval maps satisfying the summability condition of exponent one. Under very general conditions on the interval maps and perturbation types, we prove strong stochastic stability.
\end{abstract}

\maketitle
\section{Introduction}
Non-uniformly expanding interval maps play an important role in the theory of dynamical systems.
The statistical properties of these systems as well as the persistency of these properties have attracted much interest.
In particular, the celebrated work of Jakobson~\cite{J} showed that non-uniformly expanding maps are abundant among interval maps.  Extensions and generalizations of this work have produced many of the main examples of non-uniformly hyperbolic dynamical systems in dimension one or higher, see~\cite{BC, BC2, V, WY, R} among others.

In this paper, we study random perturbations of non-uniformly expanding interval maps $f$, modeled by iterates of random maps. So we shall study composition of maps of the form
$g_{n-1}\circ \cdots \circ g_1 \circ g_0$, where $g_0, g_1, \ldots$ are independently chosen random maps from a small neighborhood of $f$ in a suitable space of interval maps. Under very general conditions on the dynamics $f$ and on perturbation types, we shall prove stochastic stability: a typical random orbit $g_{n-1}\circ \cdots \circ g_1 \circ g_0(x)$ has roughly the same asymptotic distribution in the phase space as a typical orbit of $f$, in a strong sense.

To be more precise, let $f: [0,1]\to [0,1]$ be a multimodal interval map of class $C^3$ with non-flat critical points. We shall assume that $f$ has no attracting or neutral cycles. In the deterministic case, existence of an ergodic invariant probability measure which is absolutely continuous with respect to the Lebesgue measure (acip) has been extensively studied. Such a measure is a {\em physical measure} in the sense that there exists a subset $E$ of $[0,1]$ with positive Lebesgue measure such that for a.e. $x\in E$, $$\frac{1}{n}\sum_{i=0}^{n-1} \delta_{f^i(x)}\to \mu\mbox{ as }n\to\infty$$ in the weak star topology.
In the recent work~\cite{BSS, BRSS}, existence of acip was proved under the following {\em large derivatives condition}: for each critical value $v$, we have
\begin{equation}\label{eqn:LD}
|Df^n(v)|\to\infty \mbox{ as } n\to\infty,
\end{equation}
which generalizes earlier results in~\cite{CE,NS, BLS} among others, where stronger growth conditions on $|Df^n(v)|$ were assumed.
In general, an interval map $f$ may have more than one acip's. However, by~\cite{BL, SV}, if $\omega(c)\cap \omega(c')\not=\emptyset$ for any critical points $c, c'$, then $f$ is ergodic with respect to the Lebesgue measure, so $f$ has a unique acip (under the condition (\ref{eqn:LD})). In particular, it is the case if $f$ is unimodal.

To consider random perturbations, we shall need a stronger assumption on $f$: {\em the summability condition of exponent $1$}, which means that 
for each critical value $v$, we have
\begin{equation}\label{eqn:scbeta}
\sum_{n=0}^\infty \frac{1}{|Df^n(v)|}<\infty.
\end{equation}
We shall define a space $\Omega\ni f$ of interval maps which we call {\em admissible} and from which all random maps will be taken. The precise definition can be found in \S\ref{subsec:space}. At this moment, let us mention that when all critical points of $f$ are of quadratic type and are contained in the open interval $(0,1)$, we may take $\Omega$ to be a small neighborhood of $f$ in the $C^2$ topology.
For $\eps>0$, let $\Omega_\eps$ denote the $\eps$-neighborhood of $f$ in $\Omega$ with respect to {\em the $C^1$ metric}.
A sequence $\{x_n\}_{n=0}^\infty$ is called an {\em $\eps$-random orbit} if for each $n\ge 0$, $x_{n+1}=g_n(x_n)$ for some $g_n\in\Omega_\eps$.
Given a Borel probability measure $\nu_\eps$ supported in $\Omega_\eps$, the measure $\Leb\times \nu_\eps^\N$ on $[0,1]\times \Omega_\eps^\N$
naturally induces a probability measure on the space of $\eps$-random orbits which is our reference measure on the space of $\eps$-random orbits.
A Borel probability measure $\mu_\eps$ is called {\em physical for $\eps$-perturbations} if the set of $\eps$-random orbits $\{x_n\}_{n=0}^\infty$ with the following property has positive measure:
\begin{equation}\label{eqn:physicalrandom}
\frac{1}{n}\sum_{i=0}^n \delta_{x_i}\to \mu_\eps \mbox{ as } n\to\infty \mbox{ in the weak star topology}.
\end{equation}

To obtain meaningful results, we shall need to assume certain regularity of $\nu_\eps$. Given $\nu_\eps$, we define a family $\sP_\eps=\{p_\eps(\cdot|x)\}_{x\in [0,1]}$ of probability measures in $[0,1]$ as follows:
\begin{equation}\label{eqn:tranprob}
p_{\eps}(E| x)=\nu_\eps \left(\left\{g\in\Omega: g(x)\in E\right\}\right).
\end{equation}
Then each $p_\eps(\cdot|x)$ is supported in the $\eps$-neighborhood of $f(x)$. We write $\nu_\eps\in \sM_\eps(L)$ if
for each $x\in [0,1]$, and each Borel set $E\subset [0,1]$, we have
\begin{equation}\label{eqn:regulartranpro}
p_\eps(E|x)\le L \left(\frac{|E|}{2\eps}\right)^{1/L},
\end{equation}
where $|E|$ denote the Lebesgue measure of $E$.

\begin{mainthm}
Let $f:[0,1]\to [0,1]$ be a map of class $C^3$ with non-flat critical points and without attracting or neutral cycle and let $\Omega\ni f$ be an admissible space of interval maps.
For each $\eps>0$ small, let $\nu_\eps$ be a Borel probability measure on $\Omega_\eps$. Assume that
\begin{itemize}
\item $f$ satisfies the summability condition of exponent $1$.
\item $f$ is ergodic with respect to the Lebesgue measure.
\item there exists $L>1$ such that
$\nu_\eps\in \sM_\eps(L)$ for all $\eps>0$ small.
\end{itemize}
Then there exists $\eps_0>0$ such that the following hold:
\begin{enumerate}
\item [(i)] For each $\eps\in (0,\eps_0]$, there exists a unique physical measure $\mu_\eps$ for $\eps$-perturbations. Moreover, $\mu_\eps$ is absolutely continuous with respect to the Lebesgue measure and (\ref{eqn:physicalrandom}) holds for a.e. random orbits. 
\item [(ii)] $f$ is strongly stochastically stable in the following sense: as $\eps\to 0$, the density functions $d\mu_\eps/d\Leb$ converge in the $L^1$ topology to the density function of the unique acip $\mu$ of $f$.
\end{enumerate}
\end{mainthm}

Indeed, for each $\eps>0$ small, $\mu_\eps$ is the unique stationary measure for homogenous Markov chains $\chi^\eps$ with $p_\eps(\cdot|x)$ as transition probabilities.
Recall that a probability measure $\mu_\eps$ on $[0,1]$ is called a {\em stationary measure} for $\chi^\eps$ (or for $\sP_\eps$, or for $\nu_\eps$) if for
each Borel set $A\subset [0,1]$, we have
\begin{equation}\label{eqn:stationarym}
\mu_\eps(A)= \int_0^1 p_\eps(A|x) d\mu_\eps(x)=\int_{\Omega} \mu_\eps (g^{-1}(A)) d\nu_\eps(g).
\end{equation}

Stochastic stability of dynamical systems was introduced by Kolmogrov and Sinai. It is natural in consideration that any system arising from real world is unavoidably affected by external noises. In the long term project proposed by Palis~\cite{Palis}, this notion is used to replace structural stability introduced by Andronov and Pontrjagin, as a property which may be satisfied by ``most'' dynamical systems. It is now well-known that structural stability is tightly linked to uniform hyperbolicity, hence too restrictive. For instance, by~\cite{KSS}, the map $f$ in the Main Theorem is not structurally stable in any $C^r$ topology. On the other hand, it had been shown in~\cite{Ly, AM} that almost every quadratic polynomial $x\mapsto ax(1-x)$ satisfies either Axiom A, or the Collet-Eckmann condition and hence is strongly stochastically stable with respect to the random perturbations considered here. 

An extensive historical account on stochastic stability of dynamical systems can be found in~\cite{LK} or~\cite{BDV}.
For non-uniformly expanding interval maps, stochastic stability was previously studied in \cite{KK,T1,BeY,BaV}.
All these works considered interval maps which satisfy a condition of the Benedicks-Carleson type (or an even stronger condition). So our assumption on $f$ is significantly weaker. The perturbation types allowed here are also more general than those in~\cite{T1, BeY, BaV}. In particular, we allow the density functions of the transition probabilities to have singularity. However, in \cite{KK}, random perturbations modeled by general Markov chains were considered (for logistic maps of the  Misiurewicz type). In general it is unclear what Markov chain perturbations can be realized by iterates of smooth random maps, although some results in this direction were obtained in ~\cite[Section 1.1]{Kifer}, \cite{Q}, \cite{BeV} and \cite{JKR}. It would be interesting to find a formulation of perturbation types which covers both of the (independent) settings in~\cite{KK} and here, even in the logistic Misiurewicz case.

Strong stochastic stability for multimodal Collet-Eckmann maps was stated as Conjecture 1 in~\cite[Section 1]{BLS}, where the authors also suggested that ``possibly the Collet-Eckmann condition itself can be replaced by a much weaker growth condition''. Stochastic stability for interval maps satisfying a  summability condition stronger than (\ref{eqn:scbeta}) was posed as a problem in~\cite[Problem E.5]{BDV}.

The Main Theorem will be proved using an inducing scheme. Our Theorem~\ref{theo:reduced} asserts that for each $\eps>0$ small, under the regularity assumption on the measure $\nu_\eps$, almost every random orbit has a large scale time which is integrable in $\Leb\times \nu_\eps^\N$ and uniformly in $\eps$. The idea of inducing is well-known and powerful in the research on deterministic non-uniformly hyperbolic dynamics. Implementation of the idea in the random setting appeared in~\cite{AA,AV,BBM}. However, it seems more difficult to verify the assumptions adopted in ~\cite{AA,AV} than construct an inducing scheme directly, at least in the set-up of this paper. (In \cite{BBM}, stochastic stability was not discussed.)

To obtain Theorem~\ref{theo:reduced}, we shall first obtain lower bounds on growth of derivatives along random orbits, see Theorem~\ref{theo:dergwrothrandom}. This is based on a combination of analysis on expansion of the deterministic dynamics and the binding argument initiated by Jakobson~\cite{J} and Benedicks-Carleson~\cite{BC}. While the former argument requires only the large derivatives condition, we need the stronger condition (\ref{eqn:scbeta}) for the latter. Based on a result of \cite{BRSS}, the backward contraction property for an interval map $f$ with the large derivatives condition, we shall show that the total distortion of the first landing maps to suitably chosen critical neighborhoods of $f$ is small, see Lemma~\ref{lem:derVSdiscrt}. This result plays a crucial role in making a delicate choice of the {\em preferred binding period}, see Propositions~\ref{prop:shadowablegrowth} and ~\ref{prop:psorbretpre}. As a consequence of Theorem~\ref{theo:dergwrothrandom}, we shall prove the first landing map of $\eps$-random orbits into a suitably chosen critical neighborhood $\tB(\eps)$ of $f$ has small total distortion, see Proposition~\ref{prop:psorblocdis}. Theorem~\ref{theo:dergwrothrandom} also provides control of the recurrence of most $\eps$-random orbits into $\tB(\eps)$ under the regularity assumption on $\nu_\eps$, see Proposition~\ref{prop:rettbeps}.

As an application of the method and results presented here, a generalization of the Jakobson's theorem is given in~\cite{GS}.

The paper is organized as follows. In \S\ref{sec:prel}, we give precise definitions of the space of interval maps considered here and state Theorems~\ref{theo:dergwrothrandom} and~\ref{theo:reduced}. This section also contains lemmas about distortion and shadowing. The proof of the Main Theorem is given in \S\ref{sec:pfmain} assuming the two theorems just mentioned. Theorem~\ref{theo:dergwrothrandom} is proved in \S\ref{sec:pseudoorb}, after some preparatory study on the deterministic dynamics done in \S\ref{sec:derterministic}. The last three sections, \S\S\ref{sec:thm2}-\ref{sec:inducing}, are devoted to the proof of Theorem~\ref{theo:reduced}. In particular, in \S\ref{sec:thm2}, an outline of the proof of this theorem is given. In \S\ref{sec:slowrec}, we study the recurrence of $\eps$-random orbits to a suitably chosen small neighborhood of critical points, and estimate diffeomorphic return times to various critical regions. The final inducing step is carried out in \S\ref{sec:inducing}.

\section{Preliminaries and Statement of results}\label{sec:prel}
\subsection{Space of interval maps}\label{subsec:space}
For each $k=0,1,\ldots$, we use $\sF_k$ to denote the space of all $C^k$ maps from $[0,1]$ into itself, endowed with the $C^k$ metric. For $g\in\sF_1$, let $\Crit(g)$ denote the set of critical points of $g$ and let $\CV(g)=g(\Crit(g))$.

For $k=1,2,\ldots,$ let $\sA_k$ be the collection of maps $g\in\sF_1$ which have only hyperbolic repelling periodic points and which are of \emph{of class~$C^k$ with non-flat critical points}. The latter means that the following properties hold for $g$:
$g$ is of class~$C^k$ outside $\Crit(g)$;
and for each $c \in \Crit(g)$, there exists a number $\ell_c>1$ (called \emph{the order of~$g$ at~$c$})
and diffeomorphisms $\phi, \psi$ of~$\R$ of class~$C^k$ with $\phi(c)=\psi(g(c))=0$ such that,
$|\psi\circ g(x)|=|\phi(x)|^{\ell_c}$
holds on a neighborhood of~$c$ in~ $[0,1]$.
Note that each $g\in \sA_1$ has at least one critical point and let $\ell_{\max}(g)$ denote the maximum order of the critical points of $g$.

A subspace $\Omega$ of $\sF_1$ is called {\em admissible} if the following two conditions are satisfied:
\begin{enumerate}
\item[(i)] there exists a constant $C>0$ such that for any $x, y\in [0,1]$ and $g\in \Omega$, we have
\begin{equation}\label{eqn:constantK0}
2\dist(x, y) <\dist (x,\Crit(g)) \Rightarrow \left|\log \frac{|Dg(x)|}{|Dg(y)|}\right|\le C\frac{\dist(x,y)}{\dist (x,\Crit(g))}.
\end{equation}
\item[(ii)] there exist an integer $n\ge 0$, real numbers $\ell_1, \ell_2, \ldots, \ell_n>1$ and $\delta>0$, $O_1>0, O_2>0$ such that the following hold:
\begin{itemize}
\item A map $g\in \Omega$ has exactly $n$ critical points, denoted by $c_1(g)<c_2(g)<\cdots <c_n(g)$;
\item When $\dist(x,c_i(g))<\delta$, we have
$$O_1\dist(x,c_i(g))^{\ell_i-1}\le |Dg(x)|\le O_2 \dist(x,c_i(g))^{\ell_i-1}.$$
\end{itemize}
\end{enumerate}
Clearly, for $f\in\sF_2$ with non-degenerate critical points all of which lie in $(0,1)$, a small neighborhood of $f$ in $\sF_2$ is admissible.

Let $\LD$ denote the collection of all maps $f\in\sA_3$ which satisfies the large derivatives condition (\ref{eqn:LD}), and let $\SC_1$ denote the collection of all maps $f\in\sA_3$ which satisfies the summability condition of exponent $1$, i.e., (\ref{eqn:scbeta}). Clearly, $\SC_1\subset \LD$ and a map $f\in \LD$ has no critical relation: for any $c\in\Crit(f)$ and any integer $n\ge 1$, $f^n(c)\not\in\Crit (f)$. So if $f$ has a critical point $c\in\{0,1\}$ then it is not contained in the image of $f$ and hence dynamically irrelevant.
\subsection{Notations}

Unless otherwise stated, in the following, $f\in\sA_3$ and $\Omega\ni f$ is a subspace of $\sF_1$.
We shall always endow $\Omega$ with the $C^1$ metric and denote by $\Omega_\eps$ the $\eps$-neighborhood of $f$ in $\Omega$.

For $\bfg=(g_0,g_1,\ldots)\in\Omega^\N$ and $n\ge 1$, let
$$\bfg^n=g_{n-1}\circ g_{n-2}\circ \cdots \circ g_1\circ g_0.$$
Let $\bfg^0(x)=x$. We shall often consider the skew product $F: [0,1]\times \Omega^\N \to [0,1]\times \Omega^\N$, defined as
\begin{equation}\label{eqn:skewF}
F(x,\bfg)=(g_0(x), \sigma(\bfg)),
\end{equation}
where $\sigma:\Omega^\N\to\Omega^\N$ is the shift map.

For a subset $X$ of $[0,1]\times \Omega^\N$, let $X^\bfg$ denote the fiber of $X$ over $\bfg$, i.e.
$X^\bfg=\{x\in [0,1]| (x,\bfg)\in X\}.$

Given a $C^1$ diffeomorphism $\varphi: J\to I$ between bounded intervals, define
$$\Dist(\varphi|J)=\sup_{x, y\in J} \log \frac{|\varphi'(x)|}{|\varphi'(y)|},$$
and define $$\cN(\varphi|J)=\sup_{J'} \Dist (\varphi|J')\frac{|J|}{|J'|},$$
where the supremum is taken over all subintervals $J'$ of $J$. Note that when $\varphi$ is $C^2$, we have
$$\cN(\varphi|J)=\sup_{x\in J} \frac{|\varphi''(x)|}{|\varphi'(x)|} |J|.$$

Let $\Crit=\Crit(f)$, $\CV=f(\Crit)$ and $\ell_{\max}=\ell_{\max}(f)$. For each $c\in\Crit$ and $\delta>0$, let $\tB(c;\delta)$ denote the component of $f^{-1}(B_\delta(f(c)))$ that contains $c$, let $\ell_c$ denote the order of $c$, and let
\begin{equation}\label{eqn:Dcdelta}
D_c(\delta)=\frac{\delta}{|\tB(c;\delta)|}\asymp  \delta^{1-1/\ell_c}.
\end{equation}
Moreover, let $\tB(\delta)=\bigcup_{c\in\Crit}\tB(c;\delta)$.

Throughout we fix a small constant $\delta_*=\delta_*(f)>0$ such that the intervals $\tB(c;2\delta_*)$, $c\in \Crit$ are pairwise disjoint, and let
\begin{equation}\label{eqn:diststar}
\dist_* (x, \Crit)=\left\{\begin{array}{ll}
\dist (f(x), \CV) & \mbox{ if } x\in \tB(\delta_*)\\
\delta_* &\mbox{otherwise.}
\end{array}
\right.
\end{equation}
Replacing $\delta_*$ by a smaller constant, we may assume the following: for any $c\in\Crit$,
\begin{equation}\label{eqn:dist*der}
x\in \tB(c;\delta_*), \delta=\dist_*(x,\Crit)\mbox{ and  }g\in\Omega_\delta \Rightarrow |Dg(x)|\ge D_c(\delta).
\end{equation}

\subsection{Two intermediate theorems}

\begin{theo}\label{theo:dergwrothrandom}
Consider $f\in\SC_1$ and $\Omega=\sF_1$. For each $\eps>0$ small enough, there exist $\Lambda(\eps)>1$ and $\alpha(\eps)>0$ such that
$$\lim_{\eps\to 0} \Lambda(\eps)=\infty,\mbox{ and }\lim_{\eps\to 0}\alpha(\eps)=0,$$ and such that the following hold:
\begin{enumerate}
\item[(i)]
Let $x\in [0,1]$ and $\bfg\in \Omega_\eps^\N$ be such that $\dist(x, \CV)\le 4\eps$,  $\bfg^j(x)\not\in\tB(\eps)$ for $j=1,2,\ldots, s-1$ and $\bfg^s(x)\in \tB(c;2\eps)$ for some $c\in \Crit$. Then
    \begin{equation}\label{eqn:theo21}
    |D\bfg^s(x)|\ge \frac{\Lambda(\eps)}{D_{c}(\eps)}\exp \left(\eps^{\alpha(\eps)} s\right).
    \end{equation}
\item[(ii)] Let $x\in [0,1]$ and  $\bfg\in\Omega_\eps^\N$ be such that $\bfg^j(x)\not\in \tB(\eps)$ for $0\le j<s$. Then
    \begin{equation}\label{eqn:theo22}
    |D\bfg^s(x)|\ge A\eps^{1-\ell_{\max}^{-1}} \exp (\eps^{\alpha(\eps)} s),
    \end{equation}
    where $A>0$ is a constant independent of $\eps$.
\end{enumerate}
\end{theo}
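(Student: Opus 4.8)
The plan is to reduce everything to the deterministic dynamics of $f$ by a shadowing argument, and then to decompose an $\eps$-random orbit into alternating \emph{free} stretches — along which the orbit stays outside $\tB(\eps)$ and expansion at a rate $\lambda_0>1$ independent of $\eps$ is supplied by Ma\~n\'e-type hyperbolicity away from $\Crit$ — and \emph{bound} stretches, triggered whenever the orbit lies within $4\eps$ of a critical value; along a bound stretch the orbit shadows the corresponding critical orbit, and the summability condition (\ref{eqn:scbeta}) is invoked to show that the expansion recovered there more than compensates the derivative loss $D_c(\eps)$ incurred at the nearby critical passage. In this language part (ii) is the purely free case, and part (i) is one bound stretch followed by a free stretch that terminates by entering $\tB(c;2\eps)$.

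The deterministic input, collected in \S\ref{sec:derterministic}, consists of: (a) a Ma\~n\'e estimate — since $f$ has no attracting or neutral cycle, there are $\lambda_0>1$ and, for each small $\eps$, constants such that $z,f(z),\dots,f^{n-1}(z)\notin\tB(\eps)$ forces $|Df^n(z)|\gtrsim\eps^{1-\ell_{\max}^{-1}}\lambda_0^{n}$, with a substantially better constant when the orbit ends by entering, or begins just after leaving, a critical neighborhood; (b) the consequences of (\ref{eqn:scbeta}), namely $|Df^n(v)|\to\infty$ and $\sum_n|Df^n(v)|^{-1}<\infty$ for each critical value $v$; (c) the fine distortion control for first landing maps to suitably chosen critical neighborhoods (Lemma~\ref{lem:derVSdiscrt}), proved from the backward contraction property of $f\in\LD$ of \cite{BRSS}, which is what makes pullbacks of $\tB(c;2\eps)$ along orbit segments outside $\tB(\eps)$ contract by a factor $\gtrsim 1/D_c(\eps)$; and (d) the \emph{preferred binding period} attached to a point within $4\eps$ of a critical value, together with the growth estimate (Propositions~\ref{prop:shadowablegrowth} and~\ref{prop:psorbretpre}) that over such a period the cumulative expansion obtained by shadowing the critical orbit exceeds $\Lambda(\eps)$ for some $\Lambda(\eps)\to\infty$ while staying above an exponential with rate $\eps^{\alpha(\eps)}$. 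Since (\ref{eqn:scbeta}) only forces polynomial, not exponential, growth of $|Df^n(v)|$, binding periods can be long relative to $\log(1/\eps)$, and this is precisely why the exponential rate in the conclusion is $\eps^{\alpha(\eps)}$ rather than a fixed $\lambda_0$.

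The random step, in \S\ref{sec:pseudoorb}, uses the distortion and shadowing lemmas of \S\ref{sec:prel}: an $\eps$-random orbit that stays outside $\tB(\eps)$, or that stays within the binding scale of a critical orbit, remains within a controlled multiple of $\dist_*(\cdot,\Crit)$ of a genuine $f$-orbit, and $|D\bfg^j(x)|$ agrees with the corresponding deterministic derivative up to a factor tending to $1$ as $\eps\to 0$; so the estimates of the previous step transfer. For (ii) one applies the transferred Ma\~n\'e estimate (a) on $[0,s]$, obtaining (\ref{eqn:theo22}) with $A$ the worst-case entry constant. For (i) one splits $[0,s]$ into the preferred binding period of $x$ — nonempty since $\dist(x,\CV)\le 4\eps$ — followed by a free stretch up to time $s$, which is genuinely free because no entry into $\tB(\eps)$ occurs before $s$ (a critical value being at definite distance from $\Crit$, as $f$ has no critical relations); the binding period either exhausts $[0,s]$ or ends at some $p<s$. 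On the binding part (d) supplies the surplus $\Lambda(\eps)$ and an exponential factor; on the free part (a) supplies a further exponential factor and, because the stretch \emph{terminates} by entering $\tB(c;2\eps)$, the contraction-of-pullbacks bound (c) supplies the factor $1/D_c(\eps)$. Multiplying gives (\ref{eqn:theo21}).

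The main obstacle is step (d): defining the preferred binding period so that it simultaneously yields a surplus $\Lambda(\eps)\to\infty$ and an exponential lower bound, using only the summability condition of exponent one. This requires the fine distortion estimate of Lemma~\ref{lem:derVSdiscrt}, and — the truly delicate point in the random setting — a careful accounting showing that the perturbation errors accumulated over a possibly long binding period stay negligible compared with the small but strictly positive expansion gained; controlling this trade-off is what forces $\eps$ to be small and what prevents $\eps_0$, $\Lambda(\eps)$ and $\alpha(\eps)$ from being explicit. A secondary point is getting the correct $\eps$-dependence of the Ma\~n\'e constant near $\partial\tB(\eps)$, which the non-flatness of the critical points reduces to a one-step estimate once shadowing is available.
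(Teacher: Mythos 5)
Your sketch correctly identifies most of the ingredients (preferred binding periods built from the summability condition, the small total distortion Lemma~\ref{lem:derVSdiscrt}, backward contraction, Ma\~n\'e expansion away from $\Crit$), but there is a genuine gap at the heart of the argument: the claim that an $\eps$-random orbit which merely avoids $\tB(\eps)$ ``remains within a controlled multiple of $\dist_*(\cdot,\Crit)$ of a genuine $f$-orbit, with derivatives agreeing up to a factor tending to $1$'', so that the deterministic estimates (a) and (c) at scale $\eps$ simply transfer. This fails because the noise has exactly the size $\eps$ of the neighborhood being avoided: over a long stretch the accumulated one-step errors are only negligible relative to the distance to $\Crit$ when that distance is of a fixed order $\delta_0\gg\eps$, and an orbit avoiding only $\tB(\eps)$ may repeatedly pass at distances $\delta'\in(\eps,\delta_0)$ from critical points, where no shadowing of a true orbit is available and the Ma\~n\'e rate degrades with $\eps$. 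The paper transfers the deterministic first-landing estimate only at the fixed scale $\delta_0$ with $\eps\le\eps(\delta_0)$ (Lemma~\ref{lem:pseudoorbitland}), and it treats every intermediate-scale approach by starting a \emph{new} binding period at the scale $\delta_i=\dist_*(x_{s_i},\Crit)$ of that approach (Proposition~\ref{prop:psorbretpre} applied at scale $\delta_i$, Lemma~\ref{lem:iorbit}), so that part (i) is proved by an iterated telescoping over the successive returns to $\tB(\delta_0)$, with factors $\Lambda_0(\delta_i)^{\zeta_3}(\delta_{i+1}/\delta_i)^{1-\ell_{\max}^{-1}}$; your ``one bound stretch followed by one free stretch'' decomposition does not survive the hypothesis, which only forbids entries into $\tB(\eps)$, not into larger critical neighborhoods.

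A second, related gap is the exponential factor $\exp(\eps^{\alpha(\eps)}s)$. Your proposal supplies no mechanism for it: Ma\~n\'e expansion at a rate independent of $\eps$ is only valid outside a fixed neighborhood, and, as you yourself note, the summability condition gives no exponential growth along critical orbits, so the binding periods yield only a definite gain $\Lambda_0$, not growth exponential in their (possibly very long) length. In the paper the exponential rate is produced by a separate bootstrap in \S\ref{subsec:exprate}: first part (i) is established with rate zero (Proposition~\ref{prop:dergrowthps}), then the optimal rate $\eta_0(\delta)$ at each scale is shown to satisfy $\eta_0(\delta')\ge(1-\kappa(\delta))\eta_0(\delta)$ for $\delta'\in[\delta/2,\delta)$ (Lemma~\ref{lem:eta}), and iterating this scale-halving loss from $\eta_0(\eps_0)>0$ down to $\eps$ is exactly what yields a positive rate of the degenerate form $\eps^{\alpha(\eps)}$ with $\alpha(\eps)\to0$ --- not the length of the binding periods, as you suggest. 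Part (ii) is likewise not a direct application of a transferred Ma\~n\'e estimate: it is deduced from the part (i)-type estimates by decomposing the orbit along the successive minima of $\dist_*(\bfg^j(x),\Crit)$ and telescoping the $D_c(\rho_j)$ factors. Without the intermediate-scale rebinding and the scale-induction for the rate, the proposed proof does not go through.
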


In the case that $f$ satisfies the Collet-Eckmann condition, i.e. for each $v\in\CV$, $\liminf_{n\to\infty} \log |Df^n(v)|/n>0$, our proof shows that a stronger version of the theorem holds: we can replace $\eps^{\alpha(\eps)}$ by a positive constant independent of $\eps$. For an $S$-unimodal map satisfying a stronger condition of the Benedicks-Carleson type,
this was proved in~\cite{BaV}.

Before we state our Theorem~\ref{theo:reduced}, let us introduce nice sets for random perturbations, as an analogue to the deterministic case.
A {\em nice set for $\eps$-random perturbations}  is a measurable subset $V$ of $[0,1]\times \Omega_\eps^\N$ with the following properties:
\begin{itemize}
\item for each $\bfg\in \Omega_\eps^\N$, $V^\bfg$ is an open neighborhood of $\Crit$, and each component of $V^\bfg$ contains exactly one point of $\Crit$;
\item for each $\bfg\in \Omega_\eps^\N$, $x\in\partial V^\bfg$ (as a subset of $[0,1]$) and $n\ge 1$, we have
$$\bfg^n(x)\not\in V^{\sigma^n \bfg}.$$
\end{itemize}
For each $\bfg\in\Omega_\eps^\N$ and $c\in\Crit$, we use $V^\bfg_c$ to denote the component of $V^\bfg$ which contains $c$.  A positive integer $m$ is called a {\em Markov inducing time} of $(x,\bfg)\in V$,  if there exists an interval $J\ni x$ and a critical point $c\in\Crit$ such that
\begin{itemize}
\item $\bfg^m$ maps $J$ diffeomorphically onto $V^{\sigma^m \bfg}_c$ with
$\cN(\bfg^m|J)\le 1$;
\item if $x\in V_{c_0}^\bfg$ for $c_0\in\Crit$, then 
$$\inf_{y\in J} |D \bfg^m(y)|\ge 2 \frac{|V^{\sigma^m \bfg}_c|}{|V^\bfg_{c_0}|}.$$
\end{itemize}
For $(x,\bfg)\in V$, let $m_V(x,\bfg)$ denote the minimal Markov inducing time of $(x,\bfg)$. (If such a time does not exist, set $m_V(x,\bfg)=\infty$.)

\begin{theo}\label{theo:reduced}
Consider $f\in\SC_1$ with $\Crit\subset (0,1)$, an admissible space $\Omega\ni f$ and a family of probability measures $\nu_\eps\in \sM_\eps(L)$ on $\Omega_\eps$. Fix $p\ge 1$. Then for each $\delta_0>0$ small,  there exist constants $C_0>0$ and $\eps_0>0$ with the following property: For each $\eps\in (0,\eps_0]$,
there exists a nice set $V$ for $\eps$-random perturbations such that
$$\tB(\delta_0)\times \Omega_\eps\subset V\subset \tB(2\delta_0)\times \Omega_\eps;$$
and such that
$$\Leb\times\nu_\eps^\N\left(\{(x,\bfg)\in V: m_V(x,\bfg)>m\}\right)\le C_0 m^{-p}.$$
\end{theo}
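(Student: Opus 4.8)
The plan is to build the nice set $V$ fiberwise and then use Theorem~\ref{theo:dergwrothrandom} together with the regularity hypothesis $\nu_\eps\in\sM_\eps(L)$ to control the tail of $m_V$. First I would construct $V$. For each $\bfg\in\Omega_\eps^\N$, following the standard deterministic recipe for nice sets, one chooses for each $c\in\Crit$ an interval $V^\bfg_c$ with $c$ in its interior, of size comparable to some $\rho\in[\delta_0,2\delta_0]$, whose boundary points have orbits (under $\bfg^n$, landing in $V^{\sigma^n\bfg}$) that never re-enter $V$. Since the orbits now depend on $\bfg$ one has to do this measurably in $\bfg$: I would fix the \emph{same} radius $\rho$ for all $\bfg$ (so that the outer and inner inclusions $\tB(\delta_0)\times\Omega_\eps\subset V\subset\tB(2\delta_0)\times\Omega_\eps$ hold automatically once $\delta_0$ is small and $\eps\ll\delta_0$), and realize $\partial V^\bfg_c$ as a measurable selection from the uncountably many admissible radii; an alternative is to take $V^\bfg_c$ to be a component of the complement of the closure of a suitable union of forward images of $\tB(\eps)$, which is visibly measurable in $\bfg$ because $\bfg\mapsto\bfg^n$ is continuous. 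The set $\tB(\eps)$ must be contained well inside every $V^\bfg_c$, which is where we use $\eps\ll\delta_0$.

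The heart of the argument is the tail estimate. Fix $(x,\bfg)\in V$ with $x\in V^\bfg_{c_0}$. I would run the first-return construction to $\tB(\eps)$ along the random orbit of $x$: let $0<s_1<s_2<\cdots$ be the successive times when $\bfg^{s_i}(x)\in\tB(\eps)$, and on each excursion apply part~(i) of Theorem~\ref{theo:dergwrothrandom} (and part~(ii) for the final incomplete block, or the initial block before the first entry into $\tB(\eps)$). This yields, for the pullback of a component of $\tB(\eps)$ back along $\bfg^{s_i}$, a uniform expansion factor bounded below by $\Lambda(\eps)\exp(\eps^{\alpha(\eps)}(s_{i}-s_{i-1}))/D_c(\eps)$ together with the bounded-distortion control coming from the admissibility condition~(\ref{eqn:constantK0}) (the $\cN(\cdot|\cdot)\le1$ requirement). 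Because $\Lambda(\eps)\to\infty$, after a bounded number of such returns the accumulated expansion dominates the ratio $|V^{\sigma^m\bfg}_c|/|V^\bfg_{c_0}|\asymp 1$ demanded in the second bullet of the definition of Markov inducing time, and the pulled-back interval, if it still contains $x$ and maps onto a full $V^{\sigma^m\bfg}_c$, gives a Markov inducing time $m$. Thus $m_V(x,\bfg)>m$ forces the random orbit to avoid making a ``good return'' over $\{1,\dots,m\}$, which in turn forces either very few entries into $\tB(\eps)$ (impossible beyond a bounded count because each entry with enough expansion terminates the construction, using $\Lambda(\eps)$ large) or, at each entry, the preimage of the critical neighborhood straddling $x$ being too large to fit inside $V^\bfg_{c_0}$ — an event whose $\nu_\eps^\N$-probability is controlled by~(\ref{eqn:regulartranpro}).

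More precisely, the probabilistic input enters as follows: the event that $m_V>m$ is contained in the event that, in the first $\sim c\log m$ time units (or in a fixed positive fraction of the first $m$ steps), the random orbit never enters $\tB(\eps)$ \emph{or} always enters it ``shallowly''. Using the Markov-chain structure and the H\"older-type bound $p_\eps(E|y)\le L(|E|/2\eps)^{1/L}$, each single shallow-return or non-return step has conditional probability bounded by a constant $\theta<1$ (independent of $\eps$, once $\eps$ is small — here one uses that $\tB(\eps)$ has $\nu_\eps$-conditional mass bounded below, since its preimage footprint has Lebesgue measure $\gtrsim \eps$ near critical values and $\sM_\eps(L)$ gives a two-sided comparison on that scale), so the probability of $k$ consecutive ``bad'' steps is $\le\theta^k$. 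Taking $k\asymp \log m$ with the right constant gives $\Leb\times\nu_\eps^\N(m_V>m)\le C_0 m^{-p}$; the constants $C_0,\eps_0$ can be chosen uniformly in $\eps$ because $\theta$ and the combinatorial bound on the number of returns needed are uniform, thanks to $\Lambda(\eps)\to\infty$ and $\alpha(\eps)\to0$. The main obstacle I anticipate is the bookkeeping ensuring that the pulled-back interval $J$ simultaneously (a) contains $x$, (b) maps \emph{onto} a full component $V^{\sigma^m\bfg}_c$ rather than merely into it, and (c) satisfies $\cN(\bfg^m|J)\le1$; controlling (b) requires the nice-set boundary condition (orbits of $\partial V$ never return to $V$) to prevent the pullback from being cut off prematurely, and controlling (c) requires summing the per-step distortions $\dist(\bfg^j(y),\Crit(g_j))^{-1}\dist$-contributions from~(\ref{eqn:constantK0}) against the exponential expansion $\exp(\eps^{\alpha(\eps)} s)$ — a bounded-distortion estimate that must be proved carefully but is exactly the kind of computation the excerpt's distortion lemmas (and Lemma~\ref{lem:derVSdiscrt}, Proposition~\ref{prop:psorblocdis}) are designed to supply.
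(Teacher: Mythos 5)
You have a genuine gap, and it sits at the heart of the tail estimate: the probabilistic mechanism is inverted. The events that delay a Markov inducing time are not failures to enter $\tB(\eps)$ or ``shallow'' entries: time spent outside $\tB(\eps)$ yields exponential expansion (Proposition~\ref{prop:mane1} and part (ii) of Theorem~\ref{theo:dergwrothrandom}), and shallow returns are precisely the returns that produce $\theta$-good return times (cf.\ Lemma~\ref{lem:shallowret}). The dangerous events are \emph{deep} returns: at a return time $s_i$ the orbit picks up the factor $|Dg_{s_i}(\bfg^{s_i}(x))|\asymp \dist(\bfg^{s_i}(x),\Crit)^{\ell_c-1}$, which can wipe out the gain $\Lambda(\eps)/D_c(\eps)$ of the preceding excursion, and the term $|D\bfg^{s_i}(x)|/\dist(\bfg^{s_i}(x),\Crit)$ can ruin the cumulative distortion sum $A(x,\bfg,\cdot)$ needed for $\cN(\bfg^m|J)\le 1$ and for the image to cover a full component $V^{\sigma^m\bfg}_c$. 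Hence ``after a bounded number of returns the accumulated expansion dominates'' is false: progress can be reset arbitrarily often, and the number and the lengths of the excursions needed are unbounded. Your probabilistic input is also unsupported: $\nu_\eps\in\sM_\eps(L)$ is only an \emph{upper} bound on transition probabilities, so it gives no lower bound on the probability of hitting $\tB(\eps)$ and therefore no constant $\theta<1$ per ``non-return step''; and a geometric bound over $\asymp\log m$ independent trials is not the structure of the problem in any case --- the delays come from large cumulative depth $\Q_0^s(x,\bfg;\eps)$ and from long binding/recovery periods, which under the summability condition alone do not have exponential tails (this is exactly why the theorem is stated with a polynomial tail for each fixed $p$, with the constants allowed to depend on $p$).

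For comparison, the paper does not estimate $\{m_V>m\}$ directly by a per-step argument. It proves a uniform $p$-th moment bound for the first $\theta$-good return time $h^\theta_{\delta_0}$ (Theorem 2$'$), observes that $m_V\le h^\theta_{\delta_0}$ on the nice set (Lemma~\ref{lem:theta0}), and obtains $C_0m^{-p}$ by Markov's inequality; the nice set itself is built in Proposition~\ref{prop:nice} from components of \emph{preimages} of $\tB(\delta_0)$ containing the critical points (a fixed radius, or forward images of $\tB(\eps)$, cannot give the boundary non-return property). The moment bound requires three ingredients absent from your sketch: (a) deep returns are rare, Proposition~\ref{prop:rettbeps}, proved from the $\sM_\eps(L)$ upper bound via Lemmas~\ref{lem:rettbeps0}--\ref{lem:rettbeps'} together with a combinatorial count over depth sequences; (b) the deterministic statement that an orbit whose cumulative depth is controlled reaches a $\theta$-good return or a $\tau$-scale expansion time within $m\eps^{-\alpha}$ steps, Proposition~\ref{prop:indsmall0}, which rests on Theorem~\ref{theo:dergwrothrandom} and Proposition~\ref{prop:psorblocdis}; and (c) an induction over scales from $\eps$ up to $\delta_0$ (Propositions~\ref{prop:indsma} and~\ref{prop:indind}), where choosing $\theta$ small produces a contraction factor $\lambda<1$ that makes the scale-by-scale recursion summable. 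Without some substitute for these steps, the proposal does not yield the claimed estimate.
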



\subsection{Estimate of distortion}
In order to control the distortion of iterates of random perturbations of $f$, we shall use the well-known ``telescope'' technique. For interval maps with non-flat critical points, the following notation, appearing in \cite{BC, T1}, is natural to consider.
For $x\in [0,1]$, $\bfg=(g_0,g_1,\ldots)\in \sF_1^\N$ and an integer $n\ge 1$, let
\begin{equation}\label{eqn:dfnA}
A(x, \bfg, n)=\sum_{i=0}^{n-1}\frac{|D\bfg^i(x)|}{\dist(\bfg^i(x), \Crit(g_i))}.
\end{equation}
So if $\bfg^j(x)\in\Crit(g_j)$ for some $j\in\{0,1,\ldots, n-1\}$, then $A(x,\bfg; n)=\infty$.

{\em Note.} In the above formula, we set $\dist(\bfg^i(x),\Crit(g_i))=1$ if $\Crit(g_i)=\emptyset$.

The following lemma is essentially proved in \cite{T1}.
\begin{lemm} \label{lem:theta0}
Let $\Omega\subset \sF_1$ be a space of interval maps which satisfies the admissibility condition (\ref{eqn:constantK0}). Then there exists a constant $\theta_0>0$ such that
for any $(x,\bfg)\in [0,1]\times \Omega^\N$ and any integer $n\ge 1$ with $A(x,\bfg, n)<\infty$, putting
$$J=\left[x-\frac{\theta_0} {A(x,\bfg, n)}, x+\frac{\theta_0}{ A(x,\bfg, n)}\right]\cap [0,1],$$ we have that $\bfg^n|J$ is a diffeomorphism and
$\cN(\bfg^n|J)\le 1.$ Moreover, for each $y\in J$, we have
\begin{equation}\label{eqn:DfAcompare}
e^{-1}\frac{|D\bfg^n(x)|}{A(x,\bfg, n)}\le \frac{|D\bfg^n(y)|}{A(y,\bfg, n)}\le e \frac{|D\bfg^n(x)|}{A(x,\bfg, n)}.
\end{equation}
\end{lemm}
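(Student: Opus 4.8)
The plan is to bound the distortion $\Dist(\bfg^n|J)$ and the quantity $\cN(\bfg^n|J)$ by controlling, at each step $0\le i<n$, how much the iterate $\bfg^i$ distorts a small interval around $x$, and then summing these contributions. First I would argue by induction on $n$, establishing simultaneously that $\bfg^i$ is a diffeomorphism on $J$ for each $i\le n$ and that the images $J_i:=\bfg^i(J)$ are short relative to their distance to $\Crit(g_i)$: concretely, that $|J_i|\le 2\theta_0\,\dist(\bfg^i(x),\Crit(g_i))$ provided $\theta_0$ is chosen small enough (this is the condition ``$2\dist(\cdot,\cdot)<\dist(\cdot,\Crit)$'' needed to invoke the admissibility estimate~(\ref{eqn:constantK0})). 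The key mechanism is the mean-value/chain-rule identity: for $y\in J$,
\begin{equation*}
|J_i|=\int_J |D\bfg^i(t)|\,dt,
\end{equation*}
so $|J_i|$ is comparable to $|D\bfg^i(x)|\cdot|J|$ once the distortion of $\bfg^i$ on $J$ is under control; with $|J|=2\theta_0/A(x,\bfg,n)$ this gives $|J_i|\lesssim \theta_0\,|D\bfg^i(x)|/A(x,\bfg,n)\le \theta_0\,\dist(\bfg^i(x),\Crit(g_i))$ by the very definition of $A$. This is the crucial point where the specific shape of $A(x,\bfg,n)$ — a sum of terms $|D\bfg^i(x)|/\dist(\bfg^i(x),\Crit(g_i))$ — is exactly what makes the telescoping close.

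Next I would convert the geometric control on $|J_i|$ into an analytic bound on distortion. Writing $\log|D\bfg^n(y)|-\log|D\bfg^n(z)|=\sum_{i=0}^{n-1}\bigl(\log|Dg_i(\bfg^i(y))|-\log|Dg_i(\bfg^i(z))|\bigr)$ for $y,z\in J$, I bound each summand using~(\ref{eqn:constantK0}) by $C\,|J_i|/\dist(\bfg^i(x),\Crit(g_i))$ (up to a harmless factor comparing $\dist$ at the point $x$ versus at $\bfg^i(y)$, which the shortness of $J_i$ makes comparable). Summing over $i$ and using $|J_i|\lesssim \theta_0\,|D\bfg^i(x)|/A(x,\bfg,n)$ yields
\begin{equation*}
\Dist(\bfg^n|J)\le C'\theta_0\cdot\frac{1}{A(x,\bfg,n)}\sum_{i=0}^{n-1}\frac{|D\bfg^i(x)|}{\dist(\bfg^i(x),\Crit(g_i))}=C'\theta_0,
\end{equation*}
so choosing $\theta_0$ small makes $\Dist(\bfg^n|J)$, and hence $e^{\Dist}-1$, as small as we like — in particular below the threshold needed to run the induction and to get~(\ref{eqn:DfAcompare}) by exponentiating. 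The same computation applied to any subinterval $J'\subset J$ bounds $\Dist(\bfg^n|J')$ by $C'\theta_0\,|J'|/|J|\cdot(\text{something})$; being careful, one gets $\Dist(\bfg^n|J')\cdot(|J|/|J'|)\le 1$, i.e. $\cN(\bfg^n|J)\le 1$, again for $\theta_0$ small. Finally~(\ref{eqn:DfAcompare}) follows because $A(y,\bfg,n)/A(x,\bfg,n)$ is a sum of ratios $|D\bfg^i(y)|/|D\bfg^i(x)|$ weighted by nearly-equal distance factors, each ratio trapped between $e^{-\Dist}$ and $e^{\Dist}$; one then absorbs the distance comparison and the distortion into a single factor $e^{\pm1}$.

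The main obstacle I anticipate is the bookkeeping in the induction: the bound~(\ref{eqn:constantK0}) is stated with the distance measured \emph{at the point where $Dg$ is evaluated}, so at each step one must simultaneously control $\dist(\bfg^i(y),\Crit(g_i))$ versus $\dist(\bfg^i(x),\Crit(g_i))$ for all $y\in J$, and this comparison is only valid once $|J_i|$ is known to be small — which in turn depends on the distortion bound one is trying to prove. Making this circularity into a clean induction (choosing $\theta_0$ once and for all, then showing that ``the bound holds up to step $i$'' propagates to step $i+1$) is the delicate part; everything else is the standard telescoping estimate. Since the paper attributes the lemma essentially to~\cite{T1}, I would expect the author's proof to be a short reduction to that reference together with a remark that the admissibility hypothesis~(\ref{eqn:constantK0}) is exactly the ingredient used there.
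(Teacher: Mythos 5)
Your proposal is correct and takes essentially the same route as the paper: the paper resolves the circularity you flag by a bootstrap (take the maximal $n_0\le n$ with $\sum_{i<n_0}|\bfg^i(J)|/\dist(\bfg^i(x),\Crit(g_i))\le 2e\theta_0$ and derive a contradiction from maximality if $n_0<n$), which is your induction in slightly different dress, and the remaining steps (per-step distortion via (\ref{eqn:constantK0}), the subinterval estimate giving $\cN(\bfg^n|J)\le 1$, and termwise comparison of $A(y,\bfg,n)$ with $A(x,\bfg,n)$ for (\ref{eqn:DfAcompare})) match yours. The only minor difference is that the paper writes out the full argument rather than reducing to \cite{T1}.
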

\begin{proof} By assumption, there exists $\theta_0\in (0, (6e)^{-1})$ such that for each interval $I\subset [0,1]$, $y\in I$ and $g\in\Omega$,
$$2|I|\le \dist(y,\Crit(g))\Rightarrow \cN(g|I)\le \frac{1}{4e\theta_0}\frac{|I|}{\dist (y,\Crit)}.$$
Let $n_0$ be the maximal integer in $\{1,2,\ldots, n\}$ such that
\begin{equation}\label{eqn:sumagn}
\sum_{i=0}^{n_0-1}\frac{|\bfg^i(J)|}{\dist(\bfg^i(x), \Crit(g_i))} \le 2e \theta_0<\frac{1}{3}.
\end{equation}
Note that the inequality holds for $n_0=1$. We shall prove that $n_0=n$.

Indeed, (\ref{eqn:sumagn}) implies that for each $0\le i<n_0$, $2|\bfg^i(J)|\le \dist (\bfg^i(x),\Crit(g_i)$. Thus
for each $1\le m\le n_0$, we have
$$\Dist (\bfg^m |J)\le \sum_{i=0}^{m-1} \Dist (g_i|\bfg^i(J))\le \frac{1}{4e\theta_0}\sum_{i=0}^{m-1} \frac{|\bfg^i(J)|}{\dist(\bfg^i(x), \Crit)}\le \frac{1}{2},$$
hence $|\bfg^m(J)|\le e |D\bfg^m(x)||J|$.
If $n_0<n$, then we would have
$$\sum_{i=0}^{n_0} \frac{|\bfg^i(J)|}{\dist(\bfg^i(x), \Crit(g_i))}\le e A(x,\bfg, n_0+1) |J|\le 2 e\theta_0,$$
contradicting the maximality of $n_0$. Thus $n_0=n$.

For each interval $J'\subset J$, we have $|\bfg^i(J')|\le e |J'||\bfg^i(J)|/|J|$, hence
$$\Dist (\bfg^n |J') \le \frac{1}{4e\theta_0} \sum_{i=0}^{n-1}\frac{|\bfg^i(J')|}{\dist(\bfg^i(x),\Crit)}\le \frac{|J'|}{2|J|}.$$
This proves that $\cN(\bfg^n|J)\le 1/2<1.$

For each $y\in J$, and $0\le m< n$, we have
$$\left|\frac{\dist(\bfg^m(y),\Crit(g_m))}{\dist(\bfg^m(x),\Crit(g_m))}-1\right|\le
\frac{|\bfg^m(J)|}{\dist(\bfg^m(x),\Crit(g_m))}\le \frac{1}{3}.$$
Since $e^{-1/2}|D\bfg^m(x)|\le |D\bfg^m(y)|\le e^{1/2}|D\bfg^m(x)|$, this implies
$$e^{-1}\frac{|D\bfg^m(x)|}{\dist(\bfg^m(x),\Crit(g_m))}\le \frac{|D\bfg^m(y)|}{\dist(\bfg^m(y),\Crit(g_m))}\le e \frac{|D\bfg^n(x)|}{\dist(\bfg^m(x),\Crit(g_m))}.$$
The inequality (\ref{eqn:DfAcompare}) follows.
\end{proof}

\subsection{A binding lemma}

We shall use the following lemma for the binding argument.

\begin{defi} Consider $f\in\sA^2$ and $\Omega=\sF_1$. Given $v\in [0,1]$, $\eps>0$ and $C>0$, a positive integer $N$ is called {\em a $C$-binding period} for $(v,\eps)$ if
for each $y\in [0,1]$ with $\dist(y,v)\le  \eps$, each $\bfg\in\Omega_\eps^\N$ and each $0\le j<N$, the following hold:
\begin{align}
\label{eqn:bindcrit}
2|\bfg^j(y)-f^j(v)|& \le \dist(f^j(v),\Crit);\\
\label{eqn:bindder}
e^{-1}|Df^{j+1}(v)|& \le |D\bfg^{j+1}(y)|\le e |Df^{j+1}(v)|;\\
\label{eqn:bindderdis}
C \eps |Df^{j+1}(v)|& \ge |\bfg^{j+1}(y)-f^{j+1}(v)|.
\end{align}
\end{defi}
For any $f\in\sF_1$, $x\in [0,1]$ and $n\ge 1$, let $A(x,f,n)=A(x, \underline{f}, n),$ where $$\underline{f}=(f,f,\ldots).$$

\begin{lemm} \label{lem:shadow}
Let $f\in\sA^2$ and $\Omega=\sF_1$.
Then there exists $\theta_1>0$ such that the following holds
provided that $\eps>0$ is small enough. Let $v\in [0,1]$ and let $N$ be a positive integer such that
$$W:=\sum_{j=0}^{N}|Df^j(v)|^{-1}<\infty\mbox{ and }A(v,f, N)W \le \theta_1/\eps. $$
Then $N$ is an $eW$-binding period for $(v,\eps)$.
\end{lemm}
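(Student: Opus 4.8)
The plan is to prove the three conclusions \eqref{eqn:bindcrit}, \eqref{eqn:bindder}, \eqref{eqn:bindderdis} simultaneously by induction on $j$, using the telescoping estimate of Lemma~\ref{lem:theta0} applied to the deterministic orbit of $v$ together with the fact that, while the estimate \eqref{eqn:bindcrit} persists, each $g_i$ is $C^1$-close to $f$ on a neighborhood of $f^i(v)$ whose size is comparable to $\dist(f^i(v),\Crit)$. More precisely, set $a_j:=\bfg^j(y)-f^j(v)$; the induction hypothesis up to step $j$ will be that \eqref{eqn:bindcrit}, \eqref{eqn:bindder}, \eqref{eqn:bindderdis} hold for all indices $<j$, from which one first reads off that $\bfg^j(y)$ lies in the interval $J=[v-\theta_0/(A(v,f,N)W),\,v+\theta_0/(A(v,f,N)W)]$ pushed forward by $f$ — here one chooses the free constant $\theta_1\le\theta_0$ so that the hypothesis $A(v,f,N)W\le\theta_1/\eps$ forces $\eps\le\theta_0/A(v,f,N)$, i.e. the $\eps$-ball around $v$ is contained in $J$. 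Then Lemma~\ref{lem:theta0}, applied to $\underline f$ and $n=N$, gives that $f^j|J$ is a diffeomorphism with $\cN(f^j|J)\le1$ and that \eqref{eqn:DfAcompare} holds with $\bfg=\underline f$, which in particular controls $|Df^j|$ on $J$ within a factor $e$.

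The inductive step has two ingredients. First, the \emph{deterministic} comparison: since $\dist(y,v)\le\eps\le\theta_0/A(v,f,N)$, the point $f^j(v)$ and the $f$-iterate of $y$ differ by at most $(\text{const})\cdot\eps\,|Df^j(v)|$ by the distortion control on $J$ — this is exactly the source of \eqref{eqn:bindderdis} with constant $eW$ once one tracks the telescoped product, because $\sum_{i<j}|Df^i(v)|^{-1}\le W$ bounds the accumulated error after dividing by $|Df^{j+1}(v)|$. Second, the \emph{random} perturbation: passing from $f^{j+1}(v)$ (resp. $Df^{j+1}(v)$) to $\bfg^{j+1}(y)$ (resp. $D\bfg^{j+1}(y)$) introduces at each step an error governed by $\|g_i-f\|_{C^1}\le\eps$ on the relevant interval; using the admissibility/distortion inequality \eqref{eqn:constantK0} near $\Crit$ and the bound \eqref{eqn:bindcrit} which guarantees $\bfg^i(y)$ stays at distance $\gtrsim\dist(f^i(v),\Crit)$ from $\Crit(g_i)$, one estimates the multiplicative error on derivatives and the additive error on positions, again summing a geometric-type series weighted by $|Df^i(v)|^{-1}$, hence by $W$. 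Choosing $\theta_1$ small (depending only on $f$ and the admissibility constant $C$) makes all these accumulated errors at most, say, $1/3$ for \eqref{eqn:bindcrit}, at most a factor $e$ for \eqref{eqn:bindder}, and exactly of the form $eW\eps|Df^{j+1}(v)|$ for \eqref{eqn:bindderdis}, closing the induction for all $j<N$.

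The main obstacle is the bookkeeping of the two error sources along the orbit: one must show that the \emph{position} error $|a_j|$ stays controlled by $\eps\,|Df^j(v)|\,(\text{partial sum of }W)$ while simultaneously the \emph{derivative} ratios stay within $e^{\pm1}$, and these two estimates feed into each other (a position error near $\Crit$ degrades the derivative comparison via \eqref{eqn:constantK0}, and a degraded derivative comparison amplifies the next position error). The resolution is to run the induction on the conjunction of all three statements and to note that \eqref{eqn:bindcrit} — which is the crude $\tfrac12$-closeness to $\Crit$ — is implied with a large margin by \eqref{eqn:bindderdis} together with $\sum_j|Df^j(v)|^{-1}\le W\le\theta_1/(\eps A(v,f,N))$ and the trivial bound $\dist(f^j(v),\Crit)\gtrsim |Df^{j}(v)|^{-1}\cdot(\text{something})$... more carefully, \eqref{eqn:bindcrit} follows because $A(v,f,N)\ge |Df^j(v)|/\dist(f^j(v),\Crit)$ for every $j<N$, so $|a_j|\le eW\eps|Df^j(v)|\le eW\eps\, A(v,f,N)\dist(f^j(v),\Crit)\le e\theta_1\dist(f^j(v),\Crit)$, which is $\le\tfrac12\dist(f^j(v),\Crit)$ once $\theta_1<1/(2e)$. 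Thus the term $A(v,f,N)$ in the hypothesis is precisely what converts the summability budget $W$ into the binding-criterion margin, and the only real work is verifying that the perturbative ($\eps$-sized) contributions do not spoil this, which is where the $C^1$-metric on $\Omega_\eps$ and inequality \eqref{eqn:constantK0} are used.
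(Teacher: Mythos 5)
Your skeleton is essentially the paper's: an induction along the orbit that couples the position error $|\bfg^j(y)-f^j(v)|\le eW\eps|Df^j(v)|$ with closeness to $\Crit$ and with the derivative comparison, and your closing computation $|a_j|\le eW\eps|Df^j(v)|\le eW\eps\,A(v,f,N)\,\dist(f^j(v),\Crit)\le e\theta_1\dist(f^j(v),\Crit)$ is exactly how the paper obtains \eqref{eqn:bindcrit}. However, there is a genuine gap at the one delicate point, namely \eqref{eqn:bindder}. The perturbative factor $\prod_{i\le j}Dg_i(y_i)/Df(y_i)$ contributes at step $i$ a relative error of size $\eps/|Df(y_i)|$, which involves the \emph{one-step} derivatives $|Df(v_i)|$, not the full derivatives $|Df^i(v)|$; this sum is not ``weighted by $|Df^i(v)|^{-1}$, hence by $W$'' as you assert, and smallness of $\eps W$ does not control it. What closes it is the diagonal estimate hidden in the hypothesis: $A(v,f,N)W\ge\sum_i\frac{|Df^i(v)|}{\dist(f^i(v),\Crit)}\frac{1}{|Df^{i+1}(v)|}=\sum_i\frac{1}{|Df(v_i)|\dist(v_i,\Crit)}$, whence $\sum_i\eps/|Df(v_i)|\le\eps A(v,f,N)W\le\theta_1$ (this is \eqref{eqn:vi} in the paper). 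The same inequality gives $|Df(v_i)|\dist(v_i,\Crit)\ge\eps$, hence by non-flatness $|Df(v_i)|\succ\eps^{1-\ell_{\max}^{-1}}\gg\eps$, which you also need before any multiplicative comparison makes sense: it guarantees $Dg_i(y_i)$ and $Df(y_i)$ have the same sign and that $D\bfg^{j+1}(y)$ cannot degenerate. Without this step, choosing ``$\theta_1$ small'' does not yield the factor $e$ in \eqref{eqn:bindder}.

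A second, related problem is that the tools you invoke for the random part are off-target in this setting. The lemma is stated with $\Omega=\sF_1$, so the admissibility inequality \eqref{eqn:constantK0} is not available for the maps $g_i$, and nothing controls $\Crit(g_i)$ or the distance of $\bfg^i(y)$ to it (indeed \eqref{eqn:bindcrit} only speaks of $\Crit=\Crit(f)$); fortunately no such control is needed. Likewise, Lemma~\ref{lem:theta0} applied to $\underline f$ only governs $f$-iterates of points near $v$, not the random iterates $\bfg^i(y)$, so the deterministic/random splitting you sketch forces you to control $f$ along an orbit for which you have no distortion information. The paper avoids both issues by comparing $\bfg^i(y)$ directly with $f^i(v)$ through the recursion $|I_{i+1}|\le|f(I_i)|+\eps$, using only a distortion estimate for $f$ itself (valid since $f\in\sA_2$ has non-flat critical points, cf.\ \eqref{eqn:theta1}) together with the pointwise bound $|Dg_i(y_i)-Df(y_i)|\le\eps$; if you rewrite your induction in that form and insert the $\eps A(v,f,N)W\le\theta_1$ bound above, your argument becomes the paper's proof.
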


\begin{proof}
Let $\theta_1\in (0,1/(2e))$ be a small constant such that for each interval $J\subset [0,1]$ and each $x\in J$,
\begin{equation}\label{eqn:theta1}
2|J|\le \dist(x,\Crit)\Rightarrow \cN(f|J)\le \frac{1}{2e\theta_1}\frac{|J|}{\dist(x,\Crit)}.
\end{equation}

Given $y\in [0,1]$ with $\dist(y,v)\le \eps$ and $\bfg\in\Omega_\eps^\N$, let $I_i$ be the closed interval bounded by $y_i:=\bfg^i(y)$ and $v_i:=f^i(v)$, for each $i=0,1,\ldots$. Then we have
\begin{equation}\label{eqn:lengthIi}
|I_{i+1}|\le |f(I_i)|+\eps.
\end{equation}

{\bf Claim.} For each $n=0,1,\ldots, N$, we have the following three inequalities:
\begin{equation}\label{eqn:iea}
|I_n|\le eW \eps |Df^{n}(v)|;
\end{equation}
\begin{equation}\label{eqn:ieb}
\sum_{0\le i<n} \frac{|I_i|}{\dist(f^i(v),\Crit)}\le e\theta_1,
\end{equation}
\begin{equation}\label{eqn:iec}
\sum_{0\le i<n} \Dist (f|I_i)\le \frac{1}{2}.
\end{equation}

We shall prove the claim by induction on $n$.  The case $n=0$ is clear: (\ref{eqn:iea}) follows from
the construction of $I_0$ and (\ref{eqn:ieb}) and (\ref{eqn:iec}) are trivial.
Assume that these statements hold for all $n$ not greater than some $n_0\in \{0,1,\ldots, N-1\}$. Let us consider the case $n=n_0+1$.
We first prove that (\ref{eqn:ieb}) holds for $n=n_0+1$. Indeed, since (\ref{eqn:iea}) holds for $n=0,1,\ldots, n_0$, we have
\begin{equation*}
\sum_{i=0}^{n_0}\frac{|I_i|}{\dist(v_i, \Crit)}\le
\sum_{i=0}^{n_0}\frac{e W \eps|Df^{i}(v)|}{\dist (v_i,\Crit)}\le  e \eps W A(v,f, N)
\le e \theta_1.
\end{equation*}
This proves (\ref{eqn:ieb}). In particular, for each $0\le i\le n_0$,
$2|I_i|\le \dist(v_i,\Crit)$.
By (\ref{eqn:theta1}), it follows that
$$\sum_{0\le i<{n_0+1}} \Dist (f|I_i)\le \frac{1}{2e\theta_1}\sum_{0\le i<n_0+1} \frac{|I_i|}{\dist(v_i,\Crit)}\le \frac{1}{2}<1.$$
So (\ref{eqn:iec}) holds for $n=n_0+1$.
Now let us prove that (\ref{eqn:iea}) holds for $n=n_0+1$.
By the mean value theorem and (\ref{eqn:lengthIi}), for each $i=0,1,\ldots, n_0$, there exists
$x_i\in I_i$ such that $|I_{i+1}|\le D_i |I_i|+\eps,$
where $D_i=|Df(x_i)|$. Thus
$$|I_{n_0+1}| \le \prod_{i=0}^{n_0} D_i |I_0| + \eps \left(1+ \sum_{k=1}^{n_0} \prod_{i=k}^{n_0} D_i \right). $$
Since (\ref{eqn:iec}) holds for $n=n_0+1$,  for each $0\le k\le n_0$ we have
$$\prod_{i=k}^{n_0} D_i\le e \prod_{i=k}^{n_0} |Df(v_i)|=e\frac{|Df^{n_0+1}(v)}{|Df^k(v)|}.$$
Therefore,
\begin{align*}
|I_{n_0+1}|
& \le e |Df^{n_0+1}(v)| \left(|I_0|+ \frac{\eps}{e |Df^{n_0+1}(v)|}+ \sum_{k=1}^{n_0}\frac{\eps}{|Df^k(v)|}\right)\\
& \le e \eps |Df^{n_0+1}(v)|\left( 1 + \sum_{k=1}^{n_0+1} |Df^{k}(v)|^{-1}\right) \le eW |Df^{n_0+1}(v)| \eps.
\end{align*}
This proves that (\ref{eqn:iea}) holds for $n=n_0+1$. We have completed the induction step, and thus the proof of the claim.

Now let us verify that the three inequalities in the definition of binding period holds with $C=eW$, in the case that $\eps>0$ is small enough.
Clearly, (\ref{eqn:bindcrit}) follows from (\ref{eqn:ieb}), and (\ref{eqn:bindderdis}) follows from (\ref{eqn:iea}).
To prove (\ref{eqn:bindder}), note
\begin{equation}\label{eqn:vi}
\frac{\theta_1}{\eps}\ge A(v,f, N)W\ge\sum_{i=0}^{N-1} \frac{|Df^i(v)|}{\dist (v_i,\Crit)} \frac{1}{|Df^{i+1}(v)|}= \sum_{i=0}^{N-1} \frac{1}{|Df(v_i)|\dist(v_i,\Crit)}.
\end{equation}
This implies that for each $0\le i<N$,
$|Df(v_i)|\dist(v_i,\Crit)\ge \eps$. By the non-flatness of critical points of $f$, it follows that $|Df(v_i)|\succ \eps^{1-\ell_{\max}^{-1}}$ for all $0\le i<N$. Provided that $\eps$ is small enough, we have
$$|Df(y_i)|\ge |Df(v_i)|/e\ge 10 \eps \ge 10|Dg_i(y_i)-Df(y_i)|.$$ In particular, $Dg_i(y_i)$ and $Df(y_i)$ have the same sign for each $0\le i<N$. Moreover, for each $0\le j<N$ we have
\begin{equation*}
\left|\sum_{i=0}^{j}\log\left( \frac{D g_i(y_i)}{D f(y_i)}\right) \right|\le \sum_{i=0}^{j} \frac{|D g_i(y_i)-D f(y_i)|}{|D f(y_i)|} \le  \sum_{i=0}^{N-1} \frac{e\eps} {|D f(v_i)|}\le e\theta_1,
\end{equation*}
where the last inequality follows from (\ref{eqn:vi}) since $\dist(v_i,\cC)\le 1$.
By (\ref{eqn:iec}), we have
$$\left|\sum_{i=0}^{j}\log\left( \frac{Df(y_i)}{Df(v_i)}\right) \right|\le \sum_{i=0}^{N-1} \Dist (f|I_i)\le \frac{1}{2}.$$
Since
$$\log \left(\frac{|D\bfg^{j+1}(y)|}{|Df^{j+1}(v)|}\right)=\sum_{i=0}^{j}\log\left( \frac{Df(y_i)}{Df(v_i)}\right) +\sum_{i=0}^{j}\log \left(\frac{Dg_i(y_i)}{Df(y_i)}\right),$$
(\ref{eqn:bindder}) follows.
\end{proof}

\subsection{Expanding away from critical points}
The following is a well-known result due to M\'a\~ne, see~\cite{Mane}.
\begin{prop}\label{prop:mane}
Let $f\in\sA_2$. For each neighborhood $U$ of $\Crit$, there exists $C>0$ and $\lambda>1$ such that for each $x\in [0,1]$
and $n\ge 1$, if $x, f(x), \ldots, f^{n-1}(x)\not\in U$, then  $|Df^n(x)|\ge C\lambda^n$.
Moreover, for a.e. $x\in [0,1]$ there exists an integer $n\ge 1$ such that $f^n(x)\in U$.
\end{prop}

The following consequence is also known. We provide a proof for the reader's convenience. 

\begin{prop} \label{prop:mane1}
Consider $f\in\sA_2$ and an admissible space $\Omega\ni f$.  For any   neighborhood $U$  of $\Crit$, there exist $K>1$ and $\eta>0$ such that the following hold provided that $\eps>0$ is small enough:
\begin{enumerate}
\item[(i)] For $x\in [0,1]$, $\bfg\in\Omega_\eps^\N$ and $n\ge 1$, if $\bfg^j(x)\not\in U$ for all $0\le j<n$, then $|D\bfg^n(x)|\ge K^{-1} e^{\eta n}$.
\item[(ii)] For any $\bfg\in\Omega_\eps^\N$ and $n\ge 1$,
$$\left|\{x\in [0,1]: \bfg^j(x)\not\in U\mbox{ for } 0\le j<n\}\right|\le Ke^{-\eta n}.$$
\end{enumerate}
\end{prop}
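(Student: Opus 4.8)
The plan is to deduce Proposition~\ref{prop:mane1} from the deterministic M\'a\~ne result (Proposition~\ref{prop:mane}) by a perturbation argument, exploiting that the maps $g_j$ are $C^1$-close to $f$ and that the admissibility condition gives uniform control near the critical points. The key point is that outside a fixed neighborhood of $\Crit$, $f$ is uniformly expanding, so being $C^1$-close to $f$ is enough to preserve expansion — but one must be slightly careful because $\Omega_\eps$-maps need not avoid their own critical points just because the orbit avoids $\Crit(f)$; this is why we shrink the neighborhood in two stages.

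First I would fix a neighborhood $U$ of $\Crit$ and choose an auxiliary neighborhood $U'$ of $\Crit$ with $\overline{U'}\subset U$. Apply Proposition~\ref{prop:mane} to $U'$ to get $C>0$ and $\lambda>1$ with $|Df^n(x)|\ge C\lambda^n$ whenever $x,f(x),\dots,f^{n-1}(x)\notin U'$, and also fix $m\ge 1$ with $C\lambda^m\ge 2$, so that $f^m$ expands by a definite factor on the compact set of points whose first $m$ iterates avoid $U'$. On that compact set $\inf |Df^m|$ restricted to such orbits is bounded below; by uniform continuity of the derivatives, if $\eps$ is small enough then for any $\bfg\in\Omega_\eps^\N$ and any $x$ with $\bfg^j(x)\notin U$ for $0\le j<m$, the point $x$ and its $f$-iterates stay close to a genuine $f$-orbit avoiding $U'$ (using $\dist(\Crit,\partial U)$ to absorb the $O(m\eps)$ drift and noting all the $g_j$ are nonsingular at the relevant points since they lie outside $U\supset$ a fixed neighborhood of $\Crit(g_j)$ by admissibility (ii)), whence $|D\bfg^m(x)|\ge 2$. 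Iterating this over blocks of length $m$ gives $|D\bfg^n(x)|\ge K^{-1}e^{\eta n}$ with $\eta=(\log 2)/m$ and $K$ depending only on $m$, $\sup_{\Omega}\|Dg\|_\infty$ on $[0,1]\setminus U$, and $\inf$ of the same; this proves (i).

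For (ii), I would combine (i) with a covering/distortion argument. Fix $\bfg\in\Omega_\eps^\N$ and let $E_n=\{x:\bfg^j(x)\notin U\text{ for }0\le j<n\}$. On each component of $E_n$, the map $\bfg^n$ is a diffeomorphism onto its image (as all iterates avoid $U$, hence avoid the critical points of the relevant $g_j$), and by Lemma~\ref{lem:theta0} together with the admissibility condition~(\ref{eqn:constantK0}) the distortion of $\bfg^n$ on each component is uniformly bounded — the sum $\sum_{i<n}|\bfg^i(J)|/\dist(\bfg^i(x),\Crit(g_i))$ controlling distortion is dominated by a geometric series because on $[0,1]\setminus U$ the denominators are bounded below and, by part (i), the interval lengths $|\bfg^i(J)|$ decay geometrically backward from $|\bfg^n(J)|\le 1$. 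Thus $|J|\le (\text{const})\,|D\bfg^n(x)|^{-1}\le (\text{const})\,K e^{-\eta n}$ for a representative $x\in J$, and since the images $\bfg^n(J)$ are pairwise disjoint subintervals of $[0,1]$, the number of components of $E_n$ is at most a constant times $e^{\eta n}$ times... no: more directly, $|E_n|=\sum_J |J|\le \sum_J (\text{const})\,Ke^{-\eta n}|\bfg^n(J)|\le (\text{const})\,Ke^{-\eta n}$, since $\sum_J|\bfg^n(J)|\le 1$. This gives (ii) with a possibly enlarged $K$ and the same $\eta$.

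The main obstacle I anticipate is the bookkeeping in (i): ensuring that an $\eps$-random orbit staying outside $U$ genuinely shadows an $f$-orbit staying outside $U'$ long enough to invoke the deterministic expansion, and that none of the $g_j$ has a critical point interfering — this requires using admissibility condition~(ii) to know $\Crit(g_j)$ lies within $\delta$ of $\Crit(f)$ uniformly, so that choosing $U'$ to contain a $\delta$-neighborhood of $\Crit$ makes all the $g_j$ honestly $C^1$-close to $f$ on $[0,1]\setminus U'$. Once that uniform setup is fixed, the block-iteration in (i) and the Koebe-type distortion bound in (ii) are routine; the only care needed is that all constants depend on $U$ (and $\Omega$) but not on $\eps$ or $\bfg$.
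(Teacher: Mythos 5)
Your part (i) is essentially the paper's own argument: choose $U_0\Subset U$, apply the deterministic M\'a\~ne estimate to $U_0$, fix a block length $N$ over which $f^N$ expands by a definite factor, use $C^1$-closeness of $\bfg^j$ to $f^j$ for $0\le j\le N$ (valid for $\eps$ small) to transfer this to random compositions, and iterate over blocks; the remainder block is absorbed into $K$. That part is fine, including your care about $\Crit(g_j)$ lying inside a fixed neighborhood of $\Crit$.

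Part (ii), however, has a genuine gap. Your final step reads $|E_n|=\sum_J|J|\le C K e^{-\eta n}\sum_J|\bfg^n(J)|\le CKe^{-\eta n}$, which implicitly assumes that the images $\bfg^n(J)$ of distinct components $J$ of $E_n$ are (essentially) pairwise disjoint, so that $\sum_J|\bfg^n(J)|\le 1$. That is false: distinct components of $E_n$ routinely map onto overlapping (often identical) intervals, and the number of components grows exponentially in $n$, at a rate governed by the entropy of the surviving set. Since the rate $\eta$ produced in part (i) is only $\log 2/N$ for a possibly large $N$, the bound you actually get, $|E_n|\le\#\{J\}\cdot CKe^{-\eta n}$, need not decay at all. (Your bounded-distortion claim on each component is fine --- it is exactly the observation $A(x,\bfg,n)\asymp|D\bfg^n(x)|$ used in the paper --- but distortion plus expansion alone cannot give (ii).) The missing ingredient is the second, measure-theoretic half of Proposition~\ref{prop:mane}: a.e.\ point eventually enters $U$, so the deterministic set $\Lambda_N^{\bff}(U_0)$ of points avoiding $U_0$ for $N$ steps has measure $<\rho$ for $N$ large, and by continuity the same holds for $\Lambda_N^{\bfg}(U_0)$ uniformly in $\bfg\in\Omega_\eps^\N$ when $\eps$ is small. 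The paper then uses Lemma~\ref{lem:theta0} to attach to each $x\in\Lambda_N^\bfg(U)$ an interval $J_N(x)\subset\Lambda_N^\bfg(U_0)$ mapped with bounded distortion onto an interval of a fixed length $\tau$, notes $\bfg^N(\Lambda_{(k+1)N}^\bfg(U))\subset\Lambda_{kN}^{\sigma^N\bfg}(U)$ to bound the relative measure of $\Lambda_{(k+1)N}^\bfg(U)$ inside each $J_N(x)$ by $e\,\eta_k/\tau$, and sums via a Besicovitch covering of $\Lambda_N^\bfg(U)$ by such intervals (total mass $\le\rho$) to get $\sup_\bfg|\Lambda_{(k+1)N}^\bfg(U)|\le\tfrac12\sup_\bfg|\Lambda_{kN}^\bfg(U)|$ once $\rho$ is small. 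Some argument of this kind --- contracting the measure by a definite factor per block using the a.e.\ entry statement --- is needed; your disjointness shortcut does not work.
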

\begin{proof}
(i). Let $U_0$ be a neighborhood of $\Crit$ such that $U_0\Subset U$. Let $C>0$ and $\lambda>1$ be given by Proposition~\ref{prop:mane} for $U_0$ and let $N$ be an integer such that $C\lambda^{N}>4$. By continuity, provided that $\eps>0$ is small enough,  for any $x\in [0,1]$, any $\bfg\in \Omega_{\eps}^\N$ and any $0\le n \le N$, we have
$$|f^n(x)-\bfg^n(x)|<\dist(\partial U, \partial U_0), \,\, |Df^n(x)-D\bfg^n(x)|< C/2.$$

Now consider $x\in [0,1]$, $\bfg\in\Omega_{\eps}^\N$ and $n$ such that $\bfg^j(x)\not\in U$ for all $0\le j<n$.  Assume $\eps$ is small, and write $n=kN+r$ with $k\in\N$ and $0\le r<N$. Then we have
$$|D\bfg^{r}(x)|\ge |Df^r(x)|-C/2\ge C\lambda^r/2,$$
and for each $0\le i<k$,
$$|D\bfg^{N} (\bfg^{iN+r}(x))|\ge |Df^{N} (\bfg^{iN+r}(x))|-C/2\ge C\lambda^{N}/2 \ge 2. $$
Thus $$|D\bfg^{n}(x)|\ge 2^k \frac{C\lambda^r}{2}\ge K^{-1} e^{\eta n},$$
where $K=4/C$ and $\eta=\log 2/N$. 

(ii) For each $\bfg\in\Omega_\eps^\N$ and each $n\ge 1$, let
$$\Lambda_n^\bfg(U)=\{x\in [0,1]:\bfg^j(x)\not\in U \mbox{ for } 0\le j< n\},$$
and let $\Lambda_\infty^\bfg(U)=\bigcap_{n=1}^\infty \Lambda_n^\bfg(U)$. Let $U_0\Subset U$ be an open neighborhood of $\Crit$ and define $\Lambda_n^\bfg (U_0)$, $\Lambda_\infty^\bfg(U_0)$ similarly.

Assume $\eps>0$ small.
By statement (i) of this proposition, for each $x\in \Lambda_n^\bfg(U)$, we have $A(x,\bfg,n)\asymp |D\bfg^n(x)|$. So by Lemma~\ref{lem:theta0}, there exists $\tau>0$ independent of $n, x$ such that $\bfg^n$ maps a neighborhood $J_n(x)$ of $x$ diffeomorphically onto an interval of length $\tau$ with $\cN(\bfg^n|J_n(x))\le 1.$ By shrinking $\tau$ if necessary, we may assume that $J_n(x)\subset \Lambda_n^\bfg(U_0)$ for each $x\in\Lambda_n^\bfg(U)$.

Let $\rho$ be a small constant to be determined. Let us prove that there exists $N=N(\rho)$ such that $|\Lambda^\bfg_N(U_0)|<\rho$ holds whenever $\eps>0$ is small enough. Indeed, for $\bff=(f,f,\cdots),$ since $\Lambda_\infty^\bff(U_0)$ is a compact set with Lebesgue measure zero, there exists $N$ such that $|\Lambda_N^\bff(U_0)|<\rho$. Assuming $\eps>0$ small, then for each $\bfg\in\Omega_\eps^\N$, $\Lambda_N^\bfg(U_0)$ is contained in a small neighborhood of $\Lambda_N^\bff(U_0)$.  The statement follows.

Now let $\eta_k=\sup_{\bfg\in\Omega_\eps} |\Lambda_{kN}^\bfg(U)|$. It suffices to prove that $\eta_{k+1}<\eta_k/2$ holds for all $k=1,2,\ldots$ provided that $\eps>0$ is small. Let $\hLambda^\bfg$ be the union of the intervals $J_N(x)$, $x\in \Lambda_{N}^\bfg(U)$. Then $|\hLambda^\bfg|\le |\Lambda_{N}^\bfg(U_0)|<\rho$. Since $\bfg^N(\Lambda_{(k+1)N}^\bfg(U))\subset \Lambda_{kN}^{\sigma^N \bfg}(U)$, it follows that
$$|J_N(x)\cap \Lambda_{(k+1)N}^\bfg(U)|< e \eta_k\tau^{-1} |J_N(x)|.$$  By Besicovic's covering lemma, there exists a subfamily of $\{J_N(x): x\in \Lambda_N^\bfg(U)\}$ with uniformly bounded intersection multiplicity which forms a covering of $\Lambda_N^\bfg(U)$. Thus
$$|\Lambda_{(k+1)N}^\bfg(U)|\le C \eta_k |\hLambda^\bfg|\tau^{-1}< C\eta_k\rho/\tau,$$
where $C$ is a universal constant.
We can choose $\rho$ small so that the right hand side is less than $1/2$.  This proves $\eta_{k+1}<\eta_k/2$. 
\end{proof}
\section{Proof of the Main Theorem}\label{sec:pfmain}
In this section, we shall deduce the Main Theorem from Theorem~\ref{theo:reduced}. So let $f, \Omega$ and $\nu_\eps$ be as in the Main Theorem.  As we mentioned in \S\ref{subsec:space}, we may assume
$\Crit\subset (0,1)$.

\subsection{Physical measures for random perturbations}
In this section, we shall prove statement (i) of the Main Theorem.

Let $\cP$ denote the set of Borel probability measures on $[0,1]$ and let $\cT_\eps: \cP\to \cP$ be defined as
$$\cT_\eps m(A)=\int_\Omega m(g^{-1}(A)) d\nu_\eps(g)\mbox{ for each Borel set } A\subset [0,1].$$
Note that a stationary measure $\mu_\eps$ for $\nu_\eps$ is just a fixed point of $\cT_\eps$.
The following is standard.

\begin{lemm}\label{lem:exist}
For each $m\in \cP$, any accumulation point of $\frac{1}{n}\sum_{j=0}^{n-1} \cT_\eps^j m$ in the weak star topology is a stationary measure.
\end{lemm}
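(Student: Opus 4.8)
\emph{Proof proposal.} The plan is to run the standard Krylov--Bogolyubov argument, the only point requiring attention being the weak-star continuity of $\cT_\eps$. First I would pass to the dual side: for $\varphi\in C([0,1])$ set
$$(U_\eps\varphi)(x)=\int_\Omega \varphi(g(x))\,d\nu_\eps(g),$$
so that by Fubini $\int_0^1\varphi\,d(\cT_\eps m)=\int_0^1 U_\eps\varphi\,dm$ for every $m\in\cP$. Since every $g\in\Omega_\eps$ is continuous on $[0,1]$ and $\nu_\eps$ is a probability measure, if $x_k\to x$ then $\varphi(g(x_k))\to\varphi(g(x))$ for each $g$, and these integrands are bounded by $\|\varphi\|_\infty$; hence the dominated convergence theorem gives $U_\eps\varphi\in C([0,1])$. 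Consequently, if $m_k\to m$ in the weak-star topology then $\int\varphi\,d(\cT_\eps m_k)=\int U_\eps\varphi\,dm_k\to\int U_\eps\varphi\,dm=\int\varphi\,d(\cT_\eps m)$, so $\cT_\eps:\cP\to\cP$ is continuous for the weak-star topology, which is compact and metrizable because $[0,1]$ is a compact metric space.

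Next I would write $\mu_n:=\frac1n\sum_{j=0}^{n-1}\cT_\eps^j m$ and observe that
$$\cT_\eps\mu_n-\mu_n=\frac1n\Bigl(\cT_\eps^n m-m\Bigr),$$
whose total variation norm is at most $2/n$. Hence $\cT_\eps\mu_n-\mu_n\to 0$ in total variation, and a fortiori in the weak-star topology.

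Finally, let $\mu$ be an accumulation point of $(\mu_n)$ and choose $n_k\to\infty$ with $\mu_{n_k}\to\mu$ in the weak-star topology; then $\mu\in\cP$ since $\cP$ is weak-star closed. Applying the continuity of $\cT_\eps$ yields $\cT_\eps\mu_{n_k}\to\cT_\eps\mu$, while at the same time $\cT_\eps\mu_{n_k}=(\cT_\eps\mu_{n_k}-\mu_{n_k})+\mu_{n_k}\to 0+\mu=\mu$. Therefore $\cT_\eps\mu=\mu$, i.e. $\mu$ is a fixed point of $\cT_\eps$, which by definition means $\mu$ is a stationary measure for $\nu_\eps$. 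I do not expect any serious obstacle here; the only mildly delicate step is the continuity of $U_\eps\varphi$, which nonetheless follows at once from the continuity of the individual maps $g\in\Omega_\eps$ together with dominated convergence.
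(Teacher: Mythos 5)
Your argument is correct and is precisely the standard Krylov--Bogolyubov argument the paper has in mind: it states Lemma~\ref{lem:exist} without proof, remarking only that it is standard. The dual operator $U_\eps\varphi$ is indeed continuous (each $g\in\Omega_\eps$ is continuous and dominated convergence applies), so $\cT_\eps$ is weak-star continuous, and the Ces\`aro telescoping plus passage to a convergent subsequence gives the fixed point, which by definition is a stationary measure.
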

Assume $\nu_\eps\in \sM_\eps(L)$. Then for each $m\in\cP$ and each Borel set $A\subset [0,1]$, we have
$$\cT_\eps m(A)=\int_0^1 p_\eps(A| x) dm(x)\le L \left(\frac{|A|}{2\eps}\right)^{1/L}.$$
In particular, we have
\begin{lemm}\label{lem:abc}
A stationary measure $\mu_\eps$ for $\nu_\eps$ is absolutely continuous.
\end{lemm}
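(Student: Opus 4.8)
The plan is entirely elementary: combine the stationarity identity with the regularity hypothesis $\nu_\eps\in\sM_\eps(L)$. Recall that a stationary measure $\mu_\eps$ is by definition a fixed point of $\cT_\eps$, so for every Borel set $A\subset[0,1]$ one has
$$\mu_\eps(A)=\cT_\eps\mu_\eps(A)=\int_0^1 p_\eps(A\mid x)\,d\mu_\eps(x).$$
By the defining inequality \eqref{eqn:regulartranpro} of $\sM_\eps(L)$, the integrand is bounded above by $L\,(|A|/(2\eps))^{1/L}$ uniformly in $x$; since $\mu_\eps$ is a probability measure, integrating this constant bound gives
\[
\mu_\eps(A)\le L\left(\frac{|A|}{2\eps}\right)^{1/L}.
\]
So the first (and essentially only) step I would take is to record this inequality, which is precisely the displayed estimate immediately preceding the lemma applied to $m=\mu_\eps$.

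The conclusion is then immediate: if $\Leb(A)=|A|=0$, the right-hand side vanishes, whence $\mu_\eps(A)=0$, i.e. $\mu_\eps\ll\Leb$. There is no substantive obstacle; the only point worth noting is that the bound on $p_\eps(A\mid x)$ is independent of $x$, so integrating it against the probability measure $\mu_\eps$ returns the bound itself. (If desired, the same inequality $\mu_\eps(A)\le L(|A|/(2\eps))^{1/L}$ valid for all Borel $A$ also gives quantitative control on the density $d\mu_\eps/d\Leb$, but only absolute continuity is claimed here.)
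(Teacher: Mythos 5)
Your proof is correct and is exactly the paper's argument: the displayed bound $\cT_\eps m(A)\le L(|A|/2\eps)^{1/L}$ applied to $m=\mu_\eps$, together with stationarity $\mu_\eps=\cT_\eps\mu_\eps$, yields $\mu_\eps(A)=0$ whenever $|A|=0$. Nothing further is needed.
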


A subset $E$ of $[0,1]$ is called {\em almost forward invariant for $\eps$-perturbations} if $|g(E)\setminus E|=0$ holds for $\nu_\eps$-a.e. $g\in \Omega_\eps$.
If both $E$ and $E^c=[0,1]\setminus E$ are almost forward invariant for $\eps$-perturbations, then we say that $E$ is {\em almost completely invariant for $\eps$-perturbations.}

\begin{lemm} \label{lem:ergodic0}
There exists $\eta>0$ such that for each $\eps>0$ small, if $E\subset [0,1]$ is a Borel set which is almost forward invariant for $\eps$-perturbations and with $\Leb(E)>0$, then there exists an interval $J\subset [0,1]$ such that $|J|\ge \eta$ and $|J\setminus E|=0$.
\end{lemm}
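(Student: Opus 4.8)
The plan is to exploit the expansion-away-from-critical-points estimates of Proposition~\ref{prop:mane1} together with the bounded-distortion machinery of Lemma~\ref{lem:theta0}, in order to ``blow up'' a density point of $E$ to a definite-size interval that is (up to a null set) contained in $E$. First I would fix a small neighborhood $U$ of $\Crit$ and let $K>1$, $\eta>0$ be the constants furnished by Proposition~\ref{prop:mane1} for this $U$; I will produce the interval of length $\ge\eta'$ for a suitable $\eta'=\eta'(f,\Omega)>0$ (renaming if necessary so the statement reads with a single $\eta$). Pick $\eps>0$ small enough that both parts of Proposition~\ref{prop:mane1} and Lemma~\ref{lem:theta0} apply. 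Since $\Leb(E)>0$, almost every point of $E$ is a Lebesgue density point; moreover, by part (ii) of Proposition~\ref{prop:mane1}, for $\nu_\eps^\N$-a.e.\ $\bfg$ the set of $x$ that never enter $U$ has measure zero, so we may choose a density point $x_0\in E$ of $E$ and a sequence $\bfg\in\Omega_\eps^\N$ such that $\bfg^{n}(x_0)\in U$ for some first time $n\ge 1$; in fact, by iterating this observation, we can arrange that the orbit of $x_0$ returns to $U$ infinitely often, and we select $n$ to be a return time with $n$ as large as we wish.

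Next, along the initial segment $x_0,\bfg^1(x_0),\dots,\bfg^{n-1}(x_0)$ staying outside $U$, part (i) of Proposition~\ref{prop:mane1} gives $|D\bfg^n(x_0)|\ge K^{-1}e^{\eta n}$, and the same estimate applied at each intermediate time shows $A(x_0,\bfg,n)\asymp |D\bfg^n(x_0)|$ (the sum is dominated by its last term, a standard geometric-series argument using that distances to $\Crit$ are bounded below outside $U$). Hence by Lemma~\ref{lem:theta0} the interval $J_n=[x_0-\theta_0/A(x_0,\bfg,n),\,x_0+\theta_0/A(x_0,\bfg,n)]\cap[0,1]$ is mapped by $\bfg^n$ diffeomorphically, with $\cN(\bfg^n|J_n)\le 1$, onto an interval of length comparable to $\theta_0|D\bfg^n(x_0)|/A(x_0,\bfg,n)\asymp\theta_0$; choosing $\eps$ small (and, if necessary, shrinking $J_n$) we can guarantee this image interval has length at least some fixed $\tau>0$ depending only on $f,\Omega,U$, and is contained in a slightly larger neighborhood $U'\supset U$.

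The crucial point is then that $E$ is almost forward invariant for $\eps$-perturbations, so $|\bfg^j(E)\setminus E|=0$ for $\nu_\eps^\N$-a.e.\ $\bfg$ and every $j\ge 1$ (a Fubini argument over finitely many coordinates). Since $x_0$ is a density point of $E$, the ratio $|J_n\setminus E|/|J_n|\to 0$ as $n\to\infty$ (equivalently as $A(x_0,\bfg,n)\to\infty$); by the bounded distortion $\cN(\bfg^n|J_n)\le 1$, the image $\bfg^n(J_n)$ satisfies $|\bfg^n(J_n)\setminus \bfg^n(J_n\cap E)|/|\bfg^n(J_n)|\to 0$ as well, uniformly in the choice of $n$ among the return times. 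On the other hand $\bfg^n(J_n\cap E)\subset E$ up to a null set, so for $n$ large enough the interval $I:=\bfg^n(J_n)$, which has $|I|\ge\tau$, satisfies $|I\setminus E|<\tau$... more precisely $|I\setminus E|$ can be made as small a fraction of $|I|$ as we like. To upgrade this to \emph{exactly} $|J\setminus E|=0$ for a genuinely fixed-length subinterval, I would run a Lebesgue-density iteration: inside $I$ the set $E$ has density arbitrarily close to $1$, so picking a density point $x_1\in I\cap E$ and repeating the blow-up argument from $x_1$ (which still lives outside the critical neighborhood after finitely many further steps, by Proposition~\ref{prop:mane1}(ii) again), one pushes the density of $E$ up to $1$ on successively positioned fixed-length intervals; a limiting/compactness argument then extracts a fixed interval $J$ with $|J|\ge\eta$ on which $E$ has full measure.

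The main obstacle is the last step: passing from ``$E$ has density arbitrarily close to $1$ on some interval of length $\ge\tau$'' to ``$E$ has full measure on a fixed interval of length $\ge\eta$''. One clean way around it is to observe that the family of intervals produced has uniformly bounded geometry (all of length in $[\tau,\mathrm{const}]$), so by a compactness argument one can extract a subsequence $I_k$ of such intervals, with density of $E$ in $I_k$ tending to $1$, converging (endpoints converging) to a limiting interval $J$ of length $\ge\tau$; lower semicontinuity of $|J\setminus E|$ under such convergence—or rather, a direct estimate $|J\setminus E|\le\liminf|I_k\setminus E|$ once one fixes that $J\subset\liminf I_k$—forces $|J\setminus E|=0$. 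I would set $\eta:=\tau$ (the constant from Lemma~\ref{lem:theta0} applied as above, which indeed depends only on $f$ and $\Omega$), completing the proof. The only care needed is to confirm that $\tau$ can be chosen independent of $\eps$ for all small $\eps$, which is exactly the uniformity built into Lemma~\ref{lem:theta0} and Proposition~\ref{prop:mane1}.
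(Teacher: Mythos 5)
Your argument breaks down at the point where you take $n$ to be an arbitrarily large return time to $U$: Proposition~\ref{prop:mane1}(i) (and the accompanying estimate $A(x_0,\bfg,n)\asymp|D\bfg^n(x_0)|$) is only valid while the orbit stays \emph{outside} $U$, i.e.\ up to the first entry time. Once $\bfg^j(x_0)$ has entered the critical neighborhood, the orbit passes near $\Crit$, where no lower bound on derivatives or on $A$ is available from Mañé-type expansion alone — controlling expansion and distortion across such passages is exactly the non-uniformly expanding difficulty that Theorems~\ref{theo:dergwrothrandom} and~\ref{theo:reduced} are built to handle. With only the first entry time at your disposal, the pullback interval $J_{n}$ around a density point has a \emph{fixed} positive length dictated by the dynamics, so the density of $E$ in $J_n$ (hence in the blown-up image) is merely ``some number $<1$'', not arbitrarily close to $1$; and your subsequent iteration from a new point $x_1$ suffers from the same defect, since nothing forces the relevant scale around $x_1$ to shrink. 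Consequently you never obtain a sequence of intervals of definite length on which the density of $E$ tends to $1$, and the final compactness/limit step (which would otherwise be fine) has no input. This is a genuine gap, not a fixable technicality within your framework: the lemma is placed in the paper precisely after Theorem~\ref{theo:reduced} because it needs it.

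For comparison, the paper's proof uses the nice set $V_\eps$ and the induced map $G_\eps$ coming from Theorem~\ref{theo:reduced} (via Proposition~\ref{prop:nice} and the tail estimate): for a Lebesgue density point $x\in E\cap\tB(\delta_0)$ and a suitable $\bfg$ with $(x,\bfg)\in\dom(G_\eps^n)$ for all $n$, the Markov inducing times provide neighborhoods $I_n\ni x$ mapped by $\bfg^{s_n}$ diffeomorphically, with $\cN(\bfg^{s_n}|I_n)\le 2$, onto the \emph{same fixed} component $\tB(c;\delta_0)$, and the built-in expansion factor $2$ of each inducing step forces $|I_n|\to 0$. Then $|\tB(c;\delta_0)\setminus E|/|\tB(c;\delta_0)|\le e^2|I_n\setminus E|/|I_n|\to 0$, giving $|\tB(c;\delta_0)\setminus E|=0$ exactly, with $\eta$ the minimal length of the components $\tB(c;\delta_0)$. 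The two ingredients your sketch is missing — shrinking pullbacks and a fixed target interval — are precisely what the inducing scheme supplies.
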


The lemma will be proved in the next subsection after we recall Theorem~\ref{theo:reduced}.

We say that a stationary measure $\mu_\eps$ is {\em ergodic} if for each Borel set $E\subset [0,1]$ which is almost completely invariant for $\eps$-perturbations, we have either $\mu_\eps(E)=0$ or $\mu_\eps(E)=1$.

\begin{lemm} \label{lem:ergodic}
For each $\eps>0$ small enough, there exists a unique stationary measure $\mu_\eps$. This stationary measure $\mu_\eps$ is ergodic.
\end{lemm}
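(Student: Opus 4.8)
The plan is to combine the two previous lemmas about almost invariant sets and absolute continuity into a standard ergodic-decomposition-type argument for the transfer operator $\cT_\eps$. First I would establish existence: start from any $m\in\cP$ (say $\Leb$), pass to an accumulation point of the Cesàro averages $\frac{1}{n}\sum_{j=0}^{n-1}\cT_\eps^j m$, which by Lemma~\ref{lem:exist} is a stationary measure $\mu_\eps$, and by Lemma~\ref{lem:abc} it is absolutely continuous.

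Next I would prove ergodicity of \emph{any} absolutely continuous stationary measure. Suppose $E$ is almost completely invariant for $\eps$-perturbations with $0<\mu_\eps(E)<1$; since $\mu_\eps$ is a.c., this forces $\Leb(E)>0$ and $\Leb(E^c)>0$. Apply Lemma~\ref{lem:ergodic0} to both $E$ and $E^c$: there are intervals $J\subset E$ and $J'\subset E^c$ (up to null sets) each of length $\ge\eta$. The key point is that, by Proposition~\ref{prop:mane1}(ii) applied to a neighborhood $U$ of $\Crit$ avoiding (most of) $J$ and $J'$, together with Proposition~\ref{prop:mane1}(i) and the distortion control of Lemma~\ref{lem:theta0}, random orbits starting from a definite fraction of $J$ spread out to cover an interval of length $\ge\tau$ in bounded time with bounded distortion; iterating, for $\nu_\eps^\N$-a.e.\ $\bfg$ and all large $n$, $\bfg^n$ maps a positive-measure subset of $J$ onto all of $[0,1]$ up to a set of small relative measure. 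Hence $\bfg^n(J)$ meets $J'$ in positive measure for typical $\bfg$, i.e.\ $|\bfg^n(E)\cap E^c|>0$ with positive $\nu_\eps^\N$-probability — contradicting almost forward invariance of $E$. This yields ergodicity.

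Uniqueness then follows: if $\mu_\eps^{(1)},\mu_\eps^{(2)}$ are two stationary measures, both are a.c.\ (Lemma~\ref{lem:abc}), so $\tfrac12(\mu_\eps^{(1)}+\mu_\eps^{(2)})$ is a.c.\ and stationary, hence ergodic by the previous step; but a convex combination of two distinct stationary measures cannot be ergodic (the Radon--Nikodym derivative $d\mu_\eps^{(1)}/d(\mu_\eps^{(1)}+\mu_\eps^{(2)})$ is a nonconstant $\cT_\eps^*$-invariant function, whose level sets give a nontrivial almost completely invariant set). Alternatively, and perhaps more cleanly, one argues directly: if $\mu_\eps^{(1)}\neq\mu_\eps^{(2)}$ then their densities differ, so $\{x: d\mu_\eps^{(1)}/d\Leb > d\mu_\eps^{(2)}/d\Leb\}$ is, up to null sets, almost completely invariant with $\mu$-measure strictly between $0$ and $1$ for $\mu=\mu_\eps^{(1)}$ or $\mu_\eps^{(2)}$, contradicting ergodicity.

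The main obstacle I anticipate is the spreading argument in the second paragraph — making rigorous the claim that random orbits from a fixed-length interval $J$ eventually, almost surely, cover $[0,1]$ up to small relative measure. This requires carefully choosing the neighborhood $U$ of $\Crit$ (so that $J,J'$ are essentially outside it while Proposition~\ref{prop:mane1} still applies), controlling how the uniformly expanded and bounded-distortion pushforwards $\bfg^n(J)$ interact with the critical neighborhood across different $\bfg$, and invoking a Borel--Cantelli / law-of-large-numbers type estimate (using the uniform exponential bound $Ke^{-\eta n}$ from Proposition~\ref{prop:mane1}(ii)) to guarantee that, with positive $\nu_\eps^\N$-probability, some pushforward of $J$ genuinely reaches $J'$ with positive Lebesgue measure. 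Everything else is a routine assembly of Lemmas~\ref{lem:exist}, \ref{lem:abc}, and~\ref{lem:ergodic0}.
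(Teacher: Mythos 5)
Your existence step and the deduction of uniqueness from ergodicity are standard and essentially match the paper, but the central ergodicity step has a genuine gap: your ``spreading'' claim --- that for a typical $\bfg$ the images $\bfg^n(J)$ of a fixed interval $J$ of length $\ge\eta$ eventually cover $[0,1]$ up to small relative measure --- does not follow from Proposition~\ref{prop:mane1} and Lemma~\ref{lem:theta0}, and is false in general. Uniform expansion outside a critical neighborhood together with bounded distortion only grows an interval until it meets $\tB(\delta)$; after folding at a critical point the random orbit of $J$ can remain forever inside a proper forward-invariant subinterval. Indeed, a map $f$ satisfying all the other hypotheses (non-flat critical points, no attracting or neutral cycles, summability of exponent one) may have two disjoint intervals $I, I'$ with $f(I)\Subset I$ and $f(I')\Subset I'$, in which case $\eps$-random orbits starting in $I$ stay in $I$ for small $\eps$ and there are at least two stationary measures. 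So any argument that never invokes the assumption that $f$ is ergodic with respect to Lebesgue measure would prove a false statement; your proposal uses only the Mañé-type estimates and never uses that hypothesis.

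The paper closes exactly this gap by using Lebesgue-ergodicity of $f$: partition $[0,1]$ into intervals $I_0,\dots,I_{N-1}$ of length $1/N<\eta/3$; ergodicity of $f$ gives, for each pair $(i,j)$, iterates $k_{ij},m_{ij}$ with $f^{k_{ij}}(I_i)\cap f^{m_{ij}}(I_j)$ a non-degenerate interval, and since there are only finitely many pairs this persists for all $\bfg\in\Omega_{\eps_0}^\N$ by continuity, uniformly in $\eps\le\eps_0$. If $E$ were almost completely invariant with $|E|>0$ and $|E^c|>0$, Lemma~\ref{lem:ergodic0} puts some $I_i$ inside $E$ and some $I_j$ inside $E^c$ (mod null sets), and forward invariance forces the non-degenerate interval $\bfg^{k_{ij}}(I_i)\cap\bfg^{m_{ij}}(I_j)$ to lie in $E\cap E^c$ up to a null set --- a contradiction. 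If you want to keep the spirit of your argument, you must replace the unjustified covering claim by some such use of the ergodicity hypothesis (or at least of topological transitivity-type consequences of it); as written, the key step fails.
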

\begin{proof}
Take an integer $N> 3/\eta$ and let $I_i=[i/N, (i+1)/N]$, $0 \le i <N$, where $\eta$ is as in Lemma~\ref{lem:ergodic0}.
Since $f$ is ergodic with respect to the Lebesgue measure, for each $0\le i, j\le n$ there exist positive integers $k_{ij}$ and $m_{ij}$ such that
$f^{k_{ij}}(I_i)\cap f^{m_{ij}}(I_j)$ is a non-degenerate interval. By continuity, there exists $\eps_0>0$ such that for each $\bfg\in\Omega_{\eps_0}$,
$\bfg^{k_{ij}}(I_i)\cap \bfg^{m_{ij}}(I_j)$ is a non-degenerate interval.

Now assume $\eps\in (0,\eps_0]$ is small so that the conclusion of Lemma~\ref{lem:ergodic0} holds. Then for each $E\subset [0,1]$ which is almost completely invariant set for $\eps$-perturbations, we have either $|E|=0$ or $|E^c|=0$. Indeed, otherwise, there exists $i,j\in \{0,1,\ldots, N-1\}$ such that $|I_i\setminus E|=|I_j\setminus E^c|=0$. By invariance, for $\nu_\eps^\N$-a.e. $\bfg\in\Omega_\eps$ we have $\bfg^{k_{ij}}(I_i)\cap \bfg^{m_{ij}}(I_j)$ is contained in $E\cap E^c$ up to a set with Lebesue measure zero. That is, $\bfg^{k_{ij}}(I_i)\cap \bfg^{m_{ij}}(I_j)$ has Lebesgue measure zero, which contradicts what we proved above.

Since a stationary measure is absolutely continuous, it follows that a stationary measure is ergodic. The uniqueness follows.
\end{proof}

\begin{proof}[Proof of the Main Theorem (i)]
By Lemma~\ref{lem:ergodic}, for each $\eps>0$ small enough,  there exists a unique stationary measure $\mu_\eps$ and it is ergodic.
For $x\in [0,1]$, let
$$\Omega_\eps^\N(x)=\left\{\bfg\in\Omega_\eps^\N: \frac{1}{n}\sum_{j=0}^{n-1} \delta_{\bfg^j(x)}\to \mu_\eps
\mbox{ in the weak star topology}\right\},$$ and let $u_\eps(x)=\nu_\eps^\N(\Omega_\eps^\N(x))$.
To complete the proof, we shall prove that for each $x\in [0,1]$, $u_\eps(x)=1$.

To this end, we first note that $\mu_\eps\times \nu_\eps^\N$ is an ergodic probability invariant measure for $F$. See for example ~\cite[Section 7.2]{Ar}.
By Birkhorff's Ergodic Theorem, $u_\eps(x)=1$ holds for $\mu_\eps$-a.e. $x\in [0,1]$, so
\begin{equation}\label{eqn:ae1}
\int_0^1 u_\eps(y) d \mu_\eps(y)=1.
\end{equation}
For each $n=1,2,\ldots$, let $p_n(\cdot|x):=\cT_\eps^n(\delta_x)$ and $\mu_{n,x}=\frac{1}{n}\sum_{j=1}^n p_n(\cdot|x)$. Then $\mu_{n,x}(A)\le L(|A|/2\eps)^{1/L}$ holds for each $n=1,2,\ldots$ and each Borel set $A\subset [0,1]$.
Since $\mu_{n,x}$ converges to the unique stationary measure $\mu_\eps$ and $0\le u_\eps\le 1$, we have
$$\int_0^1 u_\eps(y) d\mu_{n,x}(y) \to \int_0^1 u_\eps(y) d\mu_\eps(y).$$
Note that for each $x$, $\bfg$, if $\sigma\bfg\in \Omega_\eps^\N (g_0(x))$ then $\bfg\in \Omega_\eps^\N (x)$. Thus
$$u_\eps(x)\ge \int_{\Omega_\eps} u_\eps(g(x)) d\nu_\eps(g)= \int_0^1 u_\eps(y) p_1(dy|x).$$
This implies that for each  $n$,
$$u_\eps(x)\ge \int_0^1 u_\eps(y) p_n(dy|x),$$
hence $$u_\eps (x)\ge \int_0^1 u_\eps (y) d\mu_{n,x}(y) \to\int_0^1 u_\eps (y) d\mu_\eps(y)=1,$$
where the last equality follows from (\ref{eqn:ae1}).
\end{proof}

\subsection{Strong stochastic stability}
In the rest of this section, we shall prove $\mu_\eps\to \mu$ in the strong topology.

By Theorem~\ref{theo:reduced}, there exist $\delta_0>0$ and $\eps_0>0$ such that for $\eps\in (0,\eps_0]$ there exists a nice set $V=V_\eps$ for $\eps$-random perturbations with $\tB(\delta_0)\subset V^\bfg\subset \tB(2\delta_0)$ for all $\bfg\in\Omega_\eps$ and with
\begin{equation}\label{eqn:tail}
\Leb\times \nu_\eps^N \left(\{(x,\bfg)\in V: m_V(x,\bfg) >m \}\right)\le C_0 m^{-2},
\end{equation}
where $C_0>0$ is a constant. In the following we fix such a choice of $V_\eps$ for each $0<\eps\le \eps_0$. Moreover, let $U_\eps=\{(x,\bfg)\in V_\eps: m_{V_\eps}(x,\bfg)<\infty\}$ and let $G_\eps: U_\eps\to V_\eps$ denote the map $(x,\bfg)\mapsto F^{m_{V_{\eps}}(x,\bfg)}(x,\bfg)$, where $F$ is the skew product defined in (\ref{eqn:skewF}). Since $\Leb\times \nu_\eps^\N(V_\eps\setminus U_\eps)=0$, $G_\eps^n(x,\bfg)$ is well-defined for each $n\ge 0$ and almost every $(x,\bfg)\in V_\eps$.
Note that if $(x,\bfg)\in \dom (G_\eps^n)$ and $k=\sum_{i=0}^{n-1} m_{V_\eps}(G_\eps^i(x,\bfg))$, then $\bfg^k$ maps an interval $J_k^\bfg(x)$ diffeomorphically onto a component of $V_\eps^{\sigma^k\bfg}$ and $\cN(\bfg^k|J_k^\bfg(x))\le 2$,
$|J_k(x)|\le 2^{-n}$.

\begin{proof}[Proof of Lemma~\ref{lem:ergodic0}]
Assume $\eps$ small. 
Then by Proposition~\ref{prop:mane1}, 
we have
\begin{equation}\label{eqn:enter}
\Leb\left(\bigcup_{n=0}^\infty \bfg^{-n}(\tB(\delta_0))\right)=1.
\end{equation}
for each $\bfg\in\Omega_\eps^\N$.
Thus $|E\cap \tB(\delta_0)|>0$. So there exist $x\in \tB(\delta_0)\cap E$ and $\bfg\in \Omega_\eps^\N$ with the following properties:
\begin{itemize}
\item $x$ is a Lebesgue density point of $E$;
\item $|\bfg^n(E)\setminus E|=0$ for each $n=0,1,\ldots$;
\item $(x,\bfg)\in \dom(G_\eps^n)$ for each $n=0,1,\ldots.$
\end{itemize}
The last property implies that there exists a sequence of integers $s_1<s_2<\cdots$ and $c\in\Crit$ such that $\bfg^{s_n}$ maps a neighborhood $I_n$ of $x$ diffeomorphically onto $\tB(c;\delta_0)$ with $\cN(\bfg^{s_n}|I_n)\le 2$ and $|I_n|\to 0$ as $n\to\infty$. Thus $$\frac{|\tB(c;\delta_0)\setminus E|}{|\tB(c;\delta_0)|}\le e^2 \frac{|I_n\setminus E|}{|I_n|}\to 0.$$ It follows that $|\tB(c;\delta_0)\setminus E|=0$.
\end{proof}

Let $L^1=L^1([0,1])$ denote the Banach space of all $L^1$ functions $\varphi: [0,1]\to \R$ with respect to the Lebesgue measure and let $\|\varphi\|_1$ denote the $L^1$ norm of $\varphi$.
Given $J\subset [0,1]$, $\bfg\in \Omega^\N$ and an integer $n\ge 0$, define
$$\sL_{J,n}^\bfg(x)=
\sum_{y\in \bfg^{-n}(x)\cap J} \frac{1}{|D\bfg^n(y)|}, \mbox{ and } \widehat{\sL}_{J,n}^\bfg=\frac{1}{|J|}\sL_{J,n}^ \bfg.
$$
These are functions in $L^1$, supported on $\bfg^n(J)$ and $\sL_{J,n}^\bfg$ is the density function of the absolutely continuous measure $(\bfg^n)_*(\Leb|J).$

We shall need the following lemma. 
\begin{lemm}\label{lem:L1compact}
For each $\rho>0$ there exists a compact subset $\cK(\rho)$ of $L^1$ such that for any interval $J\subset [0,1]$, any $\bfg\in\Omega$ and any integer $n\ge 0$, if $|\bfg^n(J)|>\rho$, $\bfg^n$ maps $J$ diffeomorphically onto its image and $\cN(\bfg^n|J)\le 2$, then
$\widehat{\sL}_{J,n}^\bfg\in \cK(\rho)$.
\end{lemm}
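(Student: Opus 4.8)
The goal is to show that the family of normalized densities $\widehat{\sL}_{J,n}^\bfg$, over all diffeomorphic pullbacks with $\cN(\bfg^n|J)\le 2$ and $|\bfg^n(J)|>\rho$, lies in a single compact subset of $L^1$. I would use the standard criterion for $L^1$-compactness: a bounded subset of $L^1$ is precompact if it is \emph{uniformly integrable} and \emph{tight} (here tightness is automatic since everything is supported on $[0,1]$), or equivalently, one can exhibit an explicit dominating structure. The cleanest route is to observe that $\widehat{\sL}_{J,n}^\bfg$ is supported on the interval $I:=\bfg^n(J)$ of length $>\rho$, and on $I$ it is, up to the bounded factor coming from $\cN\le 2$, the density of a diffeomorphism with controlled nonlinearity.

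First I would write $\varphi=\bfg^n|J$, a $C^1$ diffeomorphism onto $I=\varphi(J)$, and note $\sL_{J,n}^\bfg(x)=1/|D\varphi(\varphi^{-1}(x))|$ for $x\in I$. The hypothesis $\cN(\varphi|J)\le 2$ says that for every subinterval $J'\subset J$, $\Dist(\varphi|J')\le 2|J'|/|J|$; taking $J'$ with an endpoint at $\varphi^{-1}(x)$ this controls the ratio $|D\varphi(\varphi^{-1}(x))|/|D\varphi(y)|$ for any $y$ by $\exp(2|J_{xy}|/|J|)$ where $J_{xy}$ is the subinterval between the relevant preimages. Integrating, $\int_J |D\varphi| = |I|$, so there is a point $y_0$ with $|D\varphi(y_0)|=|I|/|J|$, and hence for all $y\in J$, $|D\varphi(y)|$ lies between $e^{-2}|I|/|J|$ and $e^{2}|I|/|J|$. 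Therefore the density satisfies $\widehat{\sL}_{J,n}^\bfg = \frac{1}{|J|}\cdot\frac{1}{|D\varphi(\varphi^{-1}(x))|} \le \frac{e^2}{|I|}\le \frac{e^2}{\rho}$ on $I$, and $=0$ off $I$. So the whole family is uniformly bounded in $L^\infty$ by $e^2/\rho$.

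That gives a bound but not yet compactness. For compactness I would extract more: the key is to control the \emph{modulus of continuity} of these densities, or rather to show they are \emph{equi-absolutely continuous} (equi-integrable), which together with the uniform $L^\infty$ bound and the compactness of $[0,1]$ yields precompactness in $L^1$ by the Dunford--Pettis theorem. Actually, with a uniform $L^\infty$ bound, equi-integrability is automatic, so one needs a bit more care: an $L^\infty$ bound alone does \emph{not} give $L^1$-precompactness (the set $\{\sin(nx)\}$ is bounded in $L^\infty$ but not precompact in $L^1$). The genuine input must be the nonlinearity control, which forces these densities to be close to constants on $I$. Concretely, I claim each $\widehat{\sL}_{J,n}^\bfg$, restricted to $I=[a,b]$, can be written as $\frac{1}{|J|}\,h(t)$ where, parametrizing $I$ by $t\in[a,b]$ and writing $x=\varphi^{-1}(t)$, the function $t\mapsto \log |D\varphi(x)|$ has the property that for $t_1<t_2$ in $I$, $|\log|D\varphi(\varphi^{-1}(t_1))| - \log|D\varphi(\varphi^{-1}(t_2))|| \le 2\,\frac{|\varphi^{-1}([t_1,t_2])|}{|J|}\le 2$, but more usefully $\le 2\,\frac{|[t_1,t_2]|\cdot e^2}{|I|}$ after converting preimage length to image length via the just-proved derivative bounds. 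Hence $t\mapsto \log\widehat{\sL}_{J,n}^\bfg(t)$ is uniformly Lipschitz on $I$ with constant $C(\rho)=2e^2/\rho$, so $\widehat{\sL}_{J,n}^\bfg$ is itself uniformly Lipschitz on $I$ with a constant depending only on $\rho$, and uniformly bounded. The set of functions on $[0,1]$ that are supported on a subinterval of length $\le 1$, bounded by $e^2/\rho$, and $C(\rho)$-Lipschitz on their support, is not quite equicontinuous on all of $[0,1]$ because of the jump at the endpoints of $I$, but the jump has height $\le e^2/\rho$ and location varying over $[0,1]$; this family is nonetheless precompact in $L^1$: cover by finitely many choices of endpoints up to $\rho'$-error, and within each the Arzel\`a--Ascoli theorem applies.

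I would therefore structure the proof as: (1) reduce to $\varphi=\bfg^n|J$ and record the formula for the density; (2) use $\cN(\varphi|J)\le 2$ to get two-sided derivative bounds $|D\varphi|\asymp_{e^{\pm2}} |I|/|J|$, hence the uniform $L^\infty$ bound $e^2/\rho$; (3) upgrade to a uniform Lipschitz bound for the density on its supporting interval $I$, with constant $O(1/\rho)$; (4) define $\cK(\rho)$ as the closure in $L^1$ of the family of all functions $\psi$ on $[0,1]$ supported on some subinterval $I$ with $\|\psi\|_\infty\le e^2/\rho$ and $\psi|_I$ being $(2e^2/\rho)$-Lipschitz, and check this is compact via a finite-cover-plus-Arzel\`a--Ascoli argument. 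The main obstacle, and the one point requiring genuine care rather than bookkeeping, is step (4): handling the moving jump discontinuity at $\partial I$ so that one really gets norm-compactness and not merely weak compactness. The resolution is that the location and size of the jump live in a compact parameter space, so one reduces to an equicontinuous family on the interior and absorbs an $\eps$-neighborhood of the endpoints into the error; I would spell this out carefully since it is where the argument could otherwise be too glib.
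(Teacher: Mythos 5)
Your proof is correct and follows essentially the same route as the paper: the paper simply introduces the compact class $\sD_C$ of unit-mass functions that are positive on a supporting interval of length $\ge C^{-1}$ and satisfy the relative Lipschitz bound $|\psi(x)-\psi(y)|\le C\psi(x)|x-y|$, and notes (without detail) that $\cN(\bfg^n|J)\le 2$ together with $|\bfg^n(J)|>\rho$ places $\widehat{\sL}_{J,n}^\bfg$ in $\sD_C$ for a suitable $C=C(\rho)$ -- exactly the two-sided derivative comparison and log-Lipschitz estimate you derive, and the compactness of the class, which you verify via the endpoint-grid plus Arzel\`a--Ascoli argument that the paper declares ``clearly'' compact. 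Your write-up just supplies the details the paper omits (only take care to define $\cK(\rho)$ with the slightly larger Lipschitz constant your estimate actually yields, rather than the illustrative $2e^2/\rho$).
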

\begin{proof}
For $C>1$, let $\sD_{C}$ denote the subset of $L^1$ consisting of maps $\psi: [0,1]\to \R$  for which there exists an interval $I=I_\psi \subset [0,1]$ such that
\begin{itemize}
\item $|I|\ge C^{-1}$;
\item $\psi(x)=0$ for all $x\in [0,1]\setminus I$;
\item $\psi(x)>0$ for $x\in I$;
\item  $|\psi(x)-\psi(y)|\le C \psi(x) |x-y|$ for all $x, y\in I$;
\item $\int_0^1 \psi(x) dx =1$.
\end{itemize}
Clearly, $\sD_C$ is a compact subset of $L^1$. Moreover,
for each $\rho>0$ there exists $C>1$ such that for any $\bfg, J, n$ as in the lemma, we have $\widehat{\sL}_{J,n}^\bfg\in \sD_C$. So taking $\cK(\rho)=\sD_C$ completes the proof.
\end{proof}
\begin{proof}[Proof of the Main Theorem (ii)]
Take an arbitrary $c\in\Crit$ and let $Z=\tB(c;\delta_0)$. Let $$\varphi_n(x)=\int_{\bfg\in \Omega_\eps^\N}\sL_{Z,n}^\bfg(x) d\nu_\eps^\N.$$
Since $\varphi_i(x) dx=\cT_\eps^i(\Leb|Z)$, we have that $\frac{1}{n}\sum_{i=0}^{n-1}\varphi_i(x) dx$ converges to the unique stationary measure $\mu_\eps$. Thus it suffices to prove that there exists a compact subset $\cK$ of $L^1$ independent of $\eps$ and $n$ such that $\varphi_n\in \cK$.
To this end, we shall prove that for each $\eta>0$, there exists a compact subset $\cK_\eta$ of $L^1$ such that for each $n$,
$\varphi_n$ can be written in the following form:
\begin{equation}\label{eqn:varphidec}
\varphi_n=\varphi_n^0+\varphi_n^1+\varphi_n^2,
\end{equation}
where $\|\varphi_n^i\|_1\le 2\eta$, $i=0,1$ and $\varphi_n^2\in\cK_\eta$.

Let $V=V_\eps$ and $G=G_\eps$. For $(x,\bfg)\in V$, let $\sM(x, \bfg)$ denote the collection of positive integers of the form $\sum_{j=0}^{n-1} m_V(G^j(x,\bfg))$, where $n$ runs over all positive integers for which $(x,\bfg)\in \dom (G^n)$.
For $m\ge 1$ and $k\ge 1$, let
$$U_{0,m}=\{(x,\bfg)\in V: m_V(x,\bfg)=m\},$$
and
$$U_{k,m}=\{(x,\bfg)\in V: k\in \sM(x,\bfg) \mbox{ and } F^k(x,\bfg)\in U_{0,m}\}.$$
Moreover, let $H_{k,m}=U_{k,m}\cap (Z\times \Omega_\eps^\N)$ for each $k\ge 0$ and $m\ge 1$.
Let $\cH_{k,m}^\bfg$ (resp. $\cU_{k,m}^\bfg$) denote the collection of the components of $H_{k,m}^\bfg$ (resp. $U_{k,m}^\bfg$). Note that if $x\in J\in \cH_{k,m}^\bfg$, then $k+m\in\sM(x,\bfg)$ and $J\subset J_{k+m}^\bfg(x)$, hence
\begin{equation}\label{eqn:distorcH}
\cN(\bfg^n|J)\le 2.
\end{equation}

Fix $n\ge 0$ and let $\Sigma_n=\{(k,m)\in \N^2: 0\le k\le n, m+k>n\}$.
Then the sets $H_{k,m}$, $(k,m)\in \Sigma_n$ are pairwise disjoint, and for a.e. $\bfg\in \Omega_\eps^\N$, $\cH^\bfg:=\bigcup_{(k,m)\in\Sigma_n}\cH_{k,m}^\bfg$
forms a measurable partition of $Z$ up to a set of Lebesgue measure zero. Then
\begin{equation}\label{eqn:decomp}
\varphi_n=\int_{\Omega_\eps^\N} \sum_{J\in\cH^\bfg} \sL_{J,n}^\bfg d\nu_\eps(\bfg).
\end{equation}

Now fix $\eta>0$. For each $\bfg\in\Omega_\eps^\N$, we shall introduce a decomposition
\begin{equation}
\cH^\bfg=\cH^{\bfg,0}\cup \cH^{\bfg,1} \cup\cH^{\bfg,2},
\end{equation}
and write
$$\varphi_n^i=\int_{\Omega_\eps^\N} \sum_{J\in\cH^{\bfg, i}} \sL_{J,n}^\bfg d\nu_\eps^\N.$$

The set $\cH^{\bfg,0}$ is the collection of elements $J$ of $\cH^\bfg$ for which $\partial J\cap\partial Z\not=\emptyset$ and $|J|<\eta$. For each $\bfg\in\Omega_\eps^\N$, $\cH^{\bfg,0}$ has at most two elements. Thus
$\|\varphi_n^0\|_1\le 2\eta$.

To define $\cH^{\bfg, 1}$, we first observe that for each $(k,m)\in\Sigma_n$ and $\bfg\in\Omega_\eps^\N$, we have
$|H_{k,m}^\bfg|\le |U_{k,m}^\bfg|\le C_1 |U_{0,m}^{\sigma^k\bfg}|,$
where $C_1>0$ is a constant. Indeed, for any $(x,\bfg) \in U_{k,m}$,
we have $\bfg^k(U_{k,m}^\bfg\cap J_k^\bfg(x))\subset U_{0,m}^{\sigma^k\bfg}$. Since $\cN(\bfg^k|J_{k}^\bfg(x))\le 2$, the statement follows.
Let $M$ be a positive integer such that $C_0C_1M^{-1}<\eta$ and let $\cH^{\bfg, 1}$ be the collection of all components of
$\bigcup_{(k,m)\in \Sigma_n, m>M}\cH^\bfg_{k,m}$ which are not contained in $\cH^{\bfg,0}$.

Let us prove $\|\varphi_n^1\|_1<2\eta$.
Let $G_{k,M}=\bigcup_{(k,m)\in\Sigma_n, m>M}H_{k,m}$ and let $G_M=\bigcup_{k=0}^n G_{k,M}$.
Then, for each $0\le k\le n$,
\begin{multline*}
\Leb\times \nu_\eps^\N (G_{k,M})
=\int_{\Omega_\eps^\N} |G_{k,M}^\bfg| d\nu_\eps^\N(\bfg)
 \le C_1\int_{\Omega_\eps^\N} \sum_{m>\max(M, n-k)} |U_{0,m}^{\sigma^k \bfg}| d\nu_\eps^\N(\bfg)\\
 =C_1\int_{\Omega_\eps^\N} \sum_{m>\max(M, n-k)} |U_{0,m}^{\bfg}| d\nu_\eps^\N(\bfg)\\
 =C_1 \Leb\times\nu_\eps^\N \left(\left\{(x,\bfg)\in V: m_V(x,\bfg)> \max(M, n-k)\right\}\right),
\end{multline*}
which implies by (\ref{eqn:tail})  that
$$\|\varphi_n^1\|_1\le \Leb\times \nu_\eps^\N(G_M) \le C_0C_1 \sum_{k=0}^n \max (M, n-k)^{-2}< 2C_0C_1 M^{-1}<2\eta.$$

Finally, define $\cH^{\bfg,2}=\cH^\bfg\setminus (\cH^{\bfg, 0}\cup \cH^{\bfg, 1})$.
Let us show that for each $J\in\cH^{\bfg,2}$, $|\bfg^n(J)|$ is bounded from below by a constant $\rho=\rho(\eta)>0$.
Indeed, letting
$(k,m)\in\Sigma_n$ be such that $J\in\cH_{k,m}^\bfg$, it suffices to show that $|\bfg^{k+m}(J)|$ is bounded away from zero, since $$|\bfg^{k+m}(J)|\le (\sup|f'|+\eps)^{k+m-n} |\bfg^n(J)|\le (\sup|f'|+\eps)^M |\bfg^n(J)|.$$
If $\partial J\cap\partial Z=\emptyset$ then $\bfg^{k+m}(J)$ is a component of $V^{\bfg'}$, $\bfg'=\sigma^{k+m}\bfg$, hence its length is bounded away from zero. If $\partial J\cap \partial Z=\emptyset$, then $|J|\ge \eta$, so by definition of $m_V$, $|\bfg^{k+m}(J)|$ is bounded away from zero as well.

Together with (\ref{eqn:distorcH}), by Lemma~\ref{lem:L1compact}, this implies that there is a compact subset $\cK(\rho)$ of $L^1$ such that  for each $J\in\cH^{\bfg,2}$, $\widehat{\sL}_{J,n}^\bfg\in \cK(\rho)$. Therefore $\varphi_n^2$ is contained in some compact subset $\cK_\eta$ of $L^1$.
\end{proof}

\section{Some properties of the deterministic dynamics}\label{sec:derterministic}
In this section, we study the dynamics of an interval map $f\in \LD$, see (\ref{eqn:LD}).
The main results are the following Propositions~\ref{prop:shadowablegrowth} and~\ref{prop:quantifiedmane}, which will be used to study derivative growth along random orbits in the next section.
Recall that $d_*$ was defined in (\ref{eqn:diststar}).

\begin{prop}  \label{prop:shadowablegrowth}
Given $f\in\LD$, $L>1$, $\theta\in (0,1)$ and $\zeta>0$, for any critical value $v$ and any $\delta>0$ small enough
there exists a positive integer $M_v(\delta)$ such that
the following hold:
\begin{equation}\label{eqn:smalla}
A(v, f, M_v(\delta))\le \theta /\delta,
\end{equation}
\begin{equation}\label{eqn:noretmv}
f^j(v)\not\in \tB(L\delta)\mbox{ for each } j=0,1,\ldots, M_v(\delta)-1,
\end{equation}
and
\begin{equation}\label{eqn:dermv}
|Df^{M_v(\delta)+1}(v)|\ge  \left(\frac{\delta'}{\delta}\right)^{1-\zeta},
\end{equation}
where $\delta'=\max(\dist_* (f^{M_v(\delta)}(v),\Crit),\delta)$.
Moreover, we have
\begin{equation}\label{eqn:mvdelta}
M_v(\delta)\to\infty \mbox{ as } \delta\to 0.
\end{equation}
\end{prop}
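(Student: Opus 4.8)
The plan is to exploit the large derivatives condition $|Df^n(v)| \to \infty$ together with the non-flatness of critical points to select $M_v(\delta)$ as a suitable "first deep return time", and then deduce the summability-type bound \eqref{eqn:smalla} and the derivative estimate \eqref{eqn:dermv} from the definition of $d_*$. First I would fix $v \in \CV$ and, for $\delta > 0$ small, consider the orbit $v, f(v), f^2(v), \ldots$. Because $|Df^n(v)| \to \infty$, the derivative $|Df^n(v)|$ is eventually as large as we like; moreover, by Ma\~n\'e's Proposition~\ref{prop:mane} the orbit of $v$ cannot stay forever outside a neighborhood of $\Crit$ without the derivative blowing up, which is consistent with \eqref{eqn:LD}. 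The idea is to let $M = M_v(\delta)$ be chosen so that $f^M(v)$ is the first time after a suitable threshold index $n_0(\delta)$ (with $n_0(\delta) \to \infty$ as $\delta \to 0$) that the orbit of $v$ enters $\tB(L\delta)$; if no such return occurs before the partial sum $\sum_{j} |Df^j(v)|^{-1}$ beyond $n_0$ reaches $\theta/\delta$ (which, by summability-independent growth from \eqref{eqn:LD}, it will not for $\delta$ small since each term $|Df^j(v)|^{-1}$ is tiny for $j \ge n_0$), we stop at that point instead. This makes \eqref{eqn:noretmv} automatic and \eqref{eqn:mvdelta} automatic from $M \ge n_0(\delta) \to \infty$.

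Next I would verify \eqref{eqn:smalla}. Write $A(v,f,M) = \sum_{i=0}^{M-1} |Df^i(v)| / \dist(f^i(v), \Crit)$. I would split this sum at the threshold $n_0(\delta)$. For $i < n_0(\delta)$, the terms are bounded by a constant depending only on $f$ and $n_0(\delta)$ (the orbit is at bounded distance from $\Crit$, or else we use non-flatness to bound the ratio), and $n_0(\delta) \cdot \text{const} \le \theta/(2\delta)$ provided $\delta$ is small — here I need $n_0(\delta)$ to grow slowly enough, say $n_0(\delta) = o(1/\delta)$, which I can arrange since I am free to choose the threshold. For $n_0(\delta) \le i < M$, since we stopped before the orbit entered $\tB(L\delta)$, we have $\dist(f^i(v), \CV) \ge $ something comparable to $L\delta$ when $f^i(v) \in \tB(\delta_*)$, and combined with the derivative lower bounds this forces each term $|Df^i(v)|/\dist(f^i(v),\Crit)$ to be controlled; summing and using that the total was capped at $\theta/\delta$ by the stopping rule closes this part. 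Then \eqref{eqn:dermv}: at time $M$ we have $f^M(v) \in \tB(L\delta)$, so by \eqref{eqn:dist*der} (applied to $f$, i.e. $g = f \in \Omega_\delta$ trivially for $\delta$ small) and the chain rule, $|Df^{M+1}(v)| = |Df^M(v)| \cdot |Df(f^M(v))| \gtrsim |Df^M(v)| \cdot D_c(\delta')$ where $\delta' = \max(d_*(f^M(v),\Crit), \delta)$; since $|Df^M(v)|$ is large (it exceeds some fixed growing function of $n_0(\delta)$ by \eqref{eqn:LD}) and $D_c(\delta') \asymp (\delta')^{1 - 1/\ell_c}$ while $D_c(\delta) \asymp \delta^{1-1/\ell_c}$, the ratio $(\delta'/\delta)^{1-\zeta}$ bound follows after absorbing constants into the growth of $|Df^M(v)|$ and choosing $\delta$ small relative to $\zeta$.

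The main obstacle I expect is the tension in choosing the threshold $n_0(\delta)$: it must grow to infinity (to make \eqref{eqn:dermv} work, since we need $|Df^M(v)|$ genuinely large) yet grow slowly enough that $n_0(\delta) = o(1/\delta)$ (to make the first block of the sum in \eqref{eqn:smalla} negligible), and simultaneously the stopping rule on the partial sum must be consistent with the "first return to $\tB(L\delta)$" rule — one must check these two stopping criteria do not conflict, i.e. that on the range $[n_0(\delta), M)$ the contribution to $A(v,f,M)$ stays below $\theta/\delta$ precisely because the orbit avoids $\tB(L\delta)$ and the derivatives there are large. Making all these quantifiers fit together uniformly in $v$ (there are finitely many critical values, so this is a finite intersection) is the delicate bookkeeping, but no deep new idea beyond \eqref{eqn:LD}, non-flatness, and the definitions \eqref{eqn:Dcdelta}, \eqref{eqn:diststar}, \eqref{eqn:dist*der} should be required.
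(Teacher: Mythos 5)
There is a genuine gap, and it sits exactly where you locate the ``delicate bookkeeping'': the estimate (\ref{eqn:dermv}) in the case where the orbit does \emph{not} fall into $\tB(L\delta)$ before your sum cap triggers. In that case $\delta'=\max(\dist_*(f^{M}(v),\Crit),\delta)$ can be of order $\delta_*$, so you must produce the quantitative bound $|Df^{M+1}(v)|\ge (\delta_*/\delta)^{1-\zeta}$, i.e.\ growth polynomial in $1/\delta$ at the specific time $M$; the large derivatives condition (\ref{eqn:LD}) is purely qualitative and gives no relation between $|Df^{M}(v)|$ and $1/\delta$, so ``$|Df^M(v)|$ exceeds some fixed growing function of $n_0(\delta)$'' does not suffice. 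The paper resolves precisely this case by a pigeonhole over dyadic scales $\delta_k=2^{-k}\delta_*$: with $N$ maximal such that $A(v,f,N)\le\theta/\delta$, either the part of the sum lying outside $\tB(\delta_*)$ exceeds $\theta/(2\delta)$, and then Ma\~n\'e (Proposition~\ref{prop:mane}) converts that sum into a derivative bound of size $\theta/\delta\ge(\delta_*/\delta)^{1-\zeta}$; or some window between consecutive last visits to scales $\delta_{k+1},\delta_k$ carries at least $\frac1\delta(\delta/\delta_k)^{\zeta/2}$ of the sum, and then $M$ is taken to be the \emph{last visit} $s_k$ to $\tB(\delta_k)$ (in general strictly smaller than $N$), with the small-total-distortion Lemma~\ref{lem:derVSdiscrt} converting the window sum into a lower bound on $|Df^{s_k}(v)|/|\tB(c_k;\delta_k)|$ that matches the required $(\delta'/\delta)^{1-\zeta}$ because $\delta'\le\delta_k$. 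Your construction has no mechanism for stopping at such an intermediate scale and no analogue of Lemma~\ref{lem:derVSdiscrt}; moreover that lemma rests on backward contraction (Proposition~\ref{prop:bc}) via Lemma~\ref{lem:derivativeBC}, so the closing claim that nothing beyond (\ref{eqn:LD}), non-flatness and the definitions is needed is not accurate.

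The tension between your two stopping rules is also not resolvable as stated, and (\ref{eqn:noretmv}) is not ``automatic'': a first entry into $\tB(L\delta)$ \emph{after} the threshold $n_0(\delta)$ does not exclude entries before $n_0(\delta)$, and since $n_0(\delta)\to\infty$ the initial orbit segment may come arbitrarily close to $\Crit$, so smallness of $\delta$ alone does not rule such entries out. In the paper there is a single stopping rule ($N$ maximal with $A(v,f,N)\le\theta/\delta$), and the no-return property is a \emph{consequence} of it: by Lemma~\ref{lem:derivativeBC}, a visit to $\tB(c;L(\delta)\delta)$ at a time $t<N$ would make the single term $|Df^t(v)|/\dist(f^t(v),\Crit)$ at least $r(L(\delta)\delta)/(L(\delta)\delta)>\theta/\delta$, contradicting the sum bound. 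Without this backward-contraction input your two criteria can genuinely conflict: the orbit may enter $\tB(L\delta)$ while the partial sum is still far below $\theta/\delta$, in which case the ``deep return'' branch of your scheme fails to give (\ref{eqn:noretmv}) for the earlier times, while the ``sum cap'' branch fails to give (\ref{eqn:dermv}).
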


Let $\cL_c(\delta)$ denote the collection of all orbits $\{f^j(x)\}_{j=0}^n$ with
$f^j(x)\not\in \tB(\delta)$ for each $j=0,1,\ldots, n-1$ and $f^n(x)\in \tB(c;2\delta)$.
The following proposition is a variation of a result in \cite{BS} using a different argument.
\begin{prop} \label{prop:quantifiedmane}
Given $f\in \LD$, there exists a constant $\kappa_0>0$ such that for each $\delta>0$,
the following holds.   For $\{f^j(x)\}_{j=0}^n\in \cL_c(\delta) $,
putting $\delta''=\max (\dist(x, \CV), \delta),$
we have $$|Df^n(x)|\ge \frac{\kappa_0}{D_c(\delta)} \left(\frac{\delta}{\delta''}\right)^{1-\ell_{\max}^{-1}}.
$$
\end{prop}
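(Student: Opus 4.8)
The plan is to decompose the orbit $\{f^j(x)\}_{j=0}^n$ according to the last visit of $x$ (or rather of $f^j(x)$) to a neighborhood of the critical set, and combine M\'a\~n\'e's exponential expansion estimate (Proposition~\ref{prop:mane}) with the non-flatness of the critical points near the endpoints. First I would fix a neighborhood $U$ of $\Crit$, contained in $\tB(\delta_*)$, on which M\'a\~n\'e applies with constants $C>0$, $\lambda>1$. There are then two regimes. If $\delta$ is not small relative to the size of $U$, the whole orbit stays outside a fixed neighborhood of $\Crit$ except possibly at the very start; then $|Df^n(x)| \ge C\lambda^n \ge C$, and since $D_c(\delta)\asymp \delta^{1-1/\ell_c}$ is bounded and $\delta/\delta''\le 1$, the asserted bound is immediate after absorbing everything into $\kappa_0$. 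The substantive case is $\delta$ small.

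In the substantive case, let $j_0$ be the largest index $j\in\{0,1,\dots,n\}$ with $f^{j}(x)\in U$ (if no such index exists, the pure M\'a\~n\'e bound again suffices, arguing as above since then $|Df^n(x)|\ge C\lambda^n$ and $\delta/\delta''\le 1$). Write $|Df^n(x)| = |Df^{j_0}(x)|\cdot |Df^{n-j_0}(f^{j_0}(x))|$. For the second factor, the orbit $f^{j_0}(x), f^{j_0+1}(x), \dots, f^{n-1}(x)$ avoids $U$, so Proposition~\ref{prop:mane} gives $|Df^{n-j_0}(f^{j_0}(x))|\ge C\lambda^{n-j_0}\ge C$. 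For the first factor we must be more careful: $f^{j_0}(x)\in U\subset \tB(\delta_*)$ and $f^{j_0}(x)\notin \tB(\delta)$ (since $j_0\le n-1$ by the hypothesis that intermediate iterates avoid $\tB(\delta)$; the case $j_0=n$ is handled by noting $f^n(x)\in \tB(c;2\delta)$ directly). Now I would split further according to whether $j_0=0$ or $j_0\ge 1$.

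If $j_0=0$, then $x\in U$ with $\dist(x,\Crit)\ge$ (something comparable to $\delta$, using $x\notin \tB(\delta)$ and the relation $D_c$), and one estimates $|Df^{j_0}(x)|$ trivially as $1$ while instead extracting the lower bound from the geometry at the endpoint: the point is that $x$ near a critical point $c'$ with $\dist(x,\Crit)\gtrsim \delta/D_{c'}(\delta)$-type control, together with $\dist(x,\CV)\le \delta''$, forces the derivative picked up over the binding-type return to compensate the factor $1/D_c(\delta)$; here one uses the non-flatness estimate $|Df(y)|\asymp \dist(y,c')^{\ell_{c'}-1}$ from the admissibility/non-flatness hypothesis and the large-derivatives condition~(\ref{eqn:LD}) applied to the critical value $f(c')$. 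If $j_0\ge 1$, then $f^{j_0-1}(x)\notin U$ (by maximality of $j_0$) and $f^{j_0}(x)\in \tB(\delta_*)\setminus\tB(\delta)$, so one can run the M\'a\~n\'e bound on the block $f^0(x),\dots,f^{j_0-1}(x)$ to get $|Df^{j_0}(x)|\ge C\lambda^{j_0}$ if that block also avoids $U$; but it need not, so the cleanest route is actually to re-bracket: let $j_0$ instead be the \emph{first} return of the orbit to $U$ rather than the last, and iterate the argument of Proposition~\ref{prop:mane1}-style. I expect the main obstacle to be bookkeeping the constant $1/D_c(\delta)$ correctly at the \emph{terminal} entry into $\tB(c;2\delta)$: the natural statement is that the derivative of $f$ over the last few iterates before landing in $\tB(c;2\delta)$ is $\gtrsim D_c(\delta)$ up to a bounded factor — this is exactly the content of~(\ref{eqn:dist*der})-type estimates and the non-flatness near $c$ — so that the announced $\kappa_0/D_c(\delta)$ with the correction $(\delta/\delta'')^{1-1/\ell_{\max}}$ comes from balancing the depth of the initial point $x$ against $\dist(x,\CV)\le\delta''$, while the bulk of the orbit contributes a harmless factor $\ge C\lambda^{(\text{length})}\ge C$. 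The non-flatness exponent $\ell_{\max}$ enters precisely because near the starting critical point the derivative can degenerate like $\dist(x,\Crit)^{\ell_{\max}-1}$, and $\dist(x,\Crit)$ is controlled below by a power of $\delta/\delta''$.
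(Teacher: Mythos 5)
There is a genuine gap, and it sits exactly where your proposal says ``I expect the main obstacle to be bookkeeping the constant $1/D_c(\delta)$'': that bookkeeping is the whole content of the proposition, and M\'a\~n\'e plus non-flatness cannot supply it. An orbit in $\cL_c(\delta)$ only avoids $\tB(\delta)$, not a fixed neighborhood $U$ of $\Crit$; since $\delta$ is arbitrarily small and $\kappa_0$ must be independent of $\delta$, the orbit may pass through $\tB(\delta_*)\setminus\tB(\delta)$ many times, and each such pass at depth $\rho\in[\delta,\delta_*)$ costs a derivative factor as small as $\asymp\rho^{1-\ell^{-1}}\asymp D(\rho)$, with no lower bound (in terms of $\rho$) on the length of the excursion that follows. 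Proposition~\ref{prop:mane} with a fixed $U$ gives $C\lambda^{t}$ over the time $t$ spent outside $U$, but nothing forces $t$ to be large enough to recover a loss of order $D(\rho)$, so splitting at the first or last visit to $U$ (either bracketing you try) leaves an uncontrolled product of small factors. The paper's proof handles precisely this: it decomposes the orbit along a sequence of successively deeper returns $n_1<\cdots<n_m=n$ (each new depth at most comparable to the previous one), and on each block applies Lemma~\ref{lem:derivativeBC} --- which rests on the backward contraction property of Proposition~\ref{prop:bc} (this is where $f\in\LD$ is really used, not a direct appeal to $|Df^n(v)|\to\infty$ along critical orbits) together with the Koebe principle --- to get a gain $r(\rho_i)\,D_{c_i}(\rho_i)/D_{c_{i+1}}(\rho_i)$ with $r(\rho_i)\ge 2$. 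The product then telescopes so that only one loss $1/D_c(\delta)$ survives at the terminal landing, and the factor $(\delta/\delta'')^{1-\ell_{\max}^{-1}}$ emerges from the ratios $D_{c_i}(\rho_i)/D_{c_i}(\rho_{i-1})$, i.e.\ from comparing the initial depth $\delta''=\max(\dist(x,\CV),\delta)$ with the final one; it is not produced by non-flatness at the starting point alone, as your sketch suggests.

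Concretely, your proposal is missing (i) the key lemma that after a visit at depth $\rho$ the derivative accumulated up to the next comparable-depth landing is at least $r(\rho)/D(\rho)$-type, uniformly in $\rho$ and in the return time (Lemma~\ref{lem:derivativeBC}, via backward contraction and Koebe applied to the first-landing pullback), and (ii) the multiplicative decomposition along depth-monotone return times that makes the losses telescope. Without (i) the argument fails already for an orbit that makes two intermediate passes at depth just above $\delta$ separated by short excursions, since M\'a\~n\'e gives only a bounded gain there while the losses are of order $D(\delta)^2$. If you want a route different from the paper's, you would still need some substitute for backward contraction (e.g.\ the expansion-of-derivatives results of Bruin--van Strien cited in the paper), not just Proposition~\ref{prop:mane} and the local form $|Df(y)|\asymp\dist(y,c')^{\ell_{c'}-1}$.
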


We start by stating some known facts in \S~\ref{subsec:deterpre}. Propositions~\ref{prop:shadowablegrowth} and~\ref{prop:quantifiedmane} will be proved in \S~\ref{subsec:shadowable} and~\S~\ref{subsec:1landing} respectively.
\subsection{Some facts}\label{subsec:deterpre}
The following is ~\cite[Theorem 1]{BRSS}.
\begin{prop}\label{prop:bc}
If $f\in\LD$, then $f$ is backward contracting in the following sense: For each $\delta>0$ small, there exists $\widehat{r}(\delta)>1$ such that $\lim_{\delta\to 0} \widehat{r}(\delta)=\infty$ and such that for each $\delta\in (0,\delta_0)$ and each integer $s\ge 0$, if $W$ is a component of $f^{-s}(\tB(\widehat{r}(\delta)\delta))$ and
$\dist(W, \CV)\le \delta$, then $|W|<\delta$.
\end{prop}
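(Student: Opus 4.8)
This is \cite[Theorem~1]{BRSS}; here is an outline of the strategy one would follow.

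\textbf{Reduction to a fixed ratio.} For $r>1$ and $\delta>0$ let $\cR(r,\delta)$ be the supremum of $|W|/\delta$ over all integers $s\ge 0$ and all components $W$ of $f^{-s}(\tB(r\delta))$ with $\dist(W,\CV)\le\delta$. Since $\tB(r\delta)$ grows with $r$, $\cR(r,\cdot)$ is essentially nondecreasing in $r$, and an elementary diagonalization reduces the proposition to: for every fixed $r>1$, $\limsup_{\delta\to 0}\cR(r,\delta)<1$. (If $\delta(r)>0$ can be chosen decreasing in $r$ with $\cR(r,\delta)<1$ for $\delta<\delta(r)$, then $\widehat r(\delta):=\sup\{r:\delta<\delta(r)\}\to\infty$ as $\delta\to 0$ and $\cR(\widehat r(\delta),\delta)<1$.) Throughout one uses that $f\in\LD$ has no critical relations, so no point of $\CV$ is precritical; this is what makes the shadowing below well defined.

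\textbf{Pullbacks shadow postcritical orbits.} Fix $r$ and a component $W$ of $f^{-s}(\tB(r\delta))$ with $\dist(W,\CV)\le\delta$; set $W_j=f^j(W)$, so $W_0$ lies within $\delta$ of some critical value $f(c_1)$ while $W_s=\tB(c_0;r\delta)$. Then $W_j$ tracks $f^{\,j+1}(c_1)$: applying Mañé's uniform expansion (Proposition~\ref{prop:mane}) on the time intervals during which $W_j$ stays outside a fixed neighbourhood of $\Crit$, together with the non-flatness distortion bounds coming from admissibility (the deterministic counterpart of Lemma~\ref{lem:theta0}), one gets bounded distortion of $f^j$ on $W$, hence $|W_j|\asymp|W_0|\,|Df^j(f(c_1))|$, valid up to the first time $j^\ast$ at which $|W_{j^\ast}|$ attains a definite fraction of $\dist(f^{\,j^\ast}(c_1),\Crit)$; let $c_2$ be the critical point realizing that distance. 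The large derivatives condition~\eqref{eqn:LD} enters here in the form $|Df^j(f(c_1))|\to\infty$, uniformly over the finitely many critical values: the longer the shadowing lasts, the more $f^j$ contracts $W$.

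\textbf{Recursion.} One runs an induction on the number of deep returns made by the pullback, controlling along the way the slightly more flexible quantity in which $\dist(W,\CV)\le\delta$ is replaced by $\dist(W,\Crit)\le\delta$ (from which the stated version is recovered). In the base case there is no deep return: the constraints force $s$, and hence $j^\ast=s$, to be large once $\delta$ is small, because $f^{\,s+1}(c_1)$ then lies within $O(|\tB(c_0;r\delta)|)$ of $c_0$ while $f^n(c_1)\ne c_0$ for all $n$; combined with the shadowing estimate and $|Df^{s}(f(c_1))|\to\infty$ this forces $|W_0|\asymp|W_s|/|Df^{s}(f(c_1))|<\delta$. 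In the inductive step, the first excursion carries $W$ to a short interval $W_{j^\ast}$ with $\dist(W_{j^\ast},\Crit)\lesssim|W_{j^\ast}|\asymp\delta_1$, and $W_{j^\ast},\dots,W_s$ is itself a pullback of $\tB(c_0;r\delta)$; since $|W_0|\ge\delta$ forces $\delta_1\gtrsim\delta$, one has $\tB(c_0;r\delta)\subset\tB(c_0;\widehat r(\delta_1)\delta_1)$ once $\widehat r(\delta_1)\ge r$, so the inductive hypothesis at scale $\delta_1$ (with one fewer deep return) bounds $|W_{j^\ast}|$, and the contraction factor $|Df^{j^\ast}(f(c_1))|^{-1}$ from the initial shadowing then pushes $|W_0|$ below $\delta$. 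Uniformity of all constants over $\Crit$ lets one peel off one excursion at a time until the induction closes.

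\textbf{Expected main obstacle.} The heart of the matter is the bookkeeping when deep returns accumulate with little hyperbolic time in between, so that the ``shadow, then recurse'' step must be iterated many times: one must verify that the contraction supplied by the large-derivative factors $|Df^{j^\ast}(f(c_i))|$ genuinely dominates the expansion $|\tB(c;\rho)|/\rho\asymp\rho^{\,1/\ell_c-1}\to\infty$ lost at each passage near a critical point, and that it does so at every scale simultaneously. Converting the purely qualitative input $|Df^n(v)|\to\infty$ into an estimate strong enough to win this competition, and organizing it as a clean self-referential induction with constants uniform over $\Crit$, is precisely the technical content of \cite{BRSS}.
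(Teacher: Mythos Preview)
Your identification is correct: the paper does not prove this proposition at all, it simply records it as \cite[Theorem~1]{BRSS} and moves on. Your opening sentence already matches the paper's ``proof'' exactly; the remainder of your submission is a bonus sketch of the argument from \cite{BRSS} that the paper itself does not attempt to reproduce.
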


The notion of ``backward contraction'' was introduced in~\cite{RL}. Actually, the converse of the proposition is also true, see \cite{LiS}.
We shall use the following consequence of the proposition.

\begin{lemm} \label{lem:derivativeBC}
Assume $f\in\LD$. Then for each $\delta>0$ small, there exists $r(\delta)>1$ with $r(\delta)\to\infty$ as $\delta\to 0$ such that
if $\{f^j(y)\}_{j=0}^n\in\cL_c(\delta)$ and $\dist(y,\CV)\le \delta$,
then
\begin{equation}\label{eqn:derivativebc}
|Df^{n}(y)|\ge \frac{r(\delta)}{D_c(\delta)}.
\end{equation}
\end{lemm}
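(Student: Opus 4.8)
The plan is to combine Proposition~\ref{prop:bc} (backward contraction) with the expansion-away-from-critical-points estimate of M\'a\~ne, Proposition~\ref{prop:mane}, together with the non-flatness of the critical points near the endpoints of the orbit. The key observation is that an orbit $\{f^j(y)\}_{j=0}^n \in \cL_c(\delta)$ with $\dist(y,\CV)\le\delta$ hits $\tB(c;2\delta)$ at time $n$ but avoids $\tB(\delta)$ in between; the derivative $|Df^n(y)|$ is controlled by (a) a definite amount of expansion on the ``middle'' stretch coming from staying away from the critical set, and (b) boundary terms near time $0$ and time $n$ where $f$ is only weakly expanding. The role of backward contraction is precisely to show that the boundary term at time $n$ does not destroy the estimate: if $W$ denotes the component of $f^{-n}(\tB(\widehat r(\delta)\delta))$ containing $y$, then $\dist(W,\CV)\le\delta$ forces $|W|<\delta$, so that the map $f^n$ from a neighborhood of $y$ onto $\tB(c;\widehat r(\delta)\delta)$ contracts backward, which quantitatively forces $|Df^n(y)|$ to be at least of order $\widehat r(\delta)\delta/|W| \gtrsim \widehat r(\delta)$, up to the geometry factor $1/D_c(\delta)$ coming from the non-flat shape of $f$ at $c$.

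First I would set $r(\delta)$ to be a slowly growing function of $\widehat r(\delta)$ from Proposition~\ref{prop:bc} (and of the M\'a\~ne constants for a fixed neighborhood $U\supset\Crit$, e.g.\ $U=\tB(\delta_*/2)$); the condition $r(\delta)\to\infty$ will follow since $\widehat r(\delta)\to\infty$. Next, fix the orbit. I would first dispose of the trivial regime where $n$ is bounded in terms of $\delta$: if $\delta$ is small and $n$ is large, Proposition~\ref{prop:mane} applied to the neighborhood $U$ gives $|Df^{n}(y)|\ge C\lambda^{n}$ directly on the portion of the orbit outside $U$, which dominates any fixed power of $1/\delta$ once $n$ is large; the case of bounded $n$ is handled separately because then $y$ is within distance $\delta$ of a critical value but the whole orbit of length $n$ stays in a fixed compact part of phase space where $f$ is a fixed smooth expanding-in-the-relevant-sense map, and a compactness/continuity argument pins down $|Df^n(y)|\asymp 1/D_c(\delta)$ as $\delta\to 0$. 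The substantive case is $n$ large: here I would split $Df^n(y)=Df^{n-1}(f(y))\cdot Df(y)$, note $|Df(y)|\asymp \dist(y,\CV)^{1-1/\ell}\ge$ (something in terms of $\delta$) — no, more carefully, one wants to push the ``small derivative'' factor into the backward-contraction bound instead. The cleanest route: let $W\ni y$ be the component of $f^{-n}(\tB(c;\widehat r(\delta)\delta))$. By Proposition~\ref{prop:bc}, $|W|<\delta$. By bounded distortion of $f^n$ on $W$ (which holds because, after the first step, the orbit stays outside $\tB(\delta)$, so one is in the setting of Lemma~\ref{lem:theta0}/the Koebe-type distortion control — one needs to check $W$ is comparable to the Koebe neighborhood, using that $|f(W)|$ is small relative to the distances $\dist(f^j(y),\Crit)$), we get $|Df^n(y)|\gtrsim |\tB(c;\widehat r(\delta)\delta)|/|W| > |\tB(c;\widehat r(\delta)\delta)|/\delta$. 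Finally, $|\tB(c;\widehat r(\delta)\delta)|\asymp (\widehat r(\delta)\delta)^{1/\ell_c} = \widehat r(\delta)^{1/\ell_c}\,\delta^{1/\ell_c}$, and using $D_c(\delta)\asymp\delta^{1-1/\ell_c}$ we get $|\tB(c;\widehat r(\delta)\delta)|/\delta \asymp \widehat r(\delta)^{1/\ell_c}/D_c(\delta)$, so we may take $r(\delta)=c_0\widehat r(\delta)^{1/\ell_{\max}}$ for a suitable constant $c_0>0$.

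The main obstacle I anticipate is the distortion control on $W$: Proposition~\ref{prop:bc} gives the metric bound $|W|<\delta$, but to convert a diameter bound into a derivative bound one needs that $f^n|W$ has bounded distortion, and for that one must verify that $W$ (slightly enlarged) serves as a valid ``telescope'' interval, i.e.\ that along the orbit $\{f^j(W)\}$ the intervals are small compared to the distance to $\Crit$ at each step. The first step $f(W)$ lands near $\tB(c;\widehat r(\delta)\delta)$'s preimage structure — wait, one should instead run the argument forward: $f^{n}$ maps $W$ onto $\tB(c;\widehat r(\delta)\delta)$, and pull the Koebe neighborhood back; since $\tB(c;2\delta)\subset\subset\tB(c;\widehat r(\delta)\delta)$ when $\widehat r(\delta)>2$, the point $f^n(y)\in\tB(c;2\delta)$ sits well inside the image, giving definite Koebe space, hence bounded distortion of $(f^n)^{-1}$ near $f^n(y)$, provided the intermediate intervals avoid $\Crit$ by a definite factor — which is exactly guaranteed by $f^j(y)\notin\tB(\delta)$ for $0<j<n$ combined with backward contraction controlling the sizes $|f^j(W)|$. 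Making this precise, i.e.\ showing the pullback of $\tB(c;\widehat r(\delta)\delta)$ along $W$ stays comparable in size to the pullback of the smaller interval at every intermediate step, is the technical heart; it is essentially an iteration of Proposition~\ref{prop:bc} applied at the scales $f^j(W)$, or alternatively a direct application of Lemma~\ref{lem:theta0} once one knows $A(y,f,n)\asymp|Df^n(y)|$ on $W$. I would likely present it by invoking backward contraction to bound $|f^j(W)|$ against $\dist(f^j(y),\Crit)$ for each $j<n$ and then citing the standard distortion lemma.
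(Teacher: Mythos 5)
Your ``cleanest route'' is essentially the paper's proof: pull back $\tB(c;\widehat r(\delta)\delta)$ under $f^n$ to an interval $W\ni y$, use Proposition~\ref{prop:bc} to get $|W|<\delta$, use the Koebe space coming from $f^n(y)\in\tB(c;2\delta)$ sitting well inside $\tB(c;\widehat r(\delta)\delta)$ (for $\widehat r(\delta)>4$) to control distortion, and conclude via non-flatness that one may take $r(\delta)\asymp\widehat r(\delta)^{1/\ell_{\max}}$. The preliminary M\'a\~ne/compactness case distinctions are unnecessary, and the distortion worry you flag is resolved exactly as you suggest --- the paper simply invokes the Koebe principle of van Strien--Vargas at this point rather than running a telescope argument.
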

\begin{proof}
Assume that $\delta>0$ is small so that $\hat{r}(\delta)>4$. By backward contraction, there exists an interval $J\ni y$ such that $f^n$ maps $J$ diffeomorphically onto $\tB(c;\hat{r}(\delta)\delta)$ and such that $|J|<\delta$. Since $f^n(y)$ lies roughly in the middle of $\tB(c;\hat{r}(\delta)\delta)$, by the Koebe principle (see~\cite[Theorem C]{SV}), there exists a universal constant $C>0$ such that
$$|Df^{n}(y)|\ge 2C\frac{|\tB(c;\hat{r}(\delta)\delta)|}{\delta}\ge \frac{C {\hat{r}(\delta)^{1/\ell_c}}}{D_c(\delta)}.$$
The lemma follows.
\end{proof}

\subsection{Proof of Proposition~\ref{prop:shadowablegrowth}}\label{subsec:shadowable}
The following lemma is the key to the proof of Proposition~\ref{prop:shadowablegrowth}.
\begin{lemm}[Small total distortion] \label{lem:derVSdiscrt}
For each $f\in\LD$ and $\delta>0$, there exists a constant $\theta(\delta)\ge 0$ such that for
any orbit $\{f^j(x)\}_{j=0}^s\in\cL_c(\delta)$  for some $c\in\Crit$  we have
\begin{equation}\label{eqn:derVSdis}
A(x, f, s)\le \theta(\delta) \frac{|Df^s(x)|}{|\tB(c; \delta)|},
\end{equation}
and such that
\begin{equation*}\label{eqn:kdelsmall}
\theta(\delta)\to 0\mbox{ as } \delta\to 0.
\end{equation*}
\end{lemm}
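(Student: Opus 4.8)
\textbf{Proof plan for Lemma~\ref{lem:derVSdiscrt}.}

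The plan is to estimate the sum $A(x,f,s) = \sum_{i=0}^{s-1} |Df^i(x)|/\dist(f^i(x),\Crit)$ term by term, splitting the orbit according to whether $f^i(x)$ lies in the critical neighborhood $\tB(\delta)$ or outside it, and in both cases bounding $|Df^i(x)|/\dist(f^i(x),\Crit)$ by a fixed (small) multiple of $|Df^s(x)|/|\tB(c;\delta)|$ times a geometrically summable factor. The orbit $\{f^j(x)\}_{j=0}^s$ lies outside $\tB(\delta)$ for all $j<s$, so for indices $i$ with $f^i(x)\notin\tB(\delta)$ we have $\dist(f^i(x),\Crit)\gtrsim\delta$ (up to a constant depending only on the geometry of the $\tB(c;\delta)$'s, via $D_c(\delta)\asymp\delta^{1-1/\ell_c}$), and we must control the tail product $|Df^{s-i}(f^i(x))|$ from below; for indices $i$ with $f^i(x)\in\tB(\delta)$ (necessarily with $i=0$ only if $x\in\tB$, but more relevantly one should recurse: actually since the orbit avoids $\tB(\delta)$ strictly before time $s$, the only way to be in $\tB(\delta)$ is $i=0$ when $\dist(x,\CV)$ is small — wait, $x$ itself need not avoid $\tB(\delta)$), one uses the non-flatness estimate $|Df(f^i(x))|\asymp \dist(f^i(x),\Crit)^{\ell_c-1}$ to convert $|Df^i(x)|/\dist(f^i(x),\Crit)$ into something comparable with $|Df^{i+1}(x)|/\dist(f^i(x),\Crit)^{\ell_c}$ and absorb the critical singularity.

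First I would fix the auxiliary constant $\delta_*$ from \eqref{eqn:diststar} and assume $\delta<\delta_*$. For an index $i$ with $f^i(x)\notin\tB(\delta)$, I claim $|Df^{s-i}(f^i(x))|\ge c_1 e^{\eta(s-i)}$ for the M\'a\~n\'e constants $\eta,c_1>0$ attached to the neighborhood $U=\tB(\delta)$ — but $U$ depends on $\delta$, which is the subtlety; instead I would use Lemma~\ref{lem:derivativeBC} (backward contraction) together with Proposition~\ref{prop:quantifiedmane}: the segment of the orbit from time $i$ to time $s$ belongs to $\cL_c(\delta)$ (it avoids $\tB(\delta)$ and lands in $\tB(c;2\delta)$), so Proposition~\ref{prop:quantifiedmane} gives $|Df^{s-i}(f^i(x))| \ge \kappa_0 D_c(\delta)^{-1}(\delta/\delta'')^{1-\ell_{\max}^{-1}}$ where $\delta''=\max(\dist(f^i(x),\CV),\delta)$. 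Combining with $\dist(f^i(x),\Crit)\gtrsim \delta'' / D_{?}$ (one has to relate $\dist(f^i(x),\Crit)$ to $\dist(f^{i+1}(x),\CV)$ when $f^i(x)$ is near a critical point, and to a uniform constant otherwise), the term $|Df^i(x)|/\dist(f^i(x),\Crit)$ is bounded by $\theta(\delta)|Df^s(x)|/|\tB(c;\delta)|$ up to a factor that I then sum geometrically using the genuine exponential M\'a\~n\'e expansion valid once the orbit has escaped to a $\delta$-independent neighborhood (say $\tB(\delta_*)$), noting the orbit can be inside $\tB(\delta_*)\setminus\tB(\delta)$ for only a $\delta$-controlled number of steps before distortion-type estimates take over — here the large derivatives condition \eqref{eqn:LD} enters to guarantee $|Df^n(v)|\to\infty$ so that the relevant ``shadowing'' quantities are finite and the constant $\theta(\delta)$ can be taken $\to 0$.

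The honest way to get the $\theta(\delta)\to 0$ decay, which I expect to be the main obstacle, is to observe that the bulk of $A(x,f,s)$ comes from the first few terms (small $i$), where $|Df^i(x)|$ is still of moderate size but $|Df^s(x)|$ is forced to be large: by Proposition~\ref{prop:quantifiedmane} again, $|Df^s(x)|\ge \kappa_0 D_c(\delta)^{-1}(\delta/\delta'')^{1-\ell_{\max}^{-1}}$ with $\delta''=\max(\dist(x,\CV),\delta)$, and this lower bound \emph{grows} as $\delta\to 0$ precisely because of the large derivatives condition applied along the deterministic orbit of the relevant critical value — so dividing term-by-term estimates by $|Df^s(x)|$ produces a small factor. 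Concretely I would prove: $A(x,f,s)\le C\big(1 + |Df^s(x)|\cdot r(\delta)^{-1}\big)$ for a suitable constant $C$ and then, since $|Df^s(x)|\ge r(\delta)D_c(\delta)^{-1}$ is impossible to use directly as an upper bound, instead bound $A(x,f,s)/\big(|Df^s(x)|/|\tB(c;\delta)|\big)$ directly. The cleanest route: telescoping, $A(x,f,s) = \sum_i |Df^i(x)|\dist(f^i(x),\Crit)^{-1}$, and for each $i$, $|Df^i(x)| = |Df^s(x)|/|Df^{s-i}(f^i(x))|$; bounding the denominator below by Proposition~\ref{prop:quantifiedmane} (a $\delta$-dependent lower bound that improves as $\delta\to0$) and $\dist(f^i(x),\Crit)^{-1}$ above by a constant times $|\tB(c;\delta)|^{-1}\cdot(\text{uniformly bounded factor})$ yields $A(x,f,s)/\big(|Df^s(x)|/|\tB(c;\delta)|\big)\le \sum_i (\text{geometric in }s-i)\times(\text{factor}\to 0)$, giving $\theta(\delta)\to 0$. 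The delicate point throughout is handling the single critical passage (if $f^i(x)$ is near a critical point for some $i$ in the middle of the orbit) — but since $\{f^j(x)\}_{j=0}^s$ avoids $\tB(\delta)$ strictly before time $s$ and $\delta<\delta_*$, no intermediate $f^i(x)$ with $0<i<s$ lies in $\tB(\delta)$, so $\dist(f^i(x),\Crit)\ge\delta$ there; the only genuinely delicate index is $i=0$, handled by the non-flatness expansion as above and by the hypothesis $\dist(x,\CV)$ being controlled (or by recursing on the first-landing structure). I would organize the write-up as: (1) reduce to $\delta<\delta_*$ and set up notation; (2) prove the key term estimate via Proposition~\ref{prop:quantifiedmane} and Lemma~\ref{lem:derivativeBC}; (3) sum geometrically; (4) extract the decay $\theta(\delta)\to 0$ from the $\delta$-dependence of the lower bounds on $|Df^s(x)|$ provided by backward contraction plus \eqref{eqn:LD}.
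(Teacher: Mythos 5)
Your plan has the right ingredients in view (Proposition~\ref{prop:quantifiedmane}, Lemma~\ref{lem:derivativeBC}, M\'a\~n\'e), but it has a genuine gap at the point you yourself flag as the main obstacle: the summability of the term-by-term bounds, uniformly in $\delta$. If you estimate each term $|Df^i(x)|/\dist(f^i(x),\Crit)=|Df^s(x)|/\bigl(|Df^{s-i}(f^i(x))|\dist(f^i(x),\Crit)\bigr)$ using Proposition~\ref{prop:quantifiedmane} on the tail, then at any ``shallow return'' time $i$ with $f^i(x)\in \tB(2\delta)\setminus\tB(\delta)$ you only get a bound of order a \emph{constant} times $|Df^s(x)|/|\tB(c;\delta)|$: indeed $\dist(f^i(x),\Crit)\asymp|\tB(c';\delta)|$, $|Df(f^i(x))|\asymp D_{c'}(\delta)$, and the tail bound from Proposition~\ref{prop:quantifiedmane} gives roughly $D_{c'}(\delta)/D_c(\delta)$, so the ratio to the target is $\asymp |\tB(c;\delta)|D_c(\delta)/\bigl(D_{c'}(\delta)|\tB(c';\delta)|\bigr)=1$, with no smallness and no decay in $i$. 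The orbit in $\cL_c(\delta)$ may make arbitrarily many such shallow returns, and your proposed rescue --- M\'a\~n\'e exponential expansion plus the claim that the orbit spends ``only a $\delta$-controlled number of steps'' in $\tB(\delta_*)\setminus\tB(\delta)$ --- does not hold: Proposition~\ref{prop:mane} applies only to segments avoiding a \emph{fixed} neighborhood, its constants degrade as the neighborhood shrinks, and nothing bounds the number of visits to the annulus in terms of $\delta$. Your other heuristic for the decay ($|Df^s(x)|$ large by the large derivatives condition) also cannot work as stated, since $|Df^s(x)|$ appears on both sides of (\ref{eqn:derVSdis}) and cancels. (A small side slip: by the definition of $\cL_c(\delta)$ the index $j=0$ is included, so $x\notin\tB(\delta)$ as well.)

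The paper closes exactly this gap by a scale-recursion rather than a direct term-by-term estimate. One defines $\theta(\delta)$ as the \emph{optimal} constant in (\ref{eqn:derVSdis}) (finite for each fixed $\delta$ by Proposition~\ref{prop:mane}), and then, for an orbit in $\cL_c(\delta/2)$, decomposes at the successive times $s_1<\cdots<s_m=s$ of entry into $\tB(\delta)$. Each intermediate block lies in $\cL_{c_{i+1}}(\delta)$ and is controlled by $\theta(\delta)$ itself, while Lemma~\ref{lem:derivativeBC} gives a gain of order $r(\delta)$ between consecutive entry times, so the checkpoint quantities $|Df^{s_i}(x)|/|\tB(c_i;\delta)|$ grow geometrically with ratio $\ge r(\delta)/2$ and their sum contributes only $\rho(\delta)\asymp 1/r(\delta)\to 0$. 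Finally the fixed contraction $|\tB(c;\delta/2)|\le\kappa^2|\tB(c;\delta)|$ with $\kappa<1$ converts the estimate back to scale $\delta/2$, yielding $\theta(\delta/2)\le\kappa\bigl(\theta(\delta)+\rho(\delta)\bigr)$, and iterating this recursion gives $\theta(\delta)\to 0$. So the missing idea in your proposal is precisely this bootstrap: the smallness comes from $r(\delta)\to\infty$ (backward contraction, where the large derivatives condition enters) applied \emph{between consecutive returns}, together with the inductive use of $\theta(\delta)$ for the stretches in between --- not from time-exponential expansion or from the size of $|Df^s(x)|$.
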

\begin{proof}
Given $\delta>0$, let $\theta(\delta)$ be the minimal non-negative number such that (\ref{eqn:derVSdis}) holds for each orbit in $\cL_c(\delta)$, $c\in\Crit$. Such a number exists because $f$ is uniformly expanding outside $\tB(\delta)$ (Proposition~\ref{prop:mane}).
Again by this proposition, for each $\delta_0>0$, $\theta(\delta)$ is bounded from above for $\delta\ge \delta_0$.
To complete the proof,
it suffices to prove that for $\delta>0$ small enough, we have
\begin{equation}\label{eqn:thetaind}
\theta(\delta/2)\le  \kappa (\theta(\delta) +\rho(\delta)),
\end{equation}
where $$\kappa^2=\sup_{c\in\Crit}\sup_{\delta\in (0,\delta_*]} \frac{|\tB(c;\delta/2)|}{|\tB(c;\delta)|}<1,$$
and $\rho(\delta)\to 0$ as $\delta\to 0$.

To this end, consider an orbit $\{f^j(x)\}_{j=0}^s$ in $\cL_{c}(\delta/2)$. Let $0\le s_1<s_2<\cdots< s_m=s$ be all the integers such that $f^{s_i}(x)\in \tB(\delta)$, let $c_i$ be such that $f^{s_i}(x)\in \tB(c_i,\delta)$ and let $\rho_i=\dist_*(f^{s_i}(x),\Crit)$.
For each $i=1,2,\ldots, m-1$, $\rho_i\ge \delta/2$, so by (\ref{eqn:dist*der}), $|Df(f^{s_i}(x))|\ge D_{c_i}(\rho_i)$.  By Lemma~\ref{lem:derivativeBC}, it follows that
$$|Df^{s_{i+1}-s_i}(f^{s_i}(x))|\ge \frac{r(\delta)D_{c_i}(\rho_i)}{D_{c_{i+1}}(\delta)}
=\frac{r(\delta) \rho_i}{\delta} \frac{|\tB(c_{i+1}; \delta)|}{|\tB(c_i;\rho_i)|}
\ge \frac{r(\delta)}{2}\frac{|\tB(c_{i+1}; \delta)|}{|\tB(c_i;\rho_i)|},$$
which implies
\begin{equation}\label{eqn:momentsi}
\frac{|Df^{s_i}(x)|}{|\tB(c_i;\delta)|}\le \frac{|Df^{s_i}(x)|}{|\tB(c_i;\rho_i)|}\le \frac{2}{r(\delta)} \frac{|Df^{s_{i+1}}(x)|}{|\tB(c_{i+1};\delta)|}.
\end{equation}
Thus
\begin{equation}\label{eqn:momentsi2}
\frac{|Df^{s_i}(x)|}{\dist (f^{s_i}(x), \Crit)}\le C\frac{|Df^{s_i}(x)|}{|\tB(c_i;\delta)|}\le C\rho_1(\delta)^{m-i} \frac{|Df^s(x)|}{|\tB(c; \delta)|},
\end{equation}
where $C$ is a universal constant and $\rho_1(\delta)=2/r(\delta)$.
Since $\{f^j(x)\}_{j=s_i+1}^{s_{i+1}}$ is in $\cL_{c_{i+1}}(\delta)$, we have
$$\sum_{j=s_i+1}^{s_{i+1}-1} \frac{|Df^j(x)|}{\dist (f^j(x), \Crit(f))}\le \theta(\delta)\frac{|Df^{s_{i+1}}(x)|}{\tB(c_{i+1},\delta)}.
$$
Similarly, if $s_1\not=0$, then we have
$$\sum_{j=0}^{s_1-1} \frac{|Df^j(x)|}{\dist (f^j(x), \Crit(f))}\le \theta(\delta)\frac{|Df^{s_1}(x)|}{\tB(c_1;\delta)}.$$
It follows that
$$
A(x, f, s)
\le (1+\theta(\delta)) \sum_{i=1}^{m-1} \frac{|Df^{s_i}(x)|}{\dist(f^{s_i}(x),\Crit)}+\theta(\delta) \frac{|Df^s(x)|}{|\tB(c;\delta)|},
$$
which implies by (\ref{eqn:momentsi2}) that
$$A(x,f,s)\le \left(\theta(\delta)+ \rho(\delta)(1+\theta(\delta)) \right)\frac{|Df^s(x)|}{|\tB(c;\delta)|},$$
where $\rho(\delta)=C\rho_1(\delta)/(1-\rho_1(\delta)$. When $\delta>0$ is small enough, we have $1+\rho(\delta)<\kappa^{-1}$.
Since $|\tB(c;\delta/2)|\le \kappa^2 |\tB(c;\delta)|$, it follows that
$$ A(x,f,s)\le \kappa(\theta(\delta)+\rho(\delta))\frac{|Df^s(x)|}{|\tB(c;\delta/2)|}.$$
This proves (\ref{eqn:thetaind}), completing the proof of the lemma.
\end{proof}

\begin{proof}[Proof of Proposition~\ref{prop:shadowablegrowth}]
Note that there exists $L(\delta)>L$ for each $\delta>0$ small enough such that $L(\delta)\to\infty$, $L(\delta)\delta\to 0$ and $ r(L(\delta)\delta)/ L(\delta)\to\infty$ as $\delta\to 0$, where $r(\cdot)$ is as in Lemma~\ref{lem:derivativeBC}.
Replacing $L(\delta)$ by a smaller function if necessary, we may assume
\begin{equation}\label{eqn:deep2large}
\inf\{|Df^{n+1}(v)|: v\in\CV, \dist_*(f^s(v),\Crit)\le L(\delta)\delta\}\ge L(\delta)^{1-\zeta}.
\end{equation}
Consider $v\in\CV$ and $\delta>0$ small. Let $N=N_v(\delta)$ be the maximal positive integer such that $A(v,f, N)\le {\theta}/{\delta}$. Let us first prove that
\begin{equation}\label{eqn:noretnv}
f^j(v)\not\in \tB(L(\delta)\delta)\mbox{ for each }j=0,1,\ldots, N-1,
\end{equation}
 provided that $\delta$ is small enough.
Indeed, otherwise, there exists a minimal $t\in \{1,\ldots, N-1\}$ such that
$f^t(v)\in \tB(c; L(\delta)\delta)$ for some $c\in\Crit$. By Lemma~\ref{lem:derivativeBC},
\begin{equation}\label{eqn:derat1st}
|Df^t(v)|\ge \frac{r(L(\delta)\delta)}{D_c(L(\delta)\delta)},
\end{equation}
hence
$$A(v, f, N)\ge \frac{|Df^t(v)|}{\dist (f^t(v),\Crit)}\ge \frac{r(L(\delta)\delta)}{|\tB(c;L(\delta)\delta)|D_c(L(\delta)\delta)}
=\frac{r(L(\delta)\delta)}{L(\delta)\delta} > \frac{\theta}{\delta},$$
provided that $\delta>0$ is small enough. This is a contradiction.

In the following we shall define an integer $M=M_v(\delta)\in \{1,2,\ldots, N\}$ for $\delta>0$ small enough, such that (\ref{eqn:dermv}) holds. This will complete the proof of the proposition.
Indeed, (\ref{eqn:dermv}) implies (\ref{eqn:mvdelta}): when $\delta>0$ is small, if $\delta'\ge \sqrt{\delta}$ then $|Df^{M_v(\delta)+1}(v)|$ is large, so $M_v(\delta)$ is large, and if $\delta'\le \sqrt{\delta}$ then $M_v(\delta)$ is also large since the forward orbit of $v$ is disjoint from $\Crit$.

If $f^N(v)\in \tB(L(\delta)\delta)$, then we take $M(v;\delta)=N$.  Since $\delta'\le L\delta$, the inequality (\ref{eqn:dermv}) follows from (\ref{eqn:deep2large}).
In the following we assume that $f^{N}(v)\not\in \tB(L(\delta)\delta)$.
For each $k=0,1,\ldots$, let $\delta_k=2^{-k}\delta_*$ and let $V_k=\tB(\delta_k)$.
Let $k_1$ be the minimal non-negative integer such that
$f^j(v)\not\in V_{k_1}$ for each $0\le j\le N$.  Then $2\delta_{k_1}> L(\delta)\delta$. It follows that
$$\sum_{0\le k< k_1} \left(\frac{\delta}{\delta_k}\right)^{\zeta/2}
\le \left(\frac{\delta}{2\delta_{k_1}}\right)^{\zeta/2}\frac{1}{1-2^{-\zeta/2}}\le \frac{L(\delta)^{-\zeta/2}}{1-2^{-\zeta/2}}\le \frac{\theta}{2},
$$
provided that $\delta>0$ is small enough.
Let $s_{k_1}=-1$ and for each $0\le k< k_1$, define
$$s_k=\sup\{0\le j\le N: f^j(v)\in V_k\}.$$
Then, either
\begin{equation}\label{eqn:out}
\sum_{n=s_0+1}^{N}\frac{|Df^n(v)|}{\dist (f^{n}(v), \Crit)}\ge \frac{\theta}{ 2\delta};
\end{equation}
or
\begin{equation}\label{eqn:in}
\sum_{n=s_{k+1}+1}^{s_{k}} \frac{Df^n(v)}{\dist (f^{n}(v),\Crit)} \ge  \frac{1}{\delta}\left(\frac{\delta}{\delta_k}\right)^{\zeta/2}
\mbox{ for some }0\le k <k_1.
\end{equation}

If (\ref{eqn:out}) holds, then we define $M=N$. Since $f^n(v)\not\in\tB(\delta_*)$ for each $s_0<n\le N$, by Proposition~\ref{prop:mane},
there exists a constant $C_1>0$ such that
$$|Df^{N+1}(v)|\ge C_1 \sum_{n=s_0+1}^N \frac{|Df^n(v)|}{\dist(f^n(v),\Crit)}\ge
\frac{C_1\theta}{2\delta}\ge \left(\frac{\delta_*}{\delta}\right)^{1-\zeta},$$ provided that $\delta>0$ is small enough.
Thus (\ref{eqn:dermv}) holds  in this case.

If (\ref{eqn:in}) holds then we  define $M=s_k$.
By definition of $s_k$ there exists $c_k\in\Crit$ such that $f^{s_k}(v)\in\tB(c_k; \delta_k)$ and $\delta'\le \delta_k$.
By Lemma~\ref{lem:derVSdiscrt}, there exists a constant $C_2>0$ such that
$$\frac{|Df^{s_k}(v)|}{|\tB(c_k;\delta_k)|}\ge C_2
\sum_{n=s_{k+1}+1}^{s_{k}} \frac{|Df^n(v)|}{\dist (f^{n}(v),\Crit)}\ge \frac{C_2}{\delta}\left(\frac{\delta}{\delta_k}\right)^{\zeta/2},
$$
where the second inequality follows from (\ref{eqn:in}).
Since $s_k>s_{k+1}$, $f^{s_k}(v)\not\in \tB(c_k;\delta_k/2)$, hence
$|Df(f^{s_k}(v))||\tB(c_k;\delta_k)|\ge C_3 \delta_k,$
where $C_3>0$ is constant.
Therefore
$$|Df^{s_k+1}(v)|=|Df^{s_k}(v)||Df(f^{s_k}(v))|\ge C_2C_3\frac{\delta_k}{\delta}\left(\frac{\delta}{\delta_k}\right)^{\zeta/2}. $$
Since $\delta_k/\delta\ge 2\delta_{k_1}/\delta\ge L(\delta)$, this implies (\ref{eqn:dermv}) provided that $\delta$ is small enough.
\end{proof}

\subsection{Proof of Proposition~\ref{prop:quantifiedmane}}\label{subsec:1landing}
In this section, we study the derivative of first landing map to critical neighborhoods for the map $f$ and prove Proposition~\ref{prop:quantifiedmane}.
\begin{proof}[Proof of Proposition~\ref{prop:quantifiedmane}]
Let $\delta_0>0$ be a small constant such that Lemma~\ref{lem:derivativeBC} applies for all $\delta\in (0,\delta_0)$ with $r(\delta)>2$.  By Proposition~\ref{prop:mane} we only need to prove the proposition in the case that $\delta\in (0,\delta_0/2)$.

Consider an orbit $\{f^j(x)\}_{j=0}^n$ in $\cL_c(\delta)$. Since $f^n(x)\in \tB(2\delta)$, there exists
a sequence of non-negative integers $n_1<n_2<\cdots<n_m=n$ with the following properties:
\begin{itemize}
\item $n_1$ is the minimal non-negative integer such that $f^{n_1}(x)\in \tB(\delta_0)$;
\item for each $i=1,2,\ldots, m-1$, $n_{i+1}$ is the minimal integer with $n_{i+1}>n_i$ and
$$\dist_*(f^{n_{i+1}}(x),\Crit)\le \min\left(\delta_0, 2\dist_*(f^{n_i}(x),\Crit)\right).$$
\end{itemize}
For each $i=1,2,\ldots, m$, let $c_i\in\Crit$ be such that $f^{n_i}(x)\in \tB(c_i; \delta_0)$ and let
$\rho_i=\dist_* (f^{n_i}(x), \Crit)$. So $c_m=c$ and $\rho_1, \rho_2, \ldots, \rho_{m-1}\in [\delta, \delta_0)$.

For each $i=1,2,\ldots, m-1$, applying Lemma~\ref{lem:derivativeBC} to the orbit $\{f^j(x)\}_{j=n_i+1}^{n_{i+1}}$ with $\delta=\rho_i$, we obtain
\begin{align*}
|Df^{n_{i+1}-n_i}(f^{n_i}(x))|& =|Df(f^{n_i}(x))||Df^{n_{i+1}-n_i-1} (f^{n_i+1}(x))|\\
& \ge  D_{c_i}(\rho_i) \frac{r(\rho_i)}{D_{c_{i+1}}(\rho_i)}
\ge 2 \frac{D_{c_i}(\rho_i)}{D_{c_{i+1}}(\rho_i)}.
\end{align*}
Setting $\rho_0=\delta''$, we also have
$|Df^{n_1}(x)|\ge \frac{2\kappa_0} {D_{c_1}(\rho_0)},$
where $\kappa_0>0$ is a constant.
Indeed, if $\delta''<\delta_0$ then the inequality follows from Lemma~\ref{lem:derivativeBC}; otherwise, it follows from M\~an\'e's theorem (Proposition~\ref{prop:mane}).
Thus
\begin{multline*}
|Df^n(x)| \ge \frac{2\kappa_0}{D_{c_1}(\rho_0)}\prod_{i=1}^{m-1}\frac{2D_{c_i}(\rho_i)}{D_{c_{i+1}}(\rho_i)}
 =\frac{2\kappa_0}{D_{c_m}(\rho_{m-1})}
\prod_{i=1}^{m-1}\left( 2 \frac{D_{c_i}(\rho_i)}{D_{c_i}(\rho_{i-1})}\right).
\end{multline*}
Reducing $\delta_0>0$ if necessary,
this implies that
\begin{multline*}
|Df^n(x)|\ge \frac{2\kappa_0}{D_{c_m}(\rho_{m-1})}
\prod_{i=1}^{m-1}\left(\frac{\rho_i}{\rho_{i-1}}\right)^{1-\ell_{c_i}^{-1}}
 \ge \frac{2\kappa_0}{D_{c_m}(\rho_{m-1})}
\left(\frac{\rho_{m-1}}{\delta''}\right)^{1-\ell_{\max}^{-1}},
\end{multline*}
hence
$$|Df^n(x)| \ge \frac{ \kappa_0 }{D_{c_m}(\delta)}\left(\frac{\delta}{\delta''}\right)^{1-\ell_{\max}^{-1}},$$
since $\rho_{m-1}\ge \delta$.
\end{proof}

\section{Growth of derivatives along pseudo-orbits}\label{sec:pseudoorb}
The main goal of this section is to prove Theorem~\ref{theo:dergwrothrandom}. So we will be working on $f\in\SC_1$ and $\Omega=\sF_1$.
In~\S\ref{subsec:returntocn}, we apply the binding argument to deduce a part of the first statement of the theorem from the results obtained in \S~\ref{sec:derterministic}. We shall decompose a random orbit into pieces, each of which is shadowed by either the true orbit of a critical value or a true orbit corresponding to a first landing into a critical neighborhood. In~\S\ref{subsec:exprate}, we complete the proof of Theorem~\ref{theo:dergwrothrandom} by combing this result in~\S\ref{subsec:returntocn} with M\~an\'e's theorem.
In \S\ref{subsec:moreprop}, we collect a few properties for return maps to the critical neighborhood $\tB(\eps)$ under $\eps$-random maps. These are deduced from Theorem~\ref{theo:dergwrothrandom}, and reveal that many of the properties of the deterministic dynamics remain under random perturbations.

\subsection{Return to critical neighborhoods}\label{subsec:returntocn}
In this section, we shall prove the following proposition which asserts that part (i) of Theorem~\ref{theo:dergwrothrandom} holds with $\eps^{\alpha(\eps)}$ replaced by $0$.

\begin{prop}\label{prop:dergrowthps}
Consider $f\in\SC_1$ and $\Omega=\sF_1$. For each $\eps>0$ small, there exists a constant $\hLambda(\eps)>0$ such that $\lim_{\eps\to 0} \hLambda(\eps)=\infty$ and such that for each
$\bfg\in\Omega_\eps$, $x\in [0,1]$ with $\dist(x,\CV)\le 4\eps$ and an integer $s\ge 1$, if $\bfg^j(x)\not\in \tB(\eps)$ for $1\le j<s$ and $\bfg^s(x)\in \tB(c;2\eps)$ for some $c\in\Crit$, then
$$|D\bfg^s(x)|\ge \hLambda(\eps)/D_c(\eps).$$
\end{prop}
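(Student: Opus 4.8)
The plan is to decompose the random orbit $\bfg^0(x), \bfg^1(x), \ldots, \bfg^{s}(x)$ into successive blocks, each of which is \emph{shadowed} by a true orbit of $f$, and then multiply the derivative estimates obtained on each block. The shadowing comes in two flavours: a \emph{binding block}, where the random orbit follows the true orbit of a critical value $v\in\CV$ for a carefully chosen length $M_v(\delta)$, and a \emph{free block}, where the random orbit stays outside the critical neighborhood $\tB(\eps)$ and is shadowed by a deterministic orbit that performs a first landing into some $\tB(c';2\eps)$. First I would set up the induction: starting at time $0$, since $\dist(x,\CV)\le 4\eps$ we are near some critical value $v_0$; I apply Lemma~\ref{lem:shadow} to get a $C$-binding period $N_0$ (using Proposition~\ref{prop:shadowablegrowth} with an appropriate small $\delta=\delta(\eps)\succ\eps$ to supply the hypothesis $A(v,f,N)W\le\theta_1/\eps$, so that $N_0=M_{v_0}(\delta)$ works), during which $|D\bfg^{N_0}(x)|\asymp|Df^{N_0}(v_0)|$ by \eqref{eqn:bindder}, and the random orbit lands close to $f^{N_0}(v_0)$, which by \eqref{eqn:noretmv} is outside $\tB(L\delta)\supset\tB(\eps)$. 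From the landing point I run a free block until the next entry into $\tB(\eps)$; this entry is into some $\tB(c';2\eps)$, and Proposition~\ref{prop:quantifiedmane} gives a lower bound $|Df^n(\cdot)|\ge (\kappa_0/D_{c'}(\eps))(\eps/\delta'')^{1-\ell_{\max}^{-1}}$, with $\delta''$ the distance of the shadowed starting point to $\CV$. Because the starting point of a free block is the landing point $f^{N_0}(v_0)$, the relevant $\delta''$ is roughly $\dist_*(f^{M_{v_0}(\delta)}(v_0),\Crit)$, which is exactly the quantity $\delta'$ controlled by \eqref{eqn:dermv} in Proposition~\ref{prop:shadowablegrowth}. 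Then I am at a new critical neighborhood entry and repeat.

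The key arithmetic is that when I chain a binding block of length $M=M_v(\delta)$ to the free block that follows it, the product of the two derivative estimates telescopes. The binding block contributes $|Df^{M+1}(v)|\succeq(\delta'/\delta)^{1-\zeta}$ by \eqref{eqn:dermv} (this includes the one extra step $Df$ at the landing point, so the ``$+1$'' is what makes the free-block estimate start from $f^M(v)$ properly), and the free block from $f^M(v)$ to the next entry into $\tB(c';2\eps)$ contributes $\succeq (1/D_{c'}(\eps))(\eps/\delta')^{1-\ell_{\max}^{-1}}$. Since $\delta\succeq\eps$ and $1-\zeta\ge 1-\ell_{\max}^{-1}$ for $\zeta$ chosen small, the factors $(\delta')^{1-\zeta}$ and $(\delta')^{-(1-\ell_{\max}^{-1})}$ combine to something bounded below by a positive power of $\delta'$ (or at worst a bounded constant), so the product over one binding-plus-free cycle is bounded below by $c(\eps)/D_{c'}(\eps)$ with $c(\eps)\to\infty$ as $\eps\to 0$ — here the growth $c(\eps)\to\infty$ is inherited from $r(\eps)\to\infty$ in Lemma~\ref{lem:derivativeBC} (which is where the summability condition \eqref{eqn:scbeta}, hence backward contraction Proposition~\ref{prop:bc}, enters). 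Iterating over all cycles: the first cycle produces a genuine $\hLambda(\eps)\to\infty$ gain, and each subsequent cycle produces a factor $\ge 1$ (in fact $\ge c(\eps)$, but $\ge 1$ suffices), so the telescoping $1/D_{c_0}(\eps)\cdot D_{c_0}(\eps)/D_{c_1}(\eps)\cdots$ leaves exactly $\hLambda(\eps)/D_c(\eps)$ at the final landing into $\tB(c;2\eps)$.

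I expect two technical obstacles. The first is the \emph{choice of the auxiliary scale $\delta=\delta(\eps)$}: it must be large enough relative to $\eps$ that Lemma~\ref{lem:shadow}'s hypothesis $A(v,f,M_v(\delta))W\le\theta_1/\eps$ holds (which needs $A(v,f,M_v(\delta))\le\theta/\delta$ from \eqref{eqn:smalla} together with $W=\sum|Df^j(v)|^{-1}$ bounded, using \eqref{eqn:scbeta}), yet small enough that $\delta/\eps$ does not overwhelm the gains; also $L$ in Proposition~\ref{prop:shadowablegrowth} must be taken $\ge 1$ so that $\tB(L\delta)\supset\tB(\eps)$, ensuring the binding block genuinely avoids $\tB(\eps)$. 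The second obstacle is the \emph{start and the end of the whole orbit}: at the very first step, the hypothesis is only $\dist(x,\CV)\le 4\eps$ rather than $\le\eps$, so I must check the binding lemma still applies with this slightly larger initial displacement (it does, at the cost of adjusting constants, since $4\eps$ is still $\ll\delta$); at the end, after the last free block the random orbit enters $\tB(c;2\eps)$ and I simply stop — no extra binding is needed. A minor bookkeeping point is that a free block may be empty (immediate re-entry into $\tB(\eps)$), in which case that cycle contributes only the binding factor, which is still $\succeq(\delta'/\delta)^{1-\zeta}\ge 1$; and one must handle the degenerate case $s=1$ or very small $s$ separately, but there $|D\bfg^s(x)|\ge D_c(\eps)^{-1}\cdot(\text{something large})$ is immediate from \eqref{eqn:dist*der} and continuity. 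Assembling these, $\hLambda(\eps)$ is the product of the universal constants $\kappa_0,\theta_0,\ldots$ with the first-cycle gain $r(L\delta(\eps)\delta(\eps))/L(\delta(\eps))$ or similar, which tends to $\infty$ as $\eps\to 0$.
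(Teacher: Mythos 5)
There is a genuine gap, and it lies in your treatment of the ``free block''. Under the hypotheses of the proposition the random orbit makes no intermediate visits to $\tB(\eps)$, so your scheme reduces to a single binding block $[0,M_{v_0}(\delta(\eps))]$ followed by a single free block running all the way to time $s$, and you propose to control that free block by saying it ``is shadowed by a deterministic orbit that performs a first landing'' and then invoking the deterministic Proposition~\ref{prop:quantifiedmane}. That shadowing claim is unjustified and is essentially as hard as the proposition itself: the free block has unbounded length, it is only known to avoid $\tB(\eps)$ (a neighborhood shrinking with $\eps$), and it may pass arbitrarily many times through the region between depth $\eps$ and any fixed depth $\delta_0$, where single-step derivatives are small and nonlinearity is large, so the $\eps$-errors accumulated at each step cannot be beaten down by the available expansion (outside $\tB(\eps)$ one only gets rates degenerating with $\eps$ --- indeed that is part (ii) of Theorem~\ref{theo:dergwrothrandom}, proved \emph{after} this proposition, so it cannot be used here without circularity). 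In the paper, shadowing of random orbits by true orbits is only ever performed in two safe situations: during a binding period, where Lemma~\ref{lem:shadow} controls the error by the derivative growth along the critical orbit, and over free stretches that avoid the \emph{fixed} neighborhood $\tB(\delta_0)$, where Proposition~\ref{prop:mane1} gives uniform expansion and the comparison with a true orbit is only needed for a bounded number of steps $N(\delta_0)$, with $\eps\le\eps(\delta_0)$ (this is Lemma~\ref{lem:pseudoorbitland}). Your free block does not fall into either situation.

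The missing idea is to re-bind at \emph{every} return to the fixed intermediate scale $\delta_0$, with the binding depth adapted to the actual return depth. The paper cuts the orbit at the successive returns $s_1<s_2<\cdots$ to $\tB(\delta_0)$, sets $\delta_i=\dist_*(x_{s_i},\Crit)\in[\eps,\delta_0]$, and starts a new binding block of length $M_{v_i}(\delta_i)$ (not $M_{v_i}(\delta(\eps))$) at each such return; between the end of a binding block and the next return the orbit avoids $\tB(\delta_0)$, so Lemma~\ref{lem:pseudoorbitland} applies legitimately. The estimate that makes the resulting product telescope across returns of varying depth is the ``$M+1$''-step bound \eqref{eqn:derlastM} of Proposition~\ref{prop:psorbretpre} (propagated to \eqref{eqn:Dbfgn+1} in Lemma~\ref{lem:iorbit}), which produces factors $\Lambda_0(\delta_i)^{\zeta_3}(\delta_{i+1}/\delta_i)^{1-\ell_{\max}^{-1}}$ whose product collapses to $(\delta_{k-1}/\eps)^{1-\ell_{\max}^{-1}}\ge 1$ times the final factor $\Lambda_0(\eps)^{\zeta_3}/D_c(\eps)$. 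Your single auxiliary scale $\delta(\eps)$ cannot play this role: after a return at depth $\delta_i\gg\eps$, binding at scale $\delta(\eps)$ is not adapted to the displacement (of order $\delta_i$) of the random orbit from the critical value, and without the depth-dependent exponent bookkeeping in \eqref{eqn:derlastM} the ``factors $\ge 1$ per cycle'' assertion has no basis. The divergence $\hLambda(\eps)\to\infty$ then comes from $\Lambda_0(\eps)\to\infty$ (equation \eqref{eqn:dfnLambda}, via \eqref{eqn:dermv} and $M_v(\eps)\to\infty$), rather than directly from $r(\eps)\to\infty$ as you suggest.
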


To prove this proposition, we shall first define a binding period for each $v\in\CV$ and each $\delta>0$ small as follows.

Let $C_0=\max_{[0,1]}|Df|\ge 1$, let $\eta_*\in (0,1)$ be a constant which is smaller than the distance between any two distinct critical points and let
$$W_0=\max_{v\in\CV} \sum_{n=0}^\infty |Df^n(v)|^{-1}.$$
Let $\theta>0$ be a small constant such that
\begin{equation}\label{eqn:theta}
4\theta W_0 \le \theta_1 \,\, \mbox{ and }\,\, 16 e \theta W_0 C_0 \le \eta_*,
\end{equation}
where $\theta_1>0$ is as in Lemma~\ref{lem:shadow}. Moreover, fix constants $L>2^{\ell_{\max}}$ and $\zeta\in (0,\ell_{\max}^{-1})$. For $v\in\CV$ and $\delta>0$ small, we fix a positive integer $M_v(\delta)$, called {\em the preferred binding period} for $(v,\delta)$, such that the conclusion of Proposition~\ref{prop:shadowablegrowth} holds for these constants $\theta$, $L$ and $\zeta$.
Since $M_v(\delta)\to \infty$ as $\delta\to 0$ for each $v\in\CV$, we have
\begin{equation}\label{eqn:dfnLambda}
\Lambda_0(\delta):=\inf_{v\in\CV} |Df^{M_v(\delta)+1}(v)|\to \infty \mbox{ as }\delta\to 0.
\end{equation}

\begin{prop}\label{prop:psorbretpre}
Given $f\in\SC_1$ and $\Omega=\sF_1$, there exist positive constants $\zeta_1, \zeta_2$ with the following property.
For $\delta>0$ sufficiently small, and $v\in\CV$, let
$M=M_v(\delta)\ge 1$ be the preferred binding period defined as above. Then
for any $\bfg\in\Omega_\delta^\N$ and $y\in [0,1]$ with $\dist(y, v)\le 4 \delta$, we have
\begin{align}
\label{eqn:yjposition}
y_j :=\bfg^j(y_0)& \not\in \tB(2\delta)\mbox{ for all }0\le j<M;\\
\label{eqn:derlastMp}
|D\bfg^{M}(y_0)|& \ge \frac{\Lambda_0(\delta)^{\zeta_1}}{D_c(\delta)},
\end{align}
where  $c$ is the critical point of $f$ closest to $y_M:=\bfg^M(y)$.
Moreover, if $y_M\not\in \tB(\delta)$ then
\begin{equation}
\label{eqn:derlastM}
|D\bfg^{M+1}(y_0)| \ge \Lambda_0(\delta)^{\zeta_2}\left(\frac{\dist_*(y_{M}, \Crit)}{\delta}\right)^{1-\ell_{\max}^{-1}},
\end{equation}
\end{prop}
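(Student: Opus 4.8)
The plan is to produce a binding period in the sense of Lemma~\ref{lem:shadow}, read off (\ref{eqn:yjposition})--(\ref{eqn:derlastMp}) from the binding inequalities together with the properties of $M=M_v(\delta)$ supplied by Proposition~\ref{prop:shadowablegrowth}, and then obtain (\ref{eqn:derlastM}) by applying one more iterate of $\bfg$ and invoking (\ref{eqn:dist*der}). For the binding I apply Lemma~\ref{lem:shadow} with $\eps=4\delta$ and $N=M$: since $W:=\sum_{j=0}^{M}|Df^{j}(v)|^{-1}\le W_0$ and $A(v,f,M)\le\theta/\delta$ by (\ref{eqn:smalla}), the choice $4\theta W_0\le\theta_1$ gives $A(v,f,M)W\le\theta W_0/\delta\le\theta_1/(4\delta)=\theta_1/\eps$; hence $M$ is an $eW$-binding period for $(v,4\delta)$, so for every $y_0$ with $\dist(y_0,v)\le4\delta$, every $\bfg\in\Omega_{\delta}^{\N}\subset\Omega_{4\delta}^{\N}$ and every $0\le j<M$, the inequalities (\ref{eqn:bindcrit})--(\ref{eqn:bindderdis}) hold with $C=eW\le eW_0$.

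Now (\ref{eqn:yjposition}) follows: (\ref{eqn:bindcrit}) gives $\dist(y_j,\Crit)\ge\tfrac12\dist(f^{j}(v),\Crit)$ while (\ref{eqn:noretmv}) gives $f^{j}(v)\notin\tB(L\delta)$ for $0\le j<M$, and since the critical points are non-flat one has $|\tB(c;t)|\asymp t^{1/\ell_c}$, so together with the choice $L>2^{\ell_{\max}}$ a point within $\tfrac12\dist(f^{j}(v),\Crit)$ of a point outside $\tB(L\delta)$ must lie outside $\tB(2\delta)$. I will also use the a priori bound $\Lambda_0(\delta)\le C_*/\delta$: from the construction of $M_v(\delta)$ in Proposition~\ref{prop:shadowablegrowth} one has $M_v(\delta)\le N_v(\delta)$ with $A(v,f,N_v(\delta))\le\theta/\delta$, and a direct inspection of that construction shows $|Df^{M_v(\delta)+1}(v)|\lesssim1/\delta$ in every case.

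For (\ref{eqn:derlastMp}), (\ref{eqn:bindder}) with $j=M-1$ gives $|D\bfg^{M}(y_0)|\ge e^{-1}|Df^{M}(v)|=e^{-1}|Df^{M+1}(v)|/|Df(f^{M}(v))|$. I bound $|Df^{M+1}(v)|$ below by $\max\bigl(\Lambda_0(\delta),(\delta'/\delta)^{1-\zeta}\bigr)$ with $\delta'=\max(\dist_*(f^{M}(v),\Crit),\delta)$ using (\ref{eqn:dermv}), and $|Df(f^{M}(v))|$ above by non-flatness: $|Df(f^{M}(v))|\lesssim1$ if $f^{M}(v)\notin\tB(\delta_*)$, and $|Df(f^{M}(v))|\asymp D_{c''}\bigl(\dist_*(f^{M}(v),\Crit)\bigr)$ if $f^{M}(v)\in\tB(c'';\delta_*)$ — here one may assume, after shrinking $\delta_*$, that distinct critical values are $\delta_*$-separated, so that $f(c'')$ is the critical value nearest $f^{M+1}(v)$. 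A short case analysis finishes the proof: if $y_M$ is far from $f^{M}(v)$, then (\ref{eqn:bindderdis}) already forces $|Df^{M}(v)|\gtrsim1/\delta$; if $y_M$ is close to $f^{M}(v)$ (so that the critical point $c$ nearest $y_M$ equals $c''$, resp.\ $y_M\notin\tB(\delta_*)$), one balances $(\delta'/\delta)^{1-\zeta}$ against $D_{c''}(\dist_*(f^{M}(v),\Crit))$ and against $\Lambda_0(\delta)$, distinguishing $\dist_*(f^{M}(v),\Crit)\ge\delta$ from $<\delta$, and uses $\Lambda_0(\delta)\le C_*/\delta$ to control powers of $\delta$; in all subcases (\ref{eqn:derlastMp}) holds provided $\zeta_1$ is small enough, e.g.\ $\zeta_1\le\tfrac12(\ell_{\max}^{-1}-\zeta)$ (recall $\zeta<\ell_{\max}^{-1}$).

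Finally, for (\ref{eqn:derlastM}): if $y_M\notin\tB(\delta)$ then $\dist_*(y_M,\Crit)\ge\delta$ (again by separation of critical values), so $g_M\in\Omega_{\delta}\subset\Omega_{\dist_*(y_M,\Crit)}$ and (\ref{eqn:dist*der}) gives $|Dg_M(y_M)|\ge D_c(\dist_*(y_M,\Crit))$ (with $|Dg_M(y_M)|\asymp1\asymp D_c(\delta_*)$ when $y_M\notin\tB(\delta_*)$), $c$ being the critical point nearest $y_M$. Multiplying by the subcase-refined lower bound for $|D\bfg^{M}(y_0)|$ gives $|D\bfg^{M+1}(y_0)|\gtrsim\Lambda_0(\delta)^{\zeta_1}\bigl(\dist_*(y_M,\Crit)/\delta\bigr)^{1-\ell_c^{-1}}$; passing from $\ell_c^{-1}$ to $\ell_{\max}^{-1}$ costs only a factor $\bigl(\dist_*(y_M,\Crit)/\delta\bigr)^{\ell_c^{-1}-\ell_{\max}^{-1}}\le(\delta_*/\delta)^{\ell_c^{-1}-\ell_{\max}^{-1}}$, which is absorbed, using $\dist_*(y_M,\Crit)\ge\delta$ and $\Lambda_0(\delta)\le C_*/\delta$, by taking $\zeta_2\le\zeta_1$ small. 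The hard part is (\ref{eqn:derlastMp}): one has to track carefully the critical point nearest $y_M$ against the scale at which the shadowed deterministic orbit $f^{M}(v)$ returns near $\Crit$, and the competition between the loss $|Df(f^{M}(v))|^{-1}$, the gain (\ref{eqn:dermv}), and the crude bound $|Df^{M+1}(v)|\ge\Lambda_0(\delta)$ — and it is the a priori polynomial bound $\Lambda_0(\delta)\le C_*/\delta$ that keeps these exponent comparisons uniform in $\delta$.
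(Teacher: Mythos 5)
Your overall strategy — apply Lemma~\ref{lem:shadow} with $\eps=4\delta$ to make $M=M_v(\delta)$ an $eW_0$-binding period, read off (\ref{eqn:yjposition}) from (\ref{eqn:bindcrit}) and (\ref{eqn:noretmv}), and estimate $|D\bfg^M(y_0)|$ via $|Df^{M+1}(v)|/(e|Df(f^M(v))|)$ together with (\ref{eqn:dermv}) and the definition of $\Lambda_0(\delta)$ — is exactly the paper's, and your treatment of (\ref{eqn:yjposition}) and (\ref{eqn:derlastMp}) can be completed as sketched (your a priori bound $\Lambda_0(\delta)\le C_*/\delta$ is correct and legitimately replaces part of the paper's bookkeeping there). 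The gap is in (\ref{eqn:derlastM}). Writing $\delta'=\max(\dist_*(f^M(v),\Crit),\delta)$ and $\delta''=\dist_*(y_M,\Crit)$, your chain $|D\bfg^{M+1}(y_0)|\ge e^{-1}|Df^{M+1}(v)|\,|Dg_M(y_M)|/|Df(f^M(v))|$ yields at best $\Lambda_0(\delta)^{\zeta_1}(\delta''/\delta)^{1-\ell_c^{-1}}$, and the final ``exponent swap'' from $\ell_c^{-1}$ to $\ell_{\max}^{-1}$ costs the factor $(\delta''/\delta)^{\ell_c^{-1}-\ell_{\max}^{-1}}$, which, when $\ell_c<\ell_{\max}$, can be a genuine negative power of $\delta$. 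You claim to absorb it ``using $\Lambda_0(\delta)\le C_*/\delta$ and $\zeta_2\le\zeta_1$ small'', but this uses the bound in the wrong direction: the only slack you have is a power of $\Lambda_0(\delta)$, and the summability condition gives no growth rate for $\Lambda_0(\delta)$ — it may grow slower than any power of $1/\delta$. An upper bound on $\Lambda_0$ cannot manufacture the needed lower bound. Concretely, take two critical points of orders $\ell_c=2$ and $\ell_{\max}=4$, $\Lambda_0(\delta)\sim\log(1/\delta)$, $\dist_*(f^M(v),\Crit)\le\delta$ (so $\delta'=\delta$ and (\ref{eqn:dermv}) gives nothing beyond $\Lambda_0$), and $\delta''=\sqrt{\delta}$: your chain certifies only $\Lambda_0\,\delta^{-1/4}$, while the right-hand side of (\ref{eqn:derlastM}) is $\Lambda_0^{\zeta_2}\delta^{-3/8}$, which is larger for small $\delta$. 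Your ``far'' alternative (distance of $y_M$ from $f^M(v)$ bounded below by a constant, forcing $|Df^M(v)|\gtrsim1/\delta$ via (\ref{eqn:bindderdis})) does not cover this regime either, since here $|y_M-f^M(v)|\asymp\delta^{1/4}\to0$.

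What is missing is precisely the paper's Case 1: when $\delta''\ge\Lambda_0(\delta)^{2\zeta_1}\delta'$ (a threshold depending on $\Lambda_0$, not a constant and not a power of $\delta$), the point $f^M(v)$ is much deeper than $y_M$, so $|y_M-f^M(v)|\ge C|\tB(c;\delta'')|$, and then (\ref{eqn:bindderdis}) gives the lower bound $|Df^M(v)|\ge C|\tB(c;\delta'')|/(4e\delta W_0)$; multiplying by $|Dg_M(y_M)|\gtrsim D_c(\delta'')$ yields $|D\bfg^{M+1}(y_0)|\gtrsim\delta''/\delta\ge\Lambda_0^{2\zeta_1/\ell_{\max}}(\delta''/\delta)^{1-\ell_{\max}^{-1}}$, which is where $\zeta_2=\zeta_1/\ell_{\max}$ comes from. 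In the complementary regime $\delta''\le\Lambda_0^{2\zeta_1}\delta'$ your chain does work, because the loss $(\delta'/\delta'')^{\ell_c^{-1}-\ell_{\max}^{-1}}$ is then controlled by a power of $\Lambda_0$. So you need to add this $\Lambda_0$-dependent dichotomy (or restrict to maps all of whose critical points have the same order, where $\ell_c=\ell_{\max}$ and your argument closes); as written, the proof of (\ref{eqn:derlastM}) does not go through.
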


\begin{proof}
Fix $v\in\Crit$ and $\delta>0$ small and write $M=M_v(\delta)$. Let $y$ and $\bfg$ be as in the proposition and let $c$ be the critical point of $f$ closest to $y_M$. By Lemma~\ref{lem:shadow}, (\ref{eqn:smalla}) implies that $M$ is an $eW_0$-binding period for $(v,4\delta)$.
By (\ref{eqn:bindcrit}) and (\ref{eqn:noretmv}), the statement (\ref{eqn:yjposition}) holds provided that $\delta>0$ is small enough. By (\ref{eqn:bindderdis}), we have
\begin{multline*}
\eta_M: =|y_M-f^M(v)|\le 4e\delta W_0 |Df^{M}(v)|\le 4e\delta W_0 C_0 |Df^{M-1}(v)|\\
\le 4e\delta W_0 C_0 A(x,f, M)\le 4e W_0 C_0 \theta \le  \eta_*/4,
\end{multline*}
so $\dist(f^M(v), c)\le 3 \dist (f^M(v),\Crit)$. Thus there exists  $C_1>1$ such that
\begin{equation}\label{eqn:dffMv}
|Df(f^M(v))| \le C_1 D_c(\delta'),
\end{equation}
where $\delta'=\max(\dist_*(f^{M}(v),\Crit),\delta)$.
Let $\zeta_1= (\ell_{\max}^{-1}-\zeta)/(2-2\zeta)$. By (\ref{eqn:dermv}) and the definition of $\Lambda_0(\delta)$ we obtain
\begin{equation}\label{eqn:dfM+1}
|Df^{M+1}(v)|\ge \Lambda_0(\delta)^{2\zeta_1} \left(\frac{\delta'}{\delta}\right)^{1-\ell_{\max}^{-1}}.
\end{equation}

Let us prove the inequality (\ref{eqn:derlastMp}).  By (\ref{eqn:bindder}), (\ref{eqn:dfM+1})  and (\ref{eqn:dffMv}), we have
$$|D\bfg^M(y_0)|\ge \frac{|Df^{M+1}(v)|}{e|Df(f^M(v))|}\ge  \frac{\Lambda_0(\delta)^{2\zeta_1}}{C_1e D_c(\delta')} \left(\frac{\delta'}{\delta}\right)^{1-\ell_{\max}^{-1}}\ge  \frac{C_2\Lambda_0(\delta)^{2\zeta_1}}{D_c(\delta)},$$
where $C_2>0$ is a constant and we used $\delta'\ge \delta$ for the last inequality. Provided that $\delta$ is small enough, $C_2\Lambda_0(\delta)^{\zeta_1}>1$, so (\ref{eqn:derlastMp}) holds.

Finally, let us assume $\delta'':=\dist_*(y_M, \Crit)\ge \delta$ and prove that (\ref{eqn:derlastM}) holds with $\zeta_2=\zeta_1/\ell_{\max}$ provided that $\delta$ is small enough. By (\ref{eqn:dist*der}), we have
\begin{equation}\label{eqn:dfyM}
 |Dg_M(y_M)| \ge C_3 D_c(\delta''),
\end{equation}
where $C_3>0$ is a constant.
We now distinguish two cases.

{\em Case 1. }
$\delta''> \Lambda_0(\delta)^{2\zeta_1} \delta'\ge \Lambda_0(\delta)^{2\zeta_1} \delta$. Provided that $\delta>0$ is small enough, $\Lambda_0(\delta)$ is large, so $f^M(v)$ is much closer to $c$ than $y_M$. Thus there exists $C_4>0$ such that
$\eta_M\ge C_4 |\tB(c;\delta'')|,$
which implies by (\ref{eqn:dfyM})
$$\eta_M |Dg_M(y_M)|\ge C_3C_4 |\tB(c; \delta'')|D_{c}(\delta'')=C_3C_4 \delta''.$$
By (\ref{eqn:bindder}) and (\ref{eqn:bindderdis}),
$$|D\bfg^{M+1}(y_0)|\ge \frac{|Df^M(v)|}{e}|Dg_M(y_M)|\ge \frac{\eta_M|Dg_M(y_M)|}{4e^2 W\delta } \ge C_5 \frac{\delta''}{\delta}\ge C_5 \Lambda_0(\delta)^{2\zeta_2}\left(\frac{\delta''}{\delta}\right)^{1-\ell_{\max}^{-1}},
$$
where $C_5>0$ is a constant. The inequality (\ref{eqn:derlastM}) follows provided that $\delta$ is small enough.

{\em Case 2. } $\delta''\le \Lambda_0(\delta)^{2\zeta_1}\delta'$.
In this case, combining (\ref{eqn:bindder}), (\ref{eqn:dfM+1}), (\ref{eqn:dffMv}) and (\ref{eqn:dfyM}), we obtain
\begin{multline*}
|D \bfg^{M+1} (y_0)|\ge \frac{|Df^{M+1}(v)|}{e}\frac{|D g_M(y_M)|}{|D f(f^M(v))|}\ge C_6\Lambda_0(\delta)^{2\zeta_1} \left(\frac{\delta'}{\delta}\right)^{1-\ell_{\max}^{-1}}\left(\frac{\delta''}{\delta'}\right)^{1-\ell_c^{-1}}\\
= C_6\Lambda_0(\delta) ^{2\zeta_1} \left(\frac{\delta''}{\delta}\right)^{1-\ell_{\max}^{-1}}\left(\frac{\delta'}{\delta''}\right)^{\ell_c^{-1}-\ell_{\max}^{-1}}\ge C_6 \Lambda_0(\delta)^{2\zeta_1(1-\ell_c^{-1}+\ell_{\max}^{-1})} \left(\frac{\delta''}{\delta}\right)^{1-\ell_{\max}^{-1}},
\end{multline*}
where $C_6>0$ is a constant. The inequality (\ref{eqn:derlastM}) follows provided that $\delta$ is small enough.
\end{proof}

Let $\cO^\eps(\delta)$ denote the collection of  $\eps$-random orbits $\{x_j\}_{j=0}^n$ for which $x_j\not\in \tB(\delta)$ for each $0\le j<n$, and for each $c\in\Crit$, let $\cL_c^\eps(\delta)$ denote the collection of $\eps$-random-orbits $\{x_j\}_{j=0}^n\in\cO^\eps(\delta)$ for which $x_n\in \tB(c;\delta)$.

\begin{lemm}\label{lem:pseudoorbitland}
Consider $f\in\LD$ and $\Omega=\sF_1$. For each $\delta>0$, there exist $\eps=\eps(\delta)>0$, $C(\delta)>0$ and $\widehat{\eta}=\widehat{\eta}(\delta)>0$ such that for any $\eps$-random orbit $\{\bfg^j(x)\}_{j=0}^n\in \cL_c^\eps(\delta)$, $c\in\Crit$, we have
\begin{equation}\label{eqn:psland}
|D\bfg^n(x)|\ge \frac{\kappa}{D_c(\delta)} \left(\frac{\delta}{\delta''}\right)^{1-\ell^{-1}_{\max}}\exp (\widehat{\eta}n),
\end{equation}
where $\delta''=\max(\dist(x, \CV),\delta)$ and $\kappa>0$ is a constant independent of $\delta$.
\end{lemm}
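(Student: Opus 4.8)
The plan is to prove the lemma as the random analogue of Proposition~\ref{prop:quantifiedmane}: the sharp factor $\kappa D_c(\delta)^{-1}(\delta/\delta'')^{1-\ell_{\max}^{-1}}$ will be obtained from that deterministic estimate, transported along \emph{short} random orbits by shadowing, and the extra factor $e^{\widehat\eta n}$ will come from a uniform M\'a\~ne-type lower bound; the two will then be glued at a $\delta$-dependent crossover. For the uniform bound, I would first note that the proof of Proposition~\ref{prop:mane1}(i) goes through verbatim with $\Omega=\sF_1$: it uses only the continuity of finite compositions near $f^N$ and M\'a\~ne's theorem (Proposition~\ref{prop:mane}) for the deterministic map $f$, never the admissibility of the perturbation space (which, for $\sF_1$, in fact fails). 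Applying it with the neighbourhood $U=\tB(\delta)$ of $\Crit$ gives constants $K(\delta)>1$ and $\eta(\delta)>0$ such that, once $\eps$ is small in terms of $\delta$, every $\eps$-random orbit in $\cO^\eps(\delta)$ satisfies $|D\bfg^n(x)|\ge K(\delta)^{-1}e^{\eta(\delta)n}$.

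For the short-orbit estimate, I would fix a threshold $n_0=n_0(\delta)$ (chosen below) and observe that if $\eps$ is small enough in terms of $\delta$ and $n_0$, then any $\{x_j\}_{j=0}^n\in\cL_c^\eps(\delta)$ with $n\le n_0$ stays within $\delta/10$ of the genuine orbit $\{f^j(x)\}$, so that $\{f^j(x)\}_{j=0}^n\in\cL_c(\delta/2)$; moreover, since $f$ is $C^3$, $\|g_j-f\|_{C^1}<\eps$, and $|Df(f^j(x))|\ge d_0(\delta)>0$ for $0\le j<n$ (because $f^j(x)\notin\tB(\delta/2)$ forces $|Df(f^j(x))|\gtrsim\delta^{1-\ell_{\max}^{-1}}$ by \eqref{eqn:Dcdelta} and non-flatness), a term-by-term comparison of $|Dg_j(x_j)|$ with $|Df(f^j(x))|$ gives $|D\bfg^n(x)|\ge\tfrac12|Df^n(x)|$. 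Then Proposition~\ref{prop:quantifiedmane}, applied with $\delta$ replaced by $\delta/2$, together with the elementary bounds $D_c(\delta/2)\le C_D\,D_c(\delta)$ for an absolute $C_D$ and $\max(\dist(x,\CV),\delta/2)\le\max(\dist(x,\CV),\delta)=\delta''$, yields $|D\bfg^n(x)|\ge\kappa_1 D_c(\delta)^{-1}(\delta/\delta'')^{1-\ell_{\max}^{-1}}$ with $\kappa_1>0$ absolute.

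It remains to glue. I would set $L(\delta)=\max\big(1,\log(\kappa K(\delta)/D_c(\delta))\big)$ with $\kappa:=\kappa_1/2$ (absolute), which is finite for each $\delta$, choose $n_0(\delta)$ with $\eta(\delta)n_0(\delta)\ge 2L(\delta)$, put $\widehat\eta(\delta)=\min\big(\eta(\delta)/2,(\log 2)/n_0(\delta)\big)>0$, and take $\eps(\delta)$ small enough for the two previous steps. For $n\ge n_0(\delta)$ the uniform bound gives $|D\bfg^n(x)|\ge K(\delta)^{-1}e^{\eta(\delta)n}$, and since $(\delta/\delta'')^{1-\ell_{\max}^{-1}}\le 1$ and $(\eta(\delta)-\widehat\eta(\delta))n\ge(\eta(\delta)/2)n_0(\delta)\ge L(\delta)\ge\log(\kappa K(\delta)/D_c(\delta))$, this is $\ge \kappa D_c(\delta)^{-1}(\delta/\delta'')^{1-\ell_{\max}^{-1}}e^{\widehat\eta(\delta)n}$. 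For $1\le n<n_0(\delta)$ the short-orbit bound gives $|D\bfg^n(x)|\ge 2\kappa D_c(\delta)^{-1}(\delta/\delta'')^{1-\ell_{\max}^{-1}}\ge\kappa D_c(\delta)^{-1}(\delta/\delta'')^{1-\ell_{\max}^{-1}}e^{\widehat\eta(\delta)n}$, because $\widehat\eta(\delta)n<\widehat\eta(\delta)n_0(\delta)\le\log 2$. This gives the lemma with $\kappa$ absolute, as required (the auxiliary constant $C(\delta)$ in the statement playing no role in the displayed inequality).

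The main obstacle will be exactly this gluing, not the individual estimates. The $D_c(\delta)^{-1}$-gain is available only over time windows whose length is bounded in terms of $\delta$, whereas the uniformly valid bound loses it; so $\widehat\eta(\delta)$ and the crossover $n_0(\delta)$ must be tuned jointly so that $\widehat\eta(\delta)n_0(\delta)$ stays bounded (keeping the short-orbit regime harmless) while $(\eta(\delta)-\widehat\eta(\delta))n_0(\delta)$ still dominates $\log(K(\delta)/D_c(\delta))$; since $K(\delta)/D_c(\delta)\to\infty$ as $\delta\to 0$, this forces $\widehat\eta(\delta)\to 0$, which the statement permits. By contrast, the block comparisons $|D\bfg^\bullet|\asymp|Df^\bullet|$ used above are routine: they use only $\|g_j-f\|_{C^1}<\eps$, the $C^3$ regularity of $f$, the lower bound $d_0(\delta)>0$ on $|Df|$ off $\tB(\delta/2)$, and M\'a\~ne's theorem for $f$, and in particular do not require $\sF_1$ to be admissible.
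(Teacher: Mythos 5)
Your proposal is correct and follows essentially the same route as the paper's proof: Proposition~\ref{prop:mane1}(i) (whose proof indeed uses only $C^1$-closeness over finitely many steps, so it applies with $\Omega=\sF_1$) gives the uniform bound $|D\bfg^n(x)|\ge K(\delta)^{-1}e^{\eta(\delta)n}$ on $\cO^\eps(\delta)$; for $n$ below a $\delta$-dependent threshold the random orbit shadows the true orbit, $|D\bfg^n(x)|\ge\tfrac12|Df^n(x)|$, and Proposition~\ref{prop:quantifiedmane} supplies the sharp factor; and the two regimes are glued exactly as you describe (your gluing is in fact spelled out more carefully than the paper's). The one detail to adjust is your claim that the shadowed orbit lies in $\cL_c(\delta/2)$: that would require $f^n(x)\in\tB(c;\delta)$, for which you have no margin since you only know $\bfg^n(x)\in\tB(c;\delta)$; choosing any parameter strictly between $\delta/2$ and $\delta$ (the paper uses $0.9\delta$, so the landing condition becomes $f^n(x)\in\tB(c;1.8\delta)$) removes the issue without changing anything else.
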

\begin{proof} Fix $\delta>0$.
By Proposition~\ref{prop:mane1} (i), there exists $C>0$ and $\eta>0$ such that if $\eps>0$ is small enough, then for any $\eps$-random orbit $\{\bfg^j(x)\}\in \cO^\eps(\delta)$ we have $|D\bfg^n(x)|\ge C e^{\eta n}$. If $C e^{\eta n}\ge 1/D_c(\delta)$, then the desired estimate holds with $\kappa=1$ and $\widehat{\eta}=\eta$. So assume the contrary. Then $n$ is bounded from above by a constant $N(\delta)$. When $\eps$ is small enough, we have
$\{f^j(x)\}_{j=0}^n\in \cL_c(0.9\delta)$ and $|D\bfg^n(x)|\ge |Df^n(x)|/2$.
By Proposition~\ref{prop:quantifiedmane}, there is a constant $\kappa_0>0$ such that
$$|D\bfg^n(x)|\ge \frac{\kappa_0}{2 D_c(\delta)}\left(\frac{\delta}{\delta''}\right)^{1-\ell^{-1}_{\max}}.$$
Taking $\eta'>0$ such that $\exp \left(\eta' N(\delta)\right)<2$, we obtain the inequality (\ref{eqn:psland}) with $\widehat{\eta}=\eta'$ and $\kappa=\kappa_0/4$.
\end{proof}

Let $\cI^\eps_c(\delta, \hdelta)$ denote the collection of $\eps$-random-orbits $\{x_j\}_{j=0}^n$ for which there exists $v\in \CV$ such that $\dist(x_0, v)\le 4\delta$ and such that one of the following holds:
\begin{itemize}
\item {\em either} $x_{M_v(\delta)}\in \tB(c,\hdelta)$ and $n= M_v(\delta)$;
\item {\em or }
 $x_{M_v(\delta)}\not\in \tB(\hdelta)$, $n>M_v(\delta)$ and $\{x_j\}_{j=M_v(\delta)+1}^n\in \cL^\eps_c(\hdelta)$.
\end{itemize}
In the language of \cite{BC}, $n$ is the first {\em free return } of the random orbit $\{x_j\}_{j=0}^n$ into $\tB(\hdelta)$.

\begin{lemm}\label{lem:iorbit}
There exists a constant $\zeta_3>0$ such that the following holds. For each $\delta_0>0$ small enough, there exists $\eps_0>0$ such that for  each $\{\bfg^j(x)\}_{j=0}^n\in \cI^\eps_c(\delta,\delta_0)$ with $\delta\in (0,\delta_0]$ and $0\le \eps\le \min (\eps_0,\delta)$,
we have
\begin{equation}\label{eqn:Dbfgn}
|D\bfg^n(x)|\ge \frac{\Lambda_0(\delta)^{\zeta_3}}{D_c(\delta)}.
\end{equation}
Moreover, if $x_n\not\in \tB(\delta)$ then
\begin{equation}\label{eqn:Dbfgn+1}
|D\bfg^{n+1}(x)|\ge \Lambda_0(\delta)^{\zeta_3}\left(\frac{\dist_*(x_n, \Crit)}{\delta}\right)^{1-\ell_{\max}^{-1}}.
\end{equation}
\end{lemm}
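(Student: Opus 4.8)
The plan is to split the random orbit at the endpoint $M=M_v(\delta)$ of the binding period attached to the critical value $v$ with $\dist(x_0,v)\le 4\delta$: the block $\{x_j\}_{j=0}^{M}$, together with the extra step $x_M\mapsto x_{M+1}$, is governed by Proposition~\ref{prop:psorbretpre}, and, when $n>M$, the remaining block $\{x_j\}_{j=M+1}^{n}$ is a random first landing into $\tB(c;\delta_0)$ and is governed by Lemma~\ref{lem:pseudoorbitland}. Throughout I write $\beta=1-\ell_{\max}^{-1}$ and $\beta_c=1-\ell_c^{-1}\le\beta$, recall $D_c(t)\asymp t^{\beta_c}$, and fix a constant $\zeta_3\in(0,\zeta_2)$, with $\zeta_1,\zeta_2$ as in Proposition~\ref{prop:psorbretpre}. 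I shall use freely the equivalence $x\notin\tB(t)\Leftrightarrow\dist_*(x,\Crit)\ge t$ for $t\le\delta_*$, which is implicit in \eqref{eqn:diststar}--\eqref{eqn:dist*der}. Since $\eps\le\delta$ we have $\bfg\in\Omega_\delta^\N$, so Proposition~\ref{prop:psorbretpre} applies to $(x_0,\bfg)$ with parameter $\delta$, provided $\delta_0$ (hence $\delta$) is small.

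First I would dispose of the case $n=M$, $x_M\in\tB(c;\delta_0)$. Here $c$ is the critical point nearest to $x_M$ (since $\delta_0\le\delta_*$ and the intervals $\tB(c';2\delta_*)$ are pairwise disjoint), so \eqref{eqn:derlastMp} gives $|D\bfg^n(x)|\ge\Lambda_0(\delta)^{\zeta_1}/D_c(\delta)$, which is \eqref{eqn:Dbfgn}; and if in addition $x_n=x_M\notin\tB(\delta)$, then \eqref{eqn:derlastM} is exactly \eqref{eqn:Dbfgn+1} with $\zeta_2$ in place of $\zeta_3$. Both survive replacing $\zeta_1,\zeta_2$ by the smaller $\zeta_3$.

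For the main case $n>M$ — so $x_M\notin\tB(\delta_0)$ and $\{x_j\}_{j=M+1}^{n}\in\cL^\eps_c(\delta_0)$ — I would argue as follows. Put $\rho_M=\dist_*(x_M,\Crit)$; since $x_M\notin\tB(\delta_0)$ we have $\rho_M\ge\delta_0\ge\delta$, so \eqref{eqn:derlastM} gives
$$|D\bfg^{M+1}(x_0)|\ \ge\ \Lambda_0(\delta)^{\zeta_2}\left(\frac{\rho_M}{\delta}\right)^{\beta}.$$
Applying Lemma~\ref{lem:pseudoorbitland} with threshold $\delta_0$ to the block $\{x_j\}_{j=M+1}^{n}$ (legitimate once $\eps\le\eps(\delta_0)$) bounds $|D(\sigma^{M+1}\bfg)^{n-M-1}(x_{M+1})|$ from below by $\kappa\,D_c(\delta_0)^{-1}(\delta_0/\delta_0'')^{\beta}$, where $\delta_0''=\max(\dist(x_{M+1},\CV),\delta_0)$. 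Since $|x_{M+1}-f(x_M)|\le\eps\le\delta_0\le\rho_M$, comparing $\dist(x_{M+1},\CV)$ with $\dist(f(x_M),\CV)=\rho_M$ when $x_M\in\tB(\delta_*)$, and using $\rho_M=\delta_*$ otherwise, gives $\delta_0''\le C'\rho_M$ for a constant $C'=C'(f)$. Multiplying the two lower bounds by the chain rule, the factor $\rho_M$ cancels, and, rewriting $D_c(\delta_0)^{-1}\asymp D_c(\delta)^{-1}(\delta/\delta_0)^{\beta_c}$, one obtains
$$|D\bfg^n(x_0)|\ \gtrsim\ \Lambda_0(\delta)^{\zeta_2}\,\frac{1}{D_c(\delta)}\left(\frac{\delta_0}{\delta}\right)^{\beta-\beta_c}\ \ge\ \frac{C^*\,\Lambda_0(\delta)^{\zeta_2}}{D_c(\delta)},$$
because $\beta\ge\beta_c$ and $\delta_0\ge\delta$, with $C^*>0$ absolute; choosing $\delta_0$ so small that $\Lambda_0(\delta)^{\zeta_2-\zeta_3}\ge (C^*)^{-1}$ for all $\delta\le\delta_0$ (possible since $\Lambda_0(\delta)\to\infty$) yields \eqref{eqn:Dbfgn}. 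When $n=M+1$ the second block is the single point $x_{M+1}\in\tB(c;\delta_0)$, which lies close to the critical point $c$; as $c$ has no critical relation it is bounded away from $\CV$, so $\delta_0''\ge\dist(x_{M+1},\CV)$ is bounded below, hence so is $\rho_M$, and the first display alone gives \eqref{eqn:Dbfgn}. For the moreover part, if $x_n\notin\tB(\delta)$ then $x_n\in\tB(c;\delta_0)$ forces $\rho_n:=\dist_*(x_n,\Crit)<\delta_0$ while $x_n\notin\tB(\delta)$ forces $\rho_n\ge\delta$, so \eqref{eqn:dist*der} gives $|Dg_n(x_n)|\ge D_c(\rho_n)\asymp\rho_n^{\beta_c}$; multiplying the pre-absorption estimate $|D\bfg^n(x_0)|\gtrsim\Lambda_0(\delta)^{\zeta_2}D_c(\delta)^{-1}(\delta_0/\delta)^{\beta-\beta_c}$ by this and bounding $(\delta_0/\delta)^{\beta-\beta_c}\ge(\rho_n/\delta)^{\beta-\beta_c}$ (valid as $\rho_n<\delta_0$) turns the product into $\gtrsim\Lambda_0(\delta)^{\zeta_2}(\rho_n/\delta)^{\beta}$, whence \eqref{eqn:Dbfgn+1} after absorbing the constant.

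The step I expect to be the main obstacle is the bookkeeping of the three scales $\delta\le\delta_0\le\rho_M$: one must notice that $x_M\notin\tB(\delta_0)$, rather than merely $x_M\notin\tB(\delta)$, forces $\dist_*(x_M,\Crit)\ge\delta_0$ — the middle scale at which the $\rho_M$-factor produced by Proposition~\ref{prop:psorbretpre} is exactly cancelled by the $\delta_0''$-factor produced by Lemma~\ref{lem:pseudoorbitland} — and that the leftover power of $\delta_0/\delta$ has the favourable sign precisely because $\ell_c\le\ell_{\max}$. The remaining choices are routine: $\zeta_3$ is any constant below $\zeta_2$, the absolute constant $C^*$ is swallowed into $\Lambda_0(\delta)^{\zeta_2-\zeta_3}$, and one takes $\delta_0$ small enough that $\delta_0\le\delta_*$, that $M_v(\delta)\ge1$ and the conclusions of Proposition~\ref{prop:psorbretpre} hold for all $\delta\le\delta_0$, and that $\Lambda_0(\delta)^{\zeta_2-\zeta_3}\ge\max(1,(C^*)^{-1})$ for all $\delta\le\delta_0$, and then $\eps_0=\eps_0(\delta_0)\le\eps(\delta_0)$.
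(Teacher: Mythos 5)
Your argument is correct and essentially the same as the paper's: split the orbit at the preferred binding time $M=M_v(\delta)$, use (\ref{eqn:derlastMp})--(\ref{eqn:derlastM}) for the block up to time $M+1$, apply Lemma~\ref{lem:pseudoorbitland} at scale $\delta_0$ to the remaining first-landing block, cancel the factor $\dist_*(x_M,\Crit)$, and use $\ell_c\le\ell_{\max}$ plus $\Lambda_0(\delta)\to\infty$ to absorb constants, with $|Dg_n(x_n)|\ge D_c(\rho_n)$ and $\rho_n<\delta_0$ giving the moreover part exactly as in the paper. The only adjustment is that $\zeta_3$ should be chosen below $\min(\zeta_1,\zeta_2)$ (the paper takes $\min(\zeta_1,\zeta_2)/2$), since the case $n=M$ with $x_n\notin\tB(\delta)$ not arising uses the exponent $\zeta_1$ rather than $\zeta_2$; your explicit treatment of the degenerate block $n=M+1$ is harmless extra care that the paper leaves implicit.
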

\begin{proof}
In the following, we assume that $\delta_0>0$ is a small constant such that for all $\delta\in (0,\delta_0]$ the conclusion of Proposition~\ref{prop:psorbretpre} holds, and let $\eps_0=\eps(\delta_0)$ be the constant determined by Lemma~\ref{lem:pseudoorbitland}. Let $\zeta_3=\min(\zeta_1,\zeta_2)/2$.

Assume $\delta\in (0,\delta_0]$ and $0\le \eps\le \min (\eps_0,\delta)$, and consider $\{x_j\}_{j=0}^n=\{\bfg^j(x)\}_{j=0}^n\in \cI^\eps_c(\delta,\delta_0)$. Let $v\in\CV$ be such that $\dist(x_0,v)\le 4\delta$ and let $M=M_v(\delta)$.
If $x_M\in\tB(\delta_0)$ then $n=M$ and the desired estimates hold, by Proposition~\ref{prop:psorbretpre}.
Assume $x_M\not\in \tB(\delta_0)$.
Let $\delta':=\dist_*(x_M, \Crit)\ge \delta_0$ and let $\delta''=\dist(x_{M+1}, \CV)$. Then $\delta''\le \delta'+\eps\le 2\delta'$.
By Lemma~\ref{lem:pseudoorbitland}, we obtain
\begin{equation*}
\prod_{j=M+1}^{n-1} |Dg_j(x_j)|\ge \frac{\kappa} {D_c(\delta_0)} \left(\frac{\delta_0}{\delta''}\right)^{1-\ell_{\max}^{-1}}\ge \frac{\kappa} {2 D_c(\delta_0)} \left(\frac{\delta_0}{\delta'}\right)^{1-\ell_{\max}^{-1}}.
\end{equation*}
Combining with the estimates given by (\ref{eqn:derlastM}) in Proposition~\ref{prop:psorbretpre}, this implies
\begin{equation}\label{eqn:Dgn}
|D\bfg^n(x)|\ge  \frac{\kappa \Lambda_0(\delta)^{\zeta_2}}{2 D_c(\delta_0)} \left(\frac{\delta_0}{\delta}\right)^{1-\ell_{\max}^{-1}}.
\end{equation}
Since $\delta_0\ge \delta$, it follows that there exists a constant $C_1>0$ such that
$$|D\bfg^n(x)|\ge C_1 \frac{\Lambda_0(\delta)^{\zeta_2}}{D_c(\delta)}.$$
Provided that $\delta_0$ is small enough, $\Lambda_0(\delta)$ is large, so (\ref{eqn:Dbfgn})
follows.
To prove the second inequality, assume $\rho:=\dist_*(x_n, \Crit)\ge \delta$. Then  $|Dg_n(x_n)|\ge D_c(\rho)$. Since $\rho<\delta_0$, by (\ref{eqn:Dgn}),   there exists a constant $C_2>0$ such that
$$|D\bfg^{n+1}(x)|\ge C_2 \Lambda_0(\delta)^{\zeta_2} \left(\frac{\rho}{\delta}\right)^{1-\ell_{\max}^{-1}}.$$
The inequality (\ref{eqn:Dbfgn+1}) follows provided that $\delta_0$ is small enough.
\end{proof}

\begin{proof}[Proof of Proposition~\ref{prop:dergrowthps}]
Let $\delta_0>0$ be a small constant such that $\Lambda_0(\delta)>1$ for all $\delta\in (0,\delta_0]$. Reducing $\delta_0>0$ if necessary, we may assume that there exists $\eps_0>0$ such that the conclusion of Lemma~\ref{lem:iorbit} holds.
Consider $0\le \eps\le  \min(\eps_0,\delta_0/2)$. Let $\{x_j\}_{j=0}^s=\{\bfg^j(x)\}_{j=0}^s$ be an $\eps$-random-orbit with $\dist(x_0, v)\le 4\eps$ for some $v\in \CV$, $x_j
\not\in\tB(\eps)$ for each $1\le j<s$ and $x_s\in \tB(c;2\eps)$ for some $c\in \Crit$.
We shall prove that $|D\bfg^s(x)|\ge \Lambda_0(\eps)^{\zeta_3} /D_c(\eps)$.

Let $s_1$ be the minimal integer such that $s_1\ge M_v(\eps)$ and $x_{s_1}\in \tB(\delta_0)$, and let $c_1\in \Crit$ be such that $x_{s_1}\in \tB(c_1;\delta_0)$.
Then $\{x_j\}_{j=0}^{s_1}\in\cI_{c_1}^\eps (\eps, \delta_0)$.
If $s_1=s$ then
the desired estimate follows from (\ref{eqn:Dbfgn}).

Assume $s_1<s$. Then $\delta_1=\dist_*(x_{s_1},\Crit)\ge 2\eps$. By (\ref{eqn:Dbfgn+1}), we have
\begin{equation}\label{eqn:derats1}
|D\bfg^{s_1+1}(x)|\ge \Lambda_0(\eps)^{\zeta_3} \left(\frac{\delta_1}{\eps}\right)^{1-\ell_{\max}^{-1}}.
\end{equation}
Let $v_1=f(c_1)$ and define $s_2$ to be the minimal integer such that $s_2\ge M_{v_1}(\delta_1)$ and  $x_{s_2}\in \tB(\delta_0)$. If $s_2=s$ then we stop. Otherwise, we define $c_2, v_2, \delta_2$ and $s_3$ similarly. The procedure continues until we get $s_k=s$.
Then for each $i=1,2,\ldots, k-1$, $\{x_j\}_{j=s_i+1}^{s_{i+1}}\in \cI_{c_{i+1}}^\eps (\delta_i, \delta_0)$.
By (\ref{eqn:Dbfgn+1}) again, we obtain
$$\prod_{j=s_i+1}^{s_{i+1}}|Dg_j(x_j)|\ge \Lambda_0(\delta_i)^{\zeta_3} \left(\frac{\delta_{i+1}}{\delta_i}\right)^{1-\ell_{\max}^{-1}}$$
for all $i=1,2,\ldots, k-2$ and by (\ref{eqn:Dbfgn}),
$$\prod_{j=s_{k-1}+1}^{s-1} |Dg_j(x_j)|\ge \frac{\Lambda_0(\delta_{k-1})^{\zeta_3}}{D_c(\eps)}.$$
Combining these inequalities, we obtain
\begin{multline*}
|D\bfg^s(x)|=|D\bfg^{s_1+1}(x)|\left(\prod_{i=1}^{k-2}\prod_{j=s_i+1}^{s_{i+1}}|Dg_j(x_j)|\right)
\prod_{j=s_{k-1}+1}^{s-1}|Dg_j(x_j)|\\
\ge \frac{\Lambda_0(\eps)^{\zeta}}{D_c(\eps)}\prod_{j=1}^{k-1}\Lambda_0(\delta_j)^{\zeta_3} \left(\frac{\delta_{k-1}}{\eps}\right)^{1-\ell_{\max}^{-1}}\ge \frac{\Lambda_0(\eps)^{\zeta_3}}{D_c(\eps)}.
\end{multline*}
Thus the desired estimate holds.
\end{proof}

\subsection{Exponential rate of expansion}\label{subsec:exprate}
We shall complete the proof of Theorem~\ref{theo:dergwrothrandom}.
Let $\eps_0$ be a small constant such that Proposition~\ref{prop:dergrowthps} holds for all $\eps\in (0,\eps_0]$ with $\hLambda(\eps)>2e$, and let $\eps_1=\eps(\eps_0)$ and $\widehat{\eta}=\widehat{\eta}(\eps_0)$ be constants determined by Lemma~\ref{lem:pseudoorbitland} for $\delta=\eps_0$. Replacing $\eps_1$ by a smaller constant if necessary, we assume $\eps_1<\eps_0$.

Let $\cR^\eps_c(\delta)$ denote the collection of $\eps$-random-orbits $\{x_j\}_{j=0}^s$ for which $\dist (x_0, \CV)\le 4\delta$, $x_j\not\in \tB(\delta)$ for $1\le j<s$ and $x_s\in \tB(c;2\delta)$.
Let $\eta_0(\delta)$ be the maximal number in $[0,1]$ such that for any $\{x_j\}_{j=0}^s=\{\bfg^j(x)\}_{j=0}^s\in \cR^\eps_c(\delta)$ with $0\le \eps\le \min (\eps_1, \delta)$, we have
\begin{equation}\label{eqn:dfneta}
|D\bfg^s(x)|\ge \frac{e}{D_c(\delta)} \exp (\eta_0(\delta) s).
\end{equation}
For an orbit $\{\bfg^j(x)\}_{j=0}^s \in \cR^\eps_c(\eps_0)$ with $0\le \eps\le \eps_1$, $|D\bfg^s(x)|$ is exponentially large in $s$. Combining with Proposition~\ref{prop:dergrowthps},  this gives us
\begin{equation}\label{eqn:eta0eps0}
\eta_0(\eps_0)>0.
\end{equation}
Let $\kappa: (0,\eps_0]\to (0,1)$ be a continuous function such that
\begin{itemize}
\item $\kappa(\eps)\to 0$ as $\eps\to 0$,
\item $\Lambda(\eps):=\hLambda(\eps)^{\kappa(\eps)}e^{1-\kappa(\eps)}\ge 2e$ for all $\eps\in (0,\eps_0]$ and
\item $\hLambda(\eps)^{\kappa(\eps)}\to\infty$ as $\eps\to 0$,
\end{itemize}
and let
\begin{equation}\label{eqn:teta}
\widetilde{\eta}_0(\delta):=(1-\kappa(\delta))\eta_0(\delta).
\end{equation}
Combining the estimate given by Proposition~\ref{prop:dergrowthps} with (\ref{eqn:dfneta}), we obtain that
\begin{equation}\label{eqn:etadelta1}
|D\bfg^s(x)|\ge \frac{\Lambda(\delta)}{D_c(\delta)} \exp \left(\widetilde{\eta}_0(\delta) s\right),
\end{equation}
holds for each $\{\bfg^j(x)\}_{j=0}^s\in \cR^\eps_c(\delta)$.

\begin{lemm} \label{lem:eta}
For each $\delta\in (0,\eps_0]$ and $\delta'\in [\delta/2, \delta)$, we have
$\eta_0(\delta')\ge \widetilde{\eta}_0(\delta)$.
\end{lemm}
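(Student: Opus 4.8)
The plan is to establish directly the estimate that defines $\eta_0(\delta')$, but with the exponent $\widetilde{\eta}_0(\delta)$: for every $c\in\Crit$, every $\eps\in[0,\min(\eps_1,\delta')]$ and every $\eps$-random orbit $\{x_j\}_{j=0}^s=\{\bfg^j(x_0)\}_{j=0}^s\in\cR^\eps_c(\delta')$, I would prove
\[
|D\bfg^s(x_0)|\ \ge\ \frac{e}{D_c(\delta')}\,\exp\bigl(\widetilde{\eta}_0(\delta)\,s\bigr) \qquad (\star)
\]
which, since $\eta_0(\delta')$ is the largest number in $[0,1]$ for which such an estimate holds and $\widetilde{\eta}_0(\delta)=(1-\kappa(\delta))\eta_0(\delta)\in[0,1]$, gives $\eta_0(\delta')\ge\widetilde{\eta}_0(\delta)$. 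Two elementary facts will be used repeatedly. First, since $\delta/2\le\delta'<\delta$ and $\delta'\mapsto\tB(c';\delta')$ is increasing, $D_{c'}(\delta)=\delta/|\tB(c';\delta)|\le 2\delta'/|\tB(c';\delta')|=2D_{c'}(\delta')$ for each $c'\in\Crit$, so $D_{c'}$ at the two scales differ by at most a factor $2$. Second, since $f\in\LD$ has no critical relation, $\dist(\CV,\Crit)>0$, while $\sup_{c'\in\Crit}\diam\tB(c';\delta)\to 0$ as $\delta\to 0$; consequently (using also that $\delta_*$ is small) a point of $\tB(\delta_*)$ lying outside $\tB(\delta')$ has $\dist_*(\cdot,\Crit)\ge\delta'$, and, shrinking $\eps_0$ if necessary, for all $\delta\in(0,\eps_0]$ no point within distance $2\delta$ of $\CV$ lies in $\tB(2\delta')$.

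I would prove $(\star)$ by induction on $s$. If the orbit visits no $\tB(\delta)$ at the intermediate times $1\le j<s$, then $\{x_j\}_{j=0}^s\in\cR^\eps_c(\delta)$ (because $\dist(x_0,\CV)\le 4\delta'\le 4\delta$ and $x_s\in\tB(c;2\delta')\subseteq\tB(c;2\delta)$) and $\eps\le\min(\eps_1,\delta)$, so $(\ref{eqn:etadelta1})$ gives $|D\bfg^s(x_0)|\ge\Lambda(\delta)D_c(\delta)^{-1}\exp(\widetilde{\eta}_0(\delta)s)$, which by the first fact and $\Lambda(\delta)\ge 2e$ is at least $eD_c(\delta')^{-1}\exp(\widetilde{\eta}_0(\delta)s)$, i.e.\ $(\star)$. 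Otherwise let $w\in\{1,\dots,s-1\}$ be the first intermediate time with $x_w\in\tB(\delta)$, say $x_w\in\tB(c^*;\delta)$, and put $v^*=f(c^*)\in\CV$. As $g_w\in\Omega_\eps$, the point $x_{w+1}=g_w(x_w)$ lies within $\delta+\eps\le 2\delta$ of $v^*$, so by the second fact $w\ne s-1$ (otherwise $x_s=x_{w+1}$ would be within $2\delta$ of $\CV$ yet lie in $\tB(c;2\delta')$), hence $w\le s-2$. Now decompose the derivative along the orbit as
\[
|D\bfg^s(x_0)|=|D\bfg^{w}(x_0)|\cdot|Dg_w(x_w)|\cdot\prod_{j=w+1}^{s-1}|Dg_j(x_j)|,
\]
and bound the three factors. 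The block $\{x_j\}_{j=0}^{w}$ lies in $\cR^\eps_{c^*}(\delta)$, so $(\ref{eqn:etadelta1})$ gives $|D\bfg^{w}(x_0)|\ge\Lambda(\delta)D_{c^*}(\delta)^{-1}\exp(\widetilde{\eta}_0(\delta)w)$. Since $x_w\in\tB(c^*;\delta)$ while $x_w\notin\tB(\delta')$ (the orbit being in $\cR^\eps_c(\delta')$), the second fact gives $\dist_*(x_w,\Crit)\ge\delta'\ge\eps$, so $(\ref{eqn:dist*der})$, monotonicity of $D_{c^*}$ and the first fact give $|Dg_w(x_w)|\ge D_{c^*}(\dist_*(x_w,\Crit))\ge D_{c^*}(\delta')\ge\tfrac12 D_{c^*}(\delta)$. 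Finally $\{x_j\}_{j=w+1}^{s}$ is an $\eps$-random orbit in $\cR^\eps_c(\delta')$ — its first point is within $2\delta\le 4\delta'$ of $v^*\in\CV$, it avoids $\tB(\delta')$ for $w+1\le j<s$, and $x_s\in\tB(c;2\delta')$ — of length $s-w-1<s$, so the induction hypothesis yields $\prod_{j=w+1}^{s-1}|Dg_j(x_j)|\ge eD_c(\delta')^{-1}\exp(\widetilde{\eta}_0(\delta)(s-w-1))$. Multiplying, the first two factors combine to at least $\tfrac12\Lambda(\delta)\exp(\widetilde{\eta}_0(\delta)w)\ge\exp(\widetilde{\eta}_0(\delta)(w+1))$ (here one uses $\Lambda(\delta)\ge 2e$ and $e^{\widetilde{\eta}_0(\delta)}\le e$, valid since $\widetilde{\eta}_0(\delta)\le 1$), and multiplying by the third factor gives exactly $(\star)$.

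The routine parts are the two elementary facts and the constant chasing. The one delicate point, which I expect to be the main obstacle, is the bookkeeping at the re-entry time $w$: one needs the derivative $|Dg_w(x_w)|$ of the step landing back in $\tB(\delta)$ to be comparable to $D_{c^*}(\delta)$ — this is exactly what the hypothesis that the orbit avoid the \emph{smaller} neighbourhood $\tB(\delta')$ supplies, through $(\ref{eqn:dist*der})$ — and one needs the leftover orbit $\{x_j\}_{j=w+1}^s$ to be again a genuine and \emph{strictly shorter} element of $\cR^\eps_c(\delta')$, so that the induction closes and the factor $e/D_c(\delta')$ is produced by the tail rather than lost. The latter requires $w\le s-2$; innocuous as it looks, this is the only place where the $\LD$-hypothesis on $f$ (no critical relations, hence $\dist(\CV,\Crit)>0$) and the smallness of $\eps_0$ are genuinely used.
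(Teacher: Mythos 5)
Your proof is correct and follows essentially the same route as the paper: both decompose the orbit at its returns to the $\delta$-scale critical neighbourhood, apply (\ref{eqn:etadelta1}) to each block, bound the derivative at the return times via (\ref{eqn:dist*der}) using that the orbit avoids $\tB(\delta')$, and absorb the losses with $D_{c'}(\delta)\le 2D_{c'}(\delta')$, $\Lambda(\delta)\ge 2e$ and $\widetilde{\eta}_0(\delta)\le 1$; the only difference is organizational, since you induct on the first return to $\tB(\delta)$ while the paper writes a single product over all returns to $\tB(2\delta)$. Your explicit remark that $w\le s-2$ (via $\dist(\CV,\Crit)>0$, i.e.\ no critical relations) is a point the paper leaves implicit and is consistent with its argument.
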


\begin{proof}
Given a random orbit $\{\bfg^j(x)\}_{j=0}^s\in \cR^\eps_c(\delta')$ with $0\le \eps\le \min (\eps_1, \delta')$, let us prove
\begin{equation}\label{eqn:etavary}
|D\bfg^s(x)|\ge \frac{e}{D_c(\delta')}\exp (\widetilde{\eta}_0(\delta) s).
\end{equation}
Let $s_1<s_2< \cdots< s_k=s$ be all the positive integers such that $x_{s_i}\in \tB(2\delta)$. Then for each $i=0,1,\ldots, k-1$, $\{x_j\}_{j=s_i+1}^{s_{i+1}}\in \cR^\eps_{c_{i+1}}(\delta)$, where $s_0=-1$ and $c_i$ is the critical point of $f$ which is closest to $x_{s_{i}}$. Therefore, by (\ref{eqn:etadelta1}), for each $0\le i<k$, we have
\begin{equation}\label{eqn:ai}
D_i:= \prod_{j=s_i+1}^{s_{i+1}-1}|Dg_j(x_j)|\ge \frac{2e}{D_{c_{i+1}}(\delta)}\exp (\widetilde{\eta}_0(\delta) (s_{i+1}-s_i-1)).
\end{equation}
Thus
\begin{multline*}
|D \bfg^s(x)|=\prod_{i=0}^{k-1}D_{i} \prod_{i=1}^{k-1} |D g_{s_i}(x_{s_i})|  \ge \frac{(2e)^k}
{D_{c_k}(\delta)} \exp (\widetilde{\eta}_0(\delta) (s-k))\prod_{i=1}^{k-1} \frac{|D g_{s_i}(x_{s_i})|}{D_{c_i}(\delta)}.
\end{multline*}
Since $D_{c_k}(\delta)\le  2 D_{c_k}(\delta')$, $|Dg_{s_i}(x_{s_i})|\ge D_{c_i}(\delta')\ge D_{c_i}(\delta)/2$ and $\widetilde{\eta}_0(\delta)\le 1,$ this implies
(\ref{eqn:etavary}).
\end{proof}
\begin{proof} [Proof of Theorem~\ref{theo:dergwrothrandom}, part (i)]
The lemma above implies that $\log_\eps \eta_0(\eps)\to 0$ as $\eps\to 0$. Hence $\alpha(\eps):=\log_\eps \widetilde{\eta}_0(\eps)\to 0$ as $\eps\to 0$. By (\ref{eqn:etadelta1}), the statement holds.
\end{proof}
\begin{proof}[Proof of Theorem~\ref{theo:dergwrothrandom}, Part (ii).]
Take $\eps_0$, $\eps_1$ and $\widehat{\eta}$ as above. Let $\eta$ be the constant given by Proposition~\ref{prop:mane1} for $U=\tB(\eps_0)$. For each $\delta\in (0,\eps_0]$, let $\eta_0(\delta)$ be as above and let
$$\eta(\delta)=\min \left(\hat{\eta}, \eta, \inf\{\widetilde{\eta}_0(\delta'): \delta'\in [\delta, \eps_0]\}\right).$$
Then $\alpha(\delta):= \log\eta(\delta)/\log\delta\to 0$ as $\delta\to 0$.

Now let $\eps\in (0,\eps_1]$ and consider an $\eps$-random-orbit $\{x_j\}_{j=0}^s=\{\bfg^j(x)\}_{j=0}^s$ with $x_j\not\in \tB(\eps)$ for all $0\le j<s$.
For each $j=0,1,\ldots, s$, let $c_j$ be a critical point of $f$ closest to $x_j$ and let $\rho_j=\dist_*(x_j,\Crit)$.  By Proposition~\ref{prop:mane1} (i), the desired estimate holds if
$\rho_j\ge \eps_0$ for all $j=0,1,\ldots, s-1$. Without loss of generality, we may assume that $\rho_0<\eps_0$ and $\rho_{s-1}<\eps_0$.

If there exists $s'<s-1$ such that $\rho_{s'}<\rho_{s-1}$, then letting $s'$ be the maximal integer with this property, we have
$\{x_j\}_{j=s'+1}^{s-1}\in \cR^\eps_{c_{s-1}} (\rho_{s-1})$, hence by (\ref{eqn:etadelta1}),
$$\prod_{j=s'+1}^{s-1} |Dg_j(x_j)|\ge \frac{|Dg_{s-1}(x_{s-1})|}{D_{c_{s-1}}(\rho_{s-1})}\Lambda(\rho_{s-1}) e^{\eta(\eps) (s-s'-1)}> 2 e^{\eta(\eps) (s-s')}.$$
It follows that we only need to prove the desired estimate under the further assumption that $\rho_{s-1}\le \rho_j$ for each $0\le j<s$.
In this case, let $s_0<s_1<\cdots<s_k=s-1$ be a sequence of integers such that  $s_0=0$ and such that for each $0\le i<k$,  $s_{i+1}$ is the minimal integer such that $\rho_{s_{i+1}}\le \rho_{s_i}$.
Then
for $0\le i<k$, $\{x_j\}_{j=s_{i}+1}^{s_{i+1}}\in\cR_{c_{s_{i+1}}}^\eps (\rho_{s_{i}})$, so
by (\ref{eqn:dfneta}), we have
\begin{equation*}
\prod_{j=s_i+1}^{s_{i+1}-1}|D g_j(x_j)|\ge \frac{2 e^{\eta(\eps)(s_{i+1}-s_i)}}{D_{c_{s_{i+1}}}(\rho_{s_i})},
\end{equation*}
which implies that
\begin{align*}
|D\bfg^{s}(x)|& \ge e^{\eta(\eps) (s-1)} |Dg_0(x_0)|\prod_{i=1}^{k}\frac{2|Dg_{s_i}(x_{s_i})|}{D_{c_{s_i}}(\rho_{s_{i-1}})}\\
& \ge e^{\eta(\eps) (s-1)} D_{c_0}(\rho_0)\prod_{i=1}^k \frac{D_{c_i}(\rho_{s_i})}{D_{c_i}(\rho_{s_{i-1}})}
\ge A e^{\eta(\eps) s}\rho_{s-1}^{1-\ell_{\max}^{-1}},
\end{align*}
where $A>0$ is a constant. Since $\rho_{s-1}\ge \eps$, the inequality (\ref{eqn:theo22}) follows.
\end{proof}

\subsection{More properties of return maps to $\tB(\eps)$}\label{subsec:moreprop}
The following proposition is an analogue of Lemma~\ref{lem:derVSdiscrt} for iterates of random maps. It provides distortion control of first landing maps of $\eps$-random maps taken from an admissible space into $\tB(\eps)$.
\begin{prop}\label{prop:psorblocdis}
Consider $f\in\SC_1$ and $\Omega=\sF_1$. For each $\eps>0$ small there exists $\theta(\eps)\ge 0$ such that $\lim_{\eps\to 0}\theta(\eps)=0$ and such that the following holds: For $x\in [0,1]$ and $\bfg\in \Omega_\eps^\N$, if $n\ge 1$ is an integer such that $\bfg^j(x)\not\in \tB(\eps)$, $j=0,1,\ldots, n-1$, and $\bfg^n(x)\in \tB(c; \eps)$ for some $c\in\Crit$, then
$$A(x, \bfg, n) |\tB(c;\eps)|\le \theta(\eps) |D\bfg^n(x)|.$$
\end{prop}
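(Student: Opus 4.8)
The plan is to transport to the random setting the scale-induction that proves Lemma~\ref{lem:derVSdiscrt}, with Theorem~\ref{theo:dergwrothrandom}(i) playing the role of Lemma~\ref{lem:derivativeBC}. Fix a small noise level $\eps>0$; after replacing $\delta_*$ by a slightly smaller constant we may assume $\delta_*=2^K\eps$ for an integer $K$. For a dyadic scale $\delta=2^{-k}\delta_*$ with $\eps\le\delta\le\delta_*$, let $\theta_\eps(\delta)\ge0$ be the least constant such that
$$A(x,\bfg,n)\,|\tB(c;\delta)|\le\theta_\eps(\delta)\,|D\bfg^n(x)|$$
for every $\eps$-random orbit $\{\bfg^j(x)\}_{j=0}^n$ with $\bfg^j(x)\notin\tB(\delta)$ for $0\le j<n$ and $\bfg^n(x)\in\tB(c;2\delta)$ for some $c\in\Crit$. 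This constant is finite: along such an orbit $|D\bfg^j(x)|$ and $|D\bfg^{n-j}(\bfg^j(x))|$ grow exponentially (Proposition~\ref{prop:mane1}(i)), and outside $\tB(\delta)$ the quantity $\dist(\bfg^j(x),\Crit(g_j))$ is bounded below by a positive constant depending only on $\delta$ (for $\eps$ small relative to $\delta$, by admissibility), so telescoping bounds $A(x,\bfg,n)$ by a $\delta$-dependent multiple of $|D\bfg^n(x)|$. The same telescoping, applied with $\delta=\delta_*$ and $U=\tB(\delta_*)$ in Proposition~\ref{prop:mane1}(i), gives the base estimate $\theta_\eps(\delta_*)\le C_*$ with $C_*$ independent of (small) $\eps$. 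Since $\tB(c;\eps)\subset\tB(c;2\eps)$, the proposition will follow with $\theta(\eps):=\theta_\eps(\eps)$ once I show $\theta_\eps(\eps)\to0$ as $\eps\to0$.

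The heart of the matter is the recursion
$$\theta_\eps(\delta/2)\le\kappa\bigl(\theta_\eps(\delta)+\rho(\delta)\bigr),\qquad\kappa:=\sup_{c\in\Crit,\,0<\delta\le\delta_*}\Bigl(\tfrac{|\tB(c;\delta/2)|}{|\tB(c;\delta)|}\Bigr)^{1/2}<1,$$
valid for small $\delta$ with $\eps\le\delta/2$ — hence for every step of the dyadic chain from $\delta_*$ down to $\eps$ — where $\rho(\delta)\to0$ as $\delta\to0$. I would prove it by imitating the proof of Lemma~\ref{lem:derVSdiscrt}. Given an orbit $\{\bfg^j(x)\}_{j=0}^n$ in the class defining $\theta_\eps(\delta/2)$, let $s_1<\dots<s_m=n$ be its entry times into $\tB(\delta)$, so $\bfg^{s_i}(x)\in\tB(c_i;\delta)\setminus\tB(\delta/2)$; then $\rho_i:=\dist_*(\bfg^{s_i}(x),\Crit)\asymp\delta$, and $|Dg_{s_i}(\bfg^{s_i}(x))|\ge D_{c_i}(\rho_i)\asymp D_{c_i}(\delta)$ by \eqref{eqn:dist*der} (applicable since $\eps\le\rho_i$). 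For $1\le i<m$ the block $\{\bfg^{s_i+1+l}(x)\}_l$ stays outside $\tB(\delta)$ until it lands in $\tB(c_{i+1};\delta)$, and it starts within $\delta+\eps\le2\delta$ of the critical value $f(c_i)$; applying Theorem~\ref{theo:dergwrothrandom}(i) \emph{with its parameter taken to be} $\delta$ — legitimate because $\bfg\in\Omega_\eps^\N\subset\Omega_\delta^\N$ — gives $|D\bfg^{\,s_{i+1}-s_i-1}(\bfg^{s_i+1}(x))|\ge\Lambda(\delta)/D_{c_{i+1}}(\delta)$. Multiplying the last two inequalities shows the moments $|D\bfg^{s_i}(x)|/|\tB(c_i;\delta)|$ increase towards $i=m$ with ratio $\gtrsim\Lambda(\delta)\to\infty$; meanwhile, applying the definition of $\theta_\eps(\delta)$ to each block (including the initial one $\{\bfg^j(x)\}_{j=0}^{s_1}$, which requires no hypothesis near a critical value) bounds the intermediate terms of $A(x,\bfg,n)$ by $\theta_\eps(\delta)$ times the moment at the end of that block. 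Summing the geometric series exactly as in Lemma~\ref{lem:derVSdiscrt} yields $A(x,\bfg,n)\le(\theta_\eps(\delta)+\rho(\delta)(1+\theta_\eps(\delta)))\,|D\bfg^n(x)|/|\tB(c;\delta)|$ with $\rho(\delta)\asymp\Lambda(\delta)^{-1}$, and multiplying by $|\tB(c;\delta)|$ together with $|\tB(c;\delta/2)|\le\kappa^2|\tB(c;\delta)|$ gives the recursion.

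Finally, iterating the recursion over the $K$ steps of the chain gives $\theta_\eps(\eps)\le\kappa^K C_*+\sum_{j=1}^{K}\kappa^j\rho(2^{-(K-j)}\delta_*)$; since $\kappa<1$ and $\rho$ is bounded with $\rho(\delta)\to0$ as $\delta\to0$, the right-hand side tends to $0$ as $\eps\to0$ (split the sum at $j=K/2$), so $\theta(\eps):=\theta_\eps(\eps)$ does the job. The step I expect to be the main obstacle is not the structure — which follows Lemma~\ref{lem:derVSdiscrt} faithfully — but the bookkeeping of the noise-versus-scale conditions along the whole chain: at each scale $\delta$, and most tightly at the bottom where $\delta$ is only comparable to $\eps$, the noise $\eps$ must be small enough relative to $\delta$ for Theorem~\ref{theo:dergwrothrandom}(i) at parameter $\delta$, for \eqref{eqn:dist*der}, and for the comparisons of $\dist(\bfg^j(x),\Crit(g_j))$ with $\dist(\bfg^j(x),\Crit)$ and of $\tB(c;\delta/2)$ with $\tB(c;\delta)$ to be usable. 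There is no such barrier in the deterministic Lemma~\ref{lem:derVSdiscrt}, where the scale may tend to $0$ freely; if the relations turn out too tight at the very bottom, one stops the induction at an intermediate scale and dispatches the remaining finitely many scales directly.
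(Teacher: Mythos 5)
Your proposal is correct and takes essentially the same route as the paper: one defines the minimal constant, reruns the scale-halving induction of Lemma~\ref{lem:derVSdiscrt} with Theorem~\ref{theo:dergwrothrandom}(i) standing in for Lemma~\ref{lem:derivativeBC} (and Theorem~\ref{theo:dergwrothrandom}(ii) guaranteeing finiteness), and iterates a recursion of the form $\theta\mapsto\kappa(\theta+\rho)$ with $\kappa<1$ and $\rho\to0$. The only difference is organizational: the paper runs the recursion directly in the noise level, $\theta(\eps/2)\le\kappa\bigl(\theta(\eps)+\rho(\eps)\bigr)$, using $\Omega_{\eps/2}\subset\Omega_\eps$, whereas you fix $\eps$ and descend through auxiliary scales $\delta\in[\eps,\delta_*]$ before evaluating at $\delta=\eps$ --- the same decomposition and estimates with different bookkeeping, and the noise-versus-scale uniformity you flag is indeed available (Theorem~\ref{theo:dergwrothrandom}(i) at parameter $\delta$ covers all $\bfg\in\Omega_\eps^\N\subset\Omega_\delta^\N$, and \eqref{eqn:dist*der} applies since $\dist_*\ge\delta/2\ge\eps$ at the relevant entry points).
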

\begin{proof}
This can be proved in the same way as Lemma~\ref{lem:derVSdiscrt}. Indeed, by part (ii) of Theorem~\ref{theo:dergwrothrandom}, for each $\eps>0$ small, there exists a minimal number $\theta(\eps)\ge 0$ such that for any $x\in [0,1]$, $\bfg\in \Omega_\eps^\N$, if $\bfg^j(x)\not\in\tB(\eps)$ for $0\le j<s$ and $\bfg^s(x)\in\tB(c;\eps)$, then $A(x,\bfg, s) |\tB(c;\eps)|\le \theta(\eps) |D\bfg^n(x)|$. Replacing Lemma~\ref{lem:derivativeBC} by part (i) of Theorem~\ref{theo:dergwrothrandom}
and arguing as in the proof of Lemma~\ref{lem:derVSdiscrt}, we show that $\theta(\eps/2)\le \kappa (\theta(\eps)+\rho(\eps))$ for some constant $\kappa\in (0,1)$ and $\rho(\eps)\to 0$ as $\eps\to 0$. Thus $\theta(\eps)\to 0$.
\end{proof}

\begin{prop}\label{prop:psorbbc}
Consider $f\in\SC_1$ and $\Omega=\sF_1$. Given any $0<\xi<\xi'\le 2$, the following holds for each $\eps>0$ small:
for any $\bfg\in\Omega_\eps^\N$ and any integer $s\ge 1$, if
$W$ is an interval intersecting $\tB(\xi\eps)$ and $\bfg^s(W)\subset \tB(2\eps)$, then
$W\subset \tB(\xi'\eps)$.
\end{prop}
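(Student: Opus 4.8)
The plan is to argue by contradiction and by induction on $s$ (the quantifiers over $\xi,\xi'$ and over $\bfg\in\Omega_\eps^\N$ being present at each level; everything is carried out for $\eps$ small, depending only on $f,\xi,\xi'$ and on the fixed auxiliary pair $(1,\tfrac32)$ used below). The point of departure is the standard observation: if $W$ meets $\tB(c_0;\xi\eps)$ for some $c_0\in\Crit$ but is not contained in $\tB(c_0;\xi'\eps)$, then, treating the two sides of $c_0$ separately, $W$ contains the ``annulus'' interval $T=[w_\eps,w'_\eps]$, where $w_\eps,w'_\eps$ are the corresponding endpoints of $\tB(c_0;\xi\eps)$ and $\tB(c_0;\xi'\eps)$. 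First I would record the relevant properties of $T$. Writing $v_0=f(c_0)$, for $\eps$ small no critical value other than $v_0$ lies within $\xi'\eps$ of $v_0$, so $\dist_*(\cdot,\Crit)=|f(\cdot)-v_0|\in[\xi\eps,\xi'\eps]$ on $T$; hence by the non-flatness of $c_0$ (admissibility~(ii)) one has $|Df|\asymp|Dg_0|\asymp D_{c_0}(\eps)$ on $T$ (here $Dg_0$ does not vanish on $T$ since $\eps^{1-1/\ell_{c_0}}\gg\eps$), $g_0|T$ is a diffeomorphism, and $|T|\asymp|\tB(c_0;\eps)|$, all constants depending only on $\xi,\xi',f$. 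Since $f\in\SC_1\subset\LD$ has no critical relation, $v_0\notin\Crit$, so $\dist(v_0,\Crit)>0$; as $g_0(T)$ lies within $(\xi'+1)\eps\le 3\eps$ of $v_0$, for $\eps$ small $g_0(T)$ is disjoint from $\tB(\eps)$ and contained in the $4\eps$-neighbourhood of $\CV$. Finally $\bfg^s(T)\subset\bfg^s(W)$ is a subinterval of the disjoint union $\tB(2\eps)$, so $\bfg^s(T)\subset\tB(c';2\eps)$ for a single $c'$; and, $\bfg^1(T)=g_0(T)$ being disjoint from $\tB(\eps)$, the first return of $\{\bfg^j(T)\}$ to $\tB(\eps)$, if it occurs, happens at a time $\ge 2$.

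Next I would split into cases. The base case $s=1$ is immediate: $g_0(T)\subset\tB(c';2\eps)$ would put $v_0$ within $3\eps+\sup_c|\tB(c;2\eps)|$ of $\Crit$, impossible for $\eps$ small. In the ``simple'' case, where $\bfg^j(T)\cap\tB(\eps)=\emptyset$ for all $1\le j\le s-1$ and $s\ge 2$, the orbit of $T$ avoids every $\Crit(g_j)$ up to time $s-1$ (using $|c_i(g_j)-c_i(f)|\ll|\tB(c_i;\eps)|$ for $\eps$ small), so $\bfg^s|T$ is a diffeomorphism and $|\bfg^s(T)|=\int_T|D\bfg^s|$. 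For each $z\in T$ the triple $(g_0(z),\sigma\bfg,s-1)$ satisfies the hypotheses of Theorem~\ref{theo:dergwrothrandom}(i) --- $g_0(z)$ is within $4\eps$ of $\CV$, the orbit stays out of $\tB(\eps)$, and $\bfg^s(z)\in\tB(c';2\eps)$ --- so $|D(\sigma\bfg)^{s-1}(g_0(z))|\ge\Lambda(\eps)/D_{c'}(\eps)$ and hence $|D\bfg^s(z)|\ge C^{-1}\Lambda(\eps)D_{c_0}(\eps)/D_{c'}(\eps)$ uniformly in $z\in T$, with $C=C(\xi,\xi',f)$. Integrating and using $|T|\asymp|\tB(c_0;\eps)|$ gives $|\bfg^s(T)|\ge C^{-1}\Lambda(\eps)|\tB(c';\eps)|$, incompatible with $\bfg^s(T)\subset\tB(c';2\eps)$ once $\eps$ is small, since $\Lambda(\eps)\to\infty$ while $|\tB(c';2\eps)|/|\tB(c';\eps)|$ is bounded.

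The heart of the argument is the remaining case, handled by the induction. Let $j_2\in[2,s-1]$ be the first return time of $\{\bfg^j(T)\}$ to $\tB(\eps)$, so $\bfg^{j_2}|T$ is a diffeomorphism and $W':=\bfg^{j_2}(T)$ is an interval meeting $\tB(c_1;\eps)$ for some $c_1$. Since $(\sigma^{j_2}\bfg)^{\,s-j_2}(W')=\bfg^s(T)\subset\tB(2\eps)$ and $1\le s-j_2<s$, the inductive hypothesis applied to $W'$ with the pair $1<\tfrac32$ gives $W'\subset\tB(c_1;\tfrac32\eps)$, hence the upper bound $|W'|\le C_1|\tB(c_1;\eps)|$. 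Crucially, the same inclusion makes the hypotheses of Theorem~\ref{theo:dergwrothrandom}(i) hold uniformly for $(g_0(z),\sigma\bfg,j_2-1)$, $z\in T$, because now $\bfg^{j_2}(z)\in W'\subset\tB(c_1;2\eps)$ for \emph{every} $z\in T$; so exactly as before $|D\bfg^{j_2}(z)|\ge C^{-1}\Lambda(\eps)D_{c_0}(\eps)/D_{c_1}(\eps)$ uniformly, and integrating gives $|W'|=\int_T|D\bfg^{j_2}|\ge C^{-1}\Lambda(\eps)|\tB(c_1;\eps)|$. For $\eps$ small this contradicts the upper bound; the contradiction shows $W\subset\tB(c_0;\xi'\eps)$ and closes the induction.

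I expect the main obstacle to be organizing the induction and verifying that the hypotheses of Theorem~\ref{theo:dergwrothrandom}(i) genuinely hold \emph{uniformly over all $z\in T$} --- this is what allows the pointwise derivative lower bound to be integrated over $T$, and in the inductive step it hinges on the inductive conclusion forcing $\bfg^{j_2}(T)$ to lie inside a single $\tB(c_1;2\eps)$. The remaining ingredients (the non-flatness comparisons for $f$ and $g_0$ on $T$, the smallness of $|c_i(g_j)-c_i(f)|$ relative to $|\tB(c_i;\eps)|$, and the use of the absence of critical relations) are routine and use only material already available in \S\ref{subsec:space}--\S\ref{subsec:exprate}.
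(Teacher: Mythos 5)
Your proposal is correct and takes essentially the same route as the paper: the decisive step is identical (the lower bound $|Dg_0|\ge C\,D_{c_0}(\eps)$ on the annular piece near $c_0$, combined with Theorem~\ref{theo:dergwrothrandom}(i) along the shifted orbit, forces the image to be of length at least a constant times $\Lambda(\eps)|\tB(c;\eps)|$, too long to fit in a component of $\tB(2\eps)$). Your treatment of intermediate returns --- induction on $s$ with the auxiliary pair $(1,\tfrac32)$ and a length contradiction at the first return of $T$ --- is merely a reorganization of the paper's argument, which iterates the no-intermediate-return case over the successive return times of $W$ using the pair $(1,2)$.
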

\begin{proof}
We first prove the proposition assuming that
\begin{equation}\label{eqn:psorbbcsp}
\bfg^j(W)\cap\tB(\eps)=\emptyset\mbox{ for all }1\le j<s.
\end{equation}
Our strategy is to show that $W':=W\cap (\tB(\xi'\eps)\setminus \tB(\xi\eps))$ is {\em compactly} contained in $\tB(\xi'\eps)$, which clearly implies that $W\subset \tB(\xi'\eps)$.
Let $c, c_0\in\Crit$ be such that $W'\subset \tB(c_0, 2\eps)$ and $\bfg^s(W)\subset \tB(c;2\eps)$.
Provided that $\eps>0$ is small enough, $|Dg_0(x)|\ge C D_{c_0}(\eps)$ for each $x\in \tB(c_0; 2\eps)\setminus \tB(c_0;\xi\eps)$, where $C>0$ is a constant depending only on $\xi$.
By part (i) of Theorem~\ref{theo:dergwrothrandom}, it follows that
$$|D\bfg^s(x)|=|D(\sigma\bfg)^{s-1}(g_0(x))||Dg_0(x)|\ge \frac{\Lambda(\eps)}{D_c(\eps)} CD_{c_0}(\eps)$$
for each $x\in W'$. Thus
$$|W'|\le \frac{|\tB(c;2\eps)|}{\inf_{x\in W'} |D\bfg^s(x)|}\le \frac{2|\tB(c_0;\eps)|}{C\Lambda(\eps)}.$$
Provided that $\eps>0$ is small enough, this implies that $|W'|$ is much smaller than $|\tB(c_0;\eps)|$. Since $W'$ intersects the boundary of $\tB(c_0,\xi\eps)$, it follows that $W'$ is compactly contained in $\tB(\xi'\eps)$. This proves the proposition under the assumption (\ref{eqn:psorbbcsp}).

Now assume that (\ref{eqn:psorbbcsp}) does not hold. Let $0=s_0<s_1<\cdots<s_k<s$ be all the integers such that $g^{s_i}(W)\cap \tB(\eps)\not=\emptyset$. Then replacing $\xi'$ and $\xi$ by $2$ and $1$ respectively in the argument above, we obtain that $\bfg^{s_k}(W)\subset \tB(2\eps)$. Repeating the procedure we obtain $\bfg^{s_{k-1}}(W)\subset \tB(2\eps)$, $\bfg^{s_{k-2}}(W)\subset \tB(2\eps)$, $\ldots$, $\bfg^{s_1}(W)\subset \tB(2\eps)$, and finally $W\subset \tB(\xi'\eps).$
\end{proof}

The following proposition provides us nice sets. The proof is very similar to the deterministic case provided in~\cite{BRSS} which followed the original argument of Rivera-Letelier~\cite{RL} for complex rational maps.
\begin{prop} \label{prop:nice}
Consider $f\in\SC_1$ and $\Omega=\sF_1$. If $0<\eps\le \delta$ are small enough, then there exists a nice set $V$ for $\eps$-random perturbations such that
for $\bfg\in \Omega_\eps^\N$, we have
$\tB(\delta)\subset V^\bfg\subset \tB(2\delta).$
\end{prop}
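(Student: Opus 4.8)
The plan is to adapt, fibre by fibre over $\bfg\in\Omega_\eps^\N$, the construction of nice sets for rational maps due to Rivera--Letelier~\cite{RL} (carried out for interval maps in~\cite{BRSS}), replacing the deterministic backward contraction by Proposition~\ref{prop:psorbbc} and the deterministic expansion estimates by Theorem~\ref{theo:dergwrothrandom}. First I observe that since $\Omega_\eps^\N\subseteq\Omega_\delta^\N$ whenever $\eps\le\delta$, a nice set for $\delta$-random perturbations restricts to a nice set for $\eps$-random perturbations with the same inclusions; hence it suffices to treat the case $\eps=\delta$, and from now on I write $\eps$ for the common scale.

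\textbf{The set.} Fix constants $1<\xi_1<\xi_2<2$. For $\bfg\in\Omega_\eps^\N$, call an open interval $W\subseteq[0,1]$ a \emph{tongue} if for some $n\ge1$ and $c''\in\Crit$ it is a connected component of $(\bfg^n)^{-1}(\tB(c'';2\eps))$ with $W\not\subseteq\tB(2\eps)$ and $W\cap\tB(\eps)=\emptyset$. Let $\cE^\bfg$ be the union of the closures of all tongues for $\bfg$, let $V_c^\bfg$ be the connected component of $c$ in the open set $\tB(c;2\eps)\setminus\overline{\cE^\bfg}$, and put $V^\bfg=\bigcup_{c\in\Crit}V_c^\bfg$ and $V=\{(x,\bfg):x\in V^\bfg\}$.

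\textbf{The inclusions and the first axiom.} If $W$ is a tongue then $\bfg^n(W)\subseteq\tB(2\eps)$, so Proposition~\ref{prop:psorbbc} shows $W$ cannot meet $\tB(\xi_1\eps)$: otherwise it would be contained in $\tB(\xi_2\eps)\subseteq\tB(2\eps)$, contradicting $W\not\subseteq\tB(2\eps)$. Hence $\overline{\cE^\bfg}$ is disjoint from $\tB(\xi_1\eps)$, and, using also that the neighbourhoods $\tB(c;2\eps)$ are pairwise disjoint and (for $\eps$ small) well separated, each $V_c^\bfg$ is an open interval with $\tB(c;\eps)\subseteq\tB(c;\xi_1\eps)\subseteq V_c^\bfg\subseteq\tB(c;2\eps)$ whose only critical point is $c$. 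This yields $\tB(\eps)\subseteq V^\bfg\subseteq\tB(2\eps)$ together with the first defining property of a nice set.

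\textbf{The boundary axiom, and the main difficulty.} To check that $x\in\partial V^\bfg$ and $m\ge1$ force $\bfg^m(x)\notin V^{\sigma^m\bfg}$, suppose not; since $V^{\sigma^m\bfg}\subseteq\tB(2\eps)$ we get $\bfg^m(x)\in\tB(c'';2\eps)$ for some $c''$, and the component $W$ of $x$ in $(\bfg^m)^{-1}(\tB(c'';2\eps))$ is an open interval containing $x$ on which $\bfg^m$ is a diffeomorphism onto $\tB(c'';2\eps)$. If $W\cap\tB(\eps)\ne\emptyset$, then Proposition~\ref{prop:psorbbc} (applied with exponents between $1$ and $\xi_1$) together with separation of the critical neighbourhoods gives $W\subseteq\tB(c;\xi_1\eps)\subseteq V_c^\bfg$, contradicting that $x\in\partial V_c^\bfg$ and $V_c^\bfg$ is open. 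If $W\cap\tB(\eps)=\emptyset$ and $W\not\subseteq\tB(2\eps)$, then $W$ is a tongue, so $\overline W\subseteq\overline{\cE^\bfg}$ and $W\cap V_c^\bfg=\emptyset$, again contradicting $x\in\partial V_c^\bfg$. The remaining case $W\subseteq\tB(2\eps)\setminus\tB(\eps)$ is excluded by comparing $W$ with the tongues accumulating on $x$ and using that the endpoints of $V_c^\bfg$ are the extremal endpoints of such tongues; this combinatorial step — together with the routine measure-theoretic care needed because $\partial V^\bfg$ may be a Cantor set — is the delicate point of the argument, and I expect it to be \textbf{the main obstacle}, but it is handled exactly as in the deterministic construction of~\cite{RL,BRSS}. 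Measurability of $V$ is then immediate, since the endpoints of each $V_c^\bfg$ are countable suprema and infima of functions built from $(x,\bfg)\mapsto\bfg^n(x)$, hence measurable in $\bfg$.
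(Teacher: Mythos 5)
Your construction is genuinely different from the one in the paper and in \cite{RL,BRSS}, and the step you yourself flag as ``the main obstacle'' is a real gap that the cited references do not fill. Rivera-Letelier's construction (followed by \cite{BRSS}, and adapted verbatim in this paper) does not remove ``tongues'' from $\tB(c;2\delta)$: it defines $V_c^\bfg$ as the increasing union over $n$ of the component containing $c$ of $\bigcup_{i=0}^n\bfg^{-i}(\tB(\delta))$. For that set the boundary axiom is essentially automatic (if $x\in\partial V_c^\bfg$ had $\bfg^m(x)\in V^{\sigma^m\bfg}$, the corresponding open pullback component through $x$ would meet, hence be absorbed into, the open component through $c$, forcing $x\in V_c^\bfg$), and the entire content of the proof is the inclusion $V_c^\bfg\subset\tB(c;2\delta)$, obtained by induction from Proposition~\ref{prop:psorbbc}. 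Your tongue-removal construction has exactly the dual difficulty: the inclusions are easy, but the boundary axiom is the hard part, and there is no argument in \cite{RL,BRSS} about tongues accumulating on $\partial V_c^\bfg$ that you can import.

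Moreover the remaining case is not routine. When $W\subset\tB(2\eps)\setminus\tB(\eps)$ you must exploit that $x\in\overline{\cE^\bfg}$, i.e.\ that tongues $W'$ (components of $(\bfg^n)^{-1}(\tB(c_1;2\eps))$ sticking out of $\tB(2\eps)$ and avoiding $\tB(\eps)$) meet the neighbourhood $W$ of $x$. If $n=m$, intersecting components force $W'=W$, impossible; if $n>m$, pushing $W'$ forward $m$ steps and applying Proposition~\ref{prop:psorbbc} to the resulting pullback component either gives $W'\subset W\subset\tB(2\eps)$ (impossible) or shows that component is itself a tongue for $\sigma^m\bfg$, whence $\bfg^m(x)\in\overline{\cE^{\sigma^m\bfg}}$ and you are done. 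But nothing forces the tongues accumulating at $x$ to have $n\ge m$: if they all have $n<m$, the relevant intervals ($W$, $W'$, their images at time $n$, and the pullback components of $\tB(c'';2\eps)$ under $(\sigma^n\bfg)^{m-n}$) can all lie in $\tB(2\eps)\setminus\tB(\eps)$, where Proposition~\ref{prop:psorbbc} and the separation of the critical neighbourhoods say nothing, and I see no way, with the tools of the paper, to exclude $\bfg^m(x)\in V^{\sigma^m\bfg}$ in that configuration. So as written the proof of the second nice-set axiom is incomplete at its crucial point. (Two minor remarks: the reduction to $\eps=\delta$ is fine; the claim that $\bfg^m$ maps $W$ diffeomorphically onto $\tB(c'';2\eps)$ is unjustified for components of preimages under compositions of arbitrary $C^1$ maps, though you never use it.) The paper's pullback construction avoids all of this, at the price of proving the inclusion $V_c^\bfg\subset\tB(c;2\delta)$ by induction with Proposition~\ref{prop:psorbbc}; if you want to salvage your approach you would need a new argument for small $n$, not a citation.
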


\begin{proof} Assume that $0<\eps\le \delta$ are small.
By Proposition~\ref{prop:psorbbc}, for any $\bfg\in\Omega_\eps\subset \Omega_\delta$, if $J$ is an interval intersecting $\tB(\delta)$ and $\bfg^n(J)\subset \tB(2\delta)$ for some integer $n\ge 1$ then $J\subset \tB(2\delta)$.
For $c\in\Crit$, $\bfg\in \Omega_\eps^\N$ and $n\ge 0$, let
$V_c^\bfg(n)$ be the component of $\bigcup_{i=0}^n \bfg^{-i}(\tB(\delta))$ that contains $c$. Let $V_c^\bfg:=\bigcup_{n=0}^\infty V_c^\bfg(n)$.
It is easy to check that $V=\bigcup_{c\in\Crit} \bigcup_{\bfg\in\Omega_\eps^N} V_c^\bfg\times \{\bfg\}$ is a nice set for $\eps$-random perturbations. It remains to show that for each $n\ge 0$, we have
$$\tB(c;\delta)\subset V_c^\bfg (n) \subset \tB(c;2\delta), \mbox{ for each } c\in\Crit\mbox{ and }\bfg\in \Omega_\eps^\N.$$

To this end, we proceed with induction on $n$. The case $n=0$ is trivial. Assume that the statement holds for some integer $n\ge 0$. Fix $c, \bfg$. To show that $V_c^\bfg(n+1)\subset \tB(c;2\delta)$, it suffices to show that each component $J$ of $V_c^\bfg(n+1)\setminus \tB(c;\delta)$ is contained in $\tB(2\delta)$.  To this end, let $m\in \{0,1,\ldots, n\}$ be minimal such that $\bfg^{m+1}(J)\cap \tB(\delta)\not=\emptyset$. Then we have
$\bfg^{m+1}(J)\subset \bigcup_{c'\in \Crit} V_{c'}^{\sigma^{m+1}\bfg} (n-m)$. By induction hypothesis, this implies that $\bfg^{m+1}(J)\subset \tB(2\delta)$, hence $J\subset \tB(2\delta)$.  This completes the induction step and hence the proof of the proposition.
\end{proof}

\section{Structure of proof of Theorem~\ref{theo:reduced}}\label{sec:thm2}
This section and the rest of this paper are devoted to the proof of Theorem~\ref{theo:reduced}. Unless otherwise stated, $f\in\SC_1$, $\Crit=\Crit(f)\subset (0,1)$, $\Omega\ni f$ is an admissible space, and for each $\eps>0$ small, $\nu_\eps$ is a probability measure on $\Omega_\eps$ which belongs to the class $\sM_\eps(L)$, where $L>1$ is a fixed constant. Moreover, write $P_\eps=\Leb|_{[0,1]}\times \nu_\eps^\N$.

Let $\theta_0>0$ be a small constant determined by Lemma~\ref{lem:theta0}.
For each $x\in [0,1]$, $\bfg\in \Omega^\N$ and $n\ge 1$, let
\begin{equation}\label{eqn:defnJ}
\hJ_{x, n}^\bfg =\left[x-\frac{\theta_0}{A(x,\bfg, n)}, x+ \frac{\theta_0}{A(x,\bfg, n)}\right],\mbox{ and } J_{x,n}^\bfg= \hJ_{x,n}^\bfg \cap [0,1].
\end{equation}
Then $\bfg^n$ maps $J_{x,n}^\bfg$ diffeomorphically onto its image and $\cN(\bfg^s|J_{x,n}^\bfg)\le 1.$
Note that for $0<\eps\le \delta$ small enough, if $x\in\tB(\delta)$ and $\bfg\in\Omega_\eps^\N$, then $\hJ_{x,n}^\bfg=J_{x,n}^\bfg \subset (0,1)$.

We say that an integer $s\ge 1$ is a {\em $\theta$-good return time of  $(x,\bfg)$ into $\tB(\delta)\times\Omega^\N$} if there exists $c\in\Crit$ such that
$\bfg^s(x)\in \tB(c;\delta)$ and such that
\begin{equation}\label{eqn:excellentrt}
\theta |D\bfg^s(x)|\ge A(x,\bfg;s)|\tB(c;\delta)|.
\end{equation}
So if $\theta\le \theta_0/e$, $x\in \tB(\delta)$ and $\bfg\in\Omega_\eps^\N$ for $0<\eps\le \delta$ small enough, then $\bfg^s(J_{x,s}^\bfg)$ contains $\tB(c;\delta)$.
We say that a positive integer $s$ is a {\em $\tau$-scale expansion time of $(x,\bfg)$} if $$\theta_0 |D\bfg^n(x)|\ge e \tau A(x,\bfg, s).$$

We shall use the following notations:
\begin{equation}
h_\delta^\theta(x,\bfg)  =\inf\left\{s\ge 1:  s \mbox{ is a $\theta$-good return time of } (x,\bfg)\mbox{ into } \tB(\delta)\times\Omega^\N\right\},
\end{equation}
\begin{equation}
T_\tau(x,\bfg) =\inf\left\{s\ge 1:  s \mbox{ is a $\tau$-scale time of }(x,\bfg)\right\},
\end{equation}
\begin{equation}
\hh_{\delta,\tau}^\theta (x,\bfg) =\min \left(\inf_{\delta'\ge \delta} h_{\delta'}^\theta(x,\bfg), T_\tau(x,\bfg)\right),
\end{equation}
and
\begin{equation}\label{eqn:dfnldelta}
l_{\delta}(x,\bfg)=\inf\left\{s\ge 0: \bfg^s(x)\in \tB(\delta)\right\}.
\end{equation}
The following is an easy consequence of Proposition~\ref{prop:psorblocdis}:
\begin{lemm}\label{lem:thetadelta}
Given $\theta>0$ there exists $\delta_0>0$ such that for $x\in [0,1]\setminus \tB(\delta_0)$ and $\bfg\in \Omega_{\eps}$ with $\eps\in (0,\delta_0]$, we have $h_{\delta_0}^\theta (x,\bfg)=l_{\delta_0}(x,\bfg).$
\end{lemm}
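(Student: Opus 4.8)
The plan is to deduce the lemma directly from Proposition~\ref{prop:psorblocdis} by unwinding the two relevant definitions, with the one trick being to apply that proposition \emph{at the scale $\delta_0$ rather than at the perturbation scale $\eps$}. Write $l=l_{\delta_0}(x,\bfg)$ and $h=h_{\delta_0}^{\theta}(x,\bfg)$. One inequality is trivial: any $\theta$-good return time $s\ge 1$ of $(x,\bfg)$ into $\tB(\delta_0)\times\Omega^\N$ satisfies $\bfg^s(x)\in\tB(c;\delta_0)\subset\tB(\delta_0)$ for some $c\in\Crit$, hence $s$ is an entry time of the orbit into $\tB(\delta_0)$ and so $s\ge l$; taking the infimum gives $h\ge l$. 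If the orbit of $(x,\bfg)$ never enters $\tB(\delta_0)$ then $l=h=\infty$ and there is nothing to prove, so we may assume $l<\infty$; note also $l\ge 1$ since $\bfg^0(x)=x\notin\tB(\delta_0)$. It remains to prove $h\le l$, for which it suffices to show that $l$ is itself a $\theta$-good return time.

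For this I would invoke Proposition~\ref{prop:psorblocdis} with the letter $\eps$ in its statement replaced by the fixed number $\delta_0$. This is legitimate here: since $\Omega$ is a subspace of $\sF_1$ and $\eps\le\delta_0$, we have $\bfg\in\Omega_\eps^\N\subset(\sF_1)_{\delta_0}^\N$, which is precisely the class of sequences of maps to which the proposition (stated for $f\in\SC_1$, $\Omega=\sF_1$) applies. Since $x\notin\tB(\delta_0)$ and $l$ is the first entry time, we have $\bfg^j(x)\notin\tB(\delta_0)$ for $0\le j<l$ and $\bfg^l(x)\in\tB(c;\delta_0)$ for some $c\in\Crit$; the proposition therefore yields
$$A(x,\bfg,l)\,|\tB(c;\delta_0)|\le\theta(\delta_0)\,|D\bfg^l(x)|,$$
where $\theta(\cdot)$ is the quantity provided by Proposition~\ref{prop:psorblocdis}, which tends to $0$ as its argument tends to $0$.

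Finally, fix $\delta_0>0$ small enough that both Proposition~\ref{prop:psorblocdis} is applicable at the parameter value $\delta_0$ and $\theta(\delta_0)\le\theta$. With this choice the displayed inequality becomes $\theta\,|D\bfg^l(x)|\ge A(x,\bfg,l)\,|\tB(c;\delta_0)|$, which is exactly condition (\ref{eqn:excellentrt}) certifying that $l$ is a $\theta$-good return time of $(x,\bfg)$ into $\tB(\delta_0)\times\Omega^\N$. Hence $h\le l$, and combined with $h\ge l$ this gives $h_{\delta_0}^{\theta}(x,\bfg)=l_{\delta_0}(x,\bfg)$, as claimed.

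I do not expect any genuine obstacle; the argument is bookkeeping. The only point demanding a moment's attention is the scale mismatch between the critical neighborhood $\tB(\delta_0)$ and the (a priori much smaller) perturbation size $\eps$. This causes no difficulty because an $\eps$-random perturbation with $\eps\le\delta_0$ is in particular a $\delta_0$-random perturbation, so Proposition~\ref{prop:psorblocdis} may be applied with $\delta_0$ in the role of its $\eps$; one must resist the temptation to apply it at scale $\eps$, since then $\bfg^l(x)$ need not lie in $\tB(c;\eps)$ and the argument collapses.
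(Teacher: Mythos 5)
Your argument is correct and coincides with the paper's own proof: the inequality $h_{\delta_0}^{\theta}\ge l_{\delta_0}$ is immediate from the definitions, and the reverse inequality is obtained exactly as you do, by applying Proposition~\ref{prop:psorblocdis} at the scale $\delta_0$ (legitimate since $\Omega_\eps\subset(\sF_1)_{\delta_0}$ for $\eps\le\delta_0$) and choosing $\delta_0$ so small that $\theta(\delta_0)\le\theta$, which makes the first entry time $l_{\delta_0}$ a $\theta$-good return time. The paper states this more tersely, but the content is the same.
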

\begin{proof} By definition, $h_{\delta_0}^\theta (x,\bfg)\ge l_{\delta_0}(x,\bfg)$.
By Proposition~\ref{prop:psorblocdis}, $l_{\delta_0}(x,\bfg)$, if finite, is a $\theta$-good return time of $(x,\bfg)$ into $\tB(\delta_0)\times \Omega^\N$ provided that $\delta_0$ is small enough. Thus $h_{\delta_0}^\theta (x,\bfg)\le l_{\delta_0}(x,\bfg)$.  The lemma follows.
\end{proof}

\begin{thm2} Let $f, \Omega, \nu_\eps$ be as in Theorem~\ref{theo:reduced} and let $\theta>0$ and $p\ge 1$ be constants. Then for each $\delta_0>0$ small there exist $\eps_0>0$ and $C>0$ such that for each $\eps\in (0,\eps_0]$ the following holds:
\begin{equation}
\int\int_{\tB(\delta_0)\times \Omega_\eps^\N} (h_{\delta_0}^\theta (x,\bfg))^p dP_\eps\le C.
\end{equation}
\end{thm2}

Let us deduce Theorem~\ref{theo:reduced} from Theorem 2'.

\begin{proof}[Proof of Theorem~\ref{theo:reduced}]
Fix $p>1$ and $\theta=\theta_0/4$. Let $\delta_0>0$ be small such that the conclusion of Theorem 2' holds.
Reducing $\delta_0>0$ if necessary, by Lemma~\ref{lem:thetadelta}, $l_{\delta_0}(x,\bfg)= h_{\delta_0}^\theta (x,\bfg)$ holds for all $x\in [0,1]\setminus \tB(\delta_0)$ and $\bfg\in\Omega_{\delta_0}^\N$.
Thus by Proposition~\ref{prop:mane1},
$$\int\int_{([0,1]\setminus \tB(\delta_0))\times \Omega_\eps^\N} (h_{\delta_0}^\theta(x,\bfg))^p dP_\eps= \int\int_{([0,1]\setminus \tB(\delta_0))\times \Omega_\eps^\N} (l_{\delta_0}(x,\bfg))^p dP_\eps$$
is bounded from above by a constant, provided that $\eps>0$ is small enough.
Together with Theorem 2', it follows that there exists a constant $C>0$ such that
$$\int_{[0,1]\times \Omega_\eps^\N} (h_{\delta_0}^\theta (x,\bfg))^p dP_\eps\le C$$
holds when $\eps>0$ is small enough.

Reducing $\delta_0>0$ if necessary, by Proposition~\ref{prop:nice}, when $0<\eps\le \delta_0$, there exists a nice set $V$ for $\eps$-perturbations  such that $\tB(\delta_0)\subset V^\bfg\subset \tB(2\delta_0)$ for each $\bfg\in\Omega_\eps^\N$. By Lemma~\ref{lem:theta0}, for each $(x,\bfg)\in V$, $h_{\delta_0}^\theta(x,\bfg)$ is a Markov inducing time, so $m_V(x,\bfg)\le h_{\delta_0}^\theta(x,\bfg)$. Therefore $$\int_V (m_V(x,\bfg))^p dP_\eps\le
\int_{[0,1]\times \Omega_\eps^\N} (h_{\delta_0}^\theta (x,\bfg))^p dP_\eps\le C.$$
Theorem~\ref{theo:reduced} follows.
\end{proof}

Let us outline the proof of Theorem 2'. By analyzing recurrence of $\eps$-random orbits into the critical region $\tB(\eps)$, we shall first prove the following propositions in \S\ref{sec:slowrec}.
\begin{prop} \label{prop:indsmall}
Given $\theta>0$, $p>1$ and $\gamma>0$, there exists $\tau>0$ such that for any $c\in \Crit$, we have
\begin{equation}\label{eqn:speps}
\frac{1}{|\tB(c;\eps)|}\int\int_{\tB(c;\eps)\times \Omega_\eps^\N} \left(\hh_{\eps,\tau}^\theta(x,\bfg)\right)^p dP_\eps  < \eps^{-\gamma},
\end{equation}
provided that $\eps>0$ is small enough.
\end{prop}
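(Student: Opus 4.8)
The plan is to control $\hh_{\eps,\tau}^\theta$ along the orbit $x_j=\bfg^j(x_0)$ issued from a point $x_0\in\tB(c_0;\eps)$, $c_0\in\Crit$. The starting remark is that a single step already brings us into the regime of Theorem~\ref{theo:dergwrothrandom}: since $x_0\in\tB(c_0;\eps)$ forces $|f(x_0)-f(c_0)|<\eps$, the point $x_1=g_0(x_0)$ satisfies $\dist(x_1,\CV)\le 4\eps$, so parts (i) and (ii) of that theorem apply to the orbit of $x_1$ under $\sigma\bfg$. First I would decompose time by the successive entries $0=t_0<t_1<\cdots$ of $\{x_j\}$ into $\tB(\eps)$; writing $\rho_k=\dist_*(x_{t_k},\Crit)<\eps$ for the depth of the $k$-th entry and $c_k$ for the nearby critical point, Theorem~\ref{theo:dergwrothrandom}(i) on each block $[t_k+1,t_{k+1}]$ together with admissibility at the re-entries gives
\[
|D\bfg^{t_K}(x_0)|\gtrsim \Lambda(\eps)^{K}\,\frac{D_{c_0}(\rho_0)}{D_{c_K}(\eps)}\,e^{\eps^{\alpha(\eps)}(t_K-K)}\,\prod_{k=1}^{K-1}\Bigl(\frac{\rho_k}{\eps}\Bigr)^{1-\ell_{c_k}^{-1}},
\]
the product being $\le 1$ since $\rho_k<\eps$. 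Thus the derivative grows exponentially in the total time spent outside $\tB(\eps)$, each re-entry costing only the factor $(\rho_k/\eps)^{1-\ell_{c_k}^{-1}}$ and the first step paying $D_{c_0}(\rho_0)$.

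Next I would show that for all $(x_0,\bfg)$ outside a small exceptional set, $\hh_{\eps,\tau}^\theta(x_0,\bfg)$ is comparable to the first entry time of the orbit of $x_1$ to $\tB(\eps)$. Suppose the orbit of $x_1$ stays out of $\tB(\eps)$ until its first return at time $t_1$, landing in $\tB(c_1;\eps)$. Proposition~\ref{prop:psorblocdis} applied at $x_1$ gives $A(x_1,\sigma\bfg,t_1-1)\,|\tB(c_1;\eps)|\le\theta(\eps)|D\bfg^{t_1-1}(x_1)|$ with $\theta(\eps)\to0$; feeding this, together with the identity $A(x_0,\bfg,t_1)=\dist(x_0,\Crit(g_0))^{-1}+|Dg_0(x_0)|\,A(x_1,\sigma\bfg,t_1-1)$ and the derivative bound above, into the definition of a $\theta$-good return shows that $t_1$ is a $\theta$-good return of $(x_0,\bfg)$ into $\tB(\eps)\times\Omega^\N$ whenever $\dist(x_0,\Crit(g_0))\gtrsim(\eps/\Lambda(\eps))^{1/\ell_{c_0}}$, hence $\inf_{\delta'\ge\eps}h_{\delta'}^\theta(x_0,\bfg)\le t_1$. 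Simultaneously, using Lemma~\ref{lem:theta0} to read the scale at time $s$ as $\theta_0|D\bfg^s(x_0)|/A(x_0,\bfg,s)$, the same derivative bound shows that, once $\tau$ is fixed small, this scale reaches $\tau$ within $O(\eps^{-\alpha(\eps)}\log(1/\eps))$ further steps, so $T_\tau(x_0,\bfg)$ is also dominated by $t_1+O(\eps^{-\alpha(\eps)}\log(1/\eps))$. The exceptional set is formed of the $x_0$ anomalously deep in $\tB(c_0;\eps)$ (Lebesgue measure $\lesssim(\eps/\Lambda(\eps))^{1/\ell_{c_0}}=o(|\tB(c_0;\eps)|)$) together with the $\bfg$ whose orbit makes several deep re-entries before recovering.

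For the integration, on the exceptional "deep $x_0$" part I would stratify by $\dist(x_0,\Crit(g_0))$ and bound the contribution directly from the derivative estimate, obtaining $\lesssim\eps^{-o(1)}$ after normalisation; the deep-re-entry part is handled by the regularity hypothesis $\nu_\eps\in\sM_\eps(L)$, which makes the $\nu_\eps^\N$-probability of a re-entry to depth $\le\rho$ at a prescribed time at most $L(\rho/\eps)^{1/L}$, summable over dyadic depths, so that (combined with the derivative growth, which a re-entry can fail to increase only if it is deeper than $\eps\Lambda(\eps)^{-\ell_c/(\ell_c-1)}$) the number of bad re-entries has an exponentially small tail and its contribution to the $p$-th moment is controlled. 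For the main part, since $\Crit$ and $\CV$ are disjoint for $f\in\SC_1$, the map $g_0$ sends $\tB(c;\eps)$ onto an interval of length $\asymp\eps$ about $g_0(c)\approx f(c)$, with the push-forward of Lebesgue measure having density $\asymp|y-f(c)|^{1/\ell_c-1}$, which is integrable with total mass $\asymp|\tB(c;\eps)|$; this lets me bound $\frac1{|\tB(c;\eps)|}\int\!\!\int_{\tB(c;\eps)\times\Omega_\eps^\N}(\hh_{\eps,\tau}^\theta)^p\,dP_\eps$ by a constant times the average over $y\in B_{C\eps}(f(c))$ of the $p$-th moment of the first entry time of $(y,\cdot)$ to $\tB(\eps)$. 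Finally, Proposition~\ref{prop:mane1}, sharpened by the decay rate $\eps^{\alpha(\eps)}$ furnished by Theorem~\ref{theo:dergwrothrandom}(ii), bounds that moment by $\eps^{-p\alpha(\eps)}$ times a power of $\log(1/\eps)$, which is $<\eps^{-\gamma}$ for $\eps$ small because $p\alpha(\eps)\to0$.

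The principal difficulty is to keep every step uniform in $\eps$: the distortion constant $\theta(\eps)$ of Proposition~\ref{prop:psorblocdis}, the expansion constants attached by Proposition~\ref{prop:mane1} to the shrinking neighbourhood $\tB(\eps)$, and the dyadic bookkeeping of re-entry depths all carry $\eps$-dependent losses, and one must check that after weighting the tail by $m^{p-1}$ and summing, these remain subpolynomial in $1/\eps$. The parameter $\tau$ has to be chosen small — depending on $\theta$, $p$, $\gamma$, but not on $\eps$ — precisely so that the scale $\tau$ is reached before the accumulated distortion can spoil the $\theta$-good return criterion, and so that the recovery window stays short relative to $\log(1/\eps)$.
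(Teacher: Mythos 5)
The decisive gap is your final moment estimate. Off your exceptional set you bound $\hh_{\eps,\tau}^\theta(x_0,\bfg)$ by the first entry time $t_1$ of the random orbit into $\tB(\eps)$, and you then claim that the normalized $p$-th moment of $t_1$ is at most $\eps^{-p\alpha(\eps)}$ times a power of $\log(1/\eps)$, by ``sharpening'' Proposition~\ref{prop:mane1} with Theorem~\ref{theo:dergwrothrandom}(ii). That claim is false. The target $\tB(\eps)$ has components of length $\asymp\eps^{1/\ell_c}$, and an $\eps$-random orbit can enter it only at steps when its current position lies in the $f$-preimage of an $\eps$-neighbourhood of $\tB(\eps)$, a set of measure tending to zero like a positive power of $\eps$; so the first entry time is typically polynomially large in $1/\eps$ (already for a quadratic map a Kac-type heuristic gives mean entry time of order $\eps^{-1/2}$), and its $p$-th moment cannot be below $\eps^{-\gamma}$ for arbitrarily small $\gamma$. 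Proposition~\ref{prop:mane1} gives an exponential tail with constants attached to a neighbourhood $U$ that is \emph{fixed} independently of $\eps$ (its proof rests on the zero measure of the set of $f$-orbits avoiding $U_0$ at a fixed finite time, followed by an iteration), and it does not survive the substitution $U=\tB(\eps)$ with uniform constants; Theorem~\ref{theo:dergwrothrandom}(ii) is a derivative estimate, not a measure estimate, and converting the expansion $e^{\eps^{\alpha(\eps)}s}$ into measure decay of the set avoiding $\tB(\eps)$ loses, per bounded-distortion window, a factor proportional to $|\tB(\eps)|$, so the resulting rate is polynomial in $\eps$, not $\eps^{o(1)}$.

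The proposition is true for a different reason, which your reduction discards: $\hh_{\eps,\tau}^\theta$ does not wait for a return to $\tB(\eps)$, it stops at the first $\theta$-good return into $\tB(\delta')$ for \emph{any} $\delta'\ge\eps$ or at a $\tau$-scale expansion time, and for most $(x,\bfg)$ this occurs within roughly $\eps^{-\alpha(\eps)}$ steps, long before the orbit revisits $\tB(\eps)$. This is what \S\ref{sec:slowrec} makes precise: off the set $\Bad_m(\kappa,\eps)$, whose measure is at most $Ke^{-\rho m}|\tB(c;\eps)|$ by Proposition~\ref{prop:rettbeps}, one has $\hh_{\eps,\tau}^\theta\le m\eps^{-\alpha}$ by Proposition~\ref{prop:indsmall0}, and summing over $m$ yields (\ref{eqn:speps}). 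Your sketch contains no mechanism producing a stopping time at intermediate scales; your bound on $T_\tau$ is itself anchored to $t_1$, so it inherits the same polynomial loss, and it also ignores that shallow returns into $\tB(\delta')$ with $\eps<\delta'<\delta_0$ inflate $A(x,\bfg,s)$ and must be bookkept (this is the role of Lemmas~\ref{lem:shallowret} and~\ref{lem:shallowupb}, via the counters $\Q$ and $\Gamma$). A secondary but genuine gap: controlling ``several deep re-entries'' by the regularity of $\nu_\eps$ ``at a prescribed time'' is not a proof — the depth of a re-entry is controlled only through the Lebesgue measure of the set of deep first-landing points (the covering argument of Lemma~\ref{lem:rettbeps0}) combined with the one-step regularity immediately after a critical visit, and the independence across successive entries needed for an exponential tail is exactly the content of Lemma~\ref{lem:rettbeps'}, which uses the future-free structure of the first-entry map rather than conditioning on the entry time.
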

\begin{prop}\label{prop:indsmall(1)}
Given $\theta>0$, $\alpha>0$ and $b>0$, there exists $\tau>0$ such that the following holds provided that $\eps>0$ is small enough:
For each $x\in \tB(\eps)\setminus \tB(b\eps)$ and $\bfg\in\Omega_\eps^\N$, we have $\hh_{\eps,\tau}^\theta(x,\bfg)\le \eps^{-\alpha}$.
\end{prop}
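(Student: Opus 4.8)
The plan is to show that, starting from $x\in \tB(c;\eps)\setminus\tB(c;b\eps)$ and $\bfg\in\Omega_\eps^\N$, one of three ``good events'' occurs within $\eps^{-\alpha}$ iterates of $x_j:=\bfg^j(x)$: the orbit returns to $\tB(\eps)$ (or essentially to it), yielding a $\theta$-good return time; or the orbit stays outside a fixed critical neighbourhood $\tB(\delta_1)$ ($\delta_1$ an $\eps$-independent constant to be chosen small) for $\gtrsim \log(1/\eps)$ consecutive steps, yielding a $\tau$-scale expansion time; or neither, in which case the orbit keeps returning shallowly to $\tB(\delta_1)$ and we iterate a binding analysis. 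First I would record the normalisations: the order-$\ell_c$ bounds in the admissibility condition give $\dist(x,c)\asymp_b\eps^{1/\ell_c}$, hence $\dist(x,\Crit(g_0))\asymp_b\eps^{1/\ell_c}$ and $|Dg_0(x)|\asymp_b D_c(\eps)\asymp\eps^{1-1/\ell_c}$, so $x_1=g_0(x)$ lies within $2\eps$ of $\CV$ and therefore (for $\eps,\delta_1$ small, since $f\in\LD$ has no critical relations) outside $\tB(\delta_1)$. I will also use the telescoping identities $A(x,\bfg,\ell+s)=A(x,\bfg,\ell)+|D\bfg^\ell(x)|\,A(x_\ell,\sigma^\ell\bfg,s)$ and $|D\bfg^{\ell+s}(x)|=|D\bfg^\ell(x)|\,|D(\sigma^\ell\bfg)^s(x_\ell)|$, and the observation that $\dist(x,\Crit(g_0))^{-1}$ is the $i=0$ term of $A(x,\bfg,\cdot)$, of size $\asymp_b\eps^{-1/\ell_c}$.

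\emph{The good-return case.} Suppose the orbit meets $\tB(\delta)$ for some $\delta\in[\eps,\delta_1]$ at a first time $r\le\eps^{-\alpha}$, landing at $x_r\in\tB(c';\rho)$ with $\rho=\dist_*(x_r,\Crit)$; put $\rho_0=\max(\rho,\eps)\in[\eps,\delta_1]$. Then $x_1,\dots,x_{r-1}\notin\tB(\rho_0)$ and $x_r\in\tB(c';\rho_0)$. Applying Theorem~\ref{theo:dergwrothrandom}(i) at scale $\rho_0$ to $(x_1,\sigma\bfg)$ (valid because $\dist(x_1,\CV)\le 2\eps\le 2\rho_0$ and $\bfg\in\Omega_{\rho_0}^\N$) gives $|D\bfg^r(x)|\gtrsim_b D_c(\eps)\,\Lambda(\rho_0)/D_{c'}(\rho_0)$, while Proposition~\ref{prop:psorblocdis} at scale $\rho_0$ bounds $A(x_1,\sigma\bfg,r-1)|\tB(c';\rho_0)|\le\theta(\rho_0)|D(\sigma\bfg)^{r-1}(x_1)|$. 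Feeding these into the telescoping identity with $\ell=1$ yields $A(x,\bfg,r)|\tB(c';\rho_0)|\le\bigl(\theta(\rho_0)+C_b\,\rho_0/(\eps\Lambda(\rho_0))\bigr)|D\bfg^r(x)|$; since $\theta(\rho_0)\to 0$, this is $\le\theta|D\bfg^r(x)|$ provided $\rho_0/(\eps\Lambda(\rho_0))$ is small, which holds whenever $\rho_0$ is at most a slowly growing multiple of $\eps$ — here the divergence of $\Lambda$ is exactly what annihilates the head term $\eps^{-1/\ell_c}$. In that regime $r$ is a $\theta$-good return time into $\tB(\rho_0)$, so $\hh_{\eps,\tau}^\theta(x,\bfg)\le \inf_{\delta'\ge\eps}h_{\delta'}^\theta(x,\bfg)\le h_{\rho_0}^\theta(x,\bfg)\le r\le\eps^{-\alpha}$.

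\emph{The escape case.} Fix $\delta_1$ small enough that $\theta(\delta_1)$ is as small and $\Lambda(\delta_1)$ as large as the argument requires. Suppose the orbit $x_1,x_2,\dots$ stays outside $\tB(\delta_1)$ for the first $N_0(\eps):=\lceil C_\delta\log(1/\eps)\rceil$ steps. Then Proposition~\ref{prop:mane1}(i) with $U=\tB(\delta_1)$ gives an \emph{$\eps$-independent} rate $|D(\sigma^k\bfg)^{m-k}(x_k)|\ge K^{-1}e^{\eta(m-k)}$ for $1\le k\le m\le N_0(\eps)+1$; summing the resulting estimate $|D(\sigma\bfg)^k(x_1)|\le K\,|D(\sigma\bfg)^{m-1}(x_1)|e^{-\eta(m-1-k)}$ against the uniform lower bound $\dist(x_{k+1},\Crit(g_{k+1}))\gtrsim_f 1$ gives $A(x_1,\sigma\bfg,m-1)\le C_2|D(\sigma\bfg)^{m-1}(x_1)|$ with $C_2=C_2(\delta_1,f)$. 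Choosing $C_\delta$ so that $D_c(\eps)K^{-1}e^{\eta N_0(\eps)}\ge\eps^{-1/\ell_c}$, the head term is dominated by $|D\bfg^{N_0(\eps)+1}(x)|$ and hence $A(x,\bfg,N_0(\eps)+1)\le(1+C_2)|D\bfg^{N_0(\eps)+1}(x)|$; taking $\tau:=\theta_0/\bigl(e(1+C_2)\bigr)$ makes $N_0(\eps)+1$ a $\tau$-scale expansion time, so $\hh_{\eps,\tau}^\theta(x,\bfg)\le N_0(\eps)+1\le\eps^{-\alpha}$. Note $\tau$ depends only on $\theta,\alpha,b,f$, as required.

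\emph{The remaining case, and the main obstacle.} It remains to treat the case where the good-return case never applies (the orbit avoids $\tB(\eps)$, indeed $\tB(O(\eps\Lambda(\eps)))$, for all $\eps^{-\alpha}$ steps) but the escape case also fails, i.e.\ the orbit returns to $\tB(\delta_1)$ within $N_0(\eps)$ steps at a shallow point $x_\ell\in\tB(c';\delta_1)\setminus\tB(c';\eps)$ of scale $\rho_1=\dist_*(x_\ell,\Crit)$. The plan is to restart with $(x_\ell,\sigma^\ell\bfg)$, which again has the form of the proposition but at scale $\asymp\rho_1$, and to iterate — the key being that after a restart the head term is $\rho_1^{-1/\ell}$, not $\eps^{-1/\ell_c}$, while the contribution of the preceding cycle to $A$ (its head term plus a binding period of length $M_v(\rho_1)$, whose $A$-contribution is $\lesssim\rho_1^{-1}$ by Proposition~\ref{prop:shadowablegrowth} and the binding Lemma~\ref{lem:shadow}) is controlled through Propositions~\ref{prop:psorbretpre}–\ref{prop:dergrowthps}; one shows each restart forces the scale up by a fixed factor $q>1$ (the first-return map to $\tB(\rho)$ expands by $\ge\Lambda(\rho)\ge\Lambda(\delta_1)$, and $\delta_1$ is chosen so $\Lambda(\delta_1)$ is large), so there are only $O(\log(1/\eps))$ restarts before the scale reaches $\delta_1$ and the escape/good-return alternative closes the argument. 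Since a free excursion longer than $N_0(\eps)$ at any scale already falls in the escape case, each restart consumes $O(\log(1/\eps))$ iterates, for a total of $O\bigl((\log(1/\eps))^2\bigr)$; using $\eps^{-\alpha/2}$ as the threshold in the first two cases keeps the grand total below $\eps^{-\alpha}$. The hard part is precisely this bookkeeping: one must verify that the binding periods triggered by the shallow returns do not accumulate too much time or too large an $A$-contribution, and — the delicate point that makes the recursion bottom out — that a return that is actually \emph{deep} (scale $O(\eps)$) is always a $\theta$-good return, which is where the backward-contraction input (Propositions~\ref{prop:bc} and \ref{prop:psorbbc}) and the small-total-distortion estimate (Proposition~\ref{prop:psorblocdis}) do the real work.
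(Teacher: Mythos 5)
Your first two cases are plausible, but the proof is not complete: the entire difficulty of the statement sits in your third case, and there you only describe a strategy and explicitly defer the bookkeeping. Worse, the one structural claim you do make there --- that ``each restart forces the scale up by a fixed factor $q>1$'', so that only $O(\log(1/\eps))$ restarts occur --- is unjustified and, as stated, false: this is a deterministic assertion over \emph{all} $\bfg\in\Omega_\eps^\N$, and nothing prevents successive returns to $\tB(\delta_1)$ from being deeper than earlier ones (a random orbit may even hit a critical point of some $g_j$ exactly, after which $A(x,\bfg,n)=\infty$ for all later $n$, so the required time must be located \emph{before} such an event). Controlling arbitrary patterns of shallow and deep returns is exactly what the paper's counting functions $\Q_j^s$ and $\Gamma_j^s$ and the dichotomy of $\Bad_m(\kappa,\eps)$ are for (Lemmas~\ref{lem:shallowuman}--\ref{lem:shallowupb} and Proposition~\ref{prop:indsmall0}); your sketch would have to reproduce that analysis, and it does not. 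Note also that your ``good-return case'' only closes when the landing depth $\rho_0$ is at most $\eps$ times a factor absorbed by $\Lambda(\rho_0)$, so it does not shrink the remaining case.

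You also miss the short route the paper takes, which uses the hypothesis $x\notin\tB(b\eps)$ in a much more decisive way than as a normalization of $|Dg_0(x)|$: it bounds the initial depth, $q_\eps(x,\bfg)\le m(b)$ for a constant $m(b)$, and since $x\in\tB(\eps)$ gives $\Gamma_0^0(x,\bfg;\eps)=1$, the defining inequality of $\Bad_m(\kappa,\eps)$ already fails at $s=0$ once $\kappa,m\ge m(b)$. Hence $(x,\bfg)\notin\Bad_{m(b)}(\kappa,\eps)$ for \emph{every} such $(x,\bfg)$, and Proposition~\ref{prop:indsmall0} (already established at this point in the paper) immediately yields $\hh_{\eps,\tau}^\theta(x,\bfg)\le m(b)\,\eps^{-\alpha'}\le\eps^{-\alpha}$ for suitable $\alpha'<\alpha$ and $\eps$ small. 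In other words, the statement is a two-line corollary of the slow-recurrence machinery; attempting to rebuild that machinery inside this proof, as your third case would require, is where your argument stops short.
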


Next,
for $p\ge 1$, $c\in\Crit$ and $0<\eps\le \delta\le\delta_0/e$, write
\begin{align}
\label{eqn:intnot1}
S^\theta_p(\delta,\eps; c,\delta_0)& =\frac{1}{|\tB(c;\delta)|}\int\int_{\tB(c;\delta)\times \Omega_\eps^\N} \left(\inf_{\delta'\in [\delta,\delta_0]} h_{\delta'}^\theta(x,\bfg)\right)^p dP_\eps,\\
\label{eqn:intnot1h}
\hS^\theta_{p}(\delta,\eps;\delta_0)& =\int\int_{(\tB(\delta_0)\setminus \tB(\delta))\times \Omega_\eps^\N}
\frac{1}{\dist(x,\Crit)}\left(\inf_{\delta'\in [e\delta, \delta_0]}h_{\delta'}^\theta(x,\bfg)\right)^p dP_\eps.
\end{align}

We shall prove the following two propositions in \S~\ref{sec:inducing}.
\begin{prop}\label{prop:indsma}
Fix $\theta>0$, $\gamma>0$ and $p\ge 1$. For each $\delta_0>0$ small enough, there exist $\eps_0>0$ and $C>0$
such that the following hold provided that $0<\delta\le \delta_0/e$ and $0<\eps\le \min(\eps_0, \delta)$:
\begin{enumerate}
\item [(i)] $S^\theta_p(\eps,\eps;c,\delta_0)\le C\eps^{-\gamma}$ for each $c\in\Crit$ and
\item [(ii)] $\hS^\theta_{p}(\delta,\eps;\delta_0)\le C\delta^{-\gamma}$.
\end{enumerate}
\end{prop}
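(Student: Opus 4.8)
The plan is to obtain Proposition~\ref{prop:indsma} by bootstrapping the control of the auxiliary stopping time $\hh_{\delta,\tau}^\theta$ supplied by Propositions~\ref{prop:indsmall} and~\ref{prop:indsmall(1)} to control of the genuine first $\theta$-good return into a ball of radius in $[\delta,\delta_0]$. The bridge between the two is to follow an orbit past its first $\hh$-time and, when it leaves $\tB(\delta_0)$, to track how it returns, using M\'a\~ne's theorem (Proposition~\ref{prop:mane1}) together with the small-total-distortion estimate (Proposition~\ref{prop:psorblocdis}).

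For the one-step reduction one would fix $\theta,\gamma,p$, choose an auxiliary constant $\theta'\in(0,\theta)$ smaller by a factor depending only on $\theta_0$, on the M\'a\~ne constant $K$ of Proposition~\ref{prop:mane1}, and on the finitely many ratios $|\tB(c;\cdot)|/|\tB(c';\cdot)|$, apply Proposition~\ref{prop:indsmall} with $(\theta',p,\gamma)$ to get $\tau$, and then shrink $\delta_0$ so that $\max_c|\tB(c;\delta_0)|\le\theta' e\tau/\theta_0$ and $\sup_{0<\rho\le2\delta_0}\theta(\rho)\le\theta'$, where $\theta(\cdot)$ is the modulus in Proposition~\ref{prop:psorblocdis}. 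For $(x,\bfg)\in\tB(c;\eps)\times\Omega_\eps^\N$ put $n_1=\hh_{\eps,\tau}^{\theta'}(x,\bfg)$ and $y_1=\bfg^{n_1}(x)$. If $n_1$ is already a $\theta'$-good return into some $\tB(\delta')$ with $\delta'\in[\eps,\delta_0]$, or if $y_1\in\tB(\delta_0)$ — in which case the defining inequality of a $\tau$-scale time (respectively of a good return into a ball of radius $>\delta_0$), combined with $\max_c|\tB(c;\delta_0)|\le\theta' e\tau/\theta_0$, already makes $n_1$ a $\theta'$-good return into $\tB(\delta_0)$ — then $\inf_{\delta'\in[\eps,\delta_0]}h_{\delta'}^{\theta}(x,\bfg)\le n_1$ and we stop. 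Otherwise $y_1\notin\tB(\delta_0)$; by Lemma~\ref{lem:thetadelta} the orbit re-enters $\tB(\delta_0)$ at the finite time $l=l_{\delta_0}(y_1,\sigma^{n_1}\bfg)\ge1$, landing at $y_1'=\bfg^{n_1+l}(x)$ at scale $\rho=\dist_*(y_1',\Crit)\le\delta_0$. Telescoping $A(x,\bfg,n_1+l)/|D\bfg^{n_1+l}(x)|$ into the contribution of the first $n_1$ iterates and that of the last $l$ iterates: the former is small relative to $|\tB(c;\delta_0)|^{-1}$ since $A(x,\bfg,n_1)/|D\bfg^{n_1}(x)|$ is controlled at a $\tau$-scale time or a good return into a larger ball, while the derivative gained from $n_1$ to $n_1+l$ is at least $\kappa D_c(\delta_0)^{-1}(\delta_0/\delta'')^{1-\ell_{\max}^{-1}}$ with $\delta''\ge\delta_0$ by Lemma~\ref{lem:pseudoorbitland} (using its exponential gain when $\rho$ is small compared with the escape scale); the latter is at most $\theta(\rho)\le\theta'$ by Proposition~\ref{prop:psorblocdis} applied at scale $\rho$. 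With these choices $n_1+l$ becomes a $\theta$-good return of $(x,\bfg)$ into $\tB(c'';\rho)$, so either $\rho\ge\eps$ and we stop with $\inf_{\delta'\in[\eps,\delta_0]}h_{\delta'}^{\theta}(x,\bfg)\le n_1+l$, or $\rho<\eps$, i.e.\ $y_1'$ lies deep inside $\tB(c'';\eps)$, and we restart the construction from $(y_1',\sigma^{n_1+l}\bfg)$ at a strictly finer scale.

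To integrate, one uses $(n_1+l)^p\le2^{p-1}(n_1^p+l^p)$ over $\tB(c;\eps)\times\Omega_\eps^\N$: the $n_1^p$ term is $\le\int(\hh_{\eps,\tau}^{\theta'})^p\,dP_\eps\le\eps^{-\gamma}|\tB(c;\eps)|$ by Proposition~\ref{prop:indsmall}. For the $l^p$ term one decomposes over the value $n$ of the stopping time $n_1$: conditioning on $(x,g_0,\dots,g_{n-1})$ with $n_1=n$, the tail $\sigma^n\bfg$ is still $\nu_\eps^\N$-distributed, and on the interval $J_{x,n_1}^\bfg$ of \eqref{eqn:defnJ} the map $\bfg^{n_1}$ is a diffeomorphism with $\cN(\bfg^{n_1}|J_{x,n_1}^\bfg)\le1$ onto an interval of length bounded below by a fixed positive constant (since $n_1$ is a $\tau$-scale time or a good return into a ball of definite radius), so the push-forward of $\Leb|_{J_{x,n_1}^\bfg}$ under $\bfg^{n_1}$ has density comparable to $|J_{x,n_1}^\bfg|$; as $|J_{x,n_1}^\bfg|\le C|\tB(c;\eps)|$ and the $J_{x,n_1}^\bfg$ cover $\tB(c;\eps)$ with bounded overlap, the $l^p$ term is dominated by a constant times $|\tB(c;\eps)|\int_{[0,1]\times\Omega_\eps^\N}(l_{\delta_0})^p\,dP_\eps$, which is finite uniformly in small $\eps$ by Proposition~\ref{prop:mane1}(ii). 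The genuinely recursive contribution is the part with $\rho<\eps$: integrating the restart over the landing point $y_1'$ and repeating the push-forward analysis yields, after comparison with the definitions, a term governed by $\hS_p^\theta$-type sums at finer scales, hence a closed recursive inequality of the form ``(bound at scale $\delta$)$\le C\delta^{-\gamma}+\lambda\cdot$(bound at finer scales)'' with $\lambda<1$; iterating and summing a geometric series gives~(i). Statement~(ii) then follows by slicing $\tB(\delta_0)\setminus\tB(\delta)$ into the annuli $\tB(c;e^{k}\delta)\setminus\tB(c;e^{k-1}\delta)$, $1\le k\lesssim\log(\delta_0/\delta)$, on which $\dist(x,\Crit)\asymp|\tB(c;e^{k}\delta)|$; the scale-$e^{k}\delta$ form of part~(i) bounds the weighted integral of $\bigl(\inf_{\delta'\in[e\delta,\delta_0]}h^\theta_{\delta'}\bigr)^p$ over the $k$-th annulus by $(e^{k}\delta)^{-1}(e^{k}\delta)^{-\gamma}|\tB(c;e^{k}\delta)|\asymp(e^{k}\delta)^{-\gamma}$, and $\sum_k(e^{k}\delta)^{-\gamma}\asymp\delta^{-\gamma}$.

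The main obstacle will be to make this scale recursion close, that is, to verify that the multiplicative constant $\lambda$ accumulated each time an orbit escapes $\tB(\delta_0)$ and re-enters deep is strictly less than $1$; this is what forces the order in which $\theta'$, $\tau$ and $\delta_0$ are chosen, and it relies essentially on the fact that the total-distortion moduli of Proposition~\ref{prop:psorblocdis} (and their analogues at intermediate scales $[\eps,\delta_0]$) tend to $0$. A secondary difficulty is the measure-theoretic bookkeeping in the stopping-time/push-forward argument, which must be carried out with uniformly bounded overlap multiplicity of the telescoping intervals $J_{x,n_1}^\bfg$ so that the various ``density comparable to $|J|$'' estimates sum to $|\tB(c;\eps)|$ rather than to something larger.
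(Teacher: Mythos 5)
Your overall skeleton (control the auxiliary time $\hh^\theta_{\eps,\tau}$ by Proposition~\ref{prop:indsmall}, then follow the orbit to its next entry into $\tB(\delta_0)$ and integrate via a push-forward density bound plus M\'a\~n\'e's exponential tail) is the paper's, but the step you yourself flag as the main obstacle — closing a cross-scale recursion with factor $\lambda<1$ when the orbit re-enters $\tB(\delta_0)$ at depth $\rho<\eps$ — is a genuine gap, and it stems from a misreading of what must be produced. A $\theta$-good return of $(x,\bfg)$ into $\tB(\delta')\times\Omega^\N$ only requires $\bfg^s(x)\in\tB(c;\delta')$ and $\theta|D\bfg^s(x)|\ge A(x,\bfg;s)\,|\tB(c;\delta')|$; the landing depth plays no role, and the infimum defining $S^\theta_p(\eps,\eps;c,\delta_0)$ allows $\delta'=\delta_0$. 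So once $h=\hh^\theta_{\eps,\tau}(x,\bfg)$ is known to be a close return and the subsequent first entry time $l=l_{\delta_0}(F^h(x,\bfg))$ is, by Lemma~\ref{lem:thetadelta}, a good return of $F^h(x,\bfg)$ into $\tB(\delta_0)$ (with a slightly smaller constant), Lemma~\ref{lem:join1}(ii) makes $h+l$ a $\theta$-good return of $(x,\bfg)$ into $\tB(\delta_0)$ no matter how deep the landing point lies; this is exactly Lemma~\ref{lem:delta0} of the paper, and it closes the estimate in one step via $\inf_{\delta'\in[\eps,\delta_0]}h^\theta_{\delta'}\le h+l\circ F^h$, with no restart and no geometric series. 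Your stronger demand of a good return into $\tB(c'';\rho)$, followed by a restart when $\rho<\eps$, is moreover not justifiable with the tools you cite: Proposition~\ref{prop:psorblocdis} ``at scale $\rho$'' requires $\bfg\in\Omega_\rho^\N$, which fails when $\rho<\eps$; handling return scales below the noise level is precisely the content of Proposition~\ref{prop:indind} and the recursion in Theorem 2', and cannot be imported into this proposition.

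Two further points would still need repair even after that fix. First, in the integration of the $l^p$ term your per-stopping-value bound ``density $\le C|\tB(c;\eps)|$'' is the right claim (it is the Claim in the paper's proof, using the intervals $J^\bfg_{x,n}$, $\cN\le1$, and image length bounded below because the relevant landing point is outside $\tB(\delta_0)$), but summing this constant bound over all values of $n_1$ diverges; the asserted ``bounded overlap'' of the $J^\bfg_{x,n}$ across different stopping values is false in general. The paper restores summability by keeping only the event $l\circ F^h>h$ (absorbing $l\le h$ into a $2h$ term), so that the $y$-integral $\int_{\{l_{\delta_0}>k\}}l_{\delta_0}^p$ decays exponentially in $k$ by Proposition~\ref{prop:mane1}(ii). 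Second, part (ii) does not follow from ``the scale-$e^k\delta$ form of part (i)'': part (i) is only available when the ball radius equals the noise level, and a bound for $\tB(c;e^k\delta)$ under $\eps$-perturbations with $\eps\ll e^k\delta$ is exactly what is not yet known at this stage. On each annulus one must instead use the deterministic bound of Proposition~\ref{prop:indsmall(1)} at scale $e\delta$ (legitimate since $\Omega_\eps\subset\Omega_{e\delta}$), joined with $l_{\delta_0}$ as in (i); your final summation $\sum_k(e^k\delta)^{-\gamma}\asymp\delta^{-\gamma}$ over annuli is then fine.
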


\begin{prop} \label{prop:indind}
Fix $p\ge 1$, $\gamma>0$ and $\lambda\in (e^{-\ell_{\max}^{-1}},1)$. There exists $\theta_*>0$ such that for each $\theta\in (0,\theta_*)$ the following holds:  For each $c\in\Crit$,
\begin{equation}\label{eqn:tailind}
S_p^\theta (e\delta,\eps; c,\delta_0)<\lambda \left(
\max_{c'\in\Crit} S_{p}^\theta (\delta,\eps; c',\delta_0) +2\hS^{\theta/e}_p(\delta,\eps;\delta_0)\right),
\end{equation}
provided that $0<\eps\le\delta\le \delta_0/e$ are small enough.
\end{prop}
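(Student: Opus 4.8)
Proposition~\ref{prop:indind} is the central "induction step" estimate: it says that passing from scale $\delta$ to scale $e\delta$ contracts the $L^p$-average of the good-return-time function by a definite factor $\lambda<1$, up to the error term coming from orbits that land in the annulus $\tB(\delta_0)\setminus\tB(\delta)$. The plan is to fix $c\in\Crit$ and a point $(x,\bfg)\in\tB(c;e\delta)\times\Omega_\eps^\N$ and decompose the orbit according to its first return into the smaller critical neighbourhood $\tB(\delta)$. Precisely, starting from $x\in\tB(c;e\delta)$, let $s$ be the first moment the orbit enters $\tB(\delta)$; if on the way it already achieves a $\theta$-good return into some $\tB(\delta')$ with $\delta'\in[e\delta,\delta_0]$, or the orbit is never again in $\tB(\delta)$, we are essentially done by a direct bound. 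Otherwise, at time $s$ the orbit sits in $\tB(c';\delta)$ for some $c'\in\Crit$; the key point is to relate $\inf_{\delta'\in[e\delta,\delta_0]}h_{\delta'}^\theta(x,\bfg)$ to $s$ plus $\inf_{\delta'\in[\delta,\delta_0]}h_{\delta'}^\theta(\bfg^s(x),\sigma^s\bfg)$, i.e. to concatenate the "preliminary" piece with the tail piece governed by $S_p^\theta(\delta,\eps;c',\delta_0)$.

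The second ingredient is to control the preliminary piece. Here one uses that the distortion along the telescope is bounded: $\bfg^s$ maps the interval $J=J_{x,s}^\bfg$ diffeomorphically with $\cN(\bfg^s|J)\le 1$ by Lemma~\ref{lem:theta0}, so pushing forward $\Leb|_{\tB(c;e\delta)}$ by $\bfg^s$ and summing over the (countably many) branches $s$ and landing points $c'$ gives, up to a bounded-distortion factor $e$, a transfer-operator identity: the measure on $\tB(c';\delta)\times\Omega_\eps^\N$ obtained by first-entry has density comparable to Lebesgue, with total mass controlled by the ratio of lengths $|\tB(c';\delta)|/|\tB(c;e\delta)|$. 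The crucial geometric input is $|\tB(c;e\delta)|/|\tB(c';\delta)| \asymp (e\delta/\delta)^{1-\ell_c^{-1}}\cdot(\text{const})$, and $e^{1-\ell_c^{-1}}\le e^{1-\ell_{\max}^{-1}}$, which is precisely where the hypothesis $\lambda\in(e^{-\ell_{\max}^{-1}},1)$, equivalently $\lambda^{-1}<e^{\ell_{\max}^{-1}}$, is used: after renormalising by $|\tB(c;e\delta)|$ on the left and by $|\tB(c';\delta)|$ on the right, one gains exactly the factor $e^{-(1-\ell_{\max}^{-1})}<\lambda$ from the length ratio. To handle the concatenation at the level of $p$-th powers, I would use $(a+b)^p\le (1+\eta)^{p-1}a^p + C_\eta b^p$ or simply split $\{(x,\bfg): s\le \text{(tail time)}\}$ from its complement; on the first set the preliminary time is dominated by the tail and contributes to $S_p^\theta(\delta,\eps;c',\delta_0)$ after the length-ratio gain, while on the second set the preliminary time dominates and its $p$-th-power integral is exactly what gets absorbed into $\hS_p^{\theta/e}(\delta,\eps;\delta_0)$ — the $\theta/e$ appearing because the telescoping branch interval $J$ has a slightly worse good-return constant than the branch endpoint, so a good return of the endpoint at level $\theta/e$ forces a good return of $x$ at level $\theta$.

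I would organise the bookkeeping as follows. First, cover $\tB(c;e\delta)$ by the first-entry branches into $\tB(\delta)$ and into each $\tB(\delta')$, $\delta'\in[e\delta,\delta_0]$, writing $\tB(c;e\delta)\times\Omega_\eps^\N$ as a disjoint union (mod zero) of cylinders on which $(x,\bfg)\mapsto(\bfg^s(x),\sigma^s\bfg)$ is a measure-isomorphism up to distortion $e$. Second, on each cylinder bound $\left(\inf_{\delta'\in[e\delta,\delta_0]}h_{\delta'}^\theta(x,\bfg)\right)^p$ by $2^{p-1}s^p + 2^{p-1}\left(\inf_{\delta'\in[\delta,\delta_0]}h_{\delta'}^\theta\circ F^s\right)^p$ when $s$ is \emph{not} itself already a level-$\theta$ good return into $\tB(\ge e\delta)$, and by $s^p$ otherwise — in the latter case $s$ contributes directly to $\hS_p^{\theta/e}$. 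Third, integrate, push forward, and sum the branches: the tail terms assemble into $\le \sum_{c'}(\text{length ratio})\cdot S_p^\theta(\delta,\eps;c',\delta_0)\le \lambda' \max_{c'}S_p^\theta(\delta,\eps;c',\delta_0)$ with $\lambda'<\lambda$ if $\theta$ (hence the distortion slack and the $2^{p-1}$ factor, which I'd instead control via a $(1+\eta)$-type splitting to avoid the crude $2^{p-1}$) is small enough; the preliminary terms assemble into $\le 2\hS_p^{\theta/e}(\delta,\eps;\delta_0)$ after noticing that $\dist(x,\Crit)\asymp \delta\asymp|\tB(c;\delta)|^{1/(1-\ell^{-1})}$-type comparisons convert the unweighted integral on $\tB(c;e\delta)$ into the $1/\dist(x,\Crit)$-weighted integral on $\tB(\delta_0)\setminus\tB(e\delta)$ defining $\hS_p^{\theta/e}$.

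The main obstacle, I expect, is making the concatenation inequality $\inf_{\delta'\in[e\delta,\delta_0]}h_{\delta'}^\theta(x,\bfg)\le s+\inf_{\delta'\in[\delta,\delta_0]}h_{\delta'}^\theta(F^s(x,\bfg))$ \emph{and simultaneously} the distortion/length bookkeeping honest: one must verify that a $\theta$-good return of $F^s(x,\bfg)$ into $\tB(\delta')$, combined with the controlled distortion of the first-entry map $\bfg^s$ on $J_{x,s}^\bfg$ (which has $\cN\le 1$ and, by Proposition~\ref{prop:psorblocdis}, small total distortion $A(x,\bfg,s)|\tB(c';\delta)|\le \theta(\eps)|D\bfg^s(x)|$), really does yield a $\theta$-good return of $(x,\bfg)$ — this requires the inequality $A(x,\bfg,s+t)\le A(x,\bfg,s)+|D\bfg^s(x)|\cdot A(\bfg^s(x),\sigma^s\bfg,t)$ together with $A(x,\bfg,s)|\tB|$ being negligible compared to $|D\bfg^s(x)|\cdot(\text{good-return quantity of the tail})$, which is exactly why one has to weaken $\theta$ to $\theta/e$ in $\hS$ and why $\theta_*$ must be taken small. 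The length-ratio gain is robust, but keeping the constant strictly below $\lambda$ after all the slack (distortion factor $e$, the $(1+\eta)$ from splitting the $p$-th power, and the comparison constants hidden in $\asymp$ for $D_c(\delta)$) is where the proof will need care; choosing $\theta_*$ after $\lambda,p,\gamma$ are fixed, and then $\eps,\delta$ small last, gives enough room.
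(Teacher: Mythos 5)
Your overall plan diverges from the paper's proof in its two crucial mechanisms, and both divergences create genuine gaps. First, the single concatenation inequality you rely on, $\inf_{\delta'\in[e\delta,\delta_0]}h_{\delta'}^\theta(x,\bfg)\le s+\inf_{\delta'\in[\delta,\delta_0]}h_{\delta'}^\theta(F^s(x,\bfg))$ with $s$ the first entry into $\tB(\delta)$, is false as stated: the tail's optimal good return may be into a neighborhood of scale in $[\delta,e\delta)$, and concatenating then produces a good return of $(x,\bfg)$ only into that too-small scale, which does not qualify for the left-hand side. The paper avoids this by building an \emph{iterated} induced map $\sG$ whose steps are scale-$[\delta,\delta_0]$ $\theta$-good returns used merely as close returns (Lemma~\ref{lem:join1}(i)), the chain terminating only when a $\theta/e$-good return into scale $\ge e\delta$ occurs, at which point Lemma~\ref{lem:join1}(ii) upgrades it to a $\theta$-good return of the original point (Lemma~\ref{lem:join2}); there is no decomposition by first entry into $\tB(\delta)$ at all, and correspondingly none of the delicate control you would need on the distortion of a first-entry branch starting deep inside $\tB(c;e\delta)$.

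Second, your source of the contraction factor $\lambda$ does not work. The claimed transfer bound for the first-entry map (``density comparable to Lebesgue, total mass controlled by $|\tB(c';\delta)|/|\tB(c;e\delta)|$'') is unjustified and essentially false: almost every point of $\tB(c;e\delta)$ eventually enters $\tB(\delta)$, so the transported mass is comparable to $|\tB(c;e\delta)|$ and the density on $\tB(c';\delta)$ is of order $|\tB(c;e\delta)|/|\tB(\delta)|\gg 1$, which after normalization yields a factor of order one, not $<\lambda$. Moreover the exponent you quote, $e^{-(1-\ell_{\max}^{-1})}$, is the ratio of the quantities $D_c$, not of the lengths $|\tB(c;\cdot)|$, and it is not below $\lambda$ under the hypothesis $\lambda>e^{-\ell_{\max}^{-1}}$ (e.g.\ when $\ell_{\max}<2$). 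In the paper the factor $\lambda$ arises differently: $S_p^\theta(\delta,\eps;c',\delta_0)$ enters only through an integral over the \emph{smaller} interval $\tB(c';\delta)$ (the annulus $\tB(c;e\delta)\setminus\tB(\delta)$ being charged entirely to $\hS$ with the $1/\dist(x,\Crit)$ weight), while the left side is normalized by $|\tB(c;e\delta)|$; the ratio $|\tB(c;\delta)|/|\tB(c;e\delta)|\to e^{-1/\ell_c}\le e^{-\ell_{\max}^{-1}}<\lambda$ is the whole gain. The corrections from orbits whose scale-$[\delta,\delta_0]$ good return precedes the scale-$[e\delta,\delta_0]$ $\theta/e$-good return are then summed over the levels of the induced map, using the Jacobian estimate $\cL^{\nu_\eps}_{\sG_c}(y)\le 36\theta\,|\tB(c;\delta)|/|\tB(c';\delta')|$ of Lemma~\ref{lem:cG} (each good-return branch covers an interval of length about $|\tB(c';\delta')|/(e\theta)$, together with a nesting argument giving $\sum_x|J_x|\le 4|\tB(c;\delta)|$), so each level contributes a factor $36\theta\#\Crit$; this, and not the $p$-th power splitting, is where the smallness of $\theta_*$ is used, via $(1-(36\theta\#\Crit)^{1/p})^p\lambda e^{\ell_{\max}^{-1}}>1$ and Minkowski's inequality. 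Without an iterated scheme of this kind and without the correct normalization bookkeeping, your argument cannot produce a constant strictly below $\lambda$.
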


Now let us assume these propositions and prove Theorem 2'.
\begin{proof}[Proof of Theorem 2'] Take $\lambda\in (e^{-\ell_{\max}^{-1}},1)$ and $\gamma>0$ such that $\lambda_0:=\lambda e^\gamma<1$. Let $p\ge 1$ and $\theta>0$ be given.
We may certainly assume that $\theta\in (0,\theta_*)$. So by Propositions~\ref{prop:indsma} and~\ref{prop:indind}, for each $\delta_0>0$
small, there exist $\eps_0>0$ and $C_1>0$ such that
\begin{align}\label{eqn:Seps}
S(\eps,\eps)& \le C_1\eps^{-\gamma},\\
\label{eqn:hSdelta}
S (e\delta,\eps)& \le \lambda S(\delta,\eps)+ C_1 \lambda \delta^{-\gamma},
\end{align}
for any $0<\delta\le \delta_0/e$ and $0<\eps\le \min(\delta,\eps_0)$, where $$S(\delta,\eps)=\max_{c\in\Crit} S_p^\theta(\delta,\eps;c,\delta_0).$$

Let us prove that $S(\delta_0,\eps)$ is bounded from above by a constant. Let $N$ be the maximal integer such that $e^N\eps\le \delta_0$.
Let $S_k=(e^{-k}\delta_0)^ \gamma S(e^{-k}\delta_0,\eps)$.
Then by (\ref{eqn:hSdelta}), for each $0\le k<N$, $S_k\le \lambda_0 (S_{k+1} +C_1)$.
It follows that
$$\delta_0^{-\gamma} S(\delta_0,\eps)=S_0\le \lambda_0^N S_N + C_2,$$ where $C_2>0$ is a constant.
Clearly $S(e^{-N}\delta_0,\eps)/S(e\eps,\eps)$ is bounded from above, so by (\ref{eqn:Seps}) and (\ref{eqn:hSdelta}), $S_N$ is bounded from above by a constant. Thus $S(\delta_0,\eps)$ is bounded from above by a constant.
\end{proof}

\section{Slow recurrence of $\eps$-random-orbits into $\tB(\eps)$}\label{sec:slowrec}

The goal of this section is to prove Propositions~\ref{prop:indsmall} and~\ref{prop:indsmall(1)}.
To this end, we shall first study the recurrence to $\tB(\eps)$ of $\eps$-random orbits, in \S~\ref{subsec:slowrec}. Proposition~\ref{prop:rettbeps} there means that $\eps$-random orbits entering $\tB(\eps)$ too deep and too often are rare.
In \S~\ref{subsec:goodreturnt} we study the expanding property of $\eps$-random orbits with slow recurrence to $\tB(\eps)$ and show that they allow a certain large scale time, see Proposition~\ref{prop:indsmall0}. The proof of Propositions~\ref{prop:indsmall} and~\ref{prop:indsmall(1)} will be completed  in \S~\ref{subsec:indsmall}.

We shall continue to use the notations $h_\delta^\theta, T_\tau, \hh_{\delta,\tau}^\theta, l_\delta$ introduced in \S~\ref{sec:thm2}.


\subsection{Most random orbits satisfy a slow recurrence condition}\label{subsec:slowrec}
For $x\in [0,1]$ and $\bfg=(g_0,g_1,\ldots)\in \Omega_\eps^\N$,
define
\begin{equation}\label{eqn:defq}
q_\eps(x,\bfg)=q_\eps(x,g_0)=\inf\{q\in \N: |Dg_0(x)|\dist(x,\Crit(g_0))\ge e^{-q}\eps\}.
\end{equation}
Moreover, for non-negative integers $0\le n_1\le n_2$, let
\begin{equation}\label{eqn:defQ}
\Q_{n_1}^{n_2} (x,\bfg;\eps)=\sum_{j=n_1}^{n_2} q_\eps(F^j(x,\bfg))
\end{equation}
and
\begin{equation}\label{eqn:defGAMMA}
\Gamma_{n_1}^{n_2}(x,\bfg;\eps)=\#\{n_1\le j\le n_2: \bfg^j(x)\in \tB(\eps)\}.
\end{equation}
Let $\Bad_m(\kappa,\eps)$ be the collection of $(x,\bfg)\in [0,1]\times \Omega_\eps^\N$ such that for each integer $s\ge 0$ we have
$$\Q_0^s(x,\bfg; \eps)> \min \left(m, \kappa\Gamma_0^s (x,\bfg; \eps)\right), $$
and such that $$\lim_{s\to\infty} \Q_0^s (x,\bfg;\eps)\ge m.$$
Moreover, for $c\in\Crit$, let $\Bad_m^c(\kappa,\eps)=\{(x,\bfg)\in \Bad_m(\kappa,\eps)| x\in\tB(c;\eps)\}$.
The main result of this section is the following:

\begin{prop}\label{prop:rettbeps}
There exist $\kappa>0$, $K>0$ and $\rho>0$ such that if $\eps>0$ is small enough, then for each $c\in\Crit$ and each integer $m\ge 0$, we have
\begin{equation}\label{eqn:badsize}
P_\eps (\Bad_m^c(\kappa,\eps))\le Ke^{ -\rho m} |\tB(c;\eps)|.
\end{equation}
\end{prop}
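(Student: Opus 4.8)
The plan is to treat this as a large-deviation estimate, bounding $P_\eps(\Bad_m^c(\kappa,\eps))$ by decomposing the bad set according to the successive returns of the random orbit to $\tB(\eps)$ together with the depths of these returns. A preliminary observation is that $q_\eps(y,g)=0$ as soon as $y$ lies outside a fixed multiple of $\tB(\eps)$: this follows from (\ref{eqn:dist*der}) and from the uniform lower bound on $|Dg|$ away from $\Crit$ (cf. Proposition~\ref{prop:mane1}), since then $|Dg(y)|\,\dist(y,\Crit(g))$ exceeds $\eps$ for $\eps$ small. Hence, listing for $(x,\bfg)$ with $x\in\tB(c;\eps)$ the times $0=\tau_0<\tau_1<\tau_2<\cdots$ at which $\bfg^{\tau_i}(x)\in\tB(\eps)$ and the depths $q_i:=q_\eps(F^{\tau_i}(x,\bfg))$, one has $\Q_0^s=\sum_{\tau_i\le s}q_i$ up to harmless bounded factors and $\Gamma_0^s=\#\{i:\tau_i\le s\}$, so membership in $\Bad_m^c(\kappa,\eps)$ becomes the combinatorial requirement that $\sum_{i=0}^{k}q_i>\min\bigl(m,\kappa(k+1)\bigr)$ for every $k$, with $\sum_i q_i\ge m$. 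Stopping at the first $N$ with $\sum_{i=0}^{N}q_i\ge m$, I would partition $\Bad_m^c$ into the cylinders associated to the admissible finite itineraries $\iota=\bigl((\tau_i,c_i,q_i)\bigr)_{i\le N}$, where $\bfg^{\tau_i}(x)\in\tB(c_i;\eps)$.

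To estimate $P_\eps$ of one cylinder I would fix $\bfg$ and pull back the last critical neighbourhood $\tB(c_N;e^{-q_N}\eps)$ (of length $\asymp (e^{-q_N}\eps)^{1/\ell_{c_N}}$) along $\bfg^{\tau_N}$. Along each stretch between a return and the next, the point one step past a return lies within $4\eps$ of $\CV$, so part~(i) of Theorem~\ref{theo:dergwrothrandom} --- which already incorporates the binding argument --- gives a derivative lower bound $\ge (\Lambda(\eps)/D_{c}(\eps))\,e^{\eps^{\alpha(\eps)}t}$ over a stretch of length $t$, while each critical passage at a return contributes a small factor $\asymp(e^{-q}\eps)^{1-\ell_c^{-1}}$. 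Since $A(x,\bfg,\cdot)$ is comparable to $|D\bfg^{\cdot}(x)|$ along these stretches, Lemma~\ref{lem:theta0} provides bounded distortion, and telescoping gives that the $x$-slice of the cylinder has length at most $|\tB(c;\eps)|$ times a product over the $N$ returns of factors $\Lambda(\eps)^{-1}e^{-\eps^{\alpha(\eps)}(\tau_i-\tau_{i-1})}$, each dressed with explicit powers of $e^{-q_i}$ and $\eps$. Where the geometry alone does not beat the depths --- for critical points of order $>2$ --- I would in addition invoke the regularity $\nu_\eps\in\sM_\eps(L)$: conditioned on the orbit up to the step before a return, the $\nu_\eps$-probability that the next step lands at depth $\ge q$ in $\tB(\eps)$ is $\le\min\bigl(1,\,L(|\tB(e^{-q}\eps)|/2\eps)^{1/L}\bigr)$, which decays like $e^{-q/(L\ell_{\max})}$ once $q\gtrsim(\ell_{\max}-1)\log(1/\eps)$. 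Balancing the geometric and probabilistic contributions one should obtain, for $\kappa$ fixed small, a per-return factor bounded by some $\lambda<1$ together with an extra $e^{-\rho q_i}$ charged to the excess depth.

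Summing over itineraries then closes the argument: the sum over return times $\tau_i$ converges by the exponential-recurrence estimate of Proposition~\ref{prop:mane1}(ii) (which also forces $\tau_N$ to be comparable to $N$ in the dominant part of the sum); the sum over the critical data $c_i$ is finite; and the sum over depth sequences $(q_i)$ constrained by $\sum_i q_i\ge m$ and $\sum_{i\le k}q_i>\kappa(k+1)$ produces the factor $Ke^{-\rho m}$, with $\rho,K,\kappa$ independent of $\eps$, the overall $|\tB(c;\eps)|$ being the normalisation carried from $\Leb|_{\tB(c;\eps)}$. In practice I expect this is organised not as a bare sum but as a self-improving recursion on $\Phi_c:=\sup_{m}e^{\rho m}P_\eps(\Bad_m^c(\kappa,\eps))/|\tB(c;\eps)|$, bounding $\Phi_c$ through a single first-return step in terms of $\max_{c'}\Phi_{c'}$ plus a bounded base term and then solving --- exactly parallel to the scheme used for Propositions~\ref{prop:indsma} and~\ref{prop:indind}.

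The main obstacle is making the per-return estimate uniform in $\eps$: the bare probabilistic bound on a depth-$q$ return carries a factor $\eps^{-(1-\ell_{\max}^{-1})/L}$ diverging as $\eps\to0$, and the naive geometric bound does not by itself dominate the depths when critical points of order $>2$ are present, so the two mechanisms --- the expansion $\Lambda(\eps)\to\infty$ supplied by Theorem~\ref{theo:dergwrothrandom} before each return, and the $\sM_\eps(L)$-tail of the depths --- must be combined precisely. Keeping honest track of the binding periods (counting each ``deep return plus ensuing binding'' block once, with the integrated part~(i) estimate rather than a step-by-step one) is the other delicate point.
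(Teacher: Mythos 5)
Your plan has the right global shape (per-return depth tails plus a counting argument over depth sequences constrained by $\kappa$), but it leaves the actual crux unproved — and you name that crux yourself without resolving it. Conditioning ``one step before a return'' and applying $\nu_\eps\in\sM_\eps(L)$ to the event of landing directly in $\tB(e^{-q}\eps)$ does give the diverging factor $\eps^{-(1-\ell_{\max}^{-1})/L}$, and hoping that this is beaten by $\Lambda(\eps)\to\infty$ from Theorem~\ref{theo:dergwrothrandom} cannot work: $\Lambda(\eps)$ tends to infinity at no specified rate, while the divergence is a definite power of $\eps$. The paper removes the divergence by conditioning at the \emph{previous return} instead: for fixed $\bfg$, the set $Y_\eps(\bfg,v;q)$ of points within $2\eps$ of the critical value $v$ whose \emph{first entry} to $\tB(\eps)$ has depth $\ge q$ has Lebesgue measure $\le K_1\,\eps\,e^{-q/\ell_{\max}}$ (Lemma~\ref{lem:rettbeps0}: pull the relative-density bound for the deep part of $\tB(c';\eps)$ back along the first-entry branches, using the distortion and derivative bounds of Lemma~\ref{lem:psorbland} and a Besicovitch covering), and it is to \emph{this} set that the regularity (\ref{eqn:regulartranpro}) is applied; the factor $\eps$ cancels against the $2\eps$ in the denominator, giving a per-return tail $K_0e^{-q/(\ell_{\max}L)}$ that is uniform in $\eps$ and uniform over the current position $x\in\tB(\eps)$ (Lemma~\ref{lem:rettbeps}), and which is then multiplied along successive returns by a conditioning/Fubini argument (Lemma~\ref{lem:rettbeps'}). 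Your sketch has both ingredients (the pulled-back geometric estimate and the $\sM_\eps(L)$ bound) but combines them in the wrong order, which is exactly why you are left with the obstruction.

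Two further points where the proposal as written would fail. First, the ``geometric'' backbone — fixing $\bfg$ and estimating Lebesgue measure of the cylinder in $x$ — cannot by itself yield a factor exponentially small in $\sum_i q_i$: for a fixed realization, deep returns can nest (the preimage through the critical passage of the set with a deep next entry can lie inside the current depth-$q_i$ annulus), so the per-return densities do not multiply; this is why the randomness must be injected at \emph{every} return, not only as a patch for critical points of order $>2$. Second, your itineraries record the return times $\tau_i$, and summing the cylinder bounds over these times requires an exponential rate uniform in $\eps$; the available rates ($e^{\eps^{\alpha(\eps)}t}$ from Theorem~\ref{theo:dergwrothrandom}, or Proposition~\ref{prop:mane1} applied to $U=\tB(\eps)$) degenerate as $\eps\to0$. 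The paper's events depend only on the sequence of depths at successive entries, with no reference to times, so no such sum arises; the only counting is over $(n,q_1,\dots,q_n)$, and there the largeness of $\kappa$ is what caps the number of returns by $2m'/\kappa$ so that the composition count $\binom{m'+n-1}{n-1}$ and the factor $K_0^n$ are absorbed into $e^{O(\rho)m'}$ — a step your sketch asserts but should make explicit.
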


To prove this proposition, we need a few lemmas.
\begin{lemm}\label{lem:psorbland}
For each $\eps>0$ small the following holds: For any $\bfg\in \Omega_\eps^\N$ and $x\in [0,1]$ with $\dist(x, \CV)\le 4 \eps$, if $n:=l_\eps(x,\bfg) <\infty$ and $J$ is the component of $\bfg^{-n}(\tB(\eps))$ which contains $x$, then $\bfg^n$ maps $J$ diffeomorphically onto its image, $\cN(\bfg^n|J)\le 1$ and $|J|<\eps$.
\end{lemm}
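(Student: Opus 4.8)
The plan is to trap the component $J$ between the diffeomorphic interval $J_{x,n}^\bfg$ furnished by Lemma~\ref{lem:theta0} and the full pullback of the single critical neighbourhood $\tB(c;\eps)$ in which $\bfg^n(x)$ lands, and then to read off $|J|<\eps$ from the derivative lower bound of Theorem~\ref{theo:dergwrothrandom}(i). Throughout, $\eps$ is taken small enough for every invoked statement to apply.

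First I would note that, since $f\in\LD$ has no critical relation, $\CV$ is disjoint from $\Crit$, so $x\notin\tB(\eps)$; hence $n=l_\eps(x,\bfg)\ge 1$, $\bfg^j(x)\notin\tB(\eps)$ for $0\le j<n$, and $\bfg^n(x)\in\tB(c;\eps)$ for some $c\in\Crit$. Thus the hypotheses of Proposition~\ref{prop:psorblocdis} and of Theorem~\ref{theo:dergwrothrandom}(i) (with $s=n$) hold, and $A(x,\bfg,n)<\infty$, indeed $A(x,\bfg,n)\ge 1$ (from its $i=0$ term, since $\dist(x,\Crit(g_0))\le 1$). Write $\widetilde J=J_{x,n}^\bfg$. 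By Lemma~\ref{lem:theta0}, $\bfg^n|\widetilde J$ is a diffeomorphism with $\cN(\bfg^n|\widetilde J)\le 1$, so $\Dist(\bfg^n|\widetilde J)\le 1$ and $e^{-1}|D\bfg^n(x)|\le|D\bfg^n(y)|\le e|D\bfg^n(x)|$ for all $y\in\widetilde J$.

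Next, Proposition~\ref{prop:psorblocdis} gives $A(x,\bfg,n)\,|\tB(c;\eps)|\le\theta(\eps)\,|D\bfg^n(x)|$ with $\theta(\eps)\to 0$, so on each side of $x$ not cut off by $\{0,1\}$ the one-sided radius of $\widetilde J$ is $\theta_0/A(x,\bfg,n)\ge\theta_0|\tB(c;\eps)|/(\theta(\eps)|D\bfg^n(x)|)$, and by the distortion bound the corresponding sub-arc of $\bfg^n(\widetilde J)$ abutting $\bfg^n(x)$ has length at least $e^{-1}\theta_0\theta(\eps)^{-1}|\tB(c;\eps)|$, which exceeds $|\tB(c;\eps)|$ for $\eps$ small. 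Since $\theta_0<1/2$, at most one side of $x$ is truncated, so $\bfg^n(\widetilde J)$ reaches strictly past $\tB(c;\eps)$ on at least one side (a harmless one-sided truncation can occur only if $x$ is within $4\eps$ of a critical value at $0$ or $1$). Now put $J''=\widetilde J\cap\bfg^{-n}(\tB(c;\eps))$, an interval through $x$ with $\bfg^n(J'')=\tB(c;\eps)\cap\bfg^n(\widetilde J)$. I claim $J=J''$: clearly $J''\subseteq J$, since $J''$ is connected, contains $x$, and lies in $\bfg^{-n}(\tB(\eps))$; conversely $J$ cannot extend past an endpoint $p$ of $J''$, because either $p\in\{0,1\}$, or (by the previous estimate) $\bfg^n(p)$ is a boundary point of $\tB(c;\eps)$ lying in the interior of $\bfg^n(\widetilde J)$, so $p$ is interior to $\widetilde J$ and the points of $\widetilde J$ just past $p$ are mapped out of $\tB(c;\eps)$, whereas $\bfg^n(J)$ is a connected subset of $\tB(\eps)=\bigsqcup_{c'\in\Crit}\tB(c';\eps)$ meeting $\tB(c;\eps)$ and hence contained in $\tB(c;\eps)$ --- a contradiction. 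So $\bfg^n|J$ is a diffeomorphism onto its image, and $\cN(\bfg^n|J)\le\cN(\bfg^n|\widetilde J)\le 1$ by monotonicity of $\cN$ under restriction.

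Finally, $|\bfg^n(J)|\le|\tB(c;\eps)|$, while Theorem~\ref{theo:dergwrothrandom}(i) gives $|D\bfg^n(x)|\ge\Lambda(\eps)/D_c(\eps)=\Lambda(\eps)|\tB(c;\eps)|/\eps$, so, using the distortion bound on $\widetilde J\supseteq J$, $|J|\le|\bfg^n(J)|/\inf_{y\in J}|D\bfg^n(y)|\le e|\tB(c;\eps)|/|D\bfg^n(x)|\le e\eps/\Lambda(\eps)<\eps$ for $\eps$ small, since $\Lambda(\eps)\to\infty$. The step I expect to require the most care is the identification $J=J''$ --- ruling out that the component of $\bfg^{-n}(\tB(\eps))$ through $x$ spills out of $J_{x,n}^\bfg$; this rests precisely on Proposition~\ref{prop:psorblocdis} (together with $\theta(\eps)\to 0$) forcing $\bfg^n(J_{x,n}^\bfg)$ to overshoot $\tB(c;\eps)$, and on the bit of case analysis needed to absorb the one-sided truncation of $J_{x,n}^\bfg$ near $\{0,1\}$.
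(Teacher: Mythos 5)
Your proof is correct and follows essentially the same route as the paper: Lemma~\ref{lem:theta0} supplies the diffeomorphic interval $J_{x,n}^\bfg$, the small-total-distortion estimate (you cite Proposition~\ref{prop:psorblocdis} directly, the paper routes it through Lemma~\ref{lem:thetadelta}, which is the same fact) forces $\bfg^n(J_{x,n}^\bfg)$ to overshoot $\tB(c;\eps)$ so that $J\subset J_{x,n}^\bfg$, and Theorem~\ref{theo:dergwrothrandom}(i) gives the derivative bound yielding $|J|<\eps$. Your extra care with the component identification and the one-sided truncation at $\{0,1\}$ just makes explicit what the paper leaves implicit.
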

\begin{proof}
Let $\theta=\theta_0/e$. By Lemma~\ref{lem:thetadelta}, provided that $\eps$ is small enough, we have
$h_{\eps}^\theta(x,\bfg)=l_\eps(x,\bfg)= n$. Let $I=J_{x,n}^\bfg$ be defined in (\ref{eqn:defnJ}). Then
$\bfg^{n}|I$ is a diffeomorphism onto its image and
$\cN(\bfg^{n}|I)\le 1$.
Let $c\in\Crit$ be such that $\bfg^n(J)\subset \tB(c;\eps)$.
It follows that for each $y\in\partial I\setminus \{0,1\}$, $|\bfg^n(x)-\bfg^n(y)|\ge |D\bfg^n(x)|\theta_0/(eA(x,\bfg, n))\ge  |\tB(c;\eps)|$. Thus $J\subset I$, so $\cN(\bfg^n|J)\le 1$. Since $\dist(x, \CV)\le 4\eps$, by part (i) of Theorem~\ref{theo:dergwrothrandom},
$|D\bfg^n(x)|> e/D_c(\eps)$, provided that $\eps$ is small enough. Thus $|J|\le e|\tB(c;\eps)|/|D\bfg^n(x)|< \eps$.
\end{proof}

Let $\bfF_\eps$ denote the first entry map into the region $\tB(\eps)\times\Omega_\eps^\N$
under $F$, i.e., $$\bfF_\eps (x,\bfg)= F^{R_\eps(x,\bfg)}(x,\bfg),$$
where $R_\eps(x,\bfg)=l_\eps(x,\bfg)$ if $x\not\in\tB(\eps)$ and $R_\eps(x,\bfg)=l_\eps(F(x,\bfg))$ if $x\in \tB(\eps)$.
Note that $\bfF_\eps$ is defined on a subset of $[0,1]\times \Omega_\eps^\N$.

\begin{lemm} \label{lem:rettbeps0}
There exists $K_1>0$ such that for $\eps>0$ small enough the following holds.
For each $\bfg\in\Omega_\eps^\N$ and $v\in\CV$, putting
$$Y_\eps(\bfg, v; q)=\{y\in B(v, 2\eps): q_\eps(\bfF_\eps(y, \bfg))\ge q\},$$ we have
\begin{equation}\label{eqn:setX}
\left|Y_\eps(\bfg,v; q)\right|\le K_1 \eps e^{-\ell_{\max}^{-1}q},
\end{equation}
\end{lemm}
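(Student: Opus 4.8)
The plan is to combine the first-entry partition of $B(v,2\eps)$ with the bounded-distortion and small-diameter bounds of Lemma~\ref{lem:psorbland} and with the non-flatness of the critical points, so that $Y_\eps(\bfg,v;q)$ is captured fiber by fiber and component by component. First I would fix $\bfg=(g_0,g_1,\dots)\in\Omega_\eps^\N$ and $v\in\CV$ and pass to the first-landing map. Since $f$ has no critical relations, $\dist(\CV,\Crit)>0$, so for $\eps$ small $B(v,2\eps)\cap\tB(2\eps)=\emptyset$; hence every $y\in B(v,2\eps)$ lies outside $\tB(\eps)$, so $R_\eps(y,\bfg)=l_\eps(y,\bfg)\ge 1$ and $\bfF_\eps(y,\bfg)=(\bfg^{n}(y),\sigma^{n}\bfg)$ with $n=l_\eps(y,\bfg)$, whence $q_\eps(\bfF_\eps(y,\bfg))=q_\eps(\bfg^{n}(y),g_n)$. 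By Proposition~\ref{prop:mane1}(ii) $l_\eps(\cdot,\bfg)<\infty$ Lebesgue-a.e., so I would partition $B(v,2\eps)$, up to a null set, into the first-landing domains of $\bfg$ into $\tB(\eps)$, i.e.\ the connected components $J$ of the level sets $\{l_\eps(\cdot,\bfg)=n\}$, $n\ge 1$. On such a $J$, with $n=n_J$ its common landing time, $\bfg^{n}$ maps $J$ homeomorphically \emph{onto} the component $\tB(c_J;\eps)$ of $\tB(\eps)$ that it meets: at an endpoint $p\notin\{0,1\}$ of $J$ the point $\bfg^{n}(p)$ must lie on $\partial\tB(c_J;\eps)$, since an interior image would force first-landing time $\le n$ on a whole neighbourhood of $p$, and a landing $<n$ near $p$ is impossible by openness of $\tB(\eps)$. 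Picking any $y_0\in J\cap B(v,2\eps)$ we have $\dist(y_0,\CV)<2\eps\le 4\eps$, so Lemma~\ref{lem:psorbland} applies with $x=y_0$ and gives $\cN(\bfg^{n_J}|J)\le 1$ and $|J|<\eps$. The $J$'s are pairwise disjoint, each is contained in $B(v,3\eps)$, so $\sum_J|J|\le 6\eps$. (The at most one $J$ abutting $\partial[0,1]$, which occurs only when $v\in\{0,1\}$, is handled by the same estimate below, using the trivial bound $|Y_\eps(\bfg,v;q)\cap J|\le|J|$ for $q$ in a bounded range and, for large $q$, the fact that if $\bfg^{n_J}(J)$ meets the region $W_q^{c_J}$ defined below — which lies well inside $\tB(c_J;\eps)$ — then it must cover a definite fraction of $\tB(c_J;\eps)$.)

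Next, on a first-landing domain $J$ with time $n$ and target $\tB(c;\eps)$, a point $y\in J$ lies in $Y_\eps(\bfg,v;q)$ iff $q_\eps(\bfg^{n}(y),g_n)\ge q$, i.e.\ by~(\ref{eqn:defq}), $|Dg_n(\bfg^{n}(y))|\,\dist(\bfg^{n}(y),\Crit(g_n))<e^{1-q}\eps$. For $\eps$ small the critical point $c'$ of $g_n$ nearest to $c$ is close to $c$, hence is the nearest $g_n$-critical point to every point of $\tB(c;\eps)$, and by the order estimate in part~(ii) of the admissibility condition (\S\ref{subsec:space}), $|Dg_n(z)|\,\dist(z,\Crit(g_n))\asymp|z-c'|^{\ell_c}$ for $z\in\tB(c;\eps)$. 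Therefore
$$W_q^{c}:=\{z\in\tB(c;\eps):q_\eps(z,g_n)\ge q\}\subset\{\,z:|z-c'|\le a\,(e^{-q}\eps)^{1/\ell_c}\,\},$$
so, using $|\tB(c;\eps)|=\eps/D_c(\eps)\asymp\eps^{1/\ell_c}$ (cf.~(\ref{eqn:Dcdelta})) and $\ell_c\le\ell_{\max}$,
$$\frac{|W_q^{c}|}{|\tB(c;\eps)|}\le a'\,e^{-q/\ell_c}\le a'\,e^{-\ell_{\max}^{-1}q},$$
with $a,a'>0$ depending only on $f$ and $\Omega$. Since $\bfg^{n}$ maps $J$ onto $\tB(c;\eps)$ with $\cN(\bfg^{n}|J)\le1$, its distortion on $J$ is at most $e$, so $Y_\eps(\bfg,v;q)\cap J=(\bfg^{n}|_J)^{-1}(W_q^{c})$ satisfies $|Y_\eps(\bfg,v;q)\cap J|\le e\,\frac{|W_q^{c}|}{|\tB(c;\eps)|}\,|J|\le e\,a'\,e^{-\ell_{\max}^{-1}q}\,|J|$. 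Summing over the first-landing domains and using $\sum_J|J|\le 6\eps$ yields $|Y_\eps(\bfg,v;q)|\le 6e\,a'\,\eps\,e^{-\ell_{\max}^{-1}q}$, which is~(\ref{eqn:setX}) with $K_1=6ea'$.

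The only delicate point is the first step: checking that the first-landing domains genuinely partition $B(v,2\eps)$ into intervals mapped \emph{onto} full components of $\tB(\eps)$, carrying the distortion and size control of Lemma~\ref{lem:psorbland} (including the harmless bookkeeping at $\partial[0,1]$). Once that structure is in hand, the remainder is just the elementary H\"older-type bound on $|W_q^{c}|$ coming from non-flatness, together with a one-line distortion summation.
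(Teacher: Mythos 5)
Your reduction to the target sets $W_q^{c}$ and the H\"older-type bound $|W_q^c|\lesssim e^{-q/\ell_c}|\tB(c;\eps)|$ match the paper's first step, but the structural claim you yourself flag as the delicate point is where the argument breaks. It is not true in general that a connected component $J$ of the level set $\{y: l_\eps(y,\bfg)=n\}$ is mapped by $\bfg^{n}$ \emph{onto} the full component $\tB(c_J;\eps)$. Your endpoint argument only excludes the case $l_\eps(p)<n$; it overlooks the case where $\bfg^{j}(p)\in\partial\tB(\eps)$ for some $j<n$ while $l_\eps(p)=n$. At such a grazing endpoint, points just beyond $p$ satisfy $\bfg^{j}\in\tB(\eps)$ and hence have first-landing time $j<n$, so the component $J$ genuinely terminates at $p$, yet $\bfg^{n}(p)$ is an \emph{interior} point of $\tB(c_J;\eps)$. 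Consequently $\bfg^{n}(J)$ can be a proper subinterval of $\tB(c_J;\eps)$, possibly a short interval sitting near the critical point of $g_n$; in that situation your per-component estimate $|Y_\eps\cap J|\le e\,\bigl(|W_q^{c}|/|\tB(c;\eps)|\bigr)\,|J|$ fails, since $W_q^{c}$ may occupy a large fraction (or all) of the small image $\bfg^{n}(J)$, and no factor $e^{-q/\ell_{\max}}$ is gained on that piece. This is not a bookkeeping issue that a small perturbation of your argument fixes: some mechanism is needed to guarantee that each piece over which you compare measures has image covering a definite fraction of $\tB(c;\eps)$.

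The paper's proof supplies exactly that mechanism, and in a different combinatorial frame. It first fixes $Q$ so that $q\ge Q$ forces the landing point to lie in $\tB(c;\eps/e)$ (small $q$ being trivial), and then, for each $y\in Y_\eps(\bfg,v;q)$, works with the pullback component $J(y)$ of $(\bfg^{n(y)})^{-1}(\tB(c;\eps))$ through $y$ rather than with level-set components: since $\bfg^{n(y)}(y)$ is deep in $\tB(c;\eps/e)$, the image $\bfg^{n(y)}(J(y))$ contains a whole component of $\tB(c;\eps)\setminus\tB(c;\eps/e)$, so it has definite relative size. The price is twofold, and both points are essential: (1) points of $J(y)$ need not all have first-entry time $n(y)$, and one needs Proposition~\ref{prop:psorbbc} (the random backward-contraction statement) to show that every $y'\in J(y)\cap Y_\eps(\bfg,v;q)$ does have $R_\eps(y',\bfg)=n(y)$, so that $\bfg^{n(y)}(J(y)\cap Y_\eps)$ lands inside $Z^{\bfh}_{c}(q)$; and (2) the intervals $J(y)$ overlap, so the final summation uses the Besicovich covering lemma together with $|J(y)|\le\eps$ from Lemma~\ref{lem:psorbland}, instead of a disjoint partition. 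Your proposal omits both ingredients, and without them (or a substitute) the componentwise decay in $q$ cannot be justified.
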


\begin{proof}
Assume $\eps>0$ small.
We first observe that there exists a constant $K_2>0$ such that for each $\bfh\in \Omega_\eps^\N$, $c\in\Crit$, the set
$Z_c^\bfh(q)=\{z\in \tB(c;\eps): q_\eps(z, \bfh)\ge q\}$ satisfies
\begin{equation}\label{eqn:Zcq}
|Z_c^\bfh(q)|\le K_2 e^{-\ell_{\max}^{-1} q}|\tB(c;\eps)|.
\end{equation}
 Moreover, there exists a constant $Q$ such that $Z_c^\bfh(Q)\subset \tB(c;\eps/e)$.

For $q< Q$, the inequality (\ref{eqn:setX}) clearly holds with a suitable choice of $K_1$. So let us assume $q\ge Q$.
For any $y\in Y_\eps(\bfg,v;q)$, let $n(y)=R_\eps(y,\bfg)$,  let $c(y)\in\Crit$ be such that $\bfg^n(y)\in\tB(c(y);\eps)$, and let $J=J(y)$ be the component of $(\bfg^{n(y)})^{-1}(\tB(c(y);\eps))$ containing $y$. Let us prove that there exists a constant $K_3>0$ such that
\begin{equation}\label{eqn:YespinJy}
|J(y)\cap Y_\eps(\bfg,v;q)|\le K_3 e^{-\ell_{\max}^{-1} q}|J(y)|.
\end{equation}
Indeed, since $\bfg^{n(y)}(y)\in\tB(c;\eps/e)$,
$\bfg^{n}(J)$ contains at least one component  of $\tB(c;\eps)\setminus \tB(c;\eps/e)$, so $|\bfg^{n(y)}(J(y))|/|\tB(c;\eps)|$ is bounded away from zero. By Lemma~\ref{lem:psorbland}, $\cN(\bfg^{n(y)}|J(y))\le 1$. In view of (\ref{eqn:Zcq}),
it suffices to prove that
\begin{equation}\label{eqn:0YepsinJy}
\bfg^{n(y)} (J(y)\cap Y_\eps(\bfg,v;q))\subset Z_c^\bfh(q), \mbox{ where }\bfh= \sigma^{n(y)}\bfg.
\end{equation}
To this end, take $y'\in J(y)\cap Y_\eps(\bfg, v;q)$. We need to prove that $n':=R_\eps(y',\bfg)=n(y)$. Otherwise, we would have
$1\le n'< n(y)$. Since $\bfg^{n'}(y')\in \tB(\eps/e)$, by Proposition~\ref{prop:psorbbc}, we would have $\bfg^{n'}(J(y))\subset \tB(\eps)$,  hence $\bfg^{n'}(y)\in \tB(\eps)$, contradicting the minimality of $n(y)$. This proves (\ref{eqn:0YepsinJy}) and hence (\ref{eqn:YespinJy}).

Since the intervals $J(y)$, $y\in Y_\eps(\bfg, v ;q)$ form a covering of $Y_\eps(\bfg,v;q)$, by Besicovic's covering lemma, we can find a sub-covering with bounded intersection multiplicity.
By Lemma~\ref{lem:psorbland},  $|J(y)|\le \eps$, so $J(y)\subset B(v, 3\eps)$. The inequality (\ref{eqn:setX}) follows.
\end{proof}

\begin{lemm} \label{lem:rettbeps}
There exist $K_0>0$ and $\rho_0>0$ such that for each $\eps>0$ small enough the following holds.
For each $x\in \tB(\eps)$ and $q\ge 0$, putting $$U_\eps(x, q)=\{\bfg\in \Omega_\eps^\N:\, q_\eps(\bfF_\eps (x,\bfg))\ge q\},$$
we have
$$\nu_\eps^\N (U_\eps(x, q))\le K_0 e^{-\rho_0 q}.$$
\end{lemm}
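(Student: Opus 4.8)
The plan is to view Lemma~\ref{lem:rettbeps} as a fibrewise counterpart of Lemma~\ref{lem:rettbeps0} and to deduce it from that lemma by a Fubini argument, using the regularity $\nu_\eps\in\sM_\eps(L)$ to pass from a Lebesgue-measure estimate to a $\nu_\eps^\N$-estimate. First I would fix $x\in\tB(c_0;\eps)$ with $c_0\in\Crit$ and put $v_0:=f(c_0)\in\CV$. Since $f\in\LD$ forces $f(c_0)\notin\Crit$, the quantity $\dist(v_0,\Crit)$ is a fixed positive constant, so for $\eps$ small $B(v_0,2\eps)\cap\tB(\eps)=\emptyset$; moreover every $g_0\in\Omega_\eps$ sends $x$ into $B(f(x),\eps)\subset B(v_0,2\eps)$, because $f(\tB(c_0;\eps))\subset B(v_0,\eps)$. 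Hence $g_0(x)\notin\tB(\eps)$, and the first return of $(x,\bfg)$ to $\tB(\eps)\times\Omega_\eps^\N$ is precisely the first entry of $(g_0(x),\sigma\bfg)$, i.e.\ $\bfF_\eps(x,\bfg)=\bfF_\eps(g_0(x),\sigma\bfg)$. As the pair $(g_0(x),\sigma\bfg)$ has law $p_\eps(\cdot\,|x)\otimes\nu_\eps^\N$ under $\nu_\eps^\N$, setting $\phi(y):=\nu_\eps^\N(\{\bfh\in\Omega_\eps^\N:\ q_\eps(\bfF_\eps(y,\bfh))\ge q\})$ I obtain
\[
\nu_\eps^\N\big(U_\eps(x,q)\big)=\int_{[0,1]}\phi(y)\,dp_\eps(y\,|x),
\]
where $p_\eps(\cdot\,|x)$ is supported in $B(f(x),\eps)\subset B(v_0,2\eps)$.

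The next step is to bound $\phi$ in $L^1$ over $B(v_0,2\eps)$. By Tonelli and Lemma~\ref{lem:rettbeps0} applied with $v=v_0$,
\[
\int_{B(v_0,2\eps)}\phi(y)\,dy=\int_{\Omega_\eps^\N}\big|Y_\eps(\bfg,v_0;q)\big|\,d\nu_\eps^\N(\bfg)\le K_1\,\eps\,e^{-\ell_{\max}^{-1}q}=:M .
\]
Then I would transfer this to a bound on $\int\phi\,dp_\eps(\cdot\,|x)$ using $\nu_\eps\in\sM_\eps(L)$: for each $t\in(0,1)$ Chebyshev gives $|\{\phi>t\}\cap B(v_0,2\eps)|\le M/t$, whence $p_\eps(\{\phi>t\}\,|x)\le\min\{1,\,L(M/(2t\eps))^{1/L}\}$ (the first alternative because $\nu_\eps^\N$ is a probability measure, the second because $p_\eps(\cdot\,|x)$ lives in $B(v_0,2\eps)$ and satisfies \eqref{eqn:regulartranpro}). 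Integrating over $t$ via the layer-cake formula, using $0\le\phi\le1$,
\[
\nu_\eps^\N\big(U_\eps(x,q)\big)\le\int_0^1\min\Big\{1,\ L\Big(\tfrac{M}{2t\eps}\Big)^{1/L}\Big\}\,dt\le C_L\Big(\tfrac{M}{2\eps}+\big(\tfrac{M}{2\eps}\big)^{1/L}\Big),
\]
with $C_L$ depending only on $L$. Since $M/(2\eps)\le\tfrac{K_1}{2}e^{-\ell_{\max}^{-1}q}$, the right-hand side is $\le K_0\,e^{-\rho_0 q}$ with $\rho_0=\tfrac{1}{L\ell_{\max}}$, after enlarging $K_0$ so that the trivial bound $\nu_\eps^\N(U_\eps(x,q))\le1$ handles the finitely many $q$ for which the displayed estimate would exceed $1$. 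This proves the lemma.

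The step I expect to be the crux is precisely this transfer from the Lebesgue estimate of Lemma~\ref{lem:rettbeps0} to the $\nu_\eps^\N$-estimate: one cannot control $\phi$ pointwise, only in $L^1$, and the H\"older-type regularity $p_\eps(E\,|x)\le L(|E|/2\eps)^{1/L}$ built into $\sM_\eps(L)$ is exactly what prevents $p_\eps(\cdot\,|x)$ from concentrating on the thin super-level sets $\{\phi>t\}$, turning the $L^1$ bound into the desired exponential decay. The remaining ingredients — the first-return/first-entry identification $\bfF_\eps(x,\bfg)=\bfF_\eps(g_0(x),\sigma\bfg)$, the inclusion $g_0(x)\in B(v_0,2\eps)\setminus\tB(\eps)$ for all $g_0\in\Omega_\eps$ once $\eps$ is small, and the elementary evaluation of the final $t$-integral — are routine.
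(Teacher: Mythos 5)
Your proof is correct, and it rests on the same two pillars as the paper's: the uniform Lebesgue bound $|Y_\eps(\bfg,v;q)|\le K_1\eps e^{-\ell_{\max}^{-1}q}$ of Lemma~\ref{lem:rettbeps0} and the regularity $\nu_\eps\in\sM_\eps(L)$, combined through a Fubini decomposition of $\bfg=(g_0,\sigma\bfg)$ and the identification $\bfF_\eps(x,\bfg)=\bfF_\eps(g_0(x),\sigma\bfg)$ (your preliminary check that $g_0(x)\in B(f(c_0),2\eps)\setminus\tB(\eps)$ is exactly what the paper uses implicitly). The only real difference is the order of conditioning: you integrate out the tail first, obtaining $\phi(y)=\nu_\eps^\N(\{\bfh:q_\eps(\bfF_\eps(y,\bfh))\ge q\})$, which you can only control in $L^1(dy)$, and you then need the Chebyshev/layer-cake step to convert that $L^1$ bound into a bound for $\int\phi\,dp_\eps(\cdot\,|x)$. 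The paper conditions the other way: since the bound of Lemma~\ref{lem:rettbeps0} holds for \emph{every fixed} $\bfg$, it applies \eqref{eqn:regulartranpro} pointwise in the tail, getting $\nu_\eps(\{g:g(x)\in Y_\eps(\bfg,f(c_0);q)\})\le L\bigl(|Y_\eps(\bfg,f(c_0);q)|/2\eps\bigr)^{1/L}\le K_0e^{-\rho_0 q}$ for each $\bfg$, and then simply integrates over the tail; this avoids the super-level-set argument entirely and gives the same constants $\rho_0=(L\ell_{\max})^{-1}$, $K_0=L(K_1/2)^{1/L}$ in one line. So your extra transfer step is sound but not needed; noticing the uniformity in $\bfg$ is the shortcut the paper takes.
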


\begin{proof}
For $x\in\tB(c;\eps)$, $c\in\Crit$ and $\bfg\in \Omega_\eps^\N$, putting
$$X_\eps(x,\bfg; q)=\left\{g\in \Omega_\eps: q_\eps(\bfF_\eps (g(x),\bfg))\ge q\right\},$$
then for each $g\in X_\eps(x,\bfg;q)$, we have $g(x)\in Y_\eps(\bfg, f(c); q)$, so by (\ref{eqn:regulartranpro}) and (\ref{eqn:setX}),
$$\nu_\eps(X_\eps(x,\bfg;q))\le p_\eps(Y_{\eps}(\bfg, f(c);q)|x)\le L\left(\frac{|Y_\eps(\bfg, f(c);q|)}{2\eps}\right)^{1/L}\le K_0 e^{-\rho_0 q},$$
where $K_0=L(K_1/2)^{1/L}$ and $\rho_0=(\ell_{\max} L)^{-1}$. It follows that
$$\nu_\eps^\N(U_\eps(x,q))=\int_{\Omega_\eps} \nu_\eps (X_\eps(x,\bfg; q)) d\nu_\eps^\N(\bfg)\le K_0e^{-\rho_0 q}.$$
\end{proof}

For $\textbf{q}=(q_1, q_2, \cdots, q_n)\in \N^n$, let $|\textbf{q}|=\sum_{i=1}^n q_i$. Moreover, for $x\in [0,1]$,
let $U_\eps^n(x; \textbf{q})$ be the set of $\bfg\in \Omega_\eps^\N$ with $q(\bfF_{\eps}^i(x,\bfg))\ge  q_i$ for $i=1,2,\ldots, n$.
\begin{lemm}\label{lem:rettbeps'}
For any $\textbf{q}\in\N^n$, $n\ge 1$, the following holds. For any $x\in \tB(\eps)$, we have
\begin{equation}\label{eqn:unq}
\nu_{\eps}^\N \left(U_\eps^n(x;\textbf{q})\right)\le K_0^n e^{-\rho_0 |\textbf{q}|},
\end{equation}
where $K_0$ and $\rho_0$ are constants as in Lemma~\ref{lem:rettbeps}.
\end{lemm}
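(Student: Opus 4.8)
The plan is to prove the bound by induction on $n$, the base case $n=1$ being exactly Lemma~\ref{lem:rettbeps}. For the inductive step I would use the tower structure of the first-entry map $\bfF_\eps$ together with the Markov property of the i.i.d.\ driving sequence $(g_0,g_1,\ldots)$. Write $0=\sigma_0<\sigma_1<\cdots<\sigma_{n-1}$ for the successive times at which the $\bfF_\eps$-orbit of $(x,\bfg)$ meets $\tB(\eps)\times\Omega_\eps^\N$, so that $\bfF_\eps^k(x,\bfg)=F^{\sigma_k}(x,\bfg)=(\bfg^{\sigma_k}(x),\sigma^{\sigma_k}\bfg)$ with $\bfg^{\sigma_k}(x)\in\tB(\eps)$; each $\sigma_k$ is a stopping time for the natural filtration. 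The first $n-1$ defining conditions of $U_\eps^n(x;\textbf{q})$, namely $q(\bfF_\eps^i(x,\bfg))\ge q_i$ for $i=1,\ldots,n-1$, are carried by the finite prefix of the driving sequence read up to time $\sigma_{n-1}$ (together with the map applied at that time); conditionally on that prefix, $x':=\bfg^{\sigma_{n-1}}(x)\in\tB(\eps)$ is determined and the remaining tail $\bfh$ is again $\nu_\eps^\N$-distributed and independent of it. Since $\bfF_\eps^n=\bfF_\eps\circ\bfF_\eps^{n-1}$, the last condition $q(\bfF_\eps^n(x,\bfg))\ge q_n$ becomes $\bfh\in U_\eps(x',q_n)$, and Lemma~\ref{lem:rettbeps} (applied to $x'$, with constants uniform in $x'\in\tB(\eps)$) bounds its conditional probability by $K_0 e^{-\rho_0 q_n}$. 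Taking expectations and invoking the inductive hypothesis,
\begin{equation*}
\nu_\eps^\N\!\left(U_\eps^n(x;\textbf{q})\right)\;\le\;K_0\,e^{-\rho_0 q_n}\,\nu_\eps^\N\!\left(U_\eps^{n-1}(x;(q_1,\ldots,q_{n-1}))\right)\;\le\;K_0^{\,n}\,e^{-\rho_0|\textbf{q}|},
\end{equation*}
which is the assertion.

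The delicate point — and the one I expect to be the main obstacle — is making this disintegration precise. The difficulty is that the quantity $q(\bfF_\eps^{n-1}(x,\bfg))$ occurring in the $(n-1)$-st condition is read off from the $(n-1)$-st visit point together with the map applied there, and that same map is also the leading map of the excursion realizing the $n$-th condition, so the ``prefix'' and the ``tail'' in the split share one coordinate. This is exactly the situation already handled inside the proof of Lemma~\ref{lem:rettbeps}: one peels that shared map off as the first map of the final excursion, whereupon Lemma~\ref{lem:rettbeps} — whose proof isolates the role of this first map through the regularity class $\sM_\eps(L)$ and the uniform Lebesgue bound on the sets $Y_\eps(\cdot,v;q)$ of Lemma~\ref{lem:rettbeps0} — applies with base point $x'\in\tB(\eps)$ and a genuinely fresh tail. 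It then remains only to note that the $\sigma_k$ are finite almost surely (random orbits enter $\tB(\eps)$ a.s.\ by Proposition~\ref{prop:mane1}), so that the events indexed by the possible values of $\sigma_{n-1}$ partition $U_\eps^n(x;\textbf{q})$ up to a null set, and that summing their mutually disjoint contributions costs at most a factor one.
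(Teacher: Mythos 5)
Your argument is essentially the paper's own proof: for $n\ge 2$ the paper establishes exactly your displayed recursion $\nu_\eps^\N(U_\eps^n(x;\textbf{q}))\le K_0e^{-\rho_0 q_n}\nu_\eps^\N(U_\eps^{n-1}(x;\textbf{p}))$ by partitioning $U_\eps^{n-1}(x;\textbf{p})$ into the sets $W_s$ on which the cumulative return time $\sum_{i=0}^{n-2}R_\eps(\bfF_\eps^i(x,\bfg))$ equals $s$, writing each $W_s$ as a cylinder $A_s\times\Omega_\eps^\N$ over the first $s$ coordinates, and applying Lemma~\ref{lem:rettbeps} at the base point $g_{s-1}\circ\cdots\circ g_0(x)$ to the fresh tail via Fubini before summing over $s$. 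The only divergence concerns your ``delicate point'': the paper does not perform the peeling/conditional maneuver you sketch (and Lemma~\ref{lem:rettbeps} is never invoked conditionally on a constraint involving the map applied at time $\sigma_{n-1}$) -- it simply treats the prefix event $W_s$ as determined by the first $s$ maps alone and the tail as genuinely independent, i.e.\ it passes over the shared coordinate you flag rather than resolving it in the way you describe.
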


\begin{proof} The case $n=1$ is given by Lemma~\ref{lem:rettbeps}. For $n\ge 2$, it suffices to show that for any
$\textbf{q}=(q_1,q_2,\ldots, q_n)\in\N^n$ and $x\in \tB(\eps)$, we have
\begin{equation}\label{eqn:Uepsrec}
\nu_\eps^\N \left(U_\eps^{n}(x;\textbf{q})\right)\le K_0e^{-\rho_0 q_n} \nu_\eps^\N \left(U_\eps^{n-1}(x;\textbf{p})\right),
\end{equation}
where $\textbf{p}=(q_1,q_2,\ldots, q_{n-1})$.

To this end, define
$$W_s=\left\{\textbf{g}\in U_\eps^{n-1}(x;\textbf{p}): \sum_{i=0}^{n-2}R_\eps(\textbf{F}_\eps^{i}(x, \textbf{g}))=s\right\}, s=1,2,\ldots.$$
Then $U_\eps^{n-1}(x;\textbf{p})=\bigcup_{s=1}^\infty W_s$.
Note that for $\textbf{g},\widehat{\textbf{g}}\in \Omega_\eps^\N$ with $g_i=\widehat{g}_i$ for $0\le i<s$, then $\textbf{g}\in W_s$ if and only if $\widehat{\textbf{g}}\in W_s$. In other words, $W_s$ can be written in the form $A_s\times \Omega_\eps^\N$, where $A_s$ is a measurable subset of $\Omega_\eps^s$. So $\nu_\eps^\N(W_s)= \nu_\eps^s(A_s)$.

By Lemma~\ref{lem:rettbeps}, for each $(g_0,g_1,\ldots, g_{s-1})\in A_s$, we have
\begin{equation}
\nu_\eps^\N \left(U_\eps(g_{s-1}\circ \cdots\circ g_0(x);q_n)\right)\le K_0 e^{-\rho_0 q_n}.
\end{equation}
Since
$$W_s\cap U_\eps^n(x;\textbf{q})=\left\{(g_0,g_1,\ldots, g_{s-1}, \textbf{h}): (g_0,g_1,\ldots, g_{s-1})\in A_s, \textbf{h}\in U_\eps (g_{s-1}\circ \cdots\circ g_0(x); q_n)\right\}£¬$$
 by Fubini's theorem, we have
\begin{multline*}
\nu_\eps^\N(W_s\cap U_\eps^n(x;\textbf{q}))=\int_{A_s} \nu_\eps^\N \left(U_\eps(g_{s-1}\circ\cdots \circ g_0(x);q_n)\right) d\nu_\eps^s
\le  K_0 e^{-\rho_0 q_n} \nu_\eps^\N(W_s).
\end{multline*}
It follows that
$$\nu_\eps^n (U_\eps^n (x;\textbf{q}))=\sum_{s=1}^\infty \nu_\eps^\N(W_s\cap U_\eps^n(x;\textbf{q}))\le  K_0 e^{-\rho_0 q_n} \sum_{s=1}^\infty \nu_\eps^\N(W_s)= K_0 e^{-\rho_0 q_n} \nu_\eps^\N (U_\eps^{n-1}(x;\textbf{p})).$$
This proves (\ref{eqn:Uepsrec}), and completes the proof of this lemma.
\end{proof}

\begin{proof}[Proof of Proposition~\ref{prop:rettbeps}]
Let $K_0>0$ and $\rho_0>0$ be as in Lemma~\ref{lem:rettbeps} and let
$\rho=\min(\rho_0/5, (2\ell_{\max})^{-1})$.
By the Stirling's formula,  there exists $\kappa>1$ such that if $m, n$ are positive integers with $m>\kappa n/2$ then the binomial coefficient $\binom{m+n-1}{n-1} \le e^{\rho m}.$ Replacing $\kappa$ by a larger constant if necessary, we may assume $K_0 \le e^{\kappa\rho}$.

Fix $c\in\Crit$ and $m\ge 0$. To estimate the size of $\Bad_m^c(\kappa,\eps)$, let
$$\Delta_0=\{(x,\bfg): x\in \tB(c;\eps), q_\eps(x,\bfg)\ge m/2-\kappa\},$$
and for non-negative integers $m', n$, let
$$\Delta_n^{m'}=\{(x,\bfg)\in \tB(c;\eps)\times \Omega_\eps^\N: \exists s\ge 1 \mbox{ such that }
\Gamma_1^s (x,\bfg;\eps)=n,\ \Q_1^s(x,\bfg;\eps)= m'\}.$$
Put $\mathcal{I}=\{(m',n)\in\mathbb{N}^2: 2m'\ge \max(m,\kappa n)>0\}$. Let us prove that
\begin{equation}\label{eqn:decomposebad}
\Bad_m^c(\kappa, \eps)\subset \Delta_0\cup\left(\bigcup_{(m',n)\in\mathcal{I}} \Delta_{m'}^n\right).
\end{equation}

To this end, we first show that for each $(x,\bfg)\in \Bad_m^c(\kappa,\eps)$, there exists $s\ge 0$ such that
\begin{equation}\label{eqn:bad2s}
\Q_0^{s}(x,\bfg;\eps)>\max ( m-\kappa, \kappa \Gamma_0^s(x,\bfg;\eps)).
\end{equation}
Indeed, let $s_0$ be minimal such that $\Q_0^{s_0}(x,\bfg;\eps)\ge m$. If $\Q_0^{s_0}(x,\bfg;\eps) > \kappa \Gamma_0^{s_0}(x,\bfg;\eps)$ then we take $s=s_0$.
Otherwise, take $s<s_0$ to be the maximal integer such that $\bfg^s(x)\in \tB(\eps)$.  Since $\Q_0^s(x,\bfg;\eps)<m$, we have
$Q_0^s(x, \bfg;\eps)> \kappa \Gamma_0^s(x, \bfg;\eps)$. Moreover,
since $\Q_0^{s_0}(x,\bfg;\eps) \le \kappa \Gamma_0^{s_0}(x,\bfg;\eps)$ and $\Gamma_0^{s_0}(x,\bfg;\eps)=\Gamma_0^s(x, \bfg; \eps)+1$,
we have
$$\Q_0^s(x,\bfg;\eps)> \kappa \Gamma_0^{s_0}(x,\bfg; \eps)-\kappa\ge \Q_0^{s_0}(x,\bfg;\eps)-\kappa\ge m-\kappa. $$
Thus the inequality (\ref{eqn:bad2s}) holds.
For each $(x,\bfg)\in \Bad_m^c(\kappa,\eps)\setminus \Delta_0$, putting $m'=\Q_1^s(x,\bfg;\eps)$ and $n=\Gamma_1^s(x,\bfg;\eps)$.
Then $(x,\bfg)\in\Delta_{m'}^n$. So it remains to show that $(m',n)\in\mathcal{I}$. Note that
$$2m'= 2(\Q_0^s(x, \bfg;\eps)-q_\eps(x,\bfg))> 2((m-\kappa) -(m/2-\kappa) )=m.$$
In particular, this implies $n>0$. Moreover, since $2q_\eps(x,\bfg)\le m-\kappa<Q_0^s(x,\bfg)$ we have
$2m'\ge Q_0^s(x,\bfg)> \kappa \Gamma_0^s(x,\bfg;\eps))>\kappa n.$ This proves $(m',n)\in\mathcal{I}$ and completes the proof of (\ref{eqn:decomposebad}).

Now let us estimate $P_\eps(\Delta_0)$. By definition of $q_\eps$, there exists a constant $C=C(\kappa)$ such that for each $\bfg\in\Omega_\eps^n$,
$$|\{x\in\tB(c;\eps): q_\eps (x,\bfg)\ge m/2-\kappa\}|\le C \eps e^{-m/(2\ell_c)} \le C \eps e^{-\rho m}.$$
Thus by Fubini's theorem,
\begin{equation}\label{eqn:Pdelta0}
P_\eps(\Delta_0)\le C\eps e^{-\rho m}.
\end{equation}

Finally, let us estimate $P_\eps(\Delta^{m'}_n)$ for $(m',n)\in\mathcal{I}$. For each $x\in \tB(\eps)$,
let
$$E_n^{m'}(x,\eps)=\left\{\bfg\in \Omega_\eps^\N: (x,\bfg)\in\Delta_n^{m'}\right\}.$$
Then $$E_n^{m'}(x;\eps)\subset \bigcup_{\substack{\textbf{q}\in\N^n\\ |\textbf{q}|=m'}} U_\eps^n(x; \textbf{q}).$$
Since the number of $\textbf{q}\in\N^n$ with $|\textbf{q}|=m'$ is $\binom{m'+n-1}{n-1}$, by Lemma~\ref{lem:rettbeps'}, we obtain
\begin{align*}
\nu_\eps^\N (E_n^{m'}(x,\eps)) \le \binom{m'+n-1}{n-1} e^{-\rho_0 m'}K_0^n.
\end{align*}
By our choice of constants, this gives us
$$\nu_\eps^\N\left(E_n^{m'}(x,\eps)\right)\le e^{\rho m'} e^{-\rho_0 m'} e^{\kappa \rho n}\le e^{-\rho (4m'-\kappa n)}.$$
For each $(m',n)\in\mathcal{I}$, we have $m'>\kappa n/2$, and hence
\begin{equation*}\label{eqn:enm'}
\nu_\eps^\N\left(E_n^{m'}(x,\eps)\right)\le e^{-2\rho m'},
\end{equation*}
which implies by Fubini's theorem that
$$P_\eps(\Delta_n^{m'})=\int_{\tB(c;\eps)} \nu_\eps(E_n^{m'}(x,\eps)) dx\le e^{-2\rho m'}|\tB(c;\eps)|.$$
Thus
\begin{multline*}
\sum_{(m',n)\in\mathcal{I}} P_\eps (\Delta_n^{m'})\le \sum_{m'= [m/2]}^\infty \sum_{n: (m', n)\in\mathcal{I} } P_\eps(\Delta_n^{m'})
\\ \le \sum_{m'=[m/2]}^\infty e^{-2\rho m'} \frac{2m'}{\kappa}|\tB(c;\eps)|=O(e^{-\rho m}|\tB(c;\eps)|).
\end{multline*}
Combining the last inequality with (\ref{eqn:decomposebad}) and (\ref{eqn:Pdelta0}), we obtain the desired estimate (\ref{eqn:badsize}).
\end{proof}

\subsection{Good return time and large scale time}\label{subsec:goodreturnt}
The main result of this section is the following proposition.
\begin{prop}\label{prop:indsmall0}
Given $\theta>0$, $\kappa>0$ and $\alpha>0$, there exists  $\tau>0$ such that the following holds provided that $\eps>0$ is small enough. For $(x,\bfg)\in\tB(\eps)\times\Omega_\eps^\N$ and $m\ge 1$, if $(x,\bfg)\not\in\Bad_m(\kappa,\eps)$,
then $\hh_{\eps,\tau}^\theta(x,\bfg)\le m\eps^{-\alpha}$.
\end{prop}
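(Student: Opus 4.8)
The plan is to mimic the inductive ``telescoping'' that underlies Lemma~\ref{lem:derVSdiscrt} and Proposition~\ref{prop:psorblocdis}, cutting the random orbit $\{\bfg^j(x)\}$ into the excursions delimited by its successive returns to $\tB(\eps)$ and weighing the expansion acquired on each excursion against the contraction incurred at the returns --- the latter being exactly what $\Q$ records. Write $0=t_0<t_1<\cdots$ for the times with $\bfg^{t_k}(x)\in\tB(c_k;\eps)$ and put $q_k=q_\eps(F^{t_k}(x,\bfg))$, so that $\#\{k:t_k\le s\}=\Gamma_0^s(x,\bfg;\eps)$ and $\sum_{k:\,t_k\le s}q_k\le\Q_0^s(x,\bfg;\eps)$.

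The first step is a growth/distortion estimate valid for every $s$. On a complete excursion $[t_k+1,t_{k+1}]$ the point $\bfg^{t_k+1}(x)$ lies within $4\eps$ of $\CV$ and the orbit avoids $\tB(\eps)$ until time $t_{k+1}$, so Theorem~\ref{theo:dergwrothrandom}(i) gives expansion by at least $\Lambda(\eps)D_{c_{k+1}}(\eps)^{-1}e^{\eps^{\alpha(\eps)}(t_{k+1}-t_k-1)}$ on that excursion, while Proposition~\ref{prop:psorblocdis} bounds the excursion's contribution to the distortion sum by $\theta(\eps)$ times this derivative; at the return $t_k$ the definition of $q_\eps$ together with the admissibility bound on $|Dg|$ gives $|Dg_{t_k}(\bfg^{t_k}(x))|\ge c\,e^{-q_k}D_{c_k}(\eps)$ and $\dist(\bfg^{t_k}(x),\Crit(g_{t_k}))\,|Dg_{t_k}(\bfg^{t_k}(x))|\ge e^{-q_k}\eps$. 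Multiplying along the orbit, the $D_{c_k}(\eps)$'s telescope and leave a harmless power of $\eps$, so I expect to obtain
\[
|D\bfg^s(x)|\ \ge\ \eps^{C_0}\,(c\Lambda(\eps))^{\Gamma_0^s(x,\bfg;\eps)}\,e^{-\Q_0^s(x,\bfg;\eps)}\,e^{\eps^{\alpha(\eps)}\,(s-\Gamma_0^s(x,\bfg;\eps))}
\]
together with, when $\bfg^s(x)\in\tB(c;\eps)$, the matching bound $A(x,\bfg,s)\,|\tB(c;\eps)|\le C(\Lambda(\eps)^{-1}+\theta(\eps))\,e^{\Q_0^s(x,\bfg;\eps)}\,|D\bfg^s(x)|$ (the two estimates arising from summing geometric series once $\Lambda(\eps)$ is large), and an analogue for $s$ inside an excursion obtained from Theorem~\ref{theo:dergwrothrandom}(ii) and Lemma~\ref{lem:theta0}. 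Reading these off: if at some $s$ the accrued expansion outweighs the accrued loss --- i.e.\ $\eps^{\alpha(\eps)}(s-\Gamma_0^s)$ or $\Gamma_0^s\log(c\Lambda(\eps))$ exceeds $\Q_0^s+C_1\log(1/\eps)$ --- then the image scale $\theta_0|D\bfg^s(x)|/A(x,\bfg,s)$ exceeds $\tau$, so $s$ is a $\tau$-scale time; and if $\bfg^s(x)\in\tB(\eps)$ while $\Q_0^s$ is small, the distortion bound makes $s$ a $\theta$-good return time into $\tB(\eps)\times\Omega^\N$.

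The second step is the case analysis coming from $(x,\bfg)\notin\Bad_m(\kappa,\eps)$. I will fix $\tau$ small, depending only on $\theta,\kappa,\alpha$ and on universal distortion constants, and then take $\eps$ small so that $\log(c\Lambda(\eps))>2\kappa$, $C(\Lambda(\eps)^{-1}+\theta(\eps))<\tau\theta$, $\alpha(\eps)<\alpha/4$ and $C_1\log(1/\eps)<\eps^{-\alpha/4}$. If $\Q_0^s(x,\bfg;\eps)<m$ for all $s$, the loss is bounded, so on any excursion Theorem~\ref{theo:dergwrothrandom}(ii) drives the image scale up like $e^{\eps^{\alpha(\eps)}L}$ and a $\tau$-scale time appears within $\eps^{-\alpha(\eps)}(Cm+C\log(1/\eps))\le m\eps^{-\alpha}$ steps. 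Otherwise there is a first $s^\ast$ with $\Q_0^{s^\ast}(x,\bfg;\eps)\le\min(m,\kappa\Gamma_0^{s^\ast}(x,\bfg;\eps))$; for $s<s^\ast$ one has $\Q_0^s>\kappa\Gamma_0^s$, so the number of returns before $s^\ast$ is less than $m/\kappa$, and (since $\Q$ is still $<m$ there) no excursion before $s^\ast$ can last long without already producing a scale time --- plugging the ``no-scale-time-yet'' inequality $s\le 2\eps^{-\alpha(\eps)}(\Q_0^s(x,\bfg;\eps)+C_1\log(1/\eps))$ into $\Q_0^{s^\ast}\le m$ yields $s^\ast\le m\eps^{-\alpha}$; finally, at $s^\ast$ we have $\Gamma_0^{s^\ast}\ge\Q_0^{s^\ast}/\kappa$, so $(c\Lambda(\eps))^{\Gamma_0^{s^\ast}}e^{-\Q_0^{s^\ast}}\ge(c\Lambda(\eps)e^{-\kappa})^{\Gamma_0^{s^\ast}}$ is large while $\Q_0^{s^\ast}\le m$ keeps the $e^{\Q_0^{s^\ast}}$ in the distortion bound under control, and $s^\ast$ --- or the first time $\le s^\ast+1$ at which the orbit next exits $\tB(\eps)$ --- turns out to be a $\theta$-good return time or a $\tau$-scale time. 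In every case $\hh_{\eps,\tau}^\theta(x,\bfg)\le m\eps^{-\alpha}$.

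The hard part will be the second step: because the distortion estimate carries a factor $e^{\Q_0^s(x,\bfg;\eps)}$, a large cumulative loss cannot be absorbed by the (merely unbounded) expansion constant $\Lambda(\eps)$, so one must genuinely use the combinatorial alternative built into $\Bad_m$ --- loss bounded versus loss dominated by the number of returns --- and then make sure the elapsed time stays $\le m\eps^{-\alpha}$ rather than $\sim m^2\eps^{-\alpha}$; this is what forces the order ``$\tau$ first, then $\eps$'' and the use of $\alpha(\eps)\to 0$ through the inequalities $\eps^{-\alpha(\eps)}\le\eps^{-\alpha/2}$ and $\log(1/\eps)\le\eps^{-\alpha/4}$. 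A subsidiary technical point is upgrading, via a finer treatment of the geometric sum defining the distortion bound (separating off the last deep return), the crude consequence ``$\Q_0^s$ large'' of the failure of the good-return inequality to the sharp form ``$\Q_0^s>\kappa\Gamma_0^s$'' needed to match the definition of $\Bad_m(\kappa,\eps)$.
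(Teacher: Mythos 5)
Your overall strategy (telescope over returns to $\tB(\eps)$, weigh the gain $\Lambda(\eps)^{\Gamma}e^{\eps^{\alpha(\eps)}(\cdot)}$ against the loss $e^{-\Q}$, then split according to whether $\Q$ stays below $m$ or the defining inequality of $\Bad_m$ first fails at some $s^\ast$) is the same as the paper's, but the two steps you yourself flag as ``to be arranged'' are exactly where the proof lives, and as written they do not go through. The distortion bound you propose, $A(x,\bfg,s)\,|\tB(c;\eps)|\le C(\Lambda(\eps)^{-1}+\theta(\eps))e^{\Q_0^s}|D\bfg^s(x)|$, cannot give a $\theta$-good return at $s^\ast$: your justification ``$\Q_0^{s^\ast}\le m$ keeps the $e^{\Q}$ under control'' fails because $m$ is arbitrary while $\theta$ is fixed, and no amount of expansion helps since $|D\bfg^{s^\ast}(x)|$ already sits on the right-hand side. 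What rescues this step is not a sharpening of the prefix inequality $\Q_0^s>\kappa\Gamma_0^s$ (your closing remark), but the \emph{tail} inequalities: minimality of $s^\ast$ forces $\Q_j^{s^\ast}\le\kappa\Gamma_j^{s^\ast}$ for every $0\le j<s^\ast$ (otherwise the complementary prefix would already satisfy the defining inequality of $s^\ast$), and with these each return term of $A(x,\bfg,s^\ast)$ is exponentially small, in the number of intervening returns, compared with the last term --- this is Lemma~\ref{lem:shallowret}, proved via Lemma~\ref{lem:shallowuman}(i), and no factor $e^{\Q}$ ever appears. Your ``separating off the last deep return'' does not supply this mechanism.

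The bounded-loss branch has a second gap: you claim a $\tau$-scale time appears within $\sim\eps^{-\alpha(\eps)}(m+\log(1/\eps))$ steps, but this ignores visits to the intermediate neighborhoods $\tB(\delta')$ with $\eps<\delta'\le\delta_0$. Such visits are invisible to $\Q$ and $\Gamma$ (they cost no loss, since $q_\eps$ only registers depth below scale $\eps$), yet they cap the window scale $\theta_0|D\bfg^s(x)|/A(x,\bfg,s)$ far below the constant $\tau$, and at those moments the orbit need not lie in $\tB(\eps)$, so your two alternatives ($\tau$-scale time, or $\theta$-good return into $\tB(\eps)$) are not obviously exhaustive. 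This is precisely why $\hh_{\eps,\tau}^\theta$ is defined with $\inf_{\delta'\ge\eps}h^\theta_{\delta'}$, and why the paper's Lemma~\ref{lem:shallowupb} runs a multi-scale argument over the levels $e^{N-i}\eps$ whose conclusion is ``a $\theta$-good return into $\tB(\delta')$ for some intermediate $\delta'$, or a $\tau$-scale time,'' under the tail hypothesis $s-j>\eps^{-\gamma}\Q_j^{s-1}$ for all $j$ (obtained from the minimality of the paper's $T_1$). You would need the intermediate-scale alternative and this tail form of the hypothesis before your time count $\le m\eps^{-\alpha}$ can be carried out.
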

To prove this proposition, we shall need a few lemmas.
\begin{lemm}\label{lem:shallowuman}
Consider $f\in \SC_1$ and $\Omega=\sF_1$. Given $K>0$ and $\beta>0$, the following hold
for each $(y,\bfh)\in \tB(\delta)\times \Omega_\delta^\N$, provided that $\delta>0$ is small enough.
\begin{enumerate}
\item [(i)] If $t\ge 1$ is an integer such that $\bfh^t(y)\in\tB(c;\delta)$ for some $c\in\Crit$, then
$$\log\left(\frac{|D\bfh^t(y)|\dist(y,\Crit(h_0))}{|\tB(c;\delta)|}\right)\ge K\Gamma_0^{t-1}(y,\bfh;\delta)-\Q_0^{t-1} (y,\bfh;\delta)+\delta^\beta t.$$
\item [(ii)] If $t\ge 1$ is an integer such that $\bfh^t(y)\not\in\tB(\delta)$, then
$$\log \left(\frac{|D\bfh^t(y)|\dist(y,\Crit(h_0))}{d(\bfh^t(y),\Crit(h_t))}\right)\ge \delta^\beta t+2\log \delta +K\Gamma_0^{t-1}(y,\bfh;\delta)-\Q_0^{t-1} (y,\bfh;\delta).$$
\end{enumerate}
\end{lemm}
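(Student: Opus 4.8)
\textbf{Plan for the proof of Lemma~\ref{lem:shallowuman}.}
The two statements are proved together by a single telescoping argument that tracks, orbit step by step, how much the derivative $|D\bfh^j(y)|$ grows relative to the distance to the critical set. The idea is to split the orbit $y_0=y, y_1=\bfh(y), \ldots, y_{t-1}$ into \emph{blocks} determined by the visits to $\tB(\delta)$: if $0\le s_1<s_2<\cdots<s_n\le t-1$ are the times $j$ with $y_j\in\tB(\delta)$, then the segments $\{y_j\}_{j=s_i+1}^{s_{i+1}}$ (together with an initial segment $\{y_j\}_{j=0}^{s_1}$ if $s_1>0$ and a final segment ending at time $t$) are return-orbits of the type handled by Theorem~\ref{theo:dergwrothrandom}. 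On each such block we apply part (i) or part (ii) of Theorem~\ref{theo:dergwrothrandom} to get a lower bound of the form $\Lambda(\delta)\exp(\delta^{\alpha(\delta)}\cdot(\text{block length}))/D_{c'}(\delta)$, and on the single-step transition across a visit $y_{s_i}\in\tB(c_i;\delta)$ we invoke the definition \eqref{eqn:defq} of $q_\delta$, which says precisely $|Dh_{s_i}(y_{s_i})|\,\dist(y_{s_i},\Crit(h_{s_i}))\ge e^{-q_\delta(F^{s_i}(y,\bfh))}\delta$, i.e. in logarithmic form the transition costs at most $q_\delta(F^{s_i}(y,\bfh))$ but gains a term $\log(\delta/\dist(\cdots))=\log D_{c_i}(\delta)+O(1)$ which cancels against the $D_{c_i}(\delta)$ appearing in the block estimate on the previous segment.

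Concretely, first I would write
$$
\log\frac{|D\bfh^t(y)|\dist(y,\Crit(h_0))}{|\tB(c;\delta)|}
=\sum_{i}\Bigl(\log|D\bfh^{s_{i+1}-s_i-1}(\bfh^{s_i+1}(y))|-\log|\tB(c_{i+1};\delta)|\Bigr)
+\sum_i\log\bigl(|Dh_{s_i}(y_{s_i})|\,|\tB(c_i;\delta)|\bigr)+(\text{boundary terms}),
$$
taking care with the very first factor, where the extra $\dist(y,\Crit(h_0))$ in the numerator is exactly what makes the initial block start ``at a critical value scale'' so that Theorem~\ref{theo:dergwrothrandom}(i) applies; this is the reason $\dist(y,\Crit(h_0))$ rather than $\delta$ sits in the statement. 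For the first sum each term is $\ge \log\Lambda(\delta)+\delta^{\alpha(\delta)}(s_{i+1}-s_i-1)+\log D_{c_{i+1}}(\delta)-\log|\tB(c_{i+1};\delta)| = \log\Lambda(\delta)+\delta^{\alpha(\delta)}(s_{i+1}-s_i-1)+\log\delta$ by \eqref{eqn:Dcdelta}. For the second sum, $\log(|Dh_{s_i}(y_{s_i})|\,|\tB(c_i;\delta)|)\ge -q_\delta(F^{s_i}(y,\bfh))+\log(e^{-0}\delta)+\log|\tB(c_i;\delta)|-\log\delta\ge -q_\delta(F^{s_i}(y,\bfh))-O(1)$. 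Summing over the $n=\Gamma_0^{t-1}(y,\bfh;\delta)$ visits, the $\log\Lambda(\delta)$ contributions (one per block) plus the $-O(1)$'s combine, for $\delta$ small, into a net positive bonus of at least $K$ per visit (here $\Lambda(\delta)\to\infty$ is crucial — it dominates any fixed $K$ and the bounded additive constants), the $q_\delta$-terms sum to exactly $-\Q_0^{t-1}(y,\bfh;\delta)$, and the $\delta^{\alpha(\delta)}$-contributions sum to $\delta^{\alpha(\delta)}(t-n)$, which is $\ge\delta^\beta t$ once $\delta$ is small enough that $\delta^{\alpha(\delta)}\ge 2\delta^\beta$ and one absorbs the lost $n$ terms using the per-visit surplus. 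The stray $\log\delta$ from each block telescopes against the $-\log\delta$ from the preceding transition, leaving one leftover $\log|\tB(c;\delta)|\asymp\log\delta$ which is cancelled by the denominator already written on the left-hand side. Part (ii) is identical except that the final block ends with $y_t\notin\tB(\delta)$: there one uses Theorem~\ref{theo:dergwrothrandom}(ii), which produces a factor $A\delta^{1-\ell_{\max}^{-1}}\exp(\delta^{\alpha(\delta)}\cdot(\text{length}))$ rather than $\Lambda(\delta)/D_c(\delta)$; dividing by $\dist(y_t,\Crit(h_t))\le \delta$ costs an extra $-\log\delta$, and accounting for all logarithmic factors one finds a residual $2\log\delta$ — exactly the term in the statement — plus the same $K\Gamma_0^{t-1}-\Q_0^{t-1}+\delta^\beta t$.

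The main obstacle will be the bookkeeping of the many $O(1)$ and $\log\delta$ additive terms and verifying that, after telescoping, the per-visit surplus coming from $\log\Lambda(\delta)\to\infty$ genuinely dominates: one needs that for $\delta$ small, $\log\Lambda(\delta)$ exceeds $K$ plus a fixed constant (from the Koebe/distortion $O(1)$'s and from bounding $\log(|\tB(c;\delta)|/\delta)$) \emph{and} that the deficit $\delta^{\alpha(\delta)}n$ lost by replacing $t-n$ by $t$ in the exponential term is reabsorbed — this is fine because each visit already contributes a surplus $\ge\log\Lambda(\delta)-(\text{const})$ in the non-exponential part, which swamps $\delta^{\alpha(\delta)}<1$. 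A secondary subtlety is the treatment of the initial segment before the first visit: if $y\in\tB(c_0;\delta)$ one must check that $\dist(y,\CV)\le 4\delta$ so that Theorem~\ref{theo:dergwrothrandom}(i) is applicable to $\{y_j\}$; this holds because $y\in\tB(\delta)$ forces $\bfh(y)$ to be within $\delta+\eps\le 2\delta$ of the corresponding critical value, and one simply absorbs the single step $h_0$ using $q_\delta$ as above. Finally, the edge case $\Gamma_0^{t-1}=0$ (orbit never returns to $\tB(\delta)$) is just a direct application of Theorem~\ref{theo:dergwrothrandom}(ii) with no telescoping at all, and serves as the base of the block decomposition.
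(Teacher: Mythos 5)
Your overall route is the same as the paper's. The paper proves the case of no intermediate visits by applying Theorem~\ref{theo:dergwrothrandom}(i) to $x=h_0(y)$, $\bfg=\sigma\bfh$ together with the defining inequality $|Dh_0(y)|\,\dist(y,\Crit(h_0))\ge e^{-q_\delta(y,\bfh)}\delta$, and then gets the general case by induction on $\Gamma_0^{t-1}$; for (ii) it first strips off the visits using (i) and then applies Theorem~\ref{theo:dergwrothrandom}(ii) with the trivial bound $\dist(\bfh^t(y),\Crit(h_t))\le 1$. Your telescoping over the blocks between visits is exactly this induction unrolled, so the plan is the right one.

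Two points of your bookkeeping, however, are wrong as written, and one of them hides the only delicate step. First, in (ii) you assert $\dist(y_t,\Crit(h_t))\le\delta$ and that dividing by it ``costs $-\log\delta$'': since $y_t\notin\tB(\delta)$ this distance is typically of order $1$, and dividing by a quantity $\le 1$ costs nothing; the $2\log\delta$ in the statement comes instead from $e^{-q_\delta(y,\bfh)}\delta\cdot A\delta^{1-\ell_{\max}^{-1}}\ge e^{K-q_\delta(y,\bfh)}\delta^2$, which is how the paper produces it. Second, your transition bound $\log\bigl(|Dh_{s_i}(y_{s_i})|\,|\tB(c_i;\delta)|\bigr)\ge -q_\delta(F^{s_i}(y,\bfh))-O(1)$ does not follow from \eqref{eqn:defq} the way you derive it (note $\log|\tB(c_i;\delta)|\asymp \ell_{c_i}^{-1}\log\delta$ is not $O(1)$, and your block-term identity $\log D_{c_{i+1}}(\delta)-\log|\tB(c_{i+1};\delta)|=\log\delta$ has the signs scrambled; the correct cancellation is $\log D_{c}(\delta)+\log|\tB(c;\delta)|=\log\delta$). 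What \eqref{eqn:defq} gives is $|Dh_{s_i}(y_{s_i})|\,\dist(y_{s_i},\Crit(h_{s_i}))\ge e^{-q}\delta$, so after pairing with the factor $\delta/D_{c_i}(\delta)=|\tB(c_i;\delta)|$ produced by the preceding block, each intermediate visit leaves a residual $|\tB(c_i;\delta)|/\dist(y_{s_i},\Crit(h_{s_i}))$. Your accounting closes only if $\dist(y_{s_i},\Crit(h_{s_i}))\lesssim|\tB(c_i;\delta)|$ — and it is $\Crit(h_{s_i})$, not $\Crit(f)$, that enters, with $h_{s_i}$ merely an element of $\Omega_\delta$. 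This comparison is exactly the splicing step in the paper's induction (where the inductive hypothesis carries $\dist(\bfh^{s}(y),\Crit(h_{s}))$ in its numerator), and it must be addressed rather than absorbed into an $O(1)$: you cannot invoke the per-visit surplus $\log\Lambda(\delta)$ against it, since $\Lambda(\delta)\to\infty$ with no specified rate while the potential loss per visit is of size $\ell_{\max}^{-1}\log(1/\delta)$.
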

\begin{proof}
(i) It suffices to consider the case that $\bfh^j(y)\not\in\tB(\delta)$ for each $1\le j<t$, since the general case follows by induction on $\Gamma_0^{t-1}(y,\bfh;\delta)$. Let $\bfg=\sigma \bfh$ and $x=h_0(y)$.
Then by part (i) of Theorem~\ref{theo:dergwrothrandom}, we have
$$|D\bfg^{t-1}(x)|\ge \frac{e^K}{ D_c(\delta)} \exp\left(\delta^\beta t\right),$$
provided that $\delta>0$ is small enough.
Since
\begin{multline*}
\frac{|D\bfh^t(y)|\dist(y,\Crit(h_0))}{|\tB(c;\delta)|}=|D\bfg^{t-1}(x)|\frac{|D h_0(y)|\dist(y,\Crit(h_0))}{|\tB(c;\delta)|}
\\ \ge  |D\bfg^{t-1}(x)| \frac{e^{-q_\delta(y,\bfh)}\delta}{|\tB(c;\delta)|}=|D\bfg^{t-1}(x)| e^{-q_\delta(y,\bfh)} D_c(\delta),
\end{multline*}
it follows that
$$\log \left(\frac{|D\bfh^t(y)|\dist(y,\Crit(h_0))}{|\tB(c;\delta)|} \right)\ge K-q_\delta(y,\bfh)+\delta^\beta t.$$

(ii) Put $\rho_j=\dist_*(\bfh^j(y),\Crit)$ for $0\le j\le t$.
By part (i) of this lemma, it suffices to consider the case that $\rho_j\ge \delta$ for all $1\le j\le t$.
By part (ii) of Theorem~\ref{theo:dergwrothrandom},
$$|D\bfh^t(y)|\dist(y,\Crit(h_0))\ge e^{-q_\delta(y,\bfh)}\delta A \delta^{1-\ell_{\max}^{-1}} \exp (\delta^\beta t)> \delta^2 e^{K-q_\delta(y,\bfh)} \exp (\delta^\beta t),$$
provided that $\delta>0$ is small enough. Since $\dist(\bfh^t(y), \Crit(h_t))\le 1$, it follows that
$$\log \left(\frac{|D\bfh^t(y)|\dist(y,\Crit(h_0))}{d(\bfh^t(y),\Crit(h_t))}\right)\ge \delta^\beta t+2\log \delta + K -q_\delta(y,\bfh),$$
as desired.
\end{proof}

\begin{lemm}\label{lem:shallowret}
Given $\kappa>0$ and $\theta>0$, the following holds provided that $\eps>0$ is small enough. Let $(x,\bfg)\in \tB(\eps)\times\Omega_\eps^\N$, and let $s\ge 1$ be an integer such that $\bfg^s(x)\in \tB(\eps)$ and such that for each $0\le j<s$,
$$\kappa \Gamma_j^{s-1} (x,\bfg; \eps)\ge \Q_j^{s-1} (x,\bfg; \eps).$$
Then $s$ is a $\theta$-good return time of $(x,\bfg)$ into $\tB(\eps)\times\Omega^\N$.
\end{lemm}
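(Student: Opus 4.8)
The plan is to unwind the definition: writing $v_j=\bfg^j(x)$ and fixing $c\in\Crit$ with $v_s=\bfg^s(x)\in\tB(c;\eps)$, it suffices to prove $A(x,\bfg,s)\,|\tB(c;\eps)|\le\theta\,|D\bfg^s(x)|$. First I would note that the recurrence hypothesis is self-improving: all $q_\eps(F^j(x,\bfg))$, $0\le j<s$, are finite (otherwise $\Q_j^{s-1}(x,\bfg;\eps)=\infty$ at that index, contradicting $\kappa\Gamma_j^{s-1}\ge\Q_j^{s-1}$), so no $v_j$ with $j<s$ hits $\Crit(g_j)$ and $A(x,\bfg,s)<\infty$. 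Then let $0=t_0<t_1<\cdots<t_k=s$ enumerate the times in $\{0,\dots,s\}$ with $v_{t_i}\in\tB(\eps)$, and let $c_i\in\Crit$ be such that $v_{t_i}\in\tB(c_i;\eps)$, so $c_k=c$ and $\Gamma_{t_j}^{s-1}(x,\bfg;\eps)=k-j$.

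The core step is to split $A(x,\bfg,s)$ along the blocks $[t_j,t_{j+1})$ and, inside each block, to separate the single ``return term'' $|D\bfg^{t_j}(x)|/\dist(v_{t_j},\Crit(g_{t_j}))$ from the remaining terms. The latter form the $A$-sum of the first-landing orbit $v_{t_j+1},\dots,v_{t_{j+1}}$, which stays outside $\tB(\eps)$ until its endpoint, so Proposition~\ref{prop:psorblocdis} together with the chain rule (to re-expand the derivatives along $\bfg$) bounds it by $\theta(\eps)\,|D\bfg^{t_{j+1}}(x)|/|\tB(c_{j+1};\eps)|$, where $\theta(\eps)\to 0$ as $\eps\to0$. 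Summing over $j$ I would obtain $A(x,\bfg,s)\le S_1+S_2$ with $S_1=\sum_{j=0}^{k-1}|D\bfg^{t_j}(x)|/\dist(v_{t_j},\Crit(g_{t_j}))$ and $S_2=\theta(\eps)\sum_{j=1}^{k}|D\bfg^{t_j}(x)|/|\tB(c_j;\eps)|$. (Blocks with $t_{j+1}=t_j+1$ contribute only their return term and need no estimate.)

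To dispose of both sums I would apply Lemma~\ref{lem:shallowuman}(i) at the base points $(v_{t_j},\sigma^{t_j}\bfg)$, whose orbit reaches $v_s\in\tB(c;\eps)$ after $s-t_j$ steps, with $\beta=1$ and with the constant $K$ taken to be $\kappa+C_*$ for a large $C_*$ that is fixed first. After the evident shift of base point, $\Gamma_0^{s-t_j-1}(v_{t_j},\sigma^{t_j}\bfg;\eps)=\Gamma_{t_j}^{s-1}(x,\bfg;\eps)=k-j$ and $\Q_0^{s-t_j-1}(v_{t_j},\sigma^{t_j}\bfg;\eps)=\Q_{t_j}^{s-1}(x,\bfg;\eps)\le\kappa(k-j)$ by hypothesis, so the lemma gives $|D(\sigma^{t_j}\bfg)^{s-t_j}(v_{t_j})|\,\dist(v_{t_j},\Crit(g_{t_j}))\ge|\tB(c;\eps)|\,e^{C_*(k-j)}$. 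Dividing the identity $|D\bfg^s(x)|=|D\bfg^{t_j}(x)|\cdot|D(\sigma^{t_j}\bfg)^{s-t_j}(v_{t_j})|$ by this, the $j$-th term of $S_1$ is at most $e^{-C_*(k-j)}\,|D\bfg^s(x)|/|\tB(c;\eps)|$, and the geometric sum gives $S_1\le\frac{e^{-C_*}}{1-e^{-C_*}}\cdot\frac{|D\bfg^s(x)|}{|\tB(c;\eps)|}$. Using the same estimates together with $\dist(v_{t_j},\Crit(g_{t_j}))\le C_0|\tB(c_j;\eps)|$ --- valid for $\eps$ small since the critical points of maps in $\Omega_\eps$ lie close to those of $f$ --- the sum $S_2$ is bounded by a fixed constant times $\theta(\eps)\,|D\bfg^s(x)|/|\tB(c;\eps)|$. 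Finally I would choose $C_*$ so large that $\frac{e^{-C_*}}{1-e^{-C_*}}<\theta/4$, and then shrink $\eps$ so that the $S_2$-bound is also below $(\theta/4)\,|D\bfg^s(x)|/|\tB(c;\eps)|$ and so that Lemma~\ref{lem:shallowuman}(i) and Proposition~\ref{prop:psorblocdis} are in force; this yields $A(x,\bfg,s)\le S_1+S_2\le\tfrac\theta2\,|D\bfg^s(x)|/|\tB(c;\eps)|$, proving that $s$ is a $\theta$-good return time.

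The routine parts are the telescoping bookkeeping and the geometric summation. The one point that needs care is the order in which constants are fixed: $C_*$ (hence $K$) must be chosen depending only on $\theta$ \emph{before} the smallness of $\eps$ is invoked, because both the validity of Lemma~\ref{lem:shallowuman}(i) at exponent $K$ and the size of $\theta(\eps)$ enter only afterwards; and one must check, uniformly over $\bfg\in\Omega_\eps^\N$, that $\dist(v_{t_j},\Crit(g_{t_j}))$ is comparable to $|\tB(c_j;\eps)|$, which is where admissibility of $\Omega$ (continuity of the critical points under perturbation) is used.
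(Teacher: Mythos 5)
Your argument is correct and essentially identical to the paper's: both decompose the orbit at its returns to $\tB(\eps)$, bound the free segments via Proposition~\ref{prop:psorblocdis} by $\theta(\eps)$ times the quantities $|D\bfg^{t_{j+1}}(x)|/|\tB(c_{j+1};\eps)|$, and apply Lemma~\ref{lem:shallowuman}(i) with $K=\kappa+C_*$ together with the recurrence hypothesis to make the return terms decay geometrically toward the last return, fixing $C_*$ (depending on $\theta$) before shrinking $\eps$. The only difference is cosmetic: the paper absorbs the $\theta(\eps)$-sum into the return terms via $\widetilde A_i\le A_i$ rather than keeping a separate $S_2$ with a comparability constant.
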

\begin{proof}  Let $0=s_0<s_1<s_2<\cdots<s_n=s$ be all the integers such that $\bfg^{s_i}(x)\in\tB(\eps)$ and let $c_i\in\Crit$ be such that $\bfg^{s_i}(x)\in \tB(c_i,\eps)$. For each $0\le i\le n$,
let $A_i=|D\bfg^{s_i}(x)|/d(\bfg^{s_i}(x), \Crit(g_{s_i}))$ and $\tA_i=|D\bfg^{s_i}(x)|/|\tB(c_i;\eps)|$. We need to prove that
$\theta\tA_n\ge  \ A(x,\bfg, s)$.
By Proposition~\ref{prop:psorblocdis}, we have
$$A(x,\bfg,s)\le \sum_{i=0}^{n-1} A_i +\theta(\eps) \sum_{i=1}^n \tA_i\le A_0+ (1+\theta(\eps) )\sum_{i=1}^{n-1} A_i +\theta(\eps) \tA_n,$$
where $\theta(\eps)\to 0$ as $\eps\to 0$.

Let $K_0$ be a large constant and assume $\eps>0$ small. By Lemma~\ref{lem:shallowuman} (i),  for each $i=0,1,\ldots, n-1$,
$$\log \frac{A_i}{\tA_n}\ge (K_0+\kappa) \Gamma_{s_i}^{s-1}(x,\bfg;\eps)-\Q_{s_i}^{s-1}(x,\bfg;\eps, Q)\ge (n-i)K_0,$$
hence $A_i\le e^{-(n-i)K_0} \tA_n$.
Thus $$A(x,\bfg,s) \le \left(e^{-K_0n}+(1+\theta(\eps) ) \sum_{i=1}^{n-1} e^{-K_0(n-i)} +\theta(\eps)\right)\tA_n\le \tA_n /\theta,$$
provided that $\eps>0$ is small enough.
\end{proof}

\begin{lemm}\label{lem:shallowupb}
Given $\theta>0$ and $\gamma>0$, there exists $\tau>0$ such that the following holds provided that $\eps>0$ is small enough. Let $(x,\bfg)\in \tB(\eps)\times\Omega_\eps^\N$ and let $s\ge 1$ be an integer such that for each $0\le j<s$,
\begin{equation}\label{eqn:s0backdpth}
s-j>  \eps^{-\gamma}\Q_j^{s-1} (x,\bfg;\eps).
\end{equation}
Then $\hh_{\eps,\tau}^\theta (x,\bfg) \le s$.
\end{lemm}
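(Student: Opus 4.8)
The goal is to show that hypothesis \eqref{eqn:s0backdpth} — a strong lower bound on the ``backward depth deficit'' $s-j$ compared to the accumulated $q$-values $\Q_j^{s-1}$ — forces $s$ to be at least a $\tau$-scale expansion time of $(x,\bfg)$, i.e.\ $\theta_0|D\bfg^s(x)|\ge e\tau A(x,\bfg,s)$, which gives $\hh_{\eps,\tau}^\theta(x,\bfg)\le T_\tau(x,\bfg)\le s$. The idea is to run Lemma~\ref{lem:shallowuman} with a large constant $K$ along the successive returns of the orbit $\{\bfg^j(x)\}_{j=0}^s$ into $\tB(\eps)$, summing the contributions to bound $A(x,\bfg,s)$ by a small multiple of $|D\bfg^s(x)|$, and then to convert this into the $\tau$-scale inequality.

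First I would set up notation: let $0=s_0<s_1<\cdots<s_n\le s$ be all indices $j\le s$ with $\bfg^j(x)\in\tB(\eps)$ (if $\bfg^s(x)\in\tB(\eps)$ then $s_n=s$, otherwise $s_n<s$), write $c_i\in\Crit$ for the critical point nearest $\bfg^{s_i}(x)$, and put $\tA_i=|D\bfg^{s_i}(x)|/|\tB(c_i;\eps)|$ for $i\ge 1$ and $A_i=|D\bfg^{s_i}(x)|/\dist(\bfg^{s_i}(x),\Crit(g_{s_i}))$ for all $i$. By Proposition~\ref{prop:psorblocdis} applied to each stretch between consecutive returns (together with the stretch $\{\bfg^j(x)\}_{j=s_n+1}^{s}$, handled directly by part (ii) of Theorem~\ref{theo:dergwrothrandom} when $s_n<s$), one gets $A(x,\bfg,s)\le A_0+(1+\theta(\eps))\sum_{i=1}^{n}A_i + \text{(tail term)}$ with $\theta(\eps)\to 0$, exactly as in the proof of Lemma~\ref{lem:shallowret}. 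The new ingredient is that I do \emph{not} assume $\kappa\Gamma_j^{s-1}\ge\Q_j^{s-1}$; instead I exploit \eqref{eqn:s0backdpth}. Fix a large constant $K=K(\gamma,\theta,\tau)$ and apply Lemma~\ref{lem:shallowuman}(i) to the sub-orbit starting at $s_i$: this yields
\[
\log\frac{A_i}{\tA_n}\ \ge\ K\,\Gamma_{s_i}^{s-1}(x,\bfg;\eps)-\Q_{s_i}^{s-1}(x,\bfg;\eps)+\eps^\beta (s-s_i)
\]
for a fixed $\beta>0$; since $\Gamma_{s_i}^{s-1}\ge n-i$ and, by \eqref{eqn:s0backdpth}, $\Q_{s_i}^{s-1}\le \eps^\gamma(s-s_i)\le \eps^{\gamma}(s-s_i)$, the $\eps^\beta(s-s_i)$ term dominates the $\Q$ term once $\beta<\gamma$ and $\eps$ is small, and the $K(n-i)$ term gives geometric decay $A_i\le e^{-(n-i)K}\tA_n$ (absorbing the $A_0$ term similarly). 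Summing the geometric series gives $A(x,\bfg,s)\le \tfrac1{2}\,\eps^{\beta s}\,|D\bfg^s(x)|$, say, when $\bfg^s(x)\in\tB(\eps)$; when $\bfg^s(x)\notin\tB(\eps)$ one uses Lemma~\ref{lem:shallowuman}(ii) in the same way, the extra factor $\delta^2$ being harmless. In particular $\theta_0|D\bfg^s(x)|\ge e\tau A(x,\bfg,s)$ holds for any prescribed $\tau$, provided $\eps$ is small enough depending on $\tau,\theta,\gamma$ — and this choice of $\tau$ is what the statement allows us to make.

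The main obstacle is the bookkeeping of the two competing exponential scales: the ``good'' growth comes both from the return-multiplicity factor $e^{K\Gamma}$ (which must beat the constant-size sums $\sum A_i$) and from the time-linear factor $e^{\eps^\beta t}$ (which must beat $e^{\Q}$), and one has to make sure \eqref{eqn:s0backdpth} is used in the \emph{uniform-in-$j$} form so that it applies at every intermediate return $s_i$, not just at $j=0$; this is exactly why the hypothesis quantifies over all $0\le j<s$. A secondary point is that one must choose the order of quantifiers correctly: $\tau$ is chosen first (as a function of $\theta,\gamma$), then $\eps_0$ shrinks to accommodate it, matching the ``there exists $\tau$ such that \dots\ provided $\eps$ small'' phrasing. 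Once $A(x,\bfg,s)\le \eps^{\beta s}|D\bfg^s(x)|/(e\tau/\theta_0)$ is in hand, $s$ is a $\tau$-scale expansion time, hence $\hh_{\eps,\tau}^\theta(x,\bfg)\le T_\tau(x,\bfg)\le s$, completing the proof.
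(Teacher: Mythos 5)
Your toolkit is the right one (Lemma~\ref{lem:shallowuman} with a large $K$, Proposition~\ref{prop:psorblocdis}, geometric decay over successive returns, and the observation that $\eps^{\beta}t$ dominates $\Q\le \eps^{\gamma}t$ once $\beta<\gamma$), but there is a genuine gap: you try to exhibit $s$ itself as the witness, i.e.\ to prove $\theta_0|D\bfg^s(x)|\ge e\tau A(x,\bfg,s)$ (or a $\theta$-good return at time exactly $s$), and this is false in general under \eqref{eqn:s0backdpth}. The hypothesis only constrains sums of $q_\eps$, and $q_\eps$ vanishes at shallow approaches: the orbit may re-enter $\tB(\eps)\setminus\tB(\mathrm{const}\,\eps)$, or pass at distance $\eps^{1/2}$ from a critical point, a bounded number of steps before $s$ without contributing anything to $\Q$, so \eqref{eqn:s0backdpth} does not exclude this. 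For such a configuration, writing $s_n$ for that last shallow approach, one has $A(x,\bfg,s)\ge |D\bfg^{s_n}(x)|/\dist(\bfg^{s_n}(x),\Crit)\gtrsim \eps^{-1}|D\bfg^{s}(x)|$, so the scale-time inequality at time $s$ would force $\tau\lesssim\eps$, and no $\eps$-independent $\tau$ can come out of your argument. Concretely, this is where your claim that the $2\log\delta$ term in Lemma~\ref{lem:shallowuman}(ii) (``the extra factor $\delta^2$'') is harmless breaks down: that $\eps^{-2}$ loss is beaten by $e^{\eps^{\beta}(s-s_n)}$ only when the last return lies at least roughly $\eps^{-\beta}\log(1/\eps)$ steps before $s$, and nothing in the hypothesis guarantees this; the same problem infects your treatment of the tail term after the last return to $\tB(\eps)$.

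The paper's proof is built precisely to circumvent this. It fixes a constant scale $\eps_0$, introduces the interpolating neighborhoods $\tB(e^{N-i}\eps)$, takes the \emph{last} visit $s_i$ to each scale before $s$, and locates the first scale $n$ at which a long gap $s_n-s_{n+1}\ge\eps^{-2\beta}$ occurs. If no such gap exists (all last visits, down to scale $\eps$, are within $\eps^{-3\beta}$ of $s$), it applies Lemma~\ref{lem:shallowret} to conclude that the \emph{earlier} time $s_N<s$ is a $\theta$-good return into $\tB(\eps)$ --- the witness is then strictly smaller than $s$, a possibility your proposal never exploits even though $\hh^{\theta}_{\eps,\tau}(x,\bfg)\le s$ only needs some admissible time $\le s$. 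If a long gap does exist, the factor $e^{\eps^{\beta}(s_n-s_{n+1})}$ absorbs both the $\eps^{-2}$ loss and the earlier returns, and then either $s_n$ is a $\theta$-good return into a larger $\tB(\delta')$ (case $n>0$), or, when $n=0$, one gets $A(x,\bfg,s)\asymp |D\bfg^{s}(x)|/\dist(\bfg^{s}(x),\Crit)$ with $\bfg^{s}(x)$ outside the fixed neighborhood $\tB(\eps_0/e)$; it is this last case that produces a $\tau$ depending only on $\theta,\gamma$ (through $\eps_0$) and not on $\eps$. Without this multi-scale case analysis, the single sum over returns to $\tB(\eps)$ ending at time $s$ cannot yield the lemma.
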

\begin{proof}
Let $\beta=\gamma/4$ and $K=1$. Let $\eps_0>0$ be a small constant such that the conclusion of Lemma~\ref{lem:shallowuman} holds for all $\delta\in (0,\eps_0]$.
In the following, we assume that $\eps\in (0, \eps_0/e]$ is small.
Let $N$ be the maximal integer such that $e^{N-1}\eps\le \eps_0$. Then $2\le N\le \log (\eps_0/\eps)+1<\eps^{-\beta}$.

Let $s_0=s$ and for $i=1,2,\ldots, N$, let
$$s_i=\max\{0\le j\le s: \bfg^j(x)\in \tB(e^{N-i}\eps)\}.$$
Define an integer $n\in \{0,1,\ldots, N\}$ as follows: if $s_0-s_{N}<\eps^{-3\beta}$, then $n=N$; otherwise, let $n$ be the minimal integer in $\{0,1,\ldots, N-1\}$ such that $s_n-s_{n+1}\ge \eps^{-2\beta}$.
Note that the minimality of $n$ implies $s_0-s_n< N\eps^{-2\beta}\le \eps^{-3\beta}$.
By (\ref{eqn:s0backdpth}), it follows that for $0\le j<s_n$, we have
\begin{equation}\label{eqn:skret}
s_n-j \ge \eps^{-3\beta} \Q_j^{s_n-1} (x,\bfg; \eps).
\end{equation}
Note that this inequality is clear if the right hand side is zero.

If $n=N$, then by Lemma~\ref{lem:shallowret},
$s_N$ is a $\theta$-good return time of $(x,\bfg)$ into $\tB(\eps)\times\Omega^\N$, so we are done in this case. Assume from now on $n<N$, so that $\bfg^{s_n}(x)\not\in \tB(\eps)$.
For each $0\le j\le s_n$, let $$A_j:= |D\bfg^j(x)|/\dist(\bfg^j(x), \Crit).$$
We need to estimate $A_{s_n}/A(x,\bfg, s_n)$ from below. Let $s_{N+1}>s_{N+2}>\cdots>s_{N+N_0}=0$ be all the integers such that $\bfg^{s_{N+i}}(x)\in\tB(\eps)$. Then, by Lemma~\ref{lem:shallowuman} (ii),
for all $N+N_0\ge k>n$, we have
\begin{equation*}
\log \frac{A_{s_n}}{A_{s_k}}\ge \eps^{\beta} (s_n-s_k) +2\log \eps -\Q_{s_k}^{s_n-1}(x,\bfg;\eps).
\end{equation*}
(To apply the lemma, we take $(y,\bfh)=F^{s_k}(x,\bfg)$ and $\delta=\eps$ in the case $k\ge N$ and $\delta=e^{N-k}\eps$ otherwise.)
Since $s_n-s_k\ge s_n-s_{n+1}\ge \eps^{-2\beta}$, by (\ref{eqn:skret}), this implies that
\begin{equation}\label{eqn:indasnask}
\log \frac{A_{s_n}}{A_{s_k}}\ge \frac{\eps^{\beta}}{2} (s_n-s_k) \ge \frac{\eps^{-\beta}}{2}.
\end{equation}
Thus
$$\sum_{i=n+1}^{N+N_0} A_{s_i}\ll A_{s_n}.$$
By Proposition~\ref{prop:psorblocdis}, this implies,
\begin{equation}\label{eqn:Asn+1}
A(x,\bfg, s_{n+1}+1)\le 2\sum_{i=n+1}^{N+N_0} A_{s_i} \ll A_{s_n}.
\end{equation}

Let us now distinguish two cases to complete the proof.

{\em Case 1.} $n>0$. Then $\bfg^{s_n}(x)\in \tB(\eps_0)$. By Proposition~\ref{prop:psorblocdis} again,
$$\sum_{j=s_{n+1}+1}^{s_n-1} \frac{|D\bfg^j(x)|}{\dist (\bfg^j(x), \Crit)}\ll \frac{|D\bfg^{s_n}(x)|}{\dist(\bfg^{s_n}(x), \Crit)}.$$
Together with (\ref{eqn:Asn+1}), this gives $A(x,\bfg, s_n) \ll A_{s_n}$, so
$s_n$ is a $\theta$-good return time of $(x,\bfg)$ into $\tB(\eps)$.

{\em Case 2.} $n=0$. Then $\bfg^j(x)\not\in \tB(\eps_0/e)$ for $s_1<j<s_0$. By part (ii) of Theorem~\ref{theo:dergwrothrandom} (or Lemma~\ref{lem:pseudoorbitland}), $|D\bfg^{s_n}(x)|/|D\bfg^j(x)|$ is exponentially big in $s_n-j$, hence
$$\sum_{j=s_{n+1}+1}^{s_n-1}\frac{|D\bfg^j(x)|}{\dist (\bfg^j(x), \Crit)} \prec \frac{|D\bfg^{s_n}(x)|}{\dist(\bfg^{s_n}(x), \Crit)}.$$
Together with (\ref{eqn:Asn+1}), this gives $A(x,\bfg, s_n)\asymp A_{s_n}$, which implies that $s_n$ is a $\tau$-scale expansion time of $(x,\bfg)$ for some constant $\tau>0$.
\end{proof}

\begin{proof}[Proof of Proposition~\ref{prop:indsmall0}]
Fix $\beta\in (0,\alpha/4)$.
Let $(x,\bfg)\in \tB(\eps)\times\Omega_\eps^\N$ with $\eps>0$  small.
We first prove that there exists $\tau>0$ such that
\begin{equation}\label{eqn:t1}
\hh_{\eps,\tau}^\theta(x,\bfg)\le T_1:=\inf \{s\ge 1: s> \eps^{-4\beta}\Q_0^{s-1}(x,\bfg;\eps)\}.
\end{equation}
Indeed, if $T_1<\infty$, then
the minimality of $T_1$ implies that for each $0\le j<T_1$,
$$T_1-j \ge \eps^{-4\beta} \Q_j^{T_1-1}(x,\bfg;\eps).$$
By Lemma~\ref{lem:shallowupb}, there exists $\tau>0$ such that the inequality (\ref{eqn:t1}) holds.

Assume now that $(x,\bfg)\not\in \Bad_m(\kappa,\eps)$.
If $T_1\le m\eps^{-\alpha}$ then $\hh_{\eps,\tau}^\theta(x,\bfg) \le T_1\le m\eps^{-\alpha}$, and the proof is completed.
So assume $T_1> m\eps^{-\alpha}$.
Then for $s_0=[m\eps^{-\alpha}]$ we have
$s_0\le \eps^{-4\beta} \Q_0^{s_0-1}(x,\bfg,\eps)$,
which implies that $\Q_0^{s_0-1}(x,\bfg;\eps)> m$. Since $(x,\bfg)\not\in\Bad_m(\kappa,\eps)$,
there exists a minimal non-negative integer $s_1$
such that
$$\Q_0^{s_1}(x,\bfg;\eps)\le \min (m, \kappa \Gamma_0^{s_1}(x,\bfg;\eps)).$$
Since $\Q_0^{s_1}(x,\bfg;\eps)\le m <\Q_0^{s_0}(x,\bfg;\eps)$ we have $s_1<s_0$.
Moreover, the minimality of $s_1$ implies that $\bfg^{s_1}(x)\in \tB(\eps)$
and that for each
$0\le j<s_1$,
$$\Q_j^{s_1}(x,\bfg;\eps)\le \kappa \Gamma_j^{s_1} (x,\bfg;\eps).$$
If $s_1\ge 1$, then by Lemma~\ref{lem:shallowret}, $h_{\eps}^\theta(x,\bfg)\le s_1<s_0\le m\eps^{-\alpha},$
and the proof is completed. Suppose $s_1=0$ and let
$$s_2=\inf\{s> s_1: \bfg^{s}(x)\in \tB(\eps)\}.$$
Again by Lemma~\ref{lem:shallowret}, $h_\eps^\theta(x,\bfg)\le s_2$, so by (\ref{eqn:t1}),
$\hh_{\eps,\tau}^\theta(x,\bfg)\le \min(s_2, T_1).$
By the minimality of $T_1$ once again,  for each $0<s< \min (s_2, T_1)$, we have
$$s\le \eps^{-4\beta} \Q_0^{s-1}(x, \bfg)=\eps^{-4\beta} q_\eps(x,\bfg)\le \eps^{-4\beta} \kappa.$$
Therefore
$$\hh_{\eps, \tau}^\theta(x,\bfg)\le \kappa \eps^{-4\beta} +1<m\eps^{-\alpha}.$$
The proof is completed.
\end{proof}

\subsection{Proof of Propositions~\ref{prop:indsmall} and~\ref{prop:indsmall(1)}}\label{subsec:indsmall}

\begin{proof} [Proof of Proposition~\ref{prop:indsmall}]
Take $\alpha\in (0,\gamma/p)$, and let $\tau>0$ be given by Proposition~\ref{prop:indsmall0}.
Then for $(x,\bfg)\in \tB(c;\eps)\times\Omega_\eps^\N$ with $\eps>0$ small enough,
$$\hh_{\eps,\tau}^\theta (x,\bfg)> m\eps^{-\alpha}\Rightarrow (x,\bfg)\in \Bad_m^c(\kappa,\eps).$$
Thus
\begin{align*}
\int\int_{\tB(c;\eps)\times \Omega_\eps^\N} \left(\hh_{\eps,\tau}^\theta(x,\bfg)\right)^p dP_\eps & \le
\sum_{m=1}^\infty m^{p}\eps^{-\alpha p} P_\eps (\Bad_m^c(\kappa,\eps)).
\end{align*}
By Proposition~\ref{prop:rettbeps}, (\ref{eqn:speps}) follows.
\end{proof}
\begin{proof} [Proof of Proposition~\ref{prop:indsmall(1)}]
Clearly, there exists $\kappa=\kappa(b)$ and $m=m(b)\ge 1$ such that
for $(x,\bfg)\in \tB(\eps)\times \Omega_\eps^\N$ with $x\not\in\tB(b\eps)$,  we have $(x,\bfg)\not\in \Bad_m(\kappa,\eps)$, provided that $\eps>0$ is small.
So the desired estimate follows from Proposition~\ref{prop:indsmall0}.
\end{proof}

\section{Inducing to a large scale}\label{sec:inducing}
In this section, we shall prove Propositions~\ref{prop:indsma} and~\ref{prop:indind}, hence complete the proof of Theorem~\ref{theo:reduced}.
Let $f, \Omega, \nu_\eps, P_\eps$ be as introduced at the beginning of \S~\ref{sec:thm2}.

\subsection{Preparatory lemmas}\label{subsec:prep}
We say that a positive integer $s$ is a {\em $\theta$-close return time} of $(x,\bfg)\in [0,1]\times\Omega^\N$ if $\theta |D\bfg^s(x)|\ge A(x,\bfg, s) \dist (\bfg^s(x), \Crit (g_s))$.
So for $0<\eps\le \delta$ small enough, if $(x,\bfg)\in [0,1]\times \Omega_\eps^\N$ and $s$ is a $\theta$-good return time of $(x,\bfg)$ into $\tB(\delta)\times \Omega^\N$, then $s$ is a $\theta$-close return, since $g_s$ has a critical point in each component of $\tB(\delta)$.  If $s$ is a $\tau$-scale expansion time of $(x,\bfg)\in [0,1]\times \Omega^\N$, then it is a $\theta_0/\tau$-close return time.
\begin{lemm} \label{lem:join1}
Consider $(x,\bfg)\in [0,1]\times \Omega^\N$.
\begin{enumerate}
\item[(i)] Let $0=T_0<T_1<\cdots< T_{n}$ be integers such that for each $0\le i< n$, $T_{i+1}-T_i$ is a $1/2$-close return of $F^{T_i}(x,\bfg)$. Then $T_{n}$ is a $1$-close return time of $(x,\bfg)$.
\item[(ii)] If $t$ is $\theta_1$-close return of $(x,\bfg)$ and
$s$ is a $\theta_2$-good return time of $(y,\bfh):=F^{t}(x,\bfg)$ into $\tB(\delta)\times \Omega^\N$ for some $\delta>0$, then
$t+s$ is a $(1+\theta_1)\theta_2$-good return time of $(x,\bfg)$ into $\tB(\delta)\times \Omega^\N$.
\end{enumerate}
\end{lemm}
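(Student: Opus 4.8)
The plan is to reduce everything to the \emph{cocycle identity} for the quantity $A$: for $(x,\bfg)\in[0,1]\times\Omega^\N$ and integers $a,b\ge0$,
$$A(x,\bfg,a+b)=A(x,\bfg,a)+|D\bfg^a(x)|\,A\bigl(\bfg^a(x),\sigma^a\bfg,b\bigr).$$
I would prove this first by splitting the defining sum of $A(x,\bfg,a+b)$ at the index $a$, re-indexing the tail via $i=a+j$, and invoking the chain rule $|D\bfg^{a+j}(x)|=|D\bfg^a(x)|\cdot|D(\sigma^a\bfg)^j(\bfg^a(x))|$ together with the identities $(\sigma^a\bfg)^j(\bfg^a(x))=\bfg^{a+j}(x)$ and $(\sigma^a\bfg)_j=g_{a+j}$. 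The one other elementary observation I would record is that for $s\ge1$ the $j=0$ term of $A(y,\bfh,s)$ equals $\dist(y,\Crit(h_0))^{-1}$, so
$$A(y,\bfh,s)\ \ge\ \dist(y,\Crit(h_0))^{-1}.$$
Throughout I may assume every $A$ that appears is finite (otherwise the relevant hypothesis is vacuous), and hence, by the convention defining $A$, that the relevant iterates avoid the critical sets, so that all the distances to $\Crit$ used in denominators are positive.

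For part (ii) put $(y,\bfh)=F^t(x,\bfg)$, so $\bfh^s(y)=\bfg^{t+s}(x)$, $h_s=g_{t+s}$, $h_0=g_t$, and $|D\bfh^s(y)|=|D\bfg^{t+s}(x)|/|D\bfg^t(x)|$. Let $c\in\Crit$ be the critical point from the $\theta_2$-good return hypothesis for $(y,\bfh)$, so already $\bfg^{t+s}(x)\in\tB(c;\delta)$ and $A(y,\bfh,s)\,|\tB(c;\delta)|\le\theta_2|D\bfh^s(y)|$. The cocycle identity gives
$$A(x,\bfg,t+s)\,|\tB(c;\delta)|=A(x,\bfg,t)\,|\tB(c;\delta)|+|D\bfg^t(x)|\,A(y,\bfh,s)\,|\tB(c;\delta)|.$$
The second summand is $\le|D\bfg^t(x)|\cdot\theta_2|D\bfh^s(y)|=\theta_2|D\bfg^{t+s}(x)|$. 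For the first summand, the $\theta_1$-close return property of $t$ gives $A(x,\bfg,t)\le\theta_1|D\bfg^t(x)|\,\dist(y,\Crit(h_0))^{-1}$; replacing $\dist(y,\Crit(h_0))^{-1}$ by $A(y,\bfh,s)$ and applying the good-return bound once more shows it is $\le\theta_1\theta_2|D\bfg^{t+s}(x)|$. Adding the two estimates yields $A(x,\bfg,t+s)\,|\tB(c;\delta)|\le(1+\theta_1)\theta_2|D\bfg^{t+s}(x)|$, which is the claim.

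For part (i) I would first record the analogous chain rule for close returns: if $t$ is a $\theta_1$-close return of $(x,\bfg)$ and $s\ge1$ is a $\theta_2$-close return of $F^t(x,\bfg)$, then $t+s$ is a $(1+\theta_1)\theta_2$-close return of $(x,\bfg)$; the proof is identical to that of (ii) with $|\tB(c;\delta)|$ replaced by $\dist(\bfg^{t+s}(x),\Crit(g_{t+s}))$. Then (i) follows by induction on $k$, with $\theta_2=\tfrac12$: if $T_k$ is a $(1-2^{-k})$-close return of $(x,\bfg)$ and $T_{k+1}-T_k$ is a $\tfrac12$-close return of $F^{T_k}(x,\bfg)$, the chain rule makes $T_{k+1}$ a $\tfrac12\bigl(1+(1-2^{-k})\bigr)=(1-2^{-(k+1)})$-close return of $(x,\bfg)$; the base case $k=1$ is exactly the hypothesis (recall $T_0=0$). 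Hence $T_n$ is a $(1-2^{-n})$-close return, and in particular a $1$-close return of $(x,\bfg)$.

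There is no real obstacle here: the argument is bookkeeping organized around the cocycle identity. The only thing to watch is keeping the indices of the critical sets $\Crit(g_\bullet)$ and of the shifted sequences $(\sigma^a\bfg)_\bullet$ correctly aligned, and recalling the convention $\dist(\,\cdot\,,\Crit(g))=1$ when $\Crit(g)=\emptyset$, which is what keeps $A(y,\bfh,s)\ge\dist(y,\Crit(h_0))^{-1}$ valid even for maps without critical points.
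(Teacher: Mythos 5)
Your proof is correct and follows essentially the same route as the paper: both parts rest on the cocycle identity $A(x,\bfg,t+s)=A(x,\bfg,t)+|D\bfg^t(x)|A(y,\bfh,s)$ together with the lower bound $A(y,\bfh,s)\ge \dist(y,\Crit(h_0))^{-1}$, and your argument for (ii) coincides with the paper's. The only cosmetic difference is in (i): you organize it as an induction via a close-return chain rule yielding the constants $1-2^{-k}$, whereas the paper bounds the geometric series $A(x,\bfg,T_n)=\sum_{i=0}^{n-1}\tA_i\le 2\tA_{n-1}$ directly and then applies the $1/2$-close return at the last step — the same estimate, packaged differently.
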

\begin{proof}
(i) For $0\le i< n$, let $A_i= A(F^{T_i}(x, \bfg), T_{i+1}-T_i)$ and $\tA_i=|D\bfg^{T_i}(x)|A_i$.
By assumption, for each $i$, we have
$$\frac{1}{2}\frac{|D\bfg^{T_{i+1}}(x)|}{|D\bfg^{T_i}(x)|}\ge A_i \dist (\bfg^{T_{i+1}}(x), \Crit(g_{T_{i+1}})).$$
For $i<n-1$, since $\dist(\bfg^{T_{i+1}}(x),\Crit(g_{T_{i+1}})) A_{i+1}\ge 1$, this implies that $\tA_{i+1}\ge 2\tA_i.$
Thus $$A(x,\bfg, T_{n} )=\sum_{i=0}^{n-1} \tA_i \le 2\tA_{n-1},$$
which implies the statement.

(ii) Since $A(y,\bfh, s)\ge 1/ \dist(\bfg^t(x),\Crit(g_t)) $, we have
$$\theta_1 |D\bfg^t(x)|\ge A(x,\bfg,t)/A(y,\bfh,s).$$
Thus
\begin{multline*}
A(x,\bfg, t+s)=A(x,\bfg,t)+|D\bfg^t(x)|A(y,\bfh,s) \le (1+\theta_1) |D\bfg^t(x)| A(y,\bfh,s)\\
=(1+\theta_1) |D\bfg^{t+s}(x)|\frac{A(y,\bfh,s)}{|D\bfh^s(y)|}\le (1+\theta_1)\theta_2 |D\bfg^{t+s}(x)| |\tB(c;\delta)|,
\end{multline*}
where $c\in\Crit$ is such that $\bfg^{t+s}(x)\in \tB(c;\delta)$. The statement follows.
\end{proof}

We say that a Borel measurable map $\sG: \sE\to [0,1]\times \Omega^\N$ defined on a Borel subset $\sE$ of $[0,1]\times \Omega^\N$ is {\em induced by $F$} if there exists a Borel measurable function $T:\sE\to \Z^+$ such that $\sG(x,\bfg)=F^{T(x,\bfg)}(x,\bfg)$ for each $(x,\bfg)\in\sE$. We say that $\sG$ is {\em future-free} if the following holds: for $(x,\bfg)\in \sE$  and $\bfh\in \Omega^\N$ with $g_i=h_i$ for each $0\le i<T(x,\bfg)$, we have $(x,\bfh)\in \sE$ and $T(x,\bfh)=T(x,\bfg)$.

Given Borel probability measure $\nu$ on $\Omega$, let
\begin{equation}\label{eqn:Jacob}
\cL_{\sG}^\nu(y)= \int_{\Omega^\N} \cL^{\bfg}_{\sG} (y) d\nu^\N(\bfg)
\end{equation}
for each $y\in [0,1]$, where
\begin{equation}\label{eqn:Jacobbfg}
\cL_\sG^{\bfg}(y)=\sum_{\substack{x\in \sE^\bfg\\
\bfg^{T(x,\bfg)}(x)=y}}\frac{1}{|D\bfg^{T(x,\bfg)}(x)|}.
\end{equation}

The following is a simple consequence of the Fubini's Theorem.

\begin{lemm}\label{lem:pushforward}
Let $\sG:\sE\to [0,1]\times \Omega^\N$ be a future-free, Borel measurable induced map with an inducing time function $T$ and let $\phi: \sE\to [0,\infty)$ be a Borel measurable function. Then for any Borel probability measure $\nu$ on $\Omega$, we have
$$\int_{\sE} \phi(\sG(x,\bfg)) dx d\nu^\N(\bfg) = \int_{\Omega^\N}\int_0^1 \cL_{\sG}^\nu(y) \phi(y,\bfh) dy d\nu^\N(\bfh).$$
\end{lemm}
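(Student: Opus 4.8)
The statement to prove is Lemma~\ref{lem:pushforward}, the change-of-variables formula for a future-free induced map. The identity to establish is
\[
\int_{\sE}\phi(\sG(x,\bfg))\,dx\,d\nu^\N(\bfg)=\int_{\Omega^\N}\int_0^1\cL_\sG^\nu(y)\,\phi(y,\bfh)\,dy\,d\nu^\N(\bfh).
\]
The natural approach is to slice $\sE$ according to the value of the inducing time function $T$, reducing to the case where $T\equiv n$ is constant, then apply the ordinary change of variables on each piece on which $\bfg^n$ is a diffeomorphism, and finally exploit the future-free property together with Fubini to separate the first $n$ coordinates of $\bfg$ (which determine whether $(x,\bfg)\in\sE$ and the map $\bfg^n$) from the tail $\sigma^n\bfg$ (which only enters through $\phi(\bfg^n(x),\sigma^n\bfg)$).

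\textbf{Key steps.} First I would write $\sE=\bigsqcup_{n\ge 1}\sE_n$ with $\sE_n=\{(x,\bfg)\in\sE: T(x,\bfg)=n\}$; by future-freeness each $\sE_n$ has the product form $\sE_n=\bigsqcup_k (I_{n,k}^{\bfw})$ fibered over the first $n$ coordinates $\bfw\in\Omega^n$, where for fixed $\bfw$ the fiber $\sE_n^{\bfw}\subset[0,1]$ is a disjoint union of intervals on each of which $\bfg^n=\bfw^n$ (depending only on $\bfw$) is a diffeomorphism — this is exactly the structure guaranteed when $T$ is an inducing time in the sense used throughout (the good-return and Markov-inducing-time constructions all give such diffeomorphic branches, cf.\ Lemma~\ref{lem:theta0}). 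Second, on each such branch $J$ I apply the usual substitution $y=\bfg^n(x)$, $dx=|D\bfg^n(x)|^{-1}dy$, to get
\[
\int_{J}\phi(\bfg^n(x),\sigma^n\bfg)\,dx=\int_{\bfg^n(J)}\frac{\phi(y,\sigma^n\bfg)}{|D\bfg^n((\bfg^n|_J)^{-1}(y))|}\,dy.
\]
Summing over the branches $J$ in $\sE_n^\bfw$ produces the factor $\cL_{\sG}^\bfg(y)$ restricted to the part coming from $\sE_n$. Third, I integrate over $\bfg\in\Omega^\N$ with respect to $\nu^\N$: using Fubini I split $d\nu^\N(\bfg)=d\nu^n(\bfw)\,d\nu^\N(\bfh)$ where $\bfw$ is the first $n$ coordinates and $\bfh=\sigma^n\bfg$; the integrand $\phi(y,\bfh)$ depends only on $\bfh$, while the branch structure and the Jacobian depend only on $\bfw$. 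Carrying out the $\bfw$-integration first reconstitutes $\cL_\sG^\nu(y)=\int_{\Omega^\N}\cL_\sG^\bfg(y)\,d\nu^\N(\bfg)$ (the contribution of the $n$-th stratum), and summing over $n$ gives the claim. Everything is nonnegative, so Tonelli's theorem justifies all interchanges without integrability hypotheses.

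\textbf{Main obstacle.} The only genuinely delicate point is bookkeeping: making precise that ``$\sG$ induced by $F$'' plus ``future-free'' forces the fibered-over-$\Omega^n$ structure of $\sE_n$ with countably many diffeomorphic branches, and that the measurability of $T$ and $\sE$ makes the branch decomposition jointly measurable in $(\bfw,x)$ so that Tonelli applies on the product space $\Omega^n\times[0,1]\times\Omega^\N$. I expect this to be routine but slightly tedious; once the branch decomposition and its measurability are in hand, the computation is a one-line application of the classical change-of-variables formula on each branch followed by Fubini/Tonelli. Since the paper states this is ``a simple consequence of the Fubini's Theorem,'' I would keep the write-up short, emphasizing the stratification by $T$, the diffeomorphic branches, the substitution $y=\bfg^n(x)$, and the splitting $\nu^\N=\nu^n\times\nu^\N$.
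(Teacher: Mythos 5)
Your proposal is correct and follows essentially the same route as the paper: stratify $\sE$ by the value of $T$, change variables $y=\bfg^n(x)$ on each stratum to produce the transfer-operator density, and use future-freeness plus the product structure of $\nu^\N$ (splitting off the first $n$ coordinates, with $\sigma^n_*\nu^\N=\nu^\N$) together with Tonelli to separate the density from $\phi$ and reassemble $\cL_\sG^\nu$. The only cosmetic difference is that you invoke an interval-by-interval diffeomorphic branch structure imported from the paper's constructions, whereas the paper's proof applies the change of variables directly on the Borel fibers $X_T^\bfg$; this does not affect correctness.
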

\begin{proof}
Let $X_T=\{(x,\bfg)\in \sE: T(x,\bfg)=T\}$ and let $\cL^\bfg_T(y)=\sum_{x\in X_T^\bfg\cap \bfg^{-T}(y)}|D\bfg^T(x)|^{-1}$.
Then
\begin{multline*}
\int_{X_T} \phi(\sG(x,\bfg)) dx d\nu^\N  =\int_{\Omega^\N} \int_{X_T^\bfg} \phi(\sG(x,\bfg)) dx d\nu^\N
\\ =\int_{\Omega^\N} \int_0^1 \cL^\bfg_T(y) \phi(y, \sigma^T\bfg) dy d\nu^\N
= \int_0^1 \int_{\Omega^\N}\cL^\bfg_T(y) \phi(y, \sigma^T\bfg)  d\nu^\N dy.
\end{multline*}
Since $\sG$ is future free, $\cL^\bfg_T(y)$ depends only on the first $T$ coordinates of $\bfg$. Thus
\begin{multline*}
\int_{\Omega^\N}\cL^\bfg_T(y)\phi(y, \sigma^T\bfg)  d\nu^\N=\int_{\Omega^\N} \cL^\bfg_T(y)d\nu^\N \int_{\Omega^\N} \phi(y, \sigma^T\bfg)d\nu^\N
=\int_{\Omega^\N} \cL^\bfg_T(y)d\nu^\N \int_{\Omega^\N} \phi(y, \bfg)d\nu^\N,
\end{multline*}
and so
$$\int_{X_T}\phi(\sG(x,\bfg)) dx d\nu^\N  =\int_0^1 \left(\int_{\Omega^\N} \cL^\bfg_T(y)d\nu^\N \int_{\Omega^\N} \phi(y, \bfg)d\nu^\N  \right)dy.$$
Since $\sE=\bigcup_{T=1}^\infty X_T$ and $\cL_\sG^\bfg(y)=\sum_{T=1}^\infty \cL^\bfg_T(y)$, the lemma follows.
\end{proof}

\subsection{Proof of Proposition~\ref{prop:indsma}}\label{subsec:indsma}
\begin{lemm}\label{lem:delta0}
Given $\tau>0$ and $\theta>0$ the following holds provided that $0<\eps\le \delta\le \delta_0$ are small enough.
For any $x\in\tB(\delta)$ and $\bfg\in\Omega_\eps^\N$, if $h=\hh_{\delta, \tau}^\theta(x,\bfg)<\infty$ and $l=l_{\delta_0}(F^h(x,\bfg))<\infty$, then
$l+h$ is a $\theta$-good return time of $(x,\bfg)$ into $\tB(\delta')\times \Omega^\N$ for some $\delta'\in [\delta,\delta_0]$.
\end{lemm}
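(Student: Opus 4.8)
The plan is to realise $l+h$ as the concatenation of the time $h$ with the first landing $l$ of the shifted orbit into $\tB(\delta_0)$, and to appeal to the concatenation lemma together with the fact that a first landing into $\tB(\delta_0)$ is automatically a good return. First I would unwind the definition
$h=\hh_{\delta,\tau}^\theta(x,\bfg)=\min\bigl(\inf_{\delta'\ge\delta}h_{\delta'}^\theta(x,\bfg),\,T_\tau(x,\bfg)\bigr)$:
since $h$ is a finite integer, either the infimum is attained and there is $\hat\delta\ge\delta$ with $h_{\hat\delta}^\theta(x,\bfg)=h$, so that $h$ is a $\theta$-good return time of $(x,\bfg)$ into $\tB(\hat\delta)\times\Omega^\N$; or $h=T_\tau(x,\bfg)$, i.e.\ $h$ is a $\tau$-scale expansion time. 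By the remarks preceding Lemma~\ref{lem:join1}, in the first case $h$ is a $\theta$-close return of $(x,\bfg)$ (for $\eps\le\delta$ small, $g_h$ has a critical point in the component of $\tB(\hat\delta)$ containing $\bfg^h(x)$, hence $\dist(\bfg^h(x),\Crit(g_h))\le|\tB(c;\hat\delta)|$), and in the second case a $(\theta_0/\tau)$-close return; thus in all cases $h$ is a $\theta_3$-close return with $\theta_3:=\max(\theta,\theta_0/\tau)$, depending only on $\theta$ and $\tau$.

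Writing $z:=\bfg^h(x)$, so that $F^h(x,\bfg)=(z,\sigma^h\bfg)$ and $l=l_{\delta_0}(z,\sigma^h\bfg)$, I would then split according to whether $z$ lies in $\tB(\delta_0)$. If $z\notin\tB(\delta_0)$, then $l\ge1$; setting $\theta_2:=\theta/(1+\theta_3)$ and shrinking $\delta_0$ so that Lemma~\ref{lem:thetadelta} applies with $\theta_2$ in place of $\theta$, one gets $h_{\delta_0}^{\theta_2}(z,\sigma^h\bfg)=l_{\delta_0}(z,\sigma^h\bfg)=l$, i.e.\ $l$ is a $\theta_2$-good return time of $F^h(x,\bfg)$ into $\tB(\delta_0)\times\Omega^\N$; Lemma~\ref{lem:join1}(ii), with $\theta_1=\theta_3$, then yields that $l+h$ is a $(1+\theta_3)\theta_2=\theta$-good return time of $(x,\bfg)$ into $\tB(\delta_0)\times\Omega^\N$, and $\delta'=\delta_0$ works. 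If instead $z\in\tB(\delta_0)$, then $l=0$ and $l+h=h$, and I would verify directly that $h$ itself is a $\theta$-good return time of $(x,\bfg)$ into $\tB(\delta')\times\Omega^\N$ for a suitable $\delta'\in[\delta,\delta_0]$: when $h$ is a $\tau$-scale time this uses $A(x,\bfg,h)\le\theta_0|D\bfg^h(x)|/(e\tau)$ and the choice of $\delta_0$ so small that $|\tB(c;\delta_0)|\le e\tau\theta/\theta_0$ for all $c\in\Crit$, giving $\delta'=\delta_0$; when $h$ is a $\theta$-good return into $\tB(\hat\delta)\times\Omega^\N$, taking $c$ with $z\in\tB(c;\delta_0)$ and choosing $\delta'\in[\delta,\min(\delta_0,\hat\delta)]$ slightly above $\dist(f(z),\CV)$ one has $|\tB(c;\delta')|\le|\tB(c;\hat\delta)|$ — by monotonicity when the two critical points involved coincide, and because $\hat\delta\ge\delta_*$ (hence $|\tB(c;\hat\delta)|$ exceeds the fixed constant $\min_{c}|\tB(c;\delta_*)|\ge|\tB(c;\delta_0)|$ for $\delta_0$ small) when they differ — so $\theta|D\bfg^h(x)|\ge A(x,\bfg,h)|\tB(c;\hat\delta)|\ge A(x,\bfg,h)|\tB(c;\delta')|$.

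The substantive points are the observation that a $\tau$-scale expansion time whose orbit has already entered $\tB(\delta_0)$ is automatically a good return there once $\delta_0$ is small relative to $\tau\theta$, and the bookkeeping of constants needed so that the output constant is exactly $\theta$ — for which it is essential to use Lemma~\ref{lem:thetadelta} with $\theta_2=\theta/(1+\theta_3)$ (exploiting that the identity $l_{\delta_0}=h_{\delta_0}^{\theta_2}$ holds for any prescribed constant once $\delta_0$ is small enough) rather than with $\theta$ itself. The remaining steps — deciding which of the two terms of $\hh_{\delta,\tau}^\theta$ realises the minimum, and comparing sizes of critical neighbourhoods — are routine.
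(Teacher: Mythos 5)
Your argument is correct and follows essentially the same route as the paper's own proof: note that $h$ is a close return with constant depending only on $\theta$ and $\theta_0/\tau$, invoke Lemma~\ref{lem:thetadelta} with the adjusted constant $\theta/(1+\cdot)$ together with Lemma~\ref{lem:join1}(ii) when $\bfg^h(x)\notin\tB(\delta_0)$, and treat the case $\bfg^h(x)\in\tB(\delta_0)$ directly, using the smallness of $|\tB(c;\delta_0)|$ relative to $\tau\theta/\theta_0$ for a $\tau$-scale time and a truncation of the scale $\hat\delta$ into $[\delta,\delta_0]$ otherwise. Your extra bookkeeping in that last truncation step (comparing $|\tB(c;\delta')|$ with the neighborhood at scale $\hat\delta$ even when the two critical points differ, via $\hat\delta\ge\delta_*$) merely makes explicit what the paper passes over with ``by definition''.
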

\begin{proof} Let $\theta_1=\max (\theta_0/\tau, 1)$ and let $\theta_2=\theta/(1+\theta_1)$.
Provided that $\delta_0>0$ is small enough, we have
\begin{equation}\label{eqn:taudel0}
|\tB(c;\delta_0)|<\tau\theta/\theta_0\mbox{ for each }c\in\Crit,
\end{equation}
and moreover, by Lemma~\ref{lem:thetadelta},
\begin{equation}\label{eqn:thetadel0}
\mbox{ either } l=0, \mbox{ or }l=h_{\delta_0}^{\theta_2}(F^{h}(x,\bfg)).
\end{equation}

Assume first that $\bfg^h(x)\in\tB(\delta_0)$. Then $l=0$. If $h$ is a $\tau$-scale expansion time, then by (\ref{eqn:taudel0}), it is a $\theta$-good return time of $(x,\bfg)$ into $\tB(\delta_0)\times \Omega^\N$, so we are done. Otherwise, it is a $\theta$-good return time of $(x,\bfg)$ into $\tB(\delta'')\times \Omega^\N$ for some $\delta''\ge \delta$. Let $\delta'=\min (\delta'', \delta_0)$. By definition, it follows that $h$ is a $\theta$-good return time of $(x,\bfg)$ into $\tB(\delta')\times \Omega^\N$.

Assume now that $\bfg^h(x)\not\in \tB(\delta_0)$. Then $l\ge 1$ and so the latter part of (\ref{eqn:thetadel0}) holds. Since $h$ is a $\theta_1$-close return of $(x,\bfg)$, by Lemma~\ref{lem:join1}, we conclude that $l+h$ is a $\theta$-good return time of $(x,\bfg)$ into $\tB(\delta_0)\times \Omega^\N$.
\end{proof}
\begin{proof}[Proof of Proposition~\ref{prop:indsma}]
(i) Fix $\theta>0$, $p\ge 1$ and $\gamma>0$ and let $\tau>0$ be given by Proposition~\ref{prop:indsmall}.
Assume $\delta_0>0$ is small.
Then for each $\eps\in (0, \delta_0]$, we have
\begin{equation}\label{eqn:hhp0}
\frac{1}{|\tB(c;\eps)|}\int_{\tB(c;\eps)\times \Omega_\eps^\N} (\hh_{\eps,\tau}^\theta(x,\bfg))^{p} dP\eps\le \eps^{-\gamma}.
\end{equation}
By Proposition~\ref{prop:mane1}, there exist constants $\eps_0\in (0,\delta_0)$, $C_1>0$ and $\rho_0>0$ such that for each $\bfg\in\Omega_\eps^\N$ with $\eps\in (0,\eps_0]$, we have $|\{y: l_{\delta_0}(y, \bfg)\ge l\}|\le C_1 e^{-\rho_0 n}.$

Now fix $c\in\Crit$ and $\eps\in (0,\eps_0]$.   Write $h(x,\bfg)= \hh_{\eps,\tau}^\theta (x,\bfg)$, $l(x,\bfg)=l_{\delta_0}(x,\bfg)$ and
$H(x,\bfg)=\inf_{\delta'\in [\eps,\delta_0]} h_{\delta'}^\theta(x,\bfg).$    Then $h(x,\bfg)$ and $l(x,\bfg)$ are finite $P_\eps$-almost everywhere. By Lemma~\ref{lem:delta0},
for each $(x,\bfg)\in \tB(\eps)\times \Omega_\eps^\N$, we have
\begin{equation}\label{eqn:joinhtau}
h(x,\bfg) + l(F^{h(x,\bfg)}(x,\bfg))\ge H(x,\bfg),
\end{equation}
provided that $\delta_0$ is small enough.

For each $k=1,2,\ldots$, let
$X_k=\{(x,\bfg)\in \tB(c;\eps)\times \Omega_\eps^\N: h(x,\bfg)=k\}$, let $\sG_k: X_k\to [0,1]\times \Omega^\N$ be the measurable induced  map defined by
$(x,\bfg)\mapsto F^k(x,\bfg)$, and let $\phi_k: [0,1]\times \Omega^\N\to [0,\infty)$ be defined as
$$\phi_k (y,\bfh)=\left\{
\begin{array} {ll}
l(y,\bfh)& \mbox{ if }\infty>l(y,\bfh)>k \mbox{ and }\bfh\in\Omega_\eps^\N;\\
0 & \mbox{ otherwise.}
\end{array}
\right.
$$
Let $L_k:=  \int\int_{[0,1]\times \Omega_\eps^\N} (\phi_k(y,\bfh))^p dP_\eps$. By the choice of $\eps_0$, there exists a constant $C_2>0$ such that
\begin{equation}\label{eqn:sumLk}
\sum_{k=1}^\infty L_k\le C_2.
\end{equation}

\noindent
{\bf Claim. } There exists a constant $C_3>0$ such that for each $y\in [0,1]\setminus \tB(\delta_0)$, each $\bfg\in\Omega_\eps^\N$, and each $k=1,2,\ldots$, we have $\sL_{\sG_k}^{\nu_\eps} (y) \le C_3 |\tB(c;\eps)|.$

Indeed, for each $(x,\bfg)\in X_k$, letting $J_{x,k}^\bfg$ be defined as in (\ref{eqn:defnJ}), we have that $\cN(\bfg^k|J_{x,k}^{\bfg})\le 1$, $|J_{x,k}^{\bfg}|<\theta_0 \dist(x,\Crit(g_0))\le \theta_0 |\tB(c;\eps)|$.
 Given $\bfg\in\Omega_\eps^\N$ and $k=1,2,\ldots$, these intervals $J_{x,k}^{\bfg}$, with $(x,\bfg)\in X_k$ and $\bfg^k(x)=y$, are pairwise disjoint. Moreover, for these $(x,\bfg)$, $|\bfg^k(J_{x,k}^\bfg)|$ is bounded from below by a constant $\tau_1=\tau_1(\tau,\delta_0)>0$.  Therefore
$$\sL_{\sG_k}^\bfg (y)= \sum_{\substack{x\in X_k^\bfg\\ \bfg^k(x)=y}}\frac{1}{|D\bfg^k(x)|}
\le \frac{e}{\tau_1} \sum_{\substack{x\in X_k^\bfg\\ \bfg^k(x)=y}} |J_{x,k}^\bfg|\le e (1+\theta_0)\tau_1^{-1} |\tB(c;\eps)|.$$
The claim is proved.

Since $\sG_k$ is future-free, by Lemma~\ref{lem:pushforward}, we obtain
$$
M_k: =\int_{X_k} (\phi_k(\sG_k(x,\bfg)))^p dP_\eps=\int_{\Omega_\eps^\N}\int_0^1 \sL_{\sG_k}^{\nu_\eps}(y) (\phi_k(y,\bfh))^p dP_\eps
$$
Since $\phi_k(y,\bfh)=0$ for each $y\in \tB(\delta_0)$, and by the claim above, we obtain
$$M_k=\int_{\Omega_\eps^\N}\int_{[0,1]\setminus \tB(\delta_0)} \sL_{\sG_k}^{\nu_\eps}(y) (\phi_k(y,\bfh))^p dP_\eps\le C_3 |\tB(c;\eps)|L_k.$$
Combining with (\ref{eqn:sumLk}), we obtain
\begin{equation}\label{eqn:sumMk}
\sum_{k=1}^\infty M_k\le C_2C_3 |\tB(c;\eps)|.
\end{equation}
On each $X_k$ we have
$H(x,\bfg) \le 2 h(x,\bfg)+\phi_k(x,\bfg)$, so
$$\int_{X_k} (H(x,\bfg))^p dP_\eps\le  C_4  \int_{X_k} (h(x,\bfg))^p dP_\eps +C_4 M_k, $$
where $C_4>0$ is a constant. Thus
\begin{multline*}
|\tB(c;\eps)|S_p^\theta(\eps,\eps;c,\delta_0)=  \sum_{k=1}^\infty \int_{X_k} (H(x,\bfg))^p dP_\eps
\le C_4 \sum_{k=1}^\infty \int_{X_k} (h(x,\bfg))^p dP_\eps+C_4\sum_{k=1}^\infty M_k
\\
= C_4 \int_{\tB(c;\eps)\times \Omega_\eps^\N} h(x,\bfg)^p dP_\eps +C_4 \sum_{k=1}^\infty M_k.
\end{multline*}
By (\ref{eqn:hhp0}) and (\ref{eqn:sumMk}), the desired estimate follows.

(ii) Similarly as in (i), using Proposition~\ref{prop:indsmall(1)} instead of Proposition~\ref{prop:indsmall}, we prove that for $\delta_0$ small enough, there exists $\eps_0>0$ such that if $0< \delta\le\delta_0/e$ and $0<\eps\le \min (\delta, \eps_0)$, then we have
$$\frac{1}{|\tB(c;\delta)|}\int\int_{(\tB(c;e\delta)\setminus \tB(\delta))\times \Omega_\eps^\N} \left(\inf_{\delta'\in [e\delta, \delta_0]} h_{\delta'}^\theta\right)^p dP_\eps\le C'\delta^{-\gamma},$$
where $C'>0$ is a constant, which implies
$$\int\int_{(\tB(c;e\delta)\setminus \tB(\delta))\times \Omega_\eps^\N}\frac{1}{\dist (x,\Crit)} \left(\inf_{\delta'\in [e\delta, \delta_0]} h_{\delta'}^\theta\right)^p dP_\eps \le C''\delta^{-\gamma}.$$
 The desired estimate follows.
\end{proof}
\newcommand{\hs}{\widehat{s}}

\subsection{Proof of Proposition~\ref{prop:indind}}\label{subsec:indind}
Fix $p\ge 1$, $\gamma>0$ and $\lambda\in (e^{-\ell_{\max}^{-1}}, 1)$. Let $\theta>0$ be small such that
$$\left(1-(36\theta \#\Crit)^{1/p}\right)^p \lambda e^{\ell_{\max}^{-1}}>1.$$
Let $\delta_0>0$ be a small constant and consider $0<\eps\le\delta\le \delta_0/e$.
Let
$$s(x,\bfg)=\inf_{\delta'\in [\delta, \delta_0]} h_{\delta'}^\theta (x,\bfg),\mbox{ and } \hs(x,\bfg)=\inf_{\delta'\in [e\delta,\delta_0]} h_{\delta'}^{\theta/e}(x,\bfg),$$
and let
\begin{equation}\label{eqn:sxg}
\varphi(x,\bfg)=\left\{
\begin{array}{ll}
s(x,\bfg) & \mbox{ if } x\in \tB(\delta);\\
&\\
\hs(x,\bfg) &\mbox{ otherwise.}
\end{array}
\right.
\end{equation}
Let $\hE_0=\tB(\delta_0)\times \Omega_\eps^\N\supset E_0=\tB(e\delta)\times \Omega_\eps^\N$, let
$$E_1=\{(x,\bfg)\in \tB(\delta)\times \Omega_\eps^\N: s(x,\bfg)< \hs(x,\bfg)\}$$ and let $\sG: E_1\to \hE_0$ denote the map
$(x,\bfg)\mapsto F^{s(x,\bfg)}(x,\bfg)$.
For each $n=1,2,\ldots$, let $E_n=\dom (\sG^n)$ and
$\varphi_n=1_{E_n}\varphi\circ \sG^n$. Furthermore, for each $c\in\Crit$, and $n=0,1,\ldots$, let
$E_n(c)=E_n\cap (\tB(c;\delta_0)\times \Omega_\eps^\N)$, $\sG_c=\sG|E_1(c)$, and
$$K_n(c) =\left(\int\int_{E_n(c)} \varphi(\sG^n(x,\bfg))^p dP_\eps\right)^{1/p}.$$

\begin{lemm}\label{lem:join2}
Assume $\delta_0$ small. Then
\begin{equation}\label{eqn:esticp2del}
\left(|\tB(c;e\delta)|S_p^\theta(e\delta,\eps; c,\delta_0)\right)^{1/p}\le \sum_{n=0}^\infty K_n(c).
\end{equation}
\end{lemm}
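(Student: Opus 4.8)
The plan is to establish, for $P_\eps$-a.e.\ $(x,\bfg)\in\tB(c;e\delta)\times\Omega_\eps^\N=E_0(c)$, the pointwise bound
\[
H(x,\bfg):=\inf_{\delta'\in[e\delta,\delta_0]}h_{\delta'}^\theta(x,\bfg)\ \le\ \sum_{n\ge 0}1_{E_n(c)}(x,\bfg)\,\varphi(\sG^n(x,\bfg)),
\]
and then to pass to $L^p(P_\eps)$-norms. Writing $K_n(c)=\|1_{E_n(c)}\,\varphi\circ\sG^n\|_{L^p(P_\eps)}$ and noting that $\|1_{E_0(c)}H\|_{L^p(P_\eps)}^p=|\tB(c;e\delta)|\,S^\theta_p(e\delta,\eps;c,\delta_0)$ by \eqref{eqn:intnot1}, Minkowski's inequality in the form $\|\sum_n f_n\|_p\le\sum_n\|f_n\|_p$ (valid for nonnegative $f_n$, by monotone convergence) then gives
\[
\bigl(|\tB(c;e\delta)|\,S^\theta_p(e\delta,\eps;c,\delta_0)\bigr)^{1/p}=\|1_{E_0(c)}H\|_p\le\Bigl\|\sum_{n\ge 0}1_{E_n(c)}\,\varphi\circ\sG^n\Bigr\|_p\le\sum_{n\ge 0}K_n(c),
\]
which is \eqref{eqn:esticp2del}. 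Here $E_0(c)=\tB(c;e\delta)\times\Omega_\eps^\N$ because $e\delta\le\delta_0$ and the sets $\tB(c';2\delta_*)$ are pairwise disjoint.

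Three monotonicities, valid for $\delta_0$ small, will be needed. (a) $s\le\hs$ pointwise: replacing the threshold $\theta$ by $\theta/e$ and the range $[\delta,\delta_0]$ by $[e\delta,\delta_0]$ only discards admissible good returns, so $h^\theta_{\delta'}\le h^{\theta/e}_{\delta'}$ and $s=\inf_{\delta'\in[\delta,\delta_0]}h^\theta_{\delta'}\le\inf_{\delta'\in[e\delta,\delta_0]}h^{\theta/e}_{\delta'}=\hs$; the same comparison gives $\hs\ge H$. (b) Consequently, if $(y,\bfh)\notin E_1$ then $\varphi(y,\bfh)=\hs(y,\bfh)$: for $y\notin\tB(\delta)$ this is the definition \eqref{eqn:sxg}, and for $y\in\tB(\delta)$ the relation $(y,\bfh)\notin E_1$ forces $s(y,\bfh)\ge\hs(y,\bfh)$, hence $s(y,\bfh)=\hs(y,\bfh)=\varphi(y,\bfh)$; in either case $\varphi(y,\bfh)=\hs(y,\bfh)\ge H(y,\bfh)$. (c) A $\theta$-good return time of a point of $[0,1]\times\Omega_\eps^\N$ into $\tB(\rho)\times\Omega^\N$ with $\eps\le\delta\le\rho\le\delta_0$ is a $\theta$-close return time — this is the fact recorded just before Lemma~\ref{lem:join1}, applicable because a map in $\Omega_\eps$ has a critical point in each component of $\tB(\rho)$ — and therefore, since $\theta$ is chosen with $36\,\theta\,\#\Crit<1$ and so $\theta<\tfrac12$, a $\tfrac12$-close return time.

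For the pointwise bound, fix $(x,\bfg)\in E_0(c)$ and let $N\in\{0,1,\dots\}\cup\{\infty\}$ be the largest $n$ with $(x,\bfg)\in E_n(c)$ (the sets $E_n(c)$ are decreasing). If $N=\infty$, then $\sG^n(x,\bfg)\in E_1\subset\tB(\delta)\times\Omega_\eps^\N$ for every $n$, so $\varphi(\sG^n(x,\bfg))=s(\sG^n(x,\bfg))\ge 1$ and the right-hand side is $+\infty$. Assume $N<\infty$ and put $T_N=\sum_{j=0}^{N-1}s(\sG^j(x,\bfg))$; this is finite (each $\sG^j(x,\bfg)\in E_1$ forces $s(\sG^j(x,\bfg))<\hs(\sG^j(x,\bfg))\le\infty$), satisfies $\sG^N(x,\bfg)=F^{T_N}(x,\bfg)$, and equals $\sum_{n=0}^{N-1}\varphi(\sG^n(x,\bfg))$ since $\varphi=s$ on $E_1$. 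By (c), each $s(\sG^j(x,\bfg))$ with $j<N$ is a $\tfrac12$-close return of $F^{T_j}(x,\bfg)=\sG^j(x,\bfg)$, so Lemma~\ref{lem:join1}(i) shows $T_N$ is a $1$-close return time of $(x,\bfg)$ (vacuously if $N=0$). Since $(x,\bfg)\notin E_{N+1}$, the point $(y,\bfh):=\sG^N(x,\bfg)\in\hE_0$ lies outside $E_1$, so $\varphi(y,\bfh)=\hs(y,\bfh)$ by (b). If $\hs(y,\bfh)=\infty$ the right-hand side is again $+\infty$; otherwise pick $\rho\in[e\delta,\delta_0]$ with $\hs(y,\bfh)=h^{\theta/e}_\rho(y,\bfh)$, so that $\hs(y,\bfh)$ is a $(\theta/e)$-good return time of $(y,\bfh)=F^{T_N}(x,\bfg)$ into $\tB(\rho)\times\Omega^\N$. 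Lemma~\ref{lem:join1}(ii) with $\theta_1=1$ and $\theta_2=\theta/e$ then shows $T_N+\hs(y,\bfh)$ is a $(2\theta/e)$-good return time of $(x,\bfg)$ into $\tB(\rho)\times\Omega^\N$; as $2/e<1$ this is a fortiori a $\theta$-good return time, at a scale $\rho\in[e\delta,\delta_0]$, whence
\[
H(x,\bfg)\le h^\theta_\rho(x,\bfg)\le T_N+\hs(y,\bfh)=\sum_{n=0}^{N-1}\varphi(\sG^n(x,\bfg))+\varphi(\sG^N(x,\bfg))=\sum_{n=0}^{N}\varphi(\sG^n(x,\bfg)).
\]
(For $N=0$ this reduces to $H(x,\bfg)\le\hs(x,\bfg)=\varphi(x,\bfg)$, using (a) and (b).)

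The delicate point is the tracking of the distortion thresholds: Lemma~\ref{lem:join1}(i) only yields a $1$-close return after composing the inducing steps, so the factor $1+\theta_1$ in Lemma~\ref{lem:join1}(ii) equals $2$, and it is precisely the slack $2/e<1$ — the reason $\hs$ is built from the sharper threshold $\theta/e$ and the larger scale $e\delta$ rather than $\theta$ and $\delta$ — that allows the composed return time to still qualify as a genuine $\theta$-good return at a scale $\ge e\delta$ and hence to bound $H$. The remaining ingredients (Borel measurability of the $E_n(c)$ and of $\varphi\circ\sG^n$, finiteness of $T_N$, and the degenerate cases $N\in\{0,\infty\}$ and $\hs=\infty$) are routine once the three monotonicities above are in place.
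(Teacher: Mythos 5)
Your proof is correct and follows essentially the same route as the paper: the pointwise bound $\inf_{\delta'\in[e\delta,\delta_0]}h^\theta_{\delta'}(x,\bfg)\le\sum_{n}\varphi_n(x,\bfg)$ obtained by composing the inducing steps via Lemma~\ref{lem:join1} (a $1$-close return followed by a good return at threshold $\theta/e$, giving a $\theta$-good return at a scale in $[e\delta,\delta_0]$), followed by the $L^p$ triangle inequality. The paper's argument is identical in substance, only phrased with $\theta/e\le\theta/2$ so that $(1+1)\cdot\theta/2=\theta$, whereas you keep $2\theta/e<\theta$; your extra verifications ($s\le\hs$, hence $\varphi=\hs$ off $E_1$, and the degenerate cases) are details the paper leaves implicit.
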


\begin{proof} It suffices to prove that for each $(x,\bfg)\in E_0$, we have
\begin{equation}\label{eqn:joinest}
\inf_{\delta'\in [e\delta, \delta_0]} h_{\delta'}^\theta(x,\bfg)\le \sum_{n=0}^\infty \varphi_n(x,\bfg),
\end{equation}
provided that $\delta_0$ is small enough.

If $(x,\bfg)\in \bigcap_{n=0}^\infty E_n$, then the right hand side is infinity, so the inequality holds.  If $(x,\bfg)\in E_0\setminus E_1$, then the inequality holds by definition.
So assume that there exists an integer $n\ge 1$ such that $(x,\bfg)\in E_n\setminus E_{n+1}$.
In this case, the inequality follows from Lemma~\ref{lem:join1}. Indeed, letting $T_0=0$ and $T_i= \sum_{j=0}^{i-1} \varphi_j(x,\bfg)$ for each $1\le i\le n+1$, then for each $0\le i< n$, $T_{i+1}-T_i$ is a $1/2$-close return of $F^{T_i}(x,\bfg)$, so by part (i) of that lemma, $T_n$ is a $1$-close return of $(x,\bfg)$. Since $T_{n+1}$ is a $\theta/2$-good return time of $F^{T_n}(x,\bfg)$ into $\tB(\delta')\times \Omega_\eps^\N$ for some $\delta'\in [e\delta, \delta_0]$, it follows by part (ii) of that lemma that $T_{n+1}$ is a $\theta$-good return time of $(x,\bfg)$ into $\tB(\delta')\times \Omega_\eps^\N$.
\end{proof}

To complete the proof of Proposition~\ref{prop:indind}, we shall estimate $K_n(c)$.

\begin{lemm}\label{lem:cG}
Assume $\delta_0$ small. For any $c,c'\in\Crit$ and  $y\in\tB(c';\delta_0)$, we have
$$\cL^{\nu_\eps}_{\sG_c}(y)\le 36\theta
\frac{|\tB(c;\delta)|}{|\tB(c';\delta')|},
$$
where $\delta'=\max(\delta, \dist_*(y,\Crit))$.
\end{lemm}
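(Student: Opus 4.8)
\emph{Plan.} The plan is to prove the pointwise estimate
$$\cL^\bfg_{\sG_c}(y)\ \le\ 36\theta\,\frac{|\tB(c;\delta)|}{|\tB(c';\delta')|}\qquad\text{for every }\bfg\in\Omega_\eps^\N,$$
which suffices since $\cL^{\nu_\eps}_{\sG_c}(y)=\int_{\Omega^\N}\cL^\bfg_{\sG_c}(y)\,d\nu_\eps^\N(\bfg)$ and $\nu_\eps^\N$ is a probability measure. Fix $\bfg$ and $y\in\tB(c';\delta_0)$, write $\delta'=\max(\delta,\dist_*(y,\Crit))$, and let $P$ be the (countable) set of $x\in E_1(c)^\bfg$ with $\bfg^{s(x,\bfg)}(x)=y$, and $s_x=s(x,\bfg)$.

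The first step identifies each branch of $\sG_c$ over $y$ as a good return into $\tB(c';\delta')$. For $x\in P$ we have $s_x=\inf_{\tilde\delta\in[\delta,\delta_0]}h_{\tilde\delta}^\theta(x,\bfg)$, so $s_x$ is a $\theta$-good return of $(x,\bfg)$ into $\tB(\tilde\delta)\times\Omega^\N$ for some $\tilde\delta\in[\delta,\delta_0]$; as $\bfg^{s_x}(x)=y$ lies in the unique component $\tB(c';\delta_0)$, the landing component is $\tB(c';\tilde\delta)$ with $\tilde\delta\ge\max(\delta,\dist_*(y,\Crit))=\delta'$, and since the good-return inequality $\theta|D\bfg^{s_x}(x)|\ge A(x,\bfg,s_x)|\tB(c';\tilde\delta)|$ only becomes easier as $\tilde\delta$ decreases, $s_x$ is in fact a $\theta$-good return into $\tB(c';\delta')\times\Omega^\N$, i.e. $s_x=h_{\delta'}^\theta(x,\bfg)$. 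Using $A(x,\bfg,s_x)\ge 1/\dist(x,\Crit(g_0))$ together with the elementary geometric fact $\dist(x,\Crit(g_0))\le 2|\tB(c;\delta)|$ for $x\in E_1(c)^\bfg\subset\tB(c;\delta)$ (valid once $\eps\le\delta$ and $\delta_0$ is small, because $\Crit(g_0)$ has a point $o(|\tB(c;\delta)|)$-close to $c$), Lemma~\ref{lem:theta0} applied to $J_x:=J_{x,s_x}^\bfg$ (a genuine interval of length $2\theta_0/A(x,\bfg,s_x)$ for $\delta_0$ small) shows that $\bfg^{s_x}|J_x$ is a diffeomorphism with $\cN(\bfg^{s_x}|J_x)\le 1$ whose image contains $\tB(c';\delta')$ (this uses $\theta\le\theta_0/e$, as in the remark after the definition of good return time), and
$$\frac{1}{|D\bfg^{s_x}(x)|}\ \le\ \frac{\theta}{A(x,\bfg,s_x)\,|\tB(c';\delta')|}\ =\ \frac{\theta\,|J_x|}{2\theta_0\,|\tB(c';\delta')|}.$$

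The remaining, and main, task is to bound $\sum_{x\in P}|J_x|$. I would show the $J_x$, $x\in P$, are pairwise disjoint: for two branches with the same return time this is the turning-point argument already used for the disjointness claim in the proof of Proposition~\ref{prop:indsma}(i) (a diffeomorphic branch of $\bfg^{s}$ contains no turning point of $\bfg^{s}$, and two distinct preimages of $y$ are separated by one), while for $s_{x_1}<s_{x_2}$ one uses that $\bfg^{s_{x_1}}(J_{x_1})$ contains $\tB(c';\delta')$ with a large margin ($\asymp\theta_0/\theta$), so that, by the bounded distortion on $J_{x_1}$ and (\ref{eqn:DfAcompare}), an intersection $J_{x_1}\cap J_{x_2}\ne\emptyset$ would force a point of the orbit of $x_2$ to realise a good return before time $s_{x_2}$, contradicting the minimality in $h_{\delta'}^\theta(x_2,\bfg)=s_{x_2}$. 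Since moreover each $J_x$ lies in the $2\theta_0|\tB(c;\delta)|$-neighbourhood of $\tB(c;\delta)$, hence in $\tB(c;2\delta)$, and $|\tB(c;2\delta)|\le 2|\tB(c;\delta)|$ by (\ref{eqn:Dcdelta}), disjointness yields $\sum_{x\in P}|J_x|\le 2|\tB(c;\delta)|$; combining with the per-branch bound and keeping track of the absolute constants (using $\theta_0<(6e)^{-1}$) gives the asserted estimate, and integrating in $\bfg$ finishes the proof.

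The hardest point is the disjointness of the $J_x$ across branches with different return times: this is exactly where the slack between $\theta$ and $\theta/e$ built into the definitions of $s$ and $\hat s$ matters, because ``a $\theta$-good return for a point is only an $e\theta$-good return for its $J_x$-neighbours'', and one must arrange that this loss does not destroy the first-return minimality driving the estimate. A secondary, routine obstacle is passing from $C^1$-closeness of $g_0$ to $f$ to the two geometric facts $\dist(x,\Crit(g_0))\le 2|\tB(c;\delta)|$ and $\bfg^{s_x}(J_x)\supset\tB(c';\delta')$, both of which need $\eps$ small relative to $\delta$ and $\delta_0$ small relative to $\delta_*$.
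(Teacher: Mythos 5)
Your overall skeleton (work pointwise in $\bfg$, identify each branch point $x$ as a $\theta$-good return into $\tB(c';\delta')$ at time $s_x=h^\theta_{\delta'}(x,\bfg)$, bound $1/|D\bfg^{s_x}(x)|$ by $\theta|J_x|/(2\theta_0|\tB(c';\delta')|)$ via Lemma~\ref{lem:theta0}, and then bound $\sum_x|J_x|$ by a constant multiple of $|\tB(c;\delta)|$) is the same as the paper's. The genuine gap is the claimed pairwise disjointness of the intervals $J_x$ across branches with different return times, which is both unproved by your sketch and not to be expected. If $x_2\in J_{x_1}$ with $s_{x_1}<s_{x_2}$, the point $\bfg^{s_{x_1}}(x_2)$ lies somewhere in the long interval $\bfg^{s_{x_1}}(J_{x_1})$ (of length $\asymp\theta_0|\tB(c';\delta')|/\theta$) and need not lie in $\tB(\delta_0)$ at all, so time $s_{x_1}$ is in general not a return time of $(x_2,\bfg)$ and no ``earlier good return'' is produced; and if one instead follows the orbit of $\bfg^{s_{x_1}}(x_2)$ to its first $\delta'$-deep landing, the resulting good-return time can coincide with $s_{x_2}$ itself, so minimality of $s_{x_2}$ is not violated either. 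Indeed the paper does not exclude such overlaps: its key Claim shows that in this configuration one necessarily has the \emph{nesting} $J_{x_2}\subset\tJ_{x_1}\subset J_{x_1}$, where $\tJ_{x_1}$ is the pullback of a small central interval $I_0$ and $|\tJ_{x_1}|\le 4e^2\theta|J_{x_1}|$; the shallow-landing alternative is ruled out not by minimality of $s$ but by the membership $(x_2,\bfg)\in E_1$, i.e.\ the strict inequality $s(x_2,\bfg)<\hs(x_2,\bfg)$, via Lemmas~\ref{lem:thetadelta} and~\ref{lem:join1}. The bound $\sum_x|J_x|\le 4|\tB(c;\delta)|$ is then obtained by stratifying $\cX$ into nesting levels and summing a geometric series with ratio $\le 1/2$ coming from the factor $4e^2\theta$, not from disjointness. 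Without this nesting-plus-geometric-decay mechanism your estimate of $\sum_x|J_x|$ does not go through, and this is precisely the heart of the lemma.

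A secondary point: even granting disjointness, your bookkeeping gives $\cL^{\bfg}_{\sG_c}(y)\le(\theta/\theta_0)\,|\tB(c;\delta)|/|\tB(c';\delta')|$, and the constant $1/\theta_0$ is not controlled by $36$ — Lemma~\ref{lem:theta0} only provides some $\theta_0\in(0,(6e)^{-1})$, an upper bound, whereas you would need a lower bound $\theta_0\ge 1/36$. The paper avoids this by measuring the branch against the pullback of the interval $I$ of length $|\tB(c';\delta')|/(e\theta)$, which yields the absolute constant $4e^2\le 36$. This would only be a cosmetic defect (one could restate the lemma with a constant depending on $\theta_0$ and adjust $\theta_*$ in Proposition~\ref{prop:indind}), but as written your argument does not give the stated inequality.
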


\begin{proof}
Fix $c, c'\in\Crit$, $y\in\tB(c';\delta_0)$, $\bfg\in\Omega_\eps^\N$ and let $\delta'=\max(\delta, \dist_*(y,\Crit))$. We shall prove that
$\cL_{\sG_c}^\bfg (y)\le 36\theta |\tB(c;\delta)|/|\tB(c';\delta')|$ which clearly implies the lemma.
Let $\cX$ denote the set of all $x\in \tB(c;\delta)$ for which $(x,\bfg)\in E_1(c)$ and $\bfg^{s(x,\bfg)}(x)=y$. For each $x\in \cX$,
let $\hJ_{x}:=\hJ_{x,s(x,\bfg)}^\bfg$ be as defined in (\ref{eqn:defnJ}). Then $\hJ_x\subset (0,1)$, $\bfg^{s(x,\bfg)}$ maps $\hJ_{x}$ diffeomorphically onto its image with $\cN(\bfg^{s(x,\bfg)}|\hJ_{x})\le 1$.
Let $I\supset I_0$ be the closed intervals centered at $c'$ and such that
$|I|=|\tB(c';\delta')|/(e\theta)$,   $|I_0|=4|\tB(c';\delta')|$. Since $\bfg^{s(x,\bfg)}(x)=y$, $s(x,\bfg)$ is  a $\theta$-good of $(x,\bfg)$ into $\tB(\delta_x)\times \Omega_\eps^\N$ for some $\delta_x\in [\delta',\delta_0]$. Thus $\bfg^{s(x,\bfg)}(\hJ_{x})\supset I$. Let $J_{x}\supset \tJ_{x}$ be subintervals of $\hJ_{x}$  such that
$\bfg^{s(x,\bfg)}(J_{x})=I$ and $\bfg^{s(x,\bfg)}(\tJ_{x})=I_0$.
Then
\begin{equation}\label{eqn:hJvsJ}
|\tJ_{x}|\le e \frac{|I_0|}{|I|}|J_{x}|\le 4e^2\theta |J_{x}|,
\end{equation}
and both components of $J_x\setminus \tJ_x$ have length bigger than $|\tJ_x|$.
Since
\begin{equation}\label{eqn:LGc0}
\cL_{\sG_c}^\bfg(y)\le e \sum_{x\in \cX} |J_{x}|/|I|\le e^2\theta \sum_{x\in \cX} |J_{x}|/|\tB(c';\delta')|,
\end{equation}
it suffices to prove
\begin{equation}\label{eqn:sumlengthJ}
\sum_{x\in\cX} |J_x|\le 4|\tB(c; \delta)|.
\end{equation}

To this end, we shall first prove the following
claim: {\em  For each $x'\in J_{x}\cap \cX$ with $s(x',\bfg)> s(x,\bfg)$, we have
$J_{x}\supset \tJ_{x}\supset J_{x'}$.}

To prove this claim, let $s=s(x,\bfg)$, $s'=s(x',\bfg)$ and let $(z,\bfh)= F^s(x',\bfg)$.
We first prove that $\dist_*(z,\Crit) \le \delta'$. Arguing by contradiction, assume $\dist_*(z,\Crit)>\delta'$. Since $\bfh^{s'-s}(x')=\bfg^{s'}(x')=y$ there exists a minimal positive integer $t\le s'-s$ such that $\dist_*(\bfh^t(z),\Crit)\le \delta'$.  Let  $\delta''\in (\delta',\delta_0]$ be such that $\dist_*( \bfh^j(z),\Crit)\ge \delta''$ for all $0\le j<t$. Then by Lemma~\ref{lem:thetadelta}, 
$t$ is a $\theta/(2e^2)$-good return time of $(z,\bfh)$ into $\tB(\delta'')\times \Omega_\eps^\N$,
provided that $\delta_0$ is small enough.
By Lemma~\ref{lem:theta0},
$$\frac{|D\bfg^{s}(x')|}{A(x', \bfg, s)}\ge e^{-1} \frac{|D\bfg^s(x)|}{A(x,\bfg,s)}, $$
which implies that
$|D\bfg^{s}(x')|\ge A(x', \bfg, s)\dist(\bfg^s(x'),\Crit(g_s)),$
i.e.,
$s$ is a $1$-close return of $(x',\bfg)$.
By Lemma~\ref{lem:join1}, it follows that $s+t$ is a $\theta/e^2$-good return time of $(x,\bfg)$ into $\tB(\delta'')\times \Omega_\eps^\N$. Since $\delta''\ge \delta$, this implies that $\hs(x',\bfg) \le s+ t\le s'.$ Since
$\hs(x',\bfg)\ge s(x',\bfg)=s'$, it follows that $\hs(x',\bfg)=s(x',\bfg)$, which contradicts the assumption that $(x',\bfg)\in E_1$.
This proves $\dist_*(z,\Crit) \le \delta'$.
Since $\cN(g^s|J_{x'})\le 1$, we have
$$\frac{|\bfg^s(J_{x'})|}{\dist(\bfg^{s}(x'),\Crit)}\le \frac{e |D\bfg^s(x')||J_{x'}|}{\dist(\bfg^{s}(x'),\Crit)}
\le
\frac{2e\theta_0}{A(x',\bfg,s')} \frac{|D\bfg^s(x')|}{\dist(\bfg^s(x'),\Crit)} \le 2e\theta_0,$$
it follows that $\bfg^s(J_{x'})\subset I_0$. The claim follows.

Let us now complete the proof of (\ref{eqn:sumlengthJ}).
Indeed, we can decompose $\cX$ as a disjoint union of subcollections $\cX(k)$, $k=0,1,\ldots$, as follows: $\cX(0)$ is the subset of $\cX$ consisting of those $x$'s for which $s(x,\bfg)\le s(x',\bfg)$ for each $x'\in J_x\cap \cX$, and for each $k=1,2,\ldots$, $\cX(k)$ is the subset of $\cX\setminus (\bigcup_{i=0}^{k-1}\cX(i))$ consisting of those $x$'s for which $s(x,\bfg)\le s(x',\bfg)$ for each $x'\in J_x\cap (\cX\setminus(\bigcup_{i=0}^{k-1}\cX(i)))$.
Then the claim and (\ref{eqn:hJvsJ}) imply that for each $k=1,2,\ldots$,
$$\sum_{x'\in\cX(k)}|J_{x'}|\le \sum_{x\in\cX(k-1)} |\tJ_x|\le \frac{1}{2} \sum_{x\in\cX(k-1)} |J_x|.$$
Since each $J_x$, $x\in\cX$, has length less than $|\tB(c; \delta)|$ the inequality (\ref{eqn:sumlengthJ}) follows.
\end{proof}

\begin{proof}[Proof of Proposition~\ref{prop:indind}]
Let  $S=\max_{c\in\Crit} S_p^\theta(\delta,\eps; c,\delta_0)$, let
$\hS= \hS_p^{\theta/e}(\delta,\eps; \delta_0)$ and let $\tK_n=\max_{c\in\Crit} (K_n(c))^p/ |\tB(c;\delta)|$, for each $n=0,1,\ldots$ and $c\in\Crit$.
In the following we shall prove that
\begin{equation}\label{eqn:tMn}
\tK_n \le (36\theta \#\Crit)^n (S +2\hS).
\end{equation}
First of all, by definition, for each $c\in\Crit$,
\begin{equation}\label{eqn:M0}
(K_0(c))^p\le |\tB(c;\delta)|S_p^\theta(\delta,\eps; c,\delta_0)+ |\tB(c;2\delta)|\hS.
\end{equation}
Thus (\ref{eqn:tMn}) holds for $n=0$, provided that $\delta_0$ is small enough.

Note that $\sG_c$ is future-free. Applying Lemmas~\ref{lem:pushforward}  and~\ref{lem:cG} to $\sG_c$ and $\phi=\varphi^p$, we obtain
\begin{align*}
(K_1(c))^p & = \int_{\tB(\delta_0)\times \Omega_\eps^\N} \cL_{\sG_c}^{\nu_\eps}(y) (\varphi(y,\bfg))^p  dy d\nu_\eps^\N\\
& =\sum_{c'\in\Crit} \int_{\tB(c';\delta)\times \Omega_\eps^\N} \cL_{\sG_c}^{\nu_\eps}(y) (\varphi(y,\bfg))^p  dP_\eps
+ \int_{(\tB(\delta_0)\setminus \tB(\delta))\times \Omega_\eps^\N} \cL_{\sG_c}^{\nu_\eps}(y)(\varphi(y,\bfg))^p  dP_\eps\\
& \le
36 \theta|\tB(c;\delta)|\left(\sum_{c'\in\Crit} S_p^\theta(\delta,\eps; c',\delta_0)+
\hS_p^{\theta/e} (\delta,\eps;\delta_0)\right)\le 36\theta \# \Crit (S+\hS).
\end{align*}
So (\ref{eqn:tMn}) holds for $n=1$.
Similarly, for each $n\ge 1$, applying Lemmas~\ref{lem:pushforward}  and~\ref{lem:cG} to $\sG_c$ and $\phi=\varphi_n^p$, we obtain
$$(K_{n+1}(c))^p = \int_{E_n} \cL_{\sG_c}^{\nu_\eps}(y)(\varphi_n (y,\bfg))^p dP_\eps\le 36\theta |\tB(c,\delta)|\sum_{c'\in\Crit} \frac{(K_n(c'))^p}{|\tB(c';\delta)|},$$
which implies that
$\tK_{n+1}\le 36\theta \#\Crit \tK_n.$
By induction, (\ref{eqn:tMn}) holds for all $n$.

By (\ref{eqn:esticp2del}), (\ref{eqn:tMn}) and the choice of $\theta$, the proposition follows.
\end{proof}

\bibliographystyle{alpha}

\end{document}